\newcommand{\Id}{\mathbb{I}}
\newcommand{\R}{\mathbb{R}}
\newcommand{\Rext}{\R\cup\{+\infty\}}
\newcommand{\set}[1]{\left\{#1\right\}}
\newcommand{\sets}[1]{\{#1\}}
\newcommand{\norm}[1]{\left\Vert#1\right\Vert}
\newcommand{\norms}[1]{\Vert#1\Vert}
\newcommand{\iprod}[1]{\left\langle #1\right\rangle}
\newcommand{\iprods}[1]{\langle #1\rangle}
\newcommand{\Eproof}{\hfill $\square$}
\newcommand{\prox}{\mathrm{prox}}
\newcommand{\proj}{\mathrm{proj}}
\newcommand{\diag}[1]{\mathrm{diag}(#1)}
\newcommand{\dom}[1]{\mathrm{dom}(#1)}
\newcommand{\gra}[1]{\mathrm{gra}(#1)}
\newcommand{\ran}[1]{\mathrm{ran}(#1)}
\newcommand{\zer}[1]{\mathrm{zer}(#1)}
\newcommand{\mbf}[1]{\mathbf{#1}}
\newcommand{\mbb}[1]{\mathbb{#1}}
\newcommand{\mcal}[1]{\mathcal{#1}}
\newcommand{\Xc}{\mathcal{X}}
\newcommand{\Gc}{\mathcal{G}}
\newcommand{\Hc}{\mathcal{H}}
\newcommand{\Lc}{\mathcal{L}}
\newcommand{\Tc}{\mathcal{T}}
\newcommand{\Uc}{\mathcal{U}}
\newcommand{\Cc}{\mathcal{C}}
\newcommand{\Nc}{\mathcal{N}}
\newcommand{\Pc}{\mathcal{P}}
\newcommand{\Vc}{\mathcal{V}}
\newcommand{\intx}[1]{\mathrm{int}\left(#1\right)}
\newcommand{\BigO}[1]{\mathcal{O}\left(#1\right)}
\newcommand{\BigOs}[1]{\mathcal{O}\big(#1\big)}
\newcommand{\ul}[1]{\underline #1}
\newcommand{\mytbi}[1]{\textbf{\textit{#1}}}
\newcommand{\mytb}[1]{\textbf{#1}}
\newcommand{\myti}[1]{\textit{#1}}
\newcommand{\beforesec}{\vspace{-3ex}}
\newcommand{\aftersec}{\vspace{-2ex}}
\newcommand{\beforesubsec}{\vspace{-4ex}}
\newcommand{\aftersubsec}{\vspace{-2ex}}
\newcommand{\beforesubsubsec}{\vspace{-2.5ex}}
\newcommand{\aftersubsubsec}{\vspace{-2.5ex}}
\begin{document}

\title{Revisiting Extragradient-Type Methods -- Part 1: Generalizations and Sublinear Convergence Rates}
\titlerunning{Revisiting Extragradient-Type Methods -- Part 1: Non-accelerated Methods}


\author{Quoc Tran-Dinh\vspace{0.25ex}\\
\newline {Department of Statistics and Operations Research}\\
\newline The University of North Carolina at Chapel Hill\\
318 Hanes Hall, UNC-Chapel Hill, NC 27599-3260.\\
\newline \textit{Email:} \url{quoctd@email.unc.edu}.}

\author{Quoc Tran-Dinh \and Nghia Nguyen-Trung}

\institute{Quoc Tran-Dinh \and Nghia Nguyen-Trung \at
             Department of Statistics and Operations Research\\ 
             The University of North Carolina at Chapel Hill\\
             318 Hanes Hall, UNC-Chapel Hill, NC 27599-3260\\
             \textit{Corresponding author:} quoctd@email.unc.edu
}

\date{Received: date / Accepted: date}

\maketitle

\begin{abstract}
\small
This paper presents a comprehensive analysis of the well-known extragradient (EG) method for solving both equations and inclusions.
First, we unify and generalize EG for [non]linear equations to a wider class of algorithms, encompassing various existing schemes and potentially new variants.
Next, we analyze both sublinear ``best-iterate'' and ``last-iterate'' convergence rates for the entire class of algorithms, and derive  new convergence results for two well-known instances.
Second, we extend our EG  framework above to ``monotone'' inclusions, introducing a new class of algorithms and its corresponding convergence results.
Third, we also unify and generalize Tseng's forward-backward-forward splitting (FBFS) method to a broader class of algorithms to solve [non]linear inclusions when a weak-Minty solution exists, and establish its ``best-iterate'' convergence rate.
Fourth, to complete our picture, we also investigate sublinear rates of two other common variants of EG  using our EG analysis framework developed here: the reflected forward-backward splitting and the golden ratio methods. 
Finally, we conduct an extensive numerical experiment to validate our theoretical findings. 
Our results demonstrate that several new variants of our proposed algorithms outperform existing schemes in the majority of examples.
\end{abstract}
\keywords{Extragradient method \and weak-Minty solution \and  monotonicity \and sublinear convergence rate \and variational inequality \and  minimax problem}
\subclass{90C25 \and 90C06 \and 90-08}


\section{Introduction}\label{sec:intro}
\aftersec
The \myti{generalized equation} (also called the \myti{[non]linear inclusion}) provides a unified template to model various problems in computational mathematics and related fields such as optimization problems (both unconstrained and constrained), minimax optimization, variational inequality, complementarity, game theory,  and fixed-point problems, see, e.g., \cite{Bauschke2011,reginaset2008,Facchinei2003,phelps2009convex,Rockafellar2004,Rockafellar1976b,ryu2016primer}.
This mathematical model has also found various direct applications  in operations research, economics, uncertainty quantification, and transportations, see, e.g., \cite{Ben-Tal2009,giannessi1995variational,harker1990finite,Facchinei2003,Konnov2001}.
Moreover, recent advancements in machine learning and robust optimization, particularly in generative adversarial networks (GANs) and adversarial training, have led to a surge of interest in minimax problems, a special case of generalized equations \cite{arjovsky2017wasserstein,Ben-Tal2009,goodfellow2014generative,levy2020large,madry2018towards,rahimian2019distributionally}. 
These models have also found applications in online learning and reinforcement learning, see, e.g.,   \cite{arjovsky2017wasserstein,azar2017minimax,bhatia2020online,goodfellow2014generative,jabbar2021survey,levy2020large,lin2022distributionally,madry2018towards,rahimian2019distributionally,wei2021last}.
Such prominent applications have motivated a new research trend for [non]linear inclusions in the last many years.

\vspace{0.5ex}
\noindent\textbf{Problem statement.}
In this paper, we consider the following \myti{generalized equation} (also known as a [composite] \myti{[non]linear inclusion}):
\begin{equation}\label{eq:NI}
\textrm{Find $x^{\star}\in\dom{\Phi }$ such that:} \quad 0 \in \Phi x^{\star} \equiv Fx^{\star} + Tx^{\star},
\tag{NI}
\end{equation}
where $F : \R^p\to\R^p$ is a single-valued mapping, $T : \R^p\rightrightarrows 2^{\R^p}$ is a set-valued (or multivalued) mapping from $\R^p$ to $2^{\R^p}$ (the set of all subsets of $\R^p$), $\Phi := F + T$, and $\dom{\Phi } := \dom{F}\cap\dom{T}$ is the domain of $\Phi$.  
Here, we will focus on the finite-dimensional Euclidean spaces $\R^p$.
However, it is worth noting that most results presented in this paper can be extended to Hilbert spaces as demonstrated in the existing literature.

\vspace{0.5ex}
\noindent\textbf{Special cases.}
We are also interested in various special cases of \eqref{eq:NI} as follows.
If $T = 0$, then \eqref{eq:NI} reduces to a \myti{[non]linear equation}:
\begin{equation}\label{eq:NE}
\textrm{Find $x^{\star}\in\dom{F}$ such that:} \quad Fx^{\star} = 0.
\tag{NE}
\end{equation}
If $T := \partial{g}$, the subdifferential of a proper, closed, and convex function $g$, then \eqref{eq:NI} reduces a \myti{mixed variational inequality problem} (MVIP) of the form:
\begin{equation}\label{eq:MVIP}
\textrm{Find $x^{\star}\in\R^p$ such that:} \ \iprods{Fx^{\star}, x - x^{\star}} + g(x) - g(x^{\star}) \geq 0, \ \forall x \in \R^p.
\tag{MVIP}
\end{equation}
In particular, if $T = \Nc_{\Xc}$, the normal cone of a nonempty, closed, and convex set $\Xc$ in $\R^p$ (i.e. $g = \delta_{\Xc}$, the indicator of $\Xc$), then \eqref{eq:MVIP} reduces the classical (Stampacchia's) \myti{variational inequality problem} (VIP):
\begin{equation}\label{eq:VIP}
\textrm{Find $x^{\star}\in\Xc$ such that:} \quad \iprods{Fx^{\star}, x - x^{\star}} \geq 0, \ \textrm{for all} \ x \in \Xc.
\tag{VIP}
\end{equation}
Another important special case of \eqref{eq:NI}  is the following minimax problem:
\begin{equation}\label{eq:minimax_prob}
\min_{u \in\R^m} \max_{v\in \R^n} \Big\{ \Lc(u, v) := \varphi(u) + \Hc(u, v) - \psi(v) \Big\},
\end{equation}
where $\varphi : \R^m\to\Rext$ and $\psi : \R^n\to\Rext$ are often proper, closed, and convex functions, and $\Hc : \R^m\times\R^n\to\R$ is a bifunction, often assumed to be differentiable, but not necessarily convex-concave.
If we denote $x := [u, v]$ as the concatenation of $u$ and $v$, and define $T := [\partial{\varphi}, \partial{\psi}]$ and $F := [\nabla_u{\Hc}(u, v), -\nabla_v{\Hc}(u, v)]$, then the optimality condition of \eqref{eq:minimax_prob} is exactly covered by \eqref{eq:NI} as a special case.

\vspace{1ex}
\noindent\textbf{Related work.}
Theory and solution methods for \eqref{eq:NI} and its special cases have been extensively studied for decades. 
Numerous monographs and articles have explored the existence of solutions, their properties, and various numerical methods.
Researchers have investigated \eqref{eq:NI} under a range of monotonicity assumptions, including monotone, quasi-monotone, pseudo-monotone, hypomonotone, and weakly monotone conditions. 
This has led to a rich body of literature, with key references including \cite{Bauschke2011,bauschke2020generalized,Bonnans2000a,Bonnans1994a,combettes2004proximal,Dontchev1996,Facchinei2003,Konnov2001,mordukhovich2006variational,Robinson1980,Rockafellar1997,Zeidler1984}.

Contemporary solution methods  for \eqref{eq:NI} and its special cases often rely on a fundamental assumption: \textit{maximal monotonicity} of $F$ and $T$, or of $\Phi$ to guarantee global convergences.
These methods essentially generalize  existing optimization algorithms such as gradient, proximal-point, Newton, and interior-point schemes to \eqref{eq:NI} and its special cases \cite{combettes2004proximal,Facchinei2003,Fukushima1992,Martinet1970,Peng1999,pennanen2002local,Rockafellar1976b,TranDinh2016c,vuong2018weak}, while leveraging the splitting structure of \eqref{eq:NI} to use individual operators defined on $F$ and $T$.
This approach leads to a class of operator splitting algorithms for solving \eqref{eq:NI} such as forward-backward splitting (FBS) and Douglas-Rachford (DRS) splitting schemes, as seen in \cite{Bauschke2011,Combettes2005,Davis2015,eckstein1992douglas,lin2020near,Lions1979}.
Alternatively, other approaches for \eqref{eq:NI} and its special cases rely on primal-dual, dual averaging, and mirror descent techniques, with notable works including \cite{Chambolle2011,Nemirovskii2004,Nesterov2007a}, and many recent works such as \cite{Chen2013a,chen2017accelerated,Cong2012,Davis2014,Esser2010a,He2012b,Nesterov2006d,TranDinh2015b,tran2019non,ZhuLiuTran2020}.

Unlike optimization and minimax problems, convergence analysis of gradient or forward-based methods for solving \eqref{eq:NI} faces a  fundamental challenge as \eqref{eq:NI} does not have an objective function, which plays a central role in constructing a Lyapunov or potential function to analyze convergence.
This drawback is even more critical when analyzing algorithms for solving nonmonotone instances of \eqref{eq:NI}. 
Additionally, unlike convex functions where strong properties such as coerciveness and cyclic monotonicity hold for their [sub]gradients beyond monotonicity, this is not the case for general monotone and Lipschitz continuous operators. 
This lack of a strong property results in gradient/forward-based methods being non-convergent, which limits their applicability in practice, see, e.g.,  \cite{Facchinei2003} for further details. 

To address this issue, the extragradient (EG) method was introduced by G. M. Korpelevich in 1976  \cite{korpelevich1976extragradient} and also by A. S. Antipin in \cite{antipin1976}. 
This method performs two sequential gradient/forward steps at each iteration, making it twice as expensive as the standard gradient method, but convergent under only the monotonicity  and the Lipschitz continuity of $F$. 
Since then, this method has been extended and modified in different directions to reduce its per-iteration complexity, including in certain nonmonotone settings, see \cite{alacaoglu2022beyond,censor2011subgradient,censor2012extensions,Iusem1997,khobotov1987modification,malitsky2015projected,malitsky2020forward,malitsky2014extragradient,Monteiro2010a,Monteiro2011,popov1980modification,solodov1999hybrid,solodov1999new,solodov1996modified,tseng1990further,tseng2000modified} for various representative examples.
Among these variants of EG, the past-extragradient scheme in  \cite{popov1980modification} and Teng’s forward-backward-forward splitting method in \cite{tseng2000modified} are the most notable ones. 
However, the results discussed in those are only applicable to the monotone setting of \eqref{eq:NI} and its special cases. 
Additionally, most of the convergence results discussed in the literature are asymptotic, leading to sublinear “best-iterate” convergence rates of the residual term associated with \eqref{eq:NI}. 
Under stronger assumptions such as ``strong monotonicity'' or error bound conditions, linear convergence rates can be achieved. 
Such types of convergence guarantees have been widely studied in the literature and are beyond the scope of this paper, see, e.g., \cite{Bauschke2011,Facchinei2003,Konnov2001} for concrete examples.
A non-exhaustive  summary of classical and recent results can be found in Table \ref{tbl:survey_results}. 

From an application perspective, motivated by recent machine learning and robust optimization applications, such as Generative Adversarial Networks (GANs), adversarial training, distributionally robust optimization, reinforcement learning, and online learning, efficient methods for solving minimax problems have become critically important and attractive. 
This is particularly true in nonconvex-nonconcave, large-scale, and stochastic settings, as evidenced in works such as \cite{arjovsky2017wasserstein,azar2017minimax,Ben-Tal2009,bhatia2020online,goodfellow2014generative,jabbar2021survey,levy2020large,lin2022distributionally,madry2018towards,rahimian2019distributionally}.
Several researchers have proposed and revisited EG and its variants, including \cite{bohm2022solving,daskalakis2018training,diakonikolas2021efficient,pethick2022escaping}. 
A notable work is due to \cite{diakonikolas2021efficient}, where the authors proposed an EG-plus (EG+) variant of EG, capable of handling nonmonotone instances of \eqref{eq:NE}, known as weak-Minty solutions. 
In \cite{pethick2022escaping}, this method was further extended to \eqref{eq:NI}, while \cite{bohm2022solving,luo2022last}  modified EG+ for Popov's methods, as well as optimistic gradient variants.

\begin{table}[ht!]
\vspace{-4ex}
\newcommand{\cell}[1]{{\!\!}{#1}{\!\!}}
\begin{center}
\caption{Summary of existing and our results in this paper and the most related references}\label{tbl:survey_results}
\vspace{-2ex}
\begin{small}
\resizebox{\textwidth}{!}{  
\begin{tabular}{|c|c|c|c|c|} \hline
\cell{Methods} & \cell{Assumptions} & \cell{Add. Assumptions} & \cell{Convergence Rates} & \cell{References} \\ \hline
\multicolumn{5}{|c|}{ For solving \eqref{eq:NE}} \\ \hline
\mytb{EG}/\mytb{EG+}/\mytb{FBFS} & $F$ is \mytb{wMs}  & None & $\BigOs{1/\sqrt{k}}$ best-iterate   & \cite{diakonikolas2021efficient,pethick2022escaping} \\ \hline
                                                                                                 & $F$ is \mytb{mono}  & None & $\BigOs{1/\sqrt{k}}$ last-iterate   & \cite{golowich2020last,gorbunov2022extragradient} \\ \hline
\mytb{PEG}/\mytb{OG}/\mytb{FRBS}/\mytb{RFBS}/\mytb{GR} & $F$ is \mytb{wMs} & $F$ is \mytb{chm}  & $\BigOs{1/\sqrt{k}}$ best-iterate & \cite{bohm2022solving,tran2022accelerated} \\ \hline
                                                                                                 & $F$ is \mytb{mono}  & $F$ is ``\mytb{coherent}'' & $\BigOs{1/\sqrt{k}}$ last-iterate   & \cite{mertikopoulos2019optimistic,mokhtari2020convergence} \\ \hline
{\color{blue}\mytb{GEG/GPEG}} & {\color{blue}$F$ is \mytb{wMs}} &  {\color{blue}$F$ is \mytb{chm}} & {\color{blue}$\BigOs{1/\sqrt{k}}$ best and last} & {\color{red}Ours$^{*}$} \\ \hline
\multicolumn{5}{|c|}{ For solving \eqref{eq:NI}, \eqref{eq:MVIP}, and \eqref{eq:VIP}} \\ \hline
\mytb{EG}/\mytb{EG+} & $\Phi$ is \mytb{mono} & $F$ is \mytb{mono}, $T = \Nc_{\Xc}$ & $\BigOs{1/\sqrt{k}}$ best and last &   \cite{cai2022tight} \\ \hline
{\color{blue}\mytb{GEG/GPEG}} & {\color{blue}$\Phi$ is \mytb{mono}} & {\color{blue}$F$ is \mytb{mono}, $T$ is \mytb{3-cm}} & {\color{blue}$\BigOs{1/\sqrt{k}}$ best and last} & {\color{red}Ours} \\ \hline
\mytb{FBFS}  & $\Phi$ is \mytb{wMs} & None & $\BigOs{1/\sqrt{k}}$ best-iterate & \cite{pethick2022escaping} \\ \hline
\mytb{OG}/\mytb{FRBS} & $\Phi$ is \mytb{mono} & None & $\BigOs{1/\sqrt{k}}$ best-iterate &   \cite{malitsky2020forward} \\ \hline
{\color{blue}\mytb{GFBFS/GFRBS}} & {\color{blue}$\Phi$ is \mytb{wMs}} & {\color{blue}None} & {\color{blue}$\BigOs{1/\sqrt{k}}$ best-iterate} & {\color{red}Ours$^{*}$} \\ \hline
\mytb{RFBS} & $F$ is \mytb{mono} & $T = \Nc_{\Xc}$ or $T$ is \mytb{mono} &   $\BigOs{1/\sqrt{k}}$ best-iterate &  \cite{cevher2021reflected,malitsky2015projected} \\ \hline
                      & $F$ is \mytb{mono} & $T = \Nc_{\Xc}$ &   $\BigOs{1/\sqrt{k}}$ last-iterate &  \cite{cai2022baccelerated} \\ \hline
{\color{blue}\mytb{RFBS}} & {\color{blue}$F$ is \mytb{mono}} & {\color{blue}$T$ is \mytb{mono}} &   {\color{blue}$\BigOs{1/\sqrt{k}}$ best and last} &  {\color{red}Ours} \\ \hline
\mytb{GR} & $F$ is \mytb{mono} & $T= \partial{g}$ &   $\BigOs{1/\sqrt{k}}$ best-iterate &  \cite{malitsky2019golden} \\ \hline
{\color{blue}\mytb{GR+}} & {\color{blue}$F$ is \mytb{mono}} & {\color{blue}$T$ is \mytb{$3$-cm}} &   {\color{blue}$\BigOs{1/\sqrt{k}}$ best-iterate} &  {\color{red}Ours} \\ \hline
\end{tabular}}
\end{small}
\\\vspace{1ex}
{\footnotesize
\textbf{Abbreviations:} 
\mytb{EG} $=$ extragradient; 
\mytb{PEG} $=$ past extragradient; 
\mytb{FBFS} $=$ forward-backward-forward splitting;
\mytb{OG} $=$ optimistic gradient;
\mytb{FRBS} $=$ forward-reflected-backward splitting;
\mytb{RFBS} $=$ reflected-forward-backward splitting;
\mytb{GR} $=$ golden ratio.
In addition, \mytb{wMs} $=$ weak-Minty solution;
\mytb{mono} $=$ monotone;
\mytb{chm} $=$ co-hypomonotone;
and \mytb{3-cm} $=$ $3$-cyclically monotone.
}
\end{center}
\vspace{-2ex}
{ {\color{red}$^{*}$} The last-iterate rates of \mytb{EG}/\mytb{EG+}/\mytb{FBFS}/\mytb{PEG}/\mytb{OG}/\mytb{FRBS}/\mytb{RFBS}/\mytb{GR} for \eqref{eq:NE} under the co-hypomonotonicity of $F$ were first proven in our unpublished report \cite{luo2022last}. }
\vspace{-4ex}
\end{table}

\vspace{0.5ex}
\noindent\textbf{Our approach and contribution.}
The EG method \cite{korpelevich1976extragradient} and its variants for solving \eqref{eq:NI} can be written as $x^{k+1} := \hat{J}_{\eta T}(x^k - \eta d^k)$, where $\eta > 0$ is a given stepsize, $d^k$ is a search direction, and $\hat{J}_{\eta T}$ is an approximation of the resolvent $J_{\eta T}$ of $\eta T$.
Hitherto, existing works primarily focus on:
\begin{compactitem}
\item[$\mathrm{(i)}$] Instantiating $d^k$ to obtain a specific variant and then studying convergence of such a scheme.
For example, if we choose $d^k := F(J_{\eta T}(x^k - \eta Fx^k))$, then we get the classical EG algorithm in \cite{korpelevich1976extragradient}, while if we set $d^k := F(y^k)$ with $y^k := J_{\eta T}(x^k - \eta Fy^{k-1})$, then we get Popov's past-EG scheme in \cite{popov1980modification}.
\item[$\mathrm{(ii)}$] Approximating $J_{\eta T}$ by a simpler operator $\hat{J}_{\eta T}$.
For instance, if $T = \delta_{\Xc}$, then we can replace $J_{\eta T}$ by the projection onto an appropriate half-space.  
\end{compactitem}
While $\mathrm{(ii)}$ has been intensively studied, e.g., in \cite{censor2011subgradient,Facchinei2003,Konnov2001,malitsky2014extragradient}, especially when $T = \Nc_{\Xc}$, and it is not our main focus in this paper, $\mathrm{(i)}$ covers a few non-unified schemes such as \cite{korpelevich1976extragradient,malitsky2015projected,malitsky2019golden,malitsky2020forward,popov1980modification,tseng2000modified} and is far from being comprehensive.
The latter raises the following question:
\begin{center}
\myti{Can we unify and generalize EG to cover broader classes of algorithms?}
\end{center}
In this paper, we attempt to tackle this question by unifying and generalizing EG to three broader classes of algorithms to solve both \eqref{eq:NE} and \eqref{eq:NI} as shown in Table~\ref{tbl:survey_results}.
Specifically, our contribution can be summarized as follows.
\begin{compactitem}
\item[$\mathrm{(a)}$]
First, we unify and generalize EG to a wider class of algorithms for solving \eqref{eq:NE}.
We prove both $\BigOs{1/\sqrt{k}}$-best-iterate and last-iterate convergence rates of this generalized scheme.
Then, we specify our results to cover various existing variants from the literature, where our results apply.

\item[$\mathrm{(b)}$]
Second, we unify and generalize EG to a broader class of schemes for solving \eqref{eq:NI}, and establish its $\BigOs{1/\sqrt{k}}$-best-iterate and last-iterate convergence rates under a ``monotonicity'' assumption.
Again, we  specify our method to cover: the standard EG scheme and Popov's past-EG algorithm.

\item[$\mathrm{(c)}$]
Third, we develop a generalization of Tseng's FBFS method, and provide a new unified $\BigOs{1/\sqrt{k}}$-best-iterate convergence rate for this scheme.
Our result covers both Tseng's classical FBFS algorithm and the FRBS (also equivalent to the optimistic gradient) method as its instances.

\item[$\mathrm{(d)}$]  
Fourth, we present a new best-iterate and last-iterate convergence analysis for the reflected forward-backward splitting (RFBS) methods to solve \eqref{eq:NI} under a ``monotonicity'' assumption.
Alternatively, we also analyze the best-iterate convergence rate of the golden ratio (GR) method in \cite{malitsky2019golden} by extending $T$ to a $3$-cyclically monotone operator and the range of the GR parameter $\tau$ from a fixed value  $\tau := \frac{1+\sqrt{5}}{2}$ to $1 < \tau < 1 + \sqrt{3}$.

\item[$\mathrm{(e)}$]
Finally, we implement our algorithms and their competitors  and perform an extensive test on different problems.
Our results show some promising improvement of the new methods over existing ones on various examples.
\end{compactitem}

\textit{Comparison.}
Since EG and its common variants in Table~\ref{tbl:survey_results} are classical, their convergence analysis is known and can be found in the literature.
Here, we only compare our results with the most recent and related work.

First, our generalized EG (called GEG) scheme for \eqref{eq:NE} covers both the classical EG \cite{korpelevich1976extragradient}  and Popov's past-EG \cite{popov1980modification} methods as instances, and allows to derive new variants.
As we will show in Subsection~\ref{subsec:EG4NE_scheme}, these two existing instances already cover almost known methods in the literature.
Nevertheless, our GEG has a flexibility to choose a search direction, called $u^k$, to potentially improve its performance over EG or past-EG (see Variant 3 in Subsection~\ref{subsec:EG4NE_scheme}).
Our convergence and convergence rate guarantees are new and covers many known results, including \cite{bohm2022solving,diakonikolas2021efficient,golowich2020last,gorbunov2022extragradient,gorbunov2022convergence,luo2022last,pethick2022escaping}.
Note that  the best-iterate rate has recently been obtained when a weak-Minty solution of \eqref{eq:NE} exists, see \cite{diakonikolas2021efficient,pethick2022escaping} for EG and  \cite{bohm2022solving,luo2022last}  for past-EG.
The last-iterate convergence rates for EG and past-EG have been recently proven in \cite{golowich2020last,gorbunov2022extragradient} for the monotone equation \eqref{eq:NE}, and in our unpublished report \cite{luo2022last} for the co-hypomonotone case (see also \cite{gorbunov2022convergence} for a refined analysis). 
Our analysis in this paper is rather elementary, unified, and thus different from those.

Second, existing convergence analysis for the EG method often focuses on \eqref{eq:VIP} and \eqref{eq:MVIP}.
In this paper, we extend the analysis for a class of \eqref{eq:NI}, where $T$ is $3$-cyclically monotone.
This class of operator is theoretically broader than $T = \partial{g}$.
Moreover, our generalization of EG in this setting covers different existing schemes, while also allows us to derive new variants. 
Our last-iterate convergence rate analysis is new and much broader than a special case in  \cite{cai2022tight}.

Third, again, our FBFS scheme is also broader and covers both Tseng's method \cite{tseng2000modified} and the FRBS method in \cite{malitsky2020forward} as specific instances.
Our best-iterate convergence rate analysis is for a weak-Minty solution, which is broader than those in \cite{malitsky2020forward,tseng2000modified}.
It also covers the results in  \cite{luo2022last,pethick2022escaping}.

Fourth, RFBS was proposed in \cite{malitsky2015projected} to solve \eqref{eq:VIP} and was extended to solve \eqref{eq:NI} in  \cite{cevher2021reflected}. 
The best-iterate rates were proven in these works, and the last-iterate rate of RFBS for solving \eqref{eq:VIP} has recently been proven in \cite{cai2022baccelerated}. 
Our result here is more general and covers these works as special cases.
Our analysis in both cases is also different from \cite{cevher2021reflected} and \cite{cai2022baccelerated}. 

Finally, we note that after our unpublished report \cite{luo2022last} and the first draft of this paper \cite{tran2023sublinear} were online, we find some concurrent and following-up  papers such as \cite{bohm2022solving,gorbunov2022convergence,pethick2023solving,pethick2024stable} that are related to our results presented in this paper, but we do not including them as existing results in Table~\ref{tbl:survey_results}.

\vspace{0.5ex}
\noindent\textbf{Paper outline.}
The rest of this paper is organized as follows. 
Section \ref{sec:background}  reviews basic concepts and related results used in this paper. 
Section \ref{sec:EG4NE}  generalizes and investigates the convergence rates of EG and its variants for solving \eqref{eq:NE}. 
Section \ref{sec:EG4NI} focuses on the EG method and its variants for solving \eqref{eq:NI}. 
Section \ref{sec:FBFS4NI} studies the FBFS method and its variants for solving \eqref{eq:NI}. 
Section \ref{sec:other_methods} provides a new convergence rate analysis of FBFS and OG for solving \eqref{eq:NI}. 
Section \ref{sec:numerical_experiments} presents several numerical examples to validate our theoretical results.

\beforesec
\section{Background and Preliminary Results}\label{sec:background}
\aftersec
We recall several concepts which will be used in this paper.
These concepts and properties are well-known and can be found, e.g., in \cite{Bauschke2011,Facchinei2003,Rockafellar2004,Rockafellar1970,ryu2016primer}.

\beforesubsec
\subsection{\bf Basic Concepts, Monotonicity, and Lipschitz Continuity}\label{subsec:basic_concepts}
\aftersubsec
We work with finite dimensional Euclidean spaces $\R^p$ and $\R^n$ equipped with standard inner product $\iprods{\cdot, \cdot}$ and Euclidean norm $\norms{\cdot}$.
For a multivalued mapping $T : \R^p \rightrightarrows 2^{\R^p}$, $\dom{T} = \set{x \in\R^p : Tx \not= \emptyset}$ denotes its domain, $\ran{T} := \bigcup_{x \in \dom{T}}Tx$ is its range, and $\gra{T} = \set{(x, y) \in \R^p\times \R^p : y \in Tx}$ stands for its graph, where $2^{\R^p}$ is the set of all subsets of $\R^p$.
The inverse mapping of $T$ is defined as $T^{-1}y := \sets{x \in \R^p : y \in Tx}$.
For a proper, closed, and convex function $f : \R^p\to\Rext$, $\dom{f} := \sets{x \in \R^p : f(x) < +\infty}$ denotes the domain of $f$, $\partial{f}$ denotes the subdifferential of $f$, and $\nabla{f}$ stands for the [sub]gradient of $f$.
For a symmetric matrix $\mbf{Q}$, $\lambda_{\min}(\mbf{Q})$ and $\lambda_{\max}(\mbf{Q})$ denote the smallest and largest eigenvalues of $\mbf{Q}$, respectively. 

\vspace{0.5ex}
\noindent\textbf{$\mathrm{(a)}$~Monotonicity.}
For a multivalued mapping $T : \R^p \rightrightarrows 2^{\R^p}$ and $\mu \in \R$, we say that $T$ is $\mu$-monotone if 
\begin{equation*}
\begin{array}{ll}
\iprods{u - v, x - y} \geq \mu\norms{x - y}^2, \quad \forall (x, u), (y, v)  \in \gra{T}.
\end{array}
\end{equation*}
If $T$ is single-valued, then this condition reduces to $\iprods{Tx - Ty, x - y} \geq \mu\norms{x - y}^2$ for all $x, y\in\dom{T}$.
If $\mu = 0$, then we say that $T$ is monotone.
If $\mu > 0$, then $T$ is  $\mu$-strongly monotone (or sometimes called coercive).  
If $\mu < 0$, then we say that $T$ is weakly monotone.
It is also called $\vert\mu\vert$-hypomonotone, see \cite{bauschke2020generalized}.
If $T = \partial{g}$, the subdifferential of a proper and convex function, then $T$ is also monotone.
If $g$ is $\mu$-strongly convex, then $T = \partial{g}$ is also $\mu$-strongly monotone.

We say that $T$ is $\rho$-comonotone if there exists $\rho \in \R$ such that 
\begin{equation*}
\begin{array}{ll}
\iprods{u - v, x - y} \geq \rho\norms{u  - v}^2, \quad \forall (x, u), (y, v)  \in \gra{T}.
\end{array}
\end{equation*}
If $\rho = 0$, then this condition reduces to the monotonicity of $T$.
If $\rho > 0$, then $T$ is called $\rho$-co-coercive. 
In particular, if $\rho = 1$, then $T$ is firmly nonexpansive.
If $\rho < 0$, then $T$ is called $\vert\rho\vert$-co-hypomonotone, see, e.g., \cite{bauschke2020generalized,combettes2004proximal}.
Note that a co-hypomonotone operator can also be nonmonotone.
As a concrete example, consider a linear mapping $Fx = Qx + q$ for a symmetric matrix $Q$ and a vector $q$.
If $Q$ is invertible then it is obvious to check that $\iprods{Fx - Fy, x - y} \geq \rho\norms{Fx - Fy}^2$ for all $x, y \in \R^p$, where $\rho := \lambda_{\min}(Q^{-1})$.
If $\rho < 0$, then $F$ is $\vert \rho\vert$-co-hypomonotone, but $F$ is not monotone since $Q$ is not positive semidefinite.

We say that $T$ is maximally $\mu$-monotone if $\gra{T}$ is not properly contained in the graph of any other $\mu$-monotone operator.
If $\mu = 0$, then we say that $T$ is maximally monotone.
These definitions are also extended to $\vert\rho\vert$-cohypomonotone operators.
For a proper, closed, and convex function $f : \R^p\to\Rext$, the subdifferential $\partial{f}$ of $f$ is maximally monotone. 

For a given mapping $T$ such that $\zer{T} := \set{x \in \dom{T} : 0 \in Tx} \neq\emptyset$,  we say that $T$ is star-monotone (respectively, $\mu$-star-monotone or $\rho$-star-comonotone (see \cite{loizou2021stochastic})) if for some $x^{\star} \in\zer{T}$, we have $\iprods{u, x - x^{\star}} \geq 0$ (respectively, $\iprods{u, x - x^{\star}} \geq \mu\norms{x - x^{\star}}^2$ or $\iprods{u, x - x^{\star}} \geq \rho\norms{u}^2$) for all $(x, u) \in \gra{T}$.
A solution $x^{\star} \in \zer{T}$ satisfying a $\rho$-star-comonotonicity is called a weak-Minty solution.
Clearly, if $T$ is monotone (respectively, $\mu$-monotone or $\rho$-co-monotone), then it is also star-monotone (respectively, $\mu$-star-monotone or $\rho$-star-comonotone).
However, the reverse statement does not hold in general.

\vspace{0.5ex}
\noindent\textbf{$\mathrm{(b)}$~Cyclic monotonicity.}
We say that a mapping $T$ is $m$-cyclically monotone ($m\geq 2$) if $\sum_{i=1}^m \iprods{u^i, x^i - x^{i+1}} \geq 0$ for all $(x^i, u^i) \in \gra{T}$ and $x_1 = x_{m+1}$ (see \cite{Bauschke2011}).
We say that $T$ is cyclically monotone if it is $m$-cyclically monotone for every $m \geq 2$.
If $T$ is $m$-cyclically monotone, then it is also $\hat{m}$-cyclically monotone for any $2 \leq \hat{m} \leq m$.
Since a  $2$-cyclically monotone operator $T$ is monotone, any $m$-cyclically monotone operator $T$ is $2$-cyclically monotone, and thus is also monotone.
An $m$-cyclically monotone operator $T$ is called maximally $m$-cyclically monotone if $\gra{T}$ is not properly contained into the graph of any other $m$-cyclically monotone operator. 

As proven in \cite[Theorem 22.18]{Bauschke2011} that $T$ is maximally cyclically monotone iff $T = \partial{f}$, the subdifferential of a proper, closed, and convex function $f$.
However, there exist maximally $m$-cyclically monotone operators (e.g., rotation linear operators) that are not the subdifferential $\partial{f}$ of a proper, closed, and convex function $f$, see, e.g., \cite{Bauschke2011}.
Furthermore, as indicated in  \cite[Example 2.16]{bartz2007fitzpatrick}, there exist maximally $3$-cyclically monotone operators that are not maximally monotone.

\vspace{0.5ex}
\noindent\textbf{$\mathrm{(c)}$~Lipschitz continuity and contraction.}
A single-valued mapping $F$ is called $L$-Lipschitz continuous if $\norms{Fx - Fy} \leq L\norms{x - y}$ for all $x, y \in\dom{F}$, where $L \geq 0$ is the Lipschitz constant. 
If $L = 1$, then we say that $F$ is nonexpansive, while if $L \in [0, 1)$, then we say that $F$ is $L$-contractive. 

\vspace{0.5ex}
\noindent\textbf{$\mathrm{(d)}$~Normal cone.}
Given a nonempty, closed, and convex set $\Xc$ in $\R^p$, the normal cone of $\Xc$ is defined as $\Nc_{\Xc}(x) := \sets{w \in \R^p : \iprods{w, x - y} \geq 0, \ \forall y\in\Xc}$ if $x\in\Xc$ and $\Nc_{\Xc}(x) = \emptyset$, otherwise.
If $f := \delta_{\Xc}$, the indicator of a convex set $\Xc$,   then we have $\partial{f} = \Nc_{\Xc}$.
Moreover, $J_{\partial{f}}$ reduces to the projection onto $\Xc$.

\vspace{0.5ex}
\noindent\textbf{$\mathrm{(e)}$~Resolvent and proximal operators.}
Given a multivalued mapping $T$, the operator $J_Tx := \set{y \in \R^p : x \in y + Ty}$ is called the resolvent of $T$, denoted by $J_Tx = (\Id + T)^{-1}x$, where $\Id$ is the identity mapping.
If $T$ is $\rho$-monotone with $\rho > -1$, then evaluating $J_T$ requires solving a strongly monotone inclusion $0 \in y - x + Ty$.
Hence, $J_T$ is well-defined and single-valued.
If $T = \partial{f}$, the subdifferential of proper, closed, and convex function $f$, then $J_T$ reduces to the proximal operator of $f$, denoted by $\prox_f$, which can be computed as $\prox_f(x) := \mathrm{arg}\min_{y}\sets{f(y) + (1/2)\norms{y-x}^2}$.
In particular, if $T = \Nc_{\Xc}$, the normal cone of a closed and convex set $\Xc$, then $J_T$ is the projection onto $\Xc$, denoted by $\proj_{\Xc}$.
If $T$ is maximally monotone, then $\mathrm{ran}(\Id + T) = \R^p$ (by Minty's theorem) and $T$ is firmly nonexpansive (and thus nonexpansive).

\beforesubsec
\subsection{\bf Best-Iterate and Last-Iterate Convergence Rates}\label{subsec:types_of_rates}
\aftersubsec
The results presented in this paper are related to two types of  sublinear convergence rates: the \mytbi{best-iterate} and the \mytbi{last-iterate} convergence rates.
To elaborate on these concepts, we assume that $\mathcal{D}$ is a given metric (e.g., $\norms{Fx^k}^2$ or $e(x^k)^2$ defined by \eqref{eq:res_norm} below) defined on an iterate sequence $\sets{x^k}$ generated by the underlying algorithm for solving \eqref{eq:NI} or its special cases.
For any $k \geq 0$ and a given order $\nu > 0$,  if
\vspace{-0.75ex}
\begin{equation*}
\min_{0 \leq l \leq k} \mathcal{D}(x^l) \leq \frac{1}{k+1}\sum_{l=0}^k \mathcal{D}(x^l) = \BigO{\frac{1}{k^{\nu}}},
\vspace{-0.75ex}
\end{equation*}
then we say that $\sets{x^k}$ has a $\BigO{1/k^{\nu}}$-best-iterate convergence rate.
In this case, we can take $\hat{x}_k :=  x_{k_{\min}}$ with $k_{\min} := \mathrm{arg}\min_{0\leq l \leq k}\mathcal{D}(x^l)$ as the ``best'' output of the underlying algorithm. 

If we instead have $\mathcal{D}(x^k) =  \BigO{\frac{1}{k^{\nu}}}$ with $x^k$ being the $k$-th iterate generated by the algorithm, then we say that $\sets{x^k}$ has a $\BigO{1/k^{\nu}}$ last-iterate convergence rate.
We emphasize that the convergence on the metric $\mathcal{D}$ of $\sets{x^k}$ does not generally imply the convergence of $\sets{x^k}$ itself, especially when we characterize the rate of convergence in different metrics. 

\beforesubsec
\subsection{\bf Exact Solutions and Approximate Solutions}\label{subsec:exact_and_approx_sols}
\aftersubsec
There are different metrics to characterize exact and approximate solutions of \eqref{eq:NI}.
The most obvious one is the residual norm of $\Phi$, which is defined as
\begin{equation}\label{eq:res_norm}
e(x) := \min_{\xi \in Tx}\norms{Fx + \xi}, \quad x \in \dom{\Phi}.
\end{equation}
Clearly, if $e(x^{\star}) = 0$ for some $x^{\star}\in\dom{\Phi}$, then $x^{\star} \in \zer{\Phi}$, a solution of \eqref{eq:NI}.
If $T = 0$, then $e(x) = \norms{Fx}$.
However, if $e(\hat{x}) \leq \epsilon$ for a given tolerance $\epsilon > 0$, then $\hat{x}$ can be considered as an $\epsilon$-approximate solution of \eqref{eq:NI}.
The algorithms presented in this paper use this metric  to characterize an approximate solution of \eqref{eq:NI} and its special cases.

Other metrics often used for monotone \eqref{eq:VIP} are gap functions and restricted gap functions \cite{Facchinei2003,Konnov2001,Nesterov2007a}, which are respectively defined as follows:
\begin{equation}\label{eq:gap_func}
\mcal{G}(x) := \max_{y \in \Xc}\iprods{Fy, y - x} \quad \text{and} \quad \mcal{G}_{\mbb{B}}(x) := \max_{y\in\Xc\cap \mbb{B}}\iprods{Fy, y  - x},
\end{equation}
where $\mbb{B}$ is a given nonempty, closed, and bounded convex set.
Note that, under the monotonicity, $\mcal{G}(x) \geq 0$ for all $x\in\Xc$, and $\mcal{G}(x^{\star}) = 0$ iff $x^{\star}$ is a solution of \eqref{eq:VIP}.
Thus $\tilde{x}$ is an $\epsilon$-approximate solution of \eqref{eq:VIP} if $\mcal{G}(\tilde{x}) \leq \epsilon$.

For the restricted gap function $\mcal{G}_{\mbb{B}}$, if $x^{\star}$ is a solution of \eqref{eq:VIP} and $x^{\star} \in \mbb{B}$, then $\mcal{G}_{\mbb{B}}(x^{\star}) = 0$.
Conversely, if $\mcal{G}_{\mbb{B}}(x^{\star}) = 0$ and $x^{\star} \in \intx{\mbb{B}}$, the interior of $\mbb{B}$, then $x^{\star}$ is a solution of \eqref{eq:VIP} in $\mbb{B}$ (see \cite[Lemma 1]{Nesterov2007a}).
Gap functions have widely been used in the literature to characterize approximate solutions for many numerical methods, see, e.g., \cite{chen2017accelerated,Cong2012,Facchinei2003,Konnov2001,Nemirovskii2004,Nesterov2007a} as concrete examples.
Note that for nonmonotone cases, gap functions are not applicable in general.

If $J_{\eta T}$ is well-defined and single-valued for some $\eta > 0$, and $F$ is single-valued, then we can use  the following forward-backward splitting residual:
\begin{equation}\label{eq:FB_residual}
\Gc_{\eta }x := \tfrac{1}{\eta}\left(x - J_{\eta T}(x - \eta Fx)\right), 
\end{equation}
to characterize solutions of \eqref{eq:NI}.
It is clear that $\Gc_{\eta}x^{\star} = 0$ iff $x^{\star} \in \zer{\Phi}$.
In addition, if $J_{\eta T}$ is firmly nonexpansive, then we also have 
\begin{equation}\label{eq:FBR_bound2}
\norms{\Gc_{\eta }x} \leq \norms{Fx + \xi}, \quad (x, \xi) \in \gra{T}.
\end{equation}
Hence, for a given tolerance $\epsilon > 0$, if $\norms{\Gc_{\eta}\tilde{x}} \leq \epsilon$, then we can say that $\tilde{x}$ is an $\epsilon$-approximate solution of \eqref{eq:NI}.
If $T := \Nc_{\Xc}$, i.e. \eqref{eq:NI} reduces to \eqref{eq:VIP}, then, with $\eta = 1$, $\Gc_{1}x$ reduces to the classical natural map $\Pi_{F,\Xc}x = x - \proj_{\Xc}(x - Fx)$ of \eqref{eq:VIP}, and $r_n(x) := \norms{\Gc_{1}x} = \norms{\Pi_{F,\Xc}x}$ is the corresponding natural residual at $x$, see \cite{Facchinei2003}.
From \eqref{eq:FBR_bound2}, we have $r_n(x) \leq \norms{Fx + \xi}$ for any $\xi \in \Nc_{\Xc}(x)$.

\beforesubsec
\subsection{\bf The Forward-Type or Fixed-Point Methods}\label{subsec:GD_FW_methods}
\aftersubsec
Let us briefly recall the gradient/forward or fixed-point scheme for solving \eqref{eq:NE} as follows.
Starting from $x^0 \in\dom{F}$, at each iteration $k$, we update
\begin{equation}\label{eq:FW4NE}
x^{k+1} := x^k - \eta Fx^k,
\tag{FW}
\end{equation}
where $\eta > 0$ is a given constant stepsize.
The forward scheme is a classical method to solve \eqref{eq:NE}.
This scheme covers the gradient descent method for minimizing $f(x)$ as a special case with $F = \nabla{f}$.
If $G := \Id - \eta F$, then \eqref{eq:FW4NE} becomes the well-known \textbf{fixed-point iteration} to approximate a fixed-point of $G$.
We recall this method here to compare with EG methods.

If $F$ is $\rho$-co-coercive and $0 < \eta < \rho$, then $\sets{x^k}$ converges to $x^{\star}\in\zer{F}$ (see, e.g., \cite{Facchinei2003}).
Otherwise, if $F$ is only monotone and $L$-Lipschitz continuous, then there exist examples (e.g., $Fx = [x_2, -x_1]$) showing that \eqref{eq:FW4NE} is divergent for any choice of stepsize $\eta$ even if $F$ is monotone. 

To solve \eqref{eq:NI}, we can instead apply the forward-backward splitting method as follows.
Starting from $x^0 \in\dom{F}$, at each iteration $k \geq 0$, we update
\begin{equation}\label{eq:FBS4NI}
x^{k+1} := J_{\eta T}(x^k - \eta Fx^k),
\tag{FBS}
\end{equation}
where $\eta > 0$ is a given constant stepsize.
Similar to \eqref{eq:FW4NE}, if $F$ is $\rho$-co-coercive and $T$ is maximally monotone, then with $\eta \in (0, \rho)$, $\sets{x^k}$ generated by \eqref{eq:FBS4NI} converges to $x^{\star}\in\zer{\Phi}$.
If $F = \nabla{f}$, the gradient of a convex  and $L$-smooth function $f$, then $F$ is co-coercive.
However, imposing the co-coerciveness for a general mapping $F$ is often restrictive. 
Hence, both \eqref{eq:FW4NE} and \eqref{eq:FBS4NI} are less practical in applications due to the absence of the co-coerciveness. 

\beforesec
\section{A Class of Extragradient-Type Methods for Nonlinear Equations}\label{sec:EG4NE}
\aftersec
In this section, we unify and generalize EG to a wider class of algorithms for solve \eqref{eq:NE}.
Then, we analyze its convergence rates and study its special cases.

\beforesubsec
\subsection{\mytb{A Class of Extragradient Methods for Equations}}\label{subsec:EG4NE_scheme}
\aftersubsec
Our class of extragradient methods for solving \eqref{eq:NE} is presented as follows.
Starting from an initial point $x^0 \in\dom{F}$, at each iteration $k \geq 0$, we update 
\begin{equation}\label{eq:EG4NE}
\arraycolsep=0.2em
\left\{\begin{array}{lcl}
y^k &:= & x^k - \frac{\eta}{\beta} u^k, \vspace{1ex}\\
x^{k+1} &:= & x^k - \eta Fy^k,
\end{array}\right.
\tag{GEG}
\end{equation}
where $\eta > 0$ is a given constant stepsize, $\beta \in (0, 1]$ is a scaling factor, and $u^k \in \R^p$ satisfies the following condition:
\begin{equation}\label{eq:EG4NE_u_cond}
\norms{Fx^k - u^k}^2 \leq  \kappa_1 \norms{Fx^k - Fy^{k-1}}^2 + \kappa_2\norms{Fx^k - Fx^{k-1}}^2,
\end{equation}
for given  constants $\kappa_1 \geq 0 $ and $\kappa_2 \geq 0$ and $x^{-1} = y^{-1} := x^0$.
Under different choices of $u^k$, \eqref{eq:EG4NE} covers a wide range of methods generalizing the \textbf{extragradient method} (EG) in \cite{korpelevich1976extragradient}.
Note that condition \eqref{eq:EG4NE_u_cond} also makes \eqref{eq:EG4NE} different from the hybrid extragradient method in, e.g., \cite{Monteiro2011,solodov1999hybrid}.
If we define $d^k := F(x^k - \frac{\eta}{\beta} u^k)$, then \eqref{eq:EG4NE} becomes $x^{k+1} := x^k - \eta d^k$, which falls into a generic iterative scheme we have discussed earlier.

Let us consider the following three choices of $u^k$ which  fulfill \eqref{eq:EG4NE_u_cond}.
\begin{itemize}
\itemsep=0.2em
\item[(a)]\textbf{Variant 1: Extragradient method.} 
If we choose $u^k := Fx^k$, then we obtain the \mytb{extragradient} (EG) scheme \cite{korpelevich1976extragradient} from \eqref{eq:EG4NE} for solving \eqref{eq:NE}.
Clearly, $u^k$ satisfies \eqref{eq:EG4NE_u_cond}  with  $\kappa_1 = \kappa_2 = 0$.

\item[(b)]\textbf{Variant 2: Past-extragradient method.} 
If  we choose $u^k := Fy^{k-1}$, then we obtain the Popov's \mytb{past-extragradient} method \cite{popov1980modification} to solve \eqref{eq:NE}. 
This scheme is also equivalent to the \mytb{optimistic gradient} method in the literature, see also \cite{daskalakis2018training,mertikopoulos2019optimistic,mokhtari2020convergence} for more details.
Clearly, $u^k$ satisfies \eqref{eq:EG4NE_u_cond} with $\kappa_1 = 1$ and $\kappa_2 = 0$.

\item[(c)]\textbf{Variant 3: Generalization.} 
We can construct $u^k := \alpha_1Fx^k + \alpha_2Fy^{k-1} + (1-\alpha_1 - \alpha_2)Fx^{k-1}$ as an affine combination of $Fx^k$, $Fy^{k-1}$ and $Fx^{k-1}$ for given constants $\alpha_1, \alpha_2 \in \R$.
Then, $u^k$ satisfies \eqref{eq:EG4NE_u_cond} with $\kappa_1 = (1 + c)\alpha_2^2$ and $\kappa_2 = (1+c^{-1})(1-\alpha_1-\alpha_2)^2$ by Young's inequality for any $c > 0$.
This variant essentially has the same per-iteration complexity as \textbf{Variant 1}, but it covers all candidates as combinations of $x^k$, $y^{k-1}$, and $x^{k-1}$.
\end{itemize}
Figure~\ref{fig:EG_illustration} illustrates the three variants of \eqref{eq:EG4NE} presented above.
\begin{figure}[hpt!]
\vspace{-3ex}
\centering
\includegraphics[width=\linewidth]{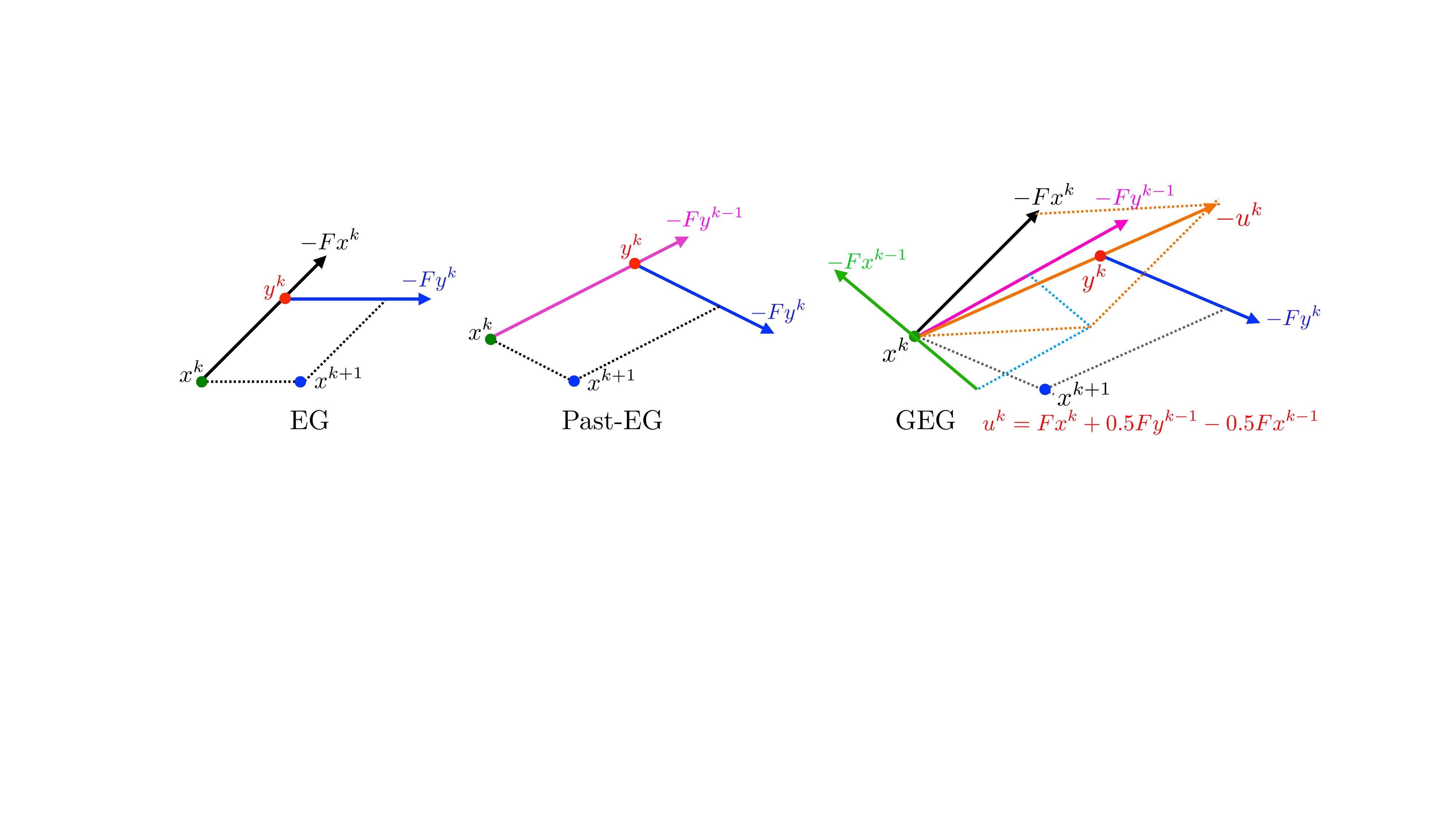}
\caption{
An illustration of three variants of \eqref{eq:EG4NE}: \mytb{Variant 1} (Left), \mytb{Variant 2} (Middle), and \mytb{Variant 3}  with $u^k := Fx^k + 0.5 Fy^{k-1} - 0.5Fx^{k-1}$ for $\alpha_1 = 1$ and $\alpha_2 = 0.5$ (Right).
}
\label{fig:EG_illustration}
\vspace{-4ex}
\end{figure}

\vspace{0.75ex}
\noindent\textbf{$\mathrm{(i)}$~Discussion of the extragradient method.}
With $u^k := Fx^k$,  if $\beta = 1$, then we obtain exactly the \myti{classical EG method}  \cite{korpelevich1976extragradient} for solving \eqref{eq:NE}.
If $\beta < 1$, then we recover the \mytb{extragradient-plus (EG$+$)} scheme from \cite{diakonikolas2021efficient} for solving \eqref{eq:NE}.
If we compute $x^k = y^k + \frac{\eta}{\beta} Fx^k$ from the first line of \eqref{eq:EG4NE} and substitute it into the second line of \eqref{eq:EG4NE}, then we get $x^{k+1} = y^k - \eta(Fy^k - \frac{1}{\beta}Fx^k)$.
In this case, we obtain  a \mytb{forward-backward-forward splitting} variant of Tseng's method in \cite{tseng2000modified} from \eqref{eq:EG4NE} as follows:
\begin{equation}\label{eq:FBFS4NE}
\arraycolsep=0.2em
\left\{\begin{array}{lcl}
y^k &:= & x^k - \frac{\eta}{\beta} Fx^k, \vspace{1ex}\\
x^{k+1} &:= & y^k - \eta (Fy^k - \frac{1}{\beta}Fx^k).
\end{array}\right.
\tag{FBFS}
\end{equation}
Clearly, if $\beta = 1$, then we recover  exactly \mytb{Tseng's method} for solving \eqref{eq:NE}.

\vspace{0.75ex}
\noindent\textbf{$\mathrm{(ii)}$~Discussion of the past-extragradient method and its relations to other methods.}
With $u^k := Fy^{k-1}$, we can show that it is equivalent to the following variants.
First, we can rewrite \eqref{eq:EG4NE} as
\begin{equation}\label{eq:PEG4NE}
\arraycolsep=0.2em
\left\{\begin{array}{lcl}
x^{k+1} & := &  x^k - \eta Fy^k \vspace{1ex}\\
y^{k+1} & := & x^{k+1} - \tfrac{\eta}{\beta}Fy^k.
\end{array}\right.
\tag{PEG}
\end{equation}
This form shows us that \eqref{eq:PEG4NE} saves one evaluation $Fx^k$ of $F$ at each iteration compared to \textbf{Variant 1}.
If $\beta = 1$, then we obtain exactly the \mytb{Popov's method} in  \cite{popov1980modification}.
If we rotate up the second line and use $\beta = 1$ as $y^k = x^k - \eta Fy^{k-1}$, then we get the \mytb{past-extragradient method}.

Now, under this choice of $u^k$, from the first line of \eqref{eq:EG4NE}, we have $x^k = y^k + \frac{\eta}{\beta}u^k = y^k + \frac{\eta}{\beta}Fy^{k-1}$.
Substituting this expression into the first line of \eqref{eq:PEG4NE}, we get $x^{k+1} = y^k - \eta Fy^k + \frac{\eta}{\beta}Fy^{k-1}$.
Substituting this relation into the second line of \eqref{eq:PEG4NE}, we can eliminate $x^{k+1}$ to get the following variant:
\begin{equation}\label{eq:FRBS4NE} 
y^{k+1} := y^k - \tfrac{\eta}{\beta}\big( (1 + \beta) Fy^k  - Fy^{k-1}\big).
\tag{FRBS}
\end{equation}
This scheme can be considered as a simplified variant of the \mytb{forward-reflected-backward splitting} scheme in \cite{malitsky2020forward} for solving \eqref{eq:NE} when we set $\beta := 1$ as $y^{k+1} := y^k - \eta ( 2Fy^k  - Fy^{k-1} )$.

Alternatively, from \eqref{eq:PEG4NE}, we have $x^{k-1} - x^k =  \eta Fy^{k-1}$ and $\beta (x^k - y^k) = \eta Fy^{k-1}$, leading to $x^{k-1} - x^k = \beta (x^k - y^k)$.
Therefore, we get $y^k = \frac{1}{\beta}( (1+\beta)x^k - x^{k-1})$.
Substituting this expression into the first line of \eqref{eq:PEG4NE}, we get
\vspace{-0.5ex}
\begin{equation}\label{eq:RGD4NE} 
x^{k+1} := x^k - \eta F\big( \tfrac{1}{\beta}( (1+\beta)x^k - x^{k-1}) \big).
\tag{RFB}
\vspace{-0.5ex}
\end{equation}
In particular, if $\beta = 1$, then we obtain $x^{k+1} := x^k - \eta F(2x^k - x^{k-1})$, which turns out to be the \mytb{reflected gradient} method in \cite{malitsky2015projected} or the \mytb{reflected forward-backward splitting} scheme in \cite{cevher2021reflected} for solving \eqref{eq:NE}.

Using the relation $x^{k-1} - x^k = \beta (x^k - y^k)$ above, we can compute that $x^{k} = \frac{\beta}{1+\beta}y^k + \frac{1}{1+\beta}x^{k-1} = \frac{(\omega-1)}{\omega}y^k + \frac{1}{\omega}x^{k-1}$, where $\omega := 1 + \beta$.
Combining the two lines of \eqref{eq:EG4NE}, we get $y^{k+1} :=  x^{k+1} - \tfrac{\eta}{\beta}Fy^k = x^k - \frac{\eta(1 + \beta)}{\beta}Fy^k$.
Putting both expressions together, we arrive at
\vspace{-0.5ex}
\begin{equation}\label{eq:GR4NE} 
\arraycolsep=0.2em
\left\{\begin{array}{lcl}
x^k  & := & \tfrac{(\omega-1)}{\omega}y^k + \tfrac{1}{\omega}x^{k-1}, \vspace{1ex}\\
y^{k+1}  & := &  x^{k} - \tfrac{\eta(1+\beta)}{\beta}Fy^k.
\end{array}\right.
\tag{GR}
\vspace{-0.5ex}
\end{equation}
This method is a simplified variant of the \mytb{golden ratio} method in \cite{malitsky2019golden} for solving \eqref{eq:NE}.
Overall, the template \eqref{eq:EG4NE} covers a class of EG algorithms with many common instances as discussed above. 

\vspace{0.75ex}
\noindent\textbf{$\mathrm{(iii)}$~Discussion of the generalization case.}
Suppose that $u_{\mathrm{GEG}}^k := \alpha_1Fx^k + \alpha_2Fy^{k-1} + (1-\alpha_1 - \alpha_2)Fx^{k-1}$ for fixed $\alpha_1, \alpha_2 \in \R$.

Let us define $v^k := Fy^k - Fx^k$, $\bar{v}^k := Fy^k - Fy^{k-1}$, and $\hat{v}^k := Fy^k - Fx^{k-1}$.
Then, we can compute $\norms{Fy^k - u^k_{\mathrm{GEG}}}^2 = \norms{\alpha_1v^k + \alpha_2\bar{v}^k + (1-\alpha_1-\alpha_2)\hat{v}^k}^2$.
Now, we compare \eqref{eq:EG4NE}  and EG in \textbf{Variant 1} and  PEG in \textbf{Variant 2}.
\begin{itemize}
\item For EG, we choose $u_{\textrm{EG}}^k := Fx^k$, and hence, $\norms{Fy^k - u_{\mathrm{EG}}^k}^2 = \norms{Fy^k - Fx^k}^2 = \norms{v^k}^2$.
If there exist $\alpha_1, \alpha_2 \in \R$ such that
\vspace{-0.5ex}
\begin{equation*} 
 \norms{v^k} > \norms{\alpha_1v^k + \alpha_2\bar{v}^k + (1-\alpha_1-\alpha_2)\hat{v}^k},
 \vspace{-0.5ex}
\end{equation*}
then, by Lemma~\ref{le:EG4NE_key_estimate1} below,  we can easily show that at each iteration $k$, $\norms{x^{k+1} - x^{\star}}^2$ decreases faster in GEG than in EG.
\item For PEG, we choose $u_{\mathrm{PEG}}^k := Fy^{k-1}$, and hence, $\norms{Fy^k - u^k_{\mathrm{PEG}}}^2 = \norms{\bar{v}^k}^2$.
If there exist $\alpha_1, \alpha_2 \in \R$ such that
\vspace{-0.5ex}
\begin{equation*} 
 \norms{\bar{v}^k} > \norms{\alpha_1v^k + \alpha_2\bar{v}^k + (1-\alpha_1-\alpha_2)\hat{v}^k},
 \vspace{-0.5ex}
\end{equation*}
then, again by Lemma~\ref{le:EG4NE_key_estimate1},  we can easily show that at each iteration $k$, $\norms{x^{k+1} - x^{\star}}^2$ decreases faster in GEG than in PEG.
\end{itemize}
Consequently, our \eqref{eq:EG4NE} scheme gives more freedom to improve the performance compared to existing EG variants as we will see in Section~\ref{sec:numerical_experiments}.

\beforesubsec
\subsection{\bf Key Estimates for Convergence Analysis}\label{subsec:EG4NE_convergence}
\aftersubsec
To analyze the convergence of \eqref{eq:EG4NE}, we first prove the following lemmas.

\begin{lemma}\label{le:EG4NE_key_estimate1}
If $\sets{(x^k, y^k)}$ is generated by \eqref{eq:EG4NE}, then for any $\gamma > 0$ and any $\hat{x}\in\dom{F}$, we have
\begin{equation}\label{eq:EG4NE_key_est1}
\arraycolsep=0.2em
\begin{array}{lcl}
\norms{x^{k+1} - \hat{x}}^2 & \leq & \norms{x^k - \hat{x}}^2 - \beta\norms{y^k - x^k}^2 +  \tfrac{\eta^2}{\gamma}\norms{Fy^k - u^k}^2 \vspace{1ex}\\
&& - {~}  2\eta\iprods{Fy^k, y^k - \hat{x}} - (\beta - \gamma)\norms{x^{k+1} - y^k}^2 \vspace{1ex}\\
&& - {~}  (1 - \beta)\norms{x^{k+1} - x^k}^2.
\end{array}
\end{equation}
\end{lemma}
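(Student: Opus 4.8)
The plan is to derive \eqref{eq:EG4NE_key_est1} from the two update rules of \eqref{eq:EG4NE} by a purely algebraic expansion, invoking neither monotonicity nor Lipschitz continuity; the lemma is an \emph{unconditional} estimate for the iterates, so no structural assumption on $F$ is needed. I would begin from the elementary three-point expansion
\begin{equation*}
\norms{x^{k+1} - \hat{x}}^2 = \norms{x^k - \hat{x}}^2 + 2\iprods{x^{k+1} - x^k, x^{k+1} - \hat{x}} - \norms{x^{k+1} - x^k}^2,
\end{equation*}
and substitute the second line of \eqref{eq:EG4NE}, namely $x^{k+1} - x^k = -\eta Fy^k$, into the cross term. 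Splitting $x^{k+1} - \hat{x} = (y^k - \hat{x}) + (x^{k+1} - y^k)$ then isolates the term $-2\eta\iprods{Fy^k, y^k - \hat{x}}$ that already appears on the right-hand side of \eqref{eq:EG4NE_key_est1}, leaving a residual cross term $-2\eta\iprods{Fy^k, x^{k+1} - y^k}$ to be controlled.

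The key idea is to re-express $\eta Fy^k$ through the first line of \eqref{eq:EG4NE}. Since $y^k = x^k - \frac{\eta}{\beta}u^k$ yields $\eta u^k = \beta(x^k - y^k)$, I would write $\eta Fy^k = \eta(Fy^k - u^k) + \beta(x^k - y^k)$ and split the residual cross term into a $\beta$-part and an $(Fy^k - u^k)$-part. For the $\beta$-part, $-2\beta\iprods{x^k - y^k, x^{k+1} - y^k}$, I would apply the polarization identity $-2\iprods{a,b} = \norms{a-b}^2 - \norms{a}^2 - \norms{b}^2$ with $a = x^k - y^k$ and $b = x^{k+1} - y^k$, noting $a - b = x^k - x^{k+1}$; this produces exactly $\beta\norms{x^{k+1}-x^k}^2 - \beta\norms{y^k - x^k}^2 - \beta\norms{x^{k+1} - y^k}^2$, supplying the $-\beta\norms{y^k-x^k}^2$ term and partial contributions to the other two quadratics. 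For the $(Fy^k - u^k)$-part, $-2\eta\iprods{Fy^k - u^k, x^{k+1} - y^k}$, I would invoke Young's inequality in the form $2\eta\norms{a}\norms{b} \leq \tfrac{\eta^2}{\gamma}\norms{a}^2 + \gamma\norms{b}^2$ with $a = Fy^k - u^k$ and $b = x^{k+1} - y^k$, which is precisely where the free parameter $\gamma > 0$ and the term $\tfrac{\eta^2}{\gamma}\norms{Fy^k - u^k}^2$ enter the bound.

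Finally I would collect the coefficients of the three quadratic quantities: the $\norms{x^{k+1}-x^k}^2$ contributions combine to $-(1-\beta)\norms{x^{k+1}-x^k}^2$, the $\norms{x^{k+1}-y^k}^2$ contributions (a $+\gamma$ from Young and a $-\beta$ from polarization) combine to $-(\beta-\gamma)\norms{x^{k+1}-y^k}^2$, and $-\beta\norms{y^k-x^k}^2$ stands alone, giving exactly \eqref{eq:EG4NE_key_est1}. I do not anticipate a genuine obstacle, since this is an unconditional estimate rather than a convergence claim; the only delicate choice is how to route the cross term $-2\eta\iprods{Fy^k, x^{k+1}-y^k}$ — handling the $\beta$-part by the \emph{exact} polarization identity while reserving Young's inequality (the sole inequality in the whole derivation) for the $(Fy^k-u^k)$-part. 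This particular split is what keeps the bound tight enough for the telescoping used later and explains why $\gamma$ is left free, to be optimized in the downstream convergence analysis.
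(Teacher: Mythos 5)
Your proposal is correct and follows essentially the same route as the paper's proof: both start from the three-point expansion of $\norms{x^{k+1}-\hat{x}}^2$ with $x^{k+1}-x^k = -\eta Fy^k$, split the cross term by writing $\eta Fy^k = \eta(Fy^k - u^k) + \beta(x^k - y^k)$ via $\eta u^k = \beta(x^k - y^k)$, handle the $\beta$-part exactly through the polarization identity $2\iprods{x^{k+1}-y^k, x^k-y^k} = \norms{x^k-y^k}^2 + \norms{x^{k+1}-y^k}^2 - \norms{x^{k+1}-x^k}^2$, and reserve Young's inequality with the free parameter $\gamma$ for the $(Fy^k - u^k)$-part. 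Your coefficient bookkeeping matches the paper's line for line, so there is nothing to add.
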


\begin{proof}
First, for any $\hat{x}\in\dom{F}$, using $x^{k+1} - x^k = -\eta Fy^k$ from the second line of \eqref{eq:EG4NE}, we have
\begin{equation*} 
\arraycolsep=0.2em
\begin{array}{lcl}
\norms{x^{k+1} - \hat{x}}^2 &= & \norms{x^k - \hat{x}}^2 + 2\iprods{x^{k+1} - x^k, x^{k+1} - \hat{x}} - \norms{x^{k+1} - x^k}^2 \vspace{1ex}\\
&= & \norms{x^k - \hat{x}}^2 - 2\eta\iprods{Fy^k, x^{k+1} - \hat{x}} -  \norms{x^{k+1} - x^k}^2.
\end{array}
\end{equation*}
Next, using  $\eta u^k = \beta(x^k - y^k)$ from the first line of \eqref{eq:EG4NE}, 
the Cauchy-Schwarz inequality, the identity $2\iprods{x^{k+1} - y^k, x^k - y^k} = \norms{x^k - y^k}^2 + \norms{x^{k+1} - y^k}^2 - \norms{x^{k+1} - x^k}^2$, and an elementary inequality $2\iprods{w, z} \leq \gamma\norms{w}^2 + \frac{\norms{z}^2}{\gamma}$  for any $\gamma > 0$ and any vectors $w$ and $z$, we can  derive that
\begin{equation*} 
\arraycolsep=0.2em
\begin{array}{lcl}
2\eta\iprods{Fy^k, x^{k+1} - \hat{x}} & = & 2\eta\iprods{Fy^k, y^k - \hat{x}} + 2\eta\iprods{Fy^k - u^k, x^{k+1} - y^k} \vspace{1ex}\\
&& + {~} 2\eta\iprods{u^k, x^{k+1} - y^k} \vspace{1ex}\\
&\geq & 2\eta\iprods{Fy^k, y^k - \hat{x}} - \frac{\eta^2}{\gamma}\norms{Fy^k - u^k}^2 - \gamma \norms{x^{k+1} - y^k}^2 \vspace{1ex}\\
&& + {~} \beta\big[ \norms{x^k - y^k}^2 + \norms{x^{k+1} - y^k}^2 - \norms{x^{k+1} - x^k}^2\big] \vspace{1ex}\\
&= & 2\eta\iprods{Fy^k, y^k - \hat{x}} + \beta\norms{y^k - x^k}^2  - \frac{\eta^2}{\gamma}\norms{Fy^k - u^k}^2 \vspace{1ex}\\
&& + {~} (\beta - \gamma)\norms{x^{k+1} - y^k}^2 - \beta\norms{x^{k+1} - x^k}^2.
\end{array}
\end{equation*}
Finally, combining the last two expressions, we obtain \eqref{eq:EG4NE_key_est1}.
\Eproof
\end{proof}

\begin{lemma}\label{le:EG4NE_key_estimate1b}
Suppose that $\zer{F} \neq\emptyset$, $F$ is $L$-Lipschitz continuous,  and there exist $x^{\star} \in \zer{F}$ and $\rho \geq 0$ such that $\iprods{Fx, x - x^{\star}} \geq -\rho\norms{Fx}^2$ for all $x\in\dom{F}$.
Let $\sets{(x^k, y^k)}$ be generated by \eqref{eq:EG4NE} for a given $u^k$ satisfying \eqref{eq:EG4NE_u_cond}.
For any $\gamma > 0$ and $r > 0$, let us introduce the following function:
\begin{equation}\label{eq:EG4NE_Lyapunov_func}
\hspace{-1ex}
\arraycolsep=0.1em
\begin{array}{lcl}
\Pc_k &:= & \norms{x^k - x^{\star} }^2 +  \tfrac{\kappa_1(1+r)L^2\eta^2}{r \gamma}\norms{x^k - y^{k-1}}^2 +  \tfrac{\kappa_2(1+r)L^2\eta^2}{r \gamma}\norms{x^k - x^{k-1}}^2.
\end{array}
\hspace{-1ex}
\end{equation}
Then, for any  $s > 0$ and $\mu \in [0, 1]$, we have
\begin{equation}\label{eq:EG4NE_key_estimate1b}
\hspace{-2ex}
\arraycolsep=0.2em
\begin{array}{lcl}
\Pc_{k+1} & \leq & \Pc_k - \left( \beta - \frac{(1 + r)L^2\eta^2}{ \gamma} - \frac{2\mu\rho(1 + s)}{s \eta}  \right) \norms{y^k - x^k}^2 \vspace{1ex} \\
&& - {~} \left(\beta - \gamma - \frac{\kappa_1(1 + r) L^2\eta^2}{r \gamma} - \frac{2\mu\rho(1 + s)}{\eta}  \right)\norms{x^{k+1} - y^k}^2 \vspace{1ex}\\
&& - {~} \big(1 - \beta - \frac{\kappa_2(1 + r) L^2\eta^2}{r \gamma} - \frac{2(1-\mu)\rho}{\eta} \big)\norms{x^{k+1} - x^k}^2.
\end{array}
\hspace{-2ex}
\end{equation}
\end{lemma}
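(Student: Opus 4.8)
The plan is to start from Lemma~\ref{le:EG4NE_key_estimate1} applied with $\hat{x} = x^{\star}$, and then to control the two ``bad'' terms on its right-hand side, namely $\tfrac{\eta^2}{\gamma}\norms{Fy^k - u^k}^2$ and $-2\eta\iprods{Fy^k, y^k - x^{\star}}$, so that after reorganization everything collapses into the claimed telescoping form $\Pc_{k+1} \leq \Pc_k - (\cdots)$. The guiding idea is that the coefficients $C_1 := \tfrac{\kappa_1(1+r)L^2\eta^2}{r\gamma}$ and $C_2 := \tfrac{\kappa_2(1+r)L^2\eta^2}{r\gamma}$ in the definition of $\Pc_k$ are chosen exactly so that the ``memory'' terms produced below are absorbed by the shift from $\Pc_k$ to $\Pc_{k+1}$.

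For the first term I would apply Young's inequality $\norms{a+b}^2 \leq (1+r)\norms{a}^2 + (1+r^{-1})\norms{b}^2$ with $a = Fy^k - Fx^k$ and $b = Fx^k - u^k$, then use $L$-Lipschitz continuity to write $\norms{Fy^k - Fx^k}^2 \leq L^2\norms{y^k - x^k}^2$, and finally invoke condition \eqref{eq:EG4NE_u_cond} together with Lipschitz continuity once more to get $\norms{Fx^k - u^k}^2 \leq \kappa_1 L^2\norms{x^k - y^{k-1}}^2 + \kappa_2 L^2\norms{x^k - x^{k-1}}^2$. Using $1 + r^{-1} = (1+r)/r$, this produces precisely the coefficient $\tfrac{(1+r)L^2\eta^2}{\gamma}$ in front of $\norms{y^k - x^k}^2$, plus the two memory terms $C_1\norms{x^k - y^{k-1}}^2 + C_2\norms{x^k - x^{k-1}}^2$.

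For the second term I would use the weak-Minty-type hypothesis $\iprods{Fy^k, y^k - x^{\star}} \geq -\rho\norms{Fy^k}^2$, giving $-2\eta\iprods{Fy^k, y^k - x^{\star}} \leq 2\eta\rho\norms{Fy^k}^2$, and then distribute $\norms{Fy^k}^2$ across the three quadratic quantities. Since $x^{k+1} - x^k = -\eta Fy^k$, I have the exact identity $\eta^2\norms{Fy^k}^2 = \norms{x^{k+1} - x^k}^2$, and also, writing $x^{k+1} - x^k = (x^{k+1} - y^k) + (y^k - x^k)$ and applying Young's inequality with parameter $s$, the bound $\eta^2\norms{Fy^k}^2 \leq (1+s)\norms{x^{k+1} - y^k}^2 + \tfrac{1+s}{s}\norms{y^k - x^k}^2$. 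Splitting $\norms{Fy^k}^2 = \mu\norms{Fy^k}^2 + (1-\mu)\norms{Fy^k}^2$ and routing the $\mu$-part through the Young split and the $(1-\mu)$-part through the exact identity yields exactly the three $\rho$-terms with coefficients $\tfrac{2\mu\rho(1+s)}{s\eta}$, $\tfrac{2\mu\rho(1+s)}{\eta}$, and $\tfrac{2(1-\mu)\rho}{\eta}$ that appear in \eqref{eq:EG4NE_key_estimate1b}.

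To finish, I would add $C_1\norms{x^{k+1} - y^k}^2 + C_2\norms{x^{k+1} - x^k}^2$ to both sides: the left-hand side $\norms{x^{k+1} - x^{\star}}^2$ augmented by these becomes $\Pc_{k+1}$, while on the right $\norms{x^k - x^{\star}}^2$ together with the two memory terms assembles into $\Pc_k$; collecting the coefficients of $\norms{y^k - x^k}^2$, $\norms{x^{k+1} - y^k}^2$, and $\norms{x^{k+1} - x^k}^2$ then reproduces the stated inequality term by term. The step I expect to require the most care -- more bookkeeping than genuine difficulty -- is the $\mu$/$s$ allocation of $\rho\norms{Fy^k}^2$: routing the $(1-\mu)$ portion through the exact identity and the $\mu$ portion through the Young split is precisely what makes the three coefficients match, and this flexibility (rather than crudely bounding all of $\norms{Fy^k}^2$ by a single inequality) is exactly the freedom the statement is designed to expose for later optimization of the stepsize conditions.
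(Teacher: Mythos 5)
Your proposal is correct and follows essentially the same route as the paper's proof: it applies Lemma~\ref{le:EG4NE_key_estimate1} with $\hat{x} = x^{\star}$, bounds $\norms{Fy^k - u^k}^2$ via Young's inequality with parameter $r$ together with \eqref{eq:EG4NE_u_cond} and Lipschitz continuity, and handles the $\rho$-term by the identity $\eta^2\norms{Fy^k}^2 = \norms{x^{k+1}-x^k}^2$ with the same $\mu$/$s$ split before absorbing the memory terms into $\Pc_k$. The coefficient bookkeeping you describe reproduces \eqref{eq:EG4NE_key_estimate1b} term by term, exactly as in the paper.
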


\begin{proof}
First, since $\iprods{Fx, x - x^{\star}} \geq -\rho\norms{Fx}^2$ for all $x\in\dom{F}$, using this condition with $x := y^k$, and then utilizing the second line of \eqref{eq:EG4NE} and Young's inequality, for any $s > 0$ and $\mu\in [0, 1]$, we can derive that
\begin{equation}\label{eq:EG4NE_lm32_proof1}
\arraycolsep=0.2em
\begin{array}{lcl}
\iprods{Fy^k, y^k - x^{\star}} & \geq & -\rho\norms{Fy^k}^2 \overset{\tiny \eqref{eq:EG4NE}}{ = } -\frac{\rho}{\eta^2}\norms{x^{k+1} - x^k}^2 \vspace{1ex}\\
& \geq  & -\frac{\mu\rho(1 + s)}{\eta^2}\norms{x^{k+1} - y^k}^2 - \frac{\mu\rho(1+s)}{s\eta^2}\norms{y^k - x^k}^2 \vspace{1ex}\\
&& - {~} \frac{(1-\mu)\rho}{\eta^2}\norms{x^{k+1} - x^k}^2.
\end{array}
\end{equation}
Next,  for any $r > 0$, it follows from Young's inequality, \eqref{eq:EG4NE_u_cond}, and the $L$-Lipschitz continuity of $F$ that
\begin{equation}\label{eq:EG4NE_lm32_proof2}
\hspace{-2ex}
\arraycolsep=0.1em
\begin{array}{lcl}
\norms{Fy^k - u^k}^2 &\leq & (1+r)\norms{Fx^k - Fy^k}^2 +  \frac{(1+r)}{r} \norms{Fx^k - u^k}^2 \vspace{1ex}\\
& \overset{\tiny \eqref{eq:EG4NE_u_cond} }{ \leq } &  (1+r) \norms{Fx^k - Fy^k}^2 + \frac{\kappa_1(1 + r)}{r}\norms{Fx^k - Fy^{k-1}}^2 \vspace{1ex}\\
&& + {~} \frac{\kappa_2(1 + r)}{r} \norms{Fx^k - Fx^{k-1}}^2 \vspace{1ex}\\
& \leq &  (1+r)L^2  \norms{x^k - y^k}^2 + \frac{\kappa_1(1 + r)L^2}{r}\norms{x^k - y^{k-1}}^2 \vspace{1ex}\\
&& + {~} \frac{\kappa_2(1 + r)L^2}{r} \norms{x^k - x^{k-1}}^2.
\end{array}
\hspace{-2ex}
\end{equation}
Now, substituting $x^{\star} \in \zer{F}$ for $\hat{x}$ into \eqref{eq:EG4NE_key_est1}, and using \eqref{eq:EG4NE_lm32_proof1} and \eqref{eq:EG4NE_lm32_proof2}, we have
\begin{equation*} 
\arraycolsep=0.2em
\begin{array}{lcl}
\norms{x^{k+1} - x^{\star}}^2 & \leq &  \norms{x^k - x^{\star} }^2 +  \tfrac{\kappa_1(1 + r)L^2\eta^2}{r \gamma}\norms{x^k - y^{k-1}}^2 +  \frac{\kappa_2(1 + r)L^2\eta^2}{r \gamma}\norms{x^k - x^{k-1}}^2 \vspace{1ex}\\
&& - {~} \big( \beta - \frac{(1 + r)L^2\eta^2}{ \gamma} - \frac{2\mu\rho(1 + s)}{s \eta } \big) \norms{y^k - x^k}^2 \vspace{1ex}\\
&& - {~}   \big(\beta - \gamma - \frac{2\mu\rho(1 + s)}{\eta} \big)\norms{x^{k+1} - y^k}^2 \vspace{1ex}\\
&& - {~}  \big( 1 - \beta - \frac{2(1-\mu)\rho}{\eta} \big)\norms{x^{k+1} - x^k}^2.
\end{array}
\end{equation*}
Finally, by rearranging this inequality, and then using $\Pc_k$ from \eqref{eq:EG4NE_Lyapunov_func}, we get
\begin{equation*} 
\hspace{-2ex}
\arraycolsep=0.2em
\begin{array}{lcl}
\Pc_{k+1} & \leq & \Pc_k -  \left( \beta - \frac{(1 + r)L^2\eta^2}{ \gamma} - \frac{2\mu\rho(1 + s)}{s\eta}  \right) \norms{y^k - x^k}^2 \vspace{1ex}\\ 
&& - {~} \left(\beta - \gamma - \frac{\kappa_1(1 + r) L^2\eta^2}{r \gamma} - \frac{2\mu\rho(1 + s)}{\eta}  \right)\norms{x^{k+1} - y^k}^2 \vspace{1ex}\\
&& - {~}  \big(1 - \beta - \frac{\kappa_2(1 + r) L^2\eta^2}{r \gamma} - \frac{2(1-\mu)\rho}{\eta} \big)\norms{x^{k+1} - x^k}^2.
\end{array}
\hspace{-2ex}
\end{equation*}
This expression is exactly  \eqref{eq:EG4NE_key_estimate1b}.
\Eproof
\end{proof}

\begin{lemma}\label{le:EG4NE_monotonicity}
Let $F$ be $\rho$-co-hypomonotone, i.e. there exists $\rho \geq 0$ such that $\iprods{Fx - Fy, x-y} \geq -\rho\norms{Fx - Fy}^2$ for all $x, y \in\dom{F}$ and $F$ be $L$-Lipschitz continuous.
Let $\sets{(x^k, y^k)}$ be generated by \eqref{eq:EG4NE}.
Then,  for any $s > 0$, $\omega \geq 0$, and $\hat{\omega} \geq 0$, the following estimate holds:
\begin{equation}\label{eq:EG4NE_monotonicity}
\arraycolsep=0.1em
\begin{array}{lcl}
\norms{Fx^{k+1}}^2 & + & \omega\norms{Fx^{k+1} - Fy^k}^2 + \hat{\omega}\norms{Fx^{k+1} - Fx^k}^2 \leq \norms{Fx^k}^2 \vspace{1ex}\\
&& - {~}  \big[1 - \frac{1+s}{s} \big(\hat{\omega} + \frac{2\rho}{\eta}\big)\big] \norms{Fy^k - Fx^k}^2 \vspace{1ex}\\
&& + {~} \big[1+ \omega + (1+s)\big(\hat{\omega} + \frac{2\rho}{\eta}\big) \big]\frac{L^2\eta^2}{\beta^2} \norms{\beta Fy^k - u^k}^2.
\end{array}
\end{equation}
\end{lemma}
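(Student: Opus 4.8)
The plan is to derive a one-step descent inequality for the residual $\norms{Fx^k}^2$ by working entirely in the ``function-value space'' spanned by $Fx^k$, $Fy^k$, and $Fx^{k+1}$, rather than in the iterate space. First I would record the purely algebraic identity obtained by expanding the squared-residual difference about the intermediate value $Fy^k$: writing $Fx^{k+1} - Fx^k = (Fx^{k+1} - Fy^k) + (Fy^k - Fx^k)$ together with $Fx^{k+1} + Fx^k = 2Fy^k + (Fx^{k+1} - Fy^k) - (Fy^k - Fx^k)$, the cross terms cancel and one gets
\[
\norms{Fx^{k+1}}^2 - \norms{Fx^k}^2 = 2\iprods{Fx^{k+1} - Fx^k, Fy^k} + \norms{Fx^{k+1} - Fy^k}^2 - \norms{Fy^k - Fx^k}^2.
\]
This identity is the backbone of the estimate, and the only inequality inputs enter afterwards.

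Next I would bound the inner-product term using co-hypomonotonicity. Applying $\iprods{Fx - Fy, x - y} \geq -\rho\norms{Fx - Fy}^2$ to the pair $(x^{k+1}, x^k)$ and substituting $x^{k+1} - x^k = -\eta Fy^k$ from the second line of \eqref{eq:EG4NE} yields $\iprods{Fx^{k+1} - Fx^k, Fy^k} \leq \tfrac{\rho}{\eta}\norms{Fx^{k+1} - Fx^k}^2$. Adding the two correction terms $\omega\norms{Fx^{k+1} - Fy^k}^2 + \hat{\omega}\norms{Fx^{k+1} - Fx^k}^2$ to both sides then leaves a coefficient $\hat{\omega} + \tfrac{2\rho}{\eta}$ in front of $\norms{Fx^{k+1} - Fx^k}^2$, which I would eliminate through Young's inequality $\norms{Fx^{k+1} - Fx^k}^2 \leq (1+s)\norms{Fx^{k+1} - Fy^k}^2 + \tfrac{1+s}{s}\norms{Fy^k - Fx^k}^2$. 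Collecting terms produces exactly the coefficient $-\big[1 - \tfrac{1+s}{s}(\hat{\omega} + \tfrac{2\rho}{\eta})\big]$ on $\norms{Fy^k - Fx^k}^2$ and $\big[1 + \omega + (1+s)(\hat{\omega} + \tfrac{2\rho}{\eta})\big]$ on $\norms{Fx^{k+1} - Fy^k}^2$.

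The final and most characteristic step is to identify the remaining $\norms{Fx^{k+1} - Fy^k}^2$ term with the right-hand side quantity. From the two lines of \eqref{eq:EG4NE} we have $u^k = \tfrac{\beta}{\eta}(x^k - y^k)$ and $\eta Fy^k = x^k - x^{k+1}$, so $\beta Fy^k - u^k = \tfrac{\beta}{\eta}(y^k - x^{k+1})$, whence $\tfrac{L^2\eta^2}{\beta^2}\norms{\beta Fy^k - u^k}^2 = L^2\norms{y^k - x^{k+1}}^2 \geq \norms{Fy^k - Fx^{k+1}}^2$ by the $L$-Lipschitz continuity of $F$. Replacing $\norms{Fx^{k+1} - Fy^k}^2$ by this upper bound, which is legitimate since its coefficient is nonnegative, gives the claimed inequality \eqref{eq:EG4NE_monotonicity}. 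I expect the only real care to lie in the coefficient bookkeeping of the last two steps; the conceptual crux is recognizing that $\norms{\beta Fy^k - u^k}^2$ is, up to the scheme's constants, nothing but $\norms{x^{k+1} - y^k}^2$, so that Lipschitz continuity closes the estimate without ever returning to the iterate space.
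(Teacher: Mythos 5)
Your proposal is correct and matches the paper's proof essentially step for step: the paper likewise applies co-hypomonotonicity to the pair $(x^{k+1},x^k)$, substitutes $x^{k+1}-x^k=-\eta Fy^k$ to obtain exactly your polarization identity, splits $\norms{Fx^{k+1}-Fx^k}^2$ via Young's inequality with parameter $s$, and closes with $L$-Lipschitz continuity applied to $x^{k+1}-y^k=-\tfrac{\eta}{\beta}(\beta Fy^k-u^k)$. The only cosmetic difference is that you state the algebraic identity up front, whereas the paper derives it in-line after the substitution.
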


\begin{proof}
Since  $F$ is $\rho$-co-hypomonotone, we have $\iprods{Fx^{k+1} - Fx^k, x^{k+1} - x^k} + \rho\norms{Fx^{k+1} - Fx^k}^2 \geq 0$.
Substituting $x^{k+1} - x^k = -\eta Fy^k$ from the second line of \eqref{eq:EG4NE} into this inequality and expanding it, we can show that
\begin{equation*} 
\arraycolsep=0.2em
\begin{array}{lcl}
0 & \leq & 2\iprods{Fx^k, Fy^k} - 2\iprods{Fx^{k+1}, Fy^k} + \frac{2\rho}{\eta}\norms{Fx^{k+1} - Fx^k}^2 \vspace{1ex}\\
& =  & \norms{Fx^k}^2 - \norms{Fy^k - Fx^k}^2 - \norms{Fx^{k+1}}^2  + \norms{Fx^{k+1} - Fy^k}^2 \vspace{1ex}\\
&& + {~} \frac{2\rho}{\eta}\norms{Fx^{k+1} - Fx^k}^2.
\end{array}
\end{equation*}
For any $\omega \geq 0$,  $\hat{\omega} \geq 0$, and $s > 0$, by Young's inequality, this estimate leads to
\begin{equation}\label{eq:EG4NE_lm33_proof2} 
\arraycolsep=0.2em
\begin{array}{lcl}
\Tc_{[k+1]} &:= & \norms{Fx^{k+1}}^2 + \omega \norms{Fx^{k+1} - Fy^k}^2  + \hat{\omega}\norms{Fx^{k+1} - Fx^k}^2 \vspace{1ex}\\
& \leq & \norms{Fx^k}^2 - \norms{Fy^k - Fx^k}^2  + (1+\omega)\norms{Fx^{k+1} - Fy^k}^2 \vspace{1ex}\\
&& + {~} \big(\hat{\omega} + \frac{2\rho}{\eta}\big)\norms{Fx^{k+1} - Fx^k}^2 \vspace{1ex} \\
& \leq & \norms{Fx^k}^2   + \big[1+ \omega + (1+s)\big(\hat{\omega} + \frac{2\rho}{\eta}\big) \big] \norms{Fx^{k+1} - Fy^k}^2 \vspace{1ex}\\
&& - {~} \big[1 - \frac{1+s}{s} \big(\hat{\omega} + \frac{2\rho}{\eta}\big)\big] \norms{Fy^k - Fx^k}^2.
\end{array}
\end{equation}
Now, by the $L$-Lipschitz continuity of $F$ and $x^{k+1} - y^k = -\eta (Fy^k - \frac{1}{\beta}u^k)$ from \eqref{eq:EG4NE}, we obtain $\norms{Fx^{k+1} - Fy^k}^2 \leq  L^2\norms{x^{k+1} - y^k}^2 = \frac{L^2\eta^2}{\beta^2}\norms{\beta Fy^k -  u^k}^2$.
Substituting this relation into \eqref{eq:EG4NE_lm33_proof2}, we arrive at \eqref{eq:EG4NE_monotonicity}.
\Eproof
\end{proof}

\beforesubsec
\subsection{\bf The Sublinear Best-Iterate Convergences Rate  of \eqref{eq:EG4NE}}\label{subsec:convergence_of_EG4NE}
\aftersubsec
For given constants $\kappa_1$ and $\kappa_2$ in \eqref{eq:EG4NE_u_cond}, we denote $r := \frac{\kappa_1 + \sqrt{\kappa_1^2 + 4\kappa_1}}{2} \geq 0$. 
First, for any  $\beta \in (0, 1]$,  we  define the following quantity:
\begin{equation}\label{eq:EG4NE_kappa_i_quantities}
\arraycolsep=0.2em
\begin{array}{lcl}
\Delta  & := & \frac{1}{16(1+r)} \cdot \min\Big\{ \big(1 + \frac{2\kappa_2}{r}\big)\beta^2, \ \frac{(1-\beta)^2r(r+2\kappa_2)}{\kappa_2^2 } \Big\}. 
\end{array}
\end{equation}
For  $r$ given above, let $\alpha := \frac{1+r}{r}$ and $\mu := \frac{r}{r + 2\kappa_2}$.
We also define
\begin{equation}\label{eq:EG4NE_eta_bounds}
\hspace{-2ex}
\arraycolsep=0.1em
\left\{\begin{array}{lcllcl}
\underline{\eta}_1 &:= & \frac{\beta - \sqrt{\beta^2 - 16(1+r)\mu L\rho}}{2(1+r)L} , \quad 
& \bar{\eta}_1 &:= & \frac{\beta + \sqrt{\beta^2 - 16(1+r)\mu L\rho}}{2(1+r)L}, \vspace{1ex}\\
\underline{\eta}_2 &:= & \frac{1-\beta -  \sqrt{(1-\beta)^2 - 8\alpha (1-\mu)\kappa_2L\rho}}{2 \alpha \kappa_2 L}, \quad 
& \bar{\eta}_2 &:= &  \frac{1-\beta +  \sqrt{(1-\beta)^2 - 8\alpha (1-\mu)\kappa_2L\rho}}{2\alpha \kappa_2 L}, \vspace{1ex}\\
\underline{\eta} &:= & \max\sets{\underline{\eta}_1, \underline{\eta}_2 },   \quad & \bar{\eta} & := & \min\sets{\bar{\eta}_1, \bar{\eta}_2}.
\end{array}\right.
\hspace{-2ex}
\end{equation}
Note that we can only choose $\beta = 1$ if $\kappa_2 = 0$ in \eqref{eq:EG4NE_u_cond}. 
In this case, we have $\mu = 1$, $\underline{\eta} = \underline{\eta}_1$, and $\bar{\eta} = \bar{\eta}_1$.
Lemma~\ref{le:EG4NE_para_conditions} in the appendix below shows that if $L\rho \leq \Delta$, then $0 \leq \underline{\eta} \leq \bar{\eta}$, i.e. $[\underline{\eta}, \bar{\eta}] \neq \emptyset$.

Now, given $\eta \in [\underline{\eta}, \bar{\eta}] $, we consider the following quantities:
\begin{equation}\label{eq:EG4NE_C_constants}
\begin{array}{lcl}
C_1 & := &  \beta - (1+r)L\eta - \frac{4\mu\rho}{\eta} \quad \text{and} \quad C_2  :=  1 - \beta - \alpha \kappa_2 L\eta - \frac{2(1-\mu)\rho}{\eta}.
\end{array}
\end{equation}
Then, we are ready to state the best-iterate convergence rate of \eqref{eq:EG4NE} for any $u^k$ satisfying the condition \eqref{eq:EG4NE_u_cond}.

\begin{theorem}[Best-iterate convergence rate]\label{th:EG4NE_convergence1}
For Equation \eqref{eq:NE}, suppose that $\zer{F}\neq\emptyset$, $F$  is $L$-Lipschitz continuous, and there exist $x^{\star} \in \zer{F}$ and $\rho \geq 0$ such that $\iprods{Fx, x - x^{\star}} \geq -\rho\norms{Fx}^2$ for all $x\in\dom{F}$.

Let $\Delta$ be defined by  \eqref{eq:EG4NE_kappa_i_quantities}, and $\underline{\eta}$ and $\bar{\eta}$ be defined by \eqref{eq:EG4NE_eta_bounds}.
Let $\sets{(x^k, y^k)}$ be generated by \eqref{eq:EG4NE} starting from $x^0\in\dom{F}$ and $y^{-1} = x^{-1} := x^0$ such that $u^k$ satisfies \eqref{eq:EG4NE_u_cond}.
If $L\rho \leq \Delta$ and $\eta$ is chosen such that $\eta \in [\underline{\eta}, \bar{\eta}]$, then $C_1$ and $C_2$ in \eqref{eq:EG4NE_C_constants}  are nonnegative.

Moreover, for any $K \geq 0$, we have
\begin{equation}\label{eq:EG4NE_convergence_est1}
\sum_{k=0}^K \big[  C_1 \norms{y^k - x^k}^2 + C_1 \norms{x^{k+1} - y^k}^2  + C_2 \norms{x^{k+1} - x^k}^2 \big] \leq \norms{x^0 - x^{\star}}^2.
\end{equation} 
If $C_1 > 0$, then for $\Lambda := \frac{C_1  + 2 C_2}{2}$ and $\Gamma :=   \frac{2L^2}{C_1} + \frac{2}{\Lambda\eta^2}$, we also have
\begin{equation}\label{eq:EG4NE_convergence_est1_details}
\left\{\begin{array}{ll}
{\displaystyle\min_{0\leq k \leq K}} \norms{u^k}^2 \leq \frac{1}{K+1}\sum_{k=0}^K\norms{u^k}^2 \leq \frac{\beta^2 \norms{x^0 - x^{\star}}^2 }{ C_1\eta^2 (K+1)}, \vspace{1ex}\\
{\displaystyle\min_{0\leq k \leq K}} \norms{Fy^k}^2 \leq \frac{1}{K+1}\sum_{k=0}^K\norms{Fy^k}^2 \leq \frac{\norms{x^0 - x^{\star}}^2 }{\Lambda  \eta^2 (K+1)}, \vspace{1ex}\\
{\displaystyle\min_{0\leq k \leq K}} \norms{Fx^k}^2 \leq \frac{1}{K+1}\sum_{k=0}^K\norms{Fx^k}^2 \leq \frac{\Gamma  \norms{x^0 - x^{\star}}^2 }{ K+1 }.
\end{array}\right.
\end{equation} 
Consequently, the following results hold:
\begin{equation}\label{eq:EG4NE_limits_best_iterates}
\arraycolsep=0.2em
\begin{array}{lll}
{\displaystyle\min_{0\leq k \leq K}}\norms{Fx^k} = \BigOs{\frac{1}{ \sqrt{K} } }, \quad {\displaystyle\min_{0\leq k \leq K}} \norms{Fy^k } = \BigOs{ \frac{1}{ \sqrt{K}} }, \vspace{1ex}\\
{\displaystyle\lim_{k\to\infty}} \norms{x^k - y^k} = {\displaystyle \lim_{k\to\infty}} \norms{Fx^k} = {\displaystyle \lim_{k\to\infty}} \norms{Fy^k} =  0.
\end{array}
\end{equation}
In addition, $\sets{\norms{x^k - x^{\star}}}$ is bounded and $\sets{x^k}$ converges to $x^{\star} \in \zer{F}$.
\end{theorem}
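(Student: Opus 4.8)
The engine of the whole proof is Lemma~\ref{le:EG4NE_key_estimate1b}, which still carries the free parameters $\gamma > 0$, $s > 0$, and $\mu \in [0,1]$. The first step is to choose these parameters so that the three descent coefficients collapse into exactly $C_1$, $C_1$, and $C_2$ from \eqref{eq:EG4NE_C_constants}. The plan is to take $\gamma := L\eta$, $s := 1$, and $\mu := \frac{r}{r+2\kappa_2}$ together with $\alpha = \frac{1+r}{r}$ as prescribed in \eqref{eq:EG4NE_eta_bounds}. The key algebraic fact is that $r = \frac{\kappa_1 + \sqrt{\kappa_1^2+4\kappa_1}}{2}$ is the positive root of $r^2 = \kappa_1(1+r)$, so that $1 + \frac{\kappa_1(1+r)}{r} = 1+r$. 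Substituting $\gamma = L\eta$ turns the coefficient of $\norms{y^k-x^k}^2$ into $\beta - (1+r)L\eta - \frac{2\mu\rho(1+s)}{s\eta}$, the coefficient of $\norms{x^{k+1}-y^k}^2$ into $\beta - (1+r)L\eta - \frac{2\mu\rho(1+s)}{\eta}$ (using the identity above), and the coefficient of $\norms{x^{k+1}-x^k}^2$ into $1 - \beta - \alpha\kappa_2 L\eta - \frac{2(1-\mu)\rho}{\eta}$. Setting $s=1$ then makes the first two equal to $C_1$ and leaves the third equal to $C_2$, so Lemma~\ref{le:EG4NE_key_estimate1b} reads $\Pc_{k+1} \leq \Pc_k - C_1\norms{y^k-x^k}^2 - C_1\norms{x^{k+1}-y^k}^2 - C_2\norms{x^{k+1}-x^k}^2$.

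Next I would establish nonnegativity of $C_1,C_2$ and derive \eqref{eq:EG4NE_convergence_est1}. The inequality $C_1 \geq 0$ is equivalent to $(1+r)L\eta^2 - \beta\eta + 4\mu\rho \leq 0$, whose roots are precisely $\underline{\eta}_1,\bar{\eta}_1$; likewise $C_2 \geq 0$ is equivalent to $\alpha\kappa_2 L\eta^2 - (1-\beta)\eta + 2(1-\mu)\rho \leq 0$, with roots $\underline{\eta}_2,\bar{\eta}_2$. The hypothesis $L\rho \leq \Delta$ (with $\Delta$ from \eqref{eq:EG4NE_kappa_i_quantities}) guarantees both discriminants are nonnegative and, via Lemma~\ref{le:EG4NE_para_conditions}, that $[\underline{\eta},\bar{\eta}] = [\underline{\eta}_1,\bar{\eta}_1]\cap[\underline{\eta}_2,\bar{\eta}_2] \neq \emptyset$; hence any $\eta \in [\underline{\eta},\bar{\eta}]$ makes both quadratics nonpositive, i.e. $C_1,C_2 \geq 0$. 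Telescoping the collapsed descent from $k=0$ to $K$, dropping $\Pc_{K+1}\geq 0$, and using the initialization $y^{-1}=x^{-1}=x^0$ so that $\Pc_0 = \norms{x^0-x^\star}^2$, yields \eqref{eq:EG4NE_convergence_est1}.

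For the rate estimates \eqref{eq:EG4NE_convergence_est1_details} I would convert the displacement terms using the iteration \eqref{eq:EG4NE}: $y^k - x^k = -\frac{\eta}{\beta}u^k$ and $x^{k+1}-x^k = -\eta Fy^k$. The $u^k$-bound then drops out of the $C_1\norms{y^k-x^k}^2$ term directly. For $Fy^k$, I use $\norms{x^{k+1}-x^k}^2 \leq 2\norms{x^{k+1}-y^k}^2 + 2\norms{y^k-x^k}^2$ to bound the left side of \eqref{eq:EG4NE_convergence_est1} from below by $\Lambda\sum_k\norms{x^{k+1}-x^k}^2 = \Lambda\eta^2\sum_k\norms{Fy^k}^2$ with $\Lambda = \frac{C_1+2C_2}{2}$. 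For $Fx^k$, the Lipschitz bound $\norms{Fx^k}^2 \leq 2L^2\norms{x^k-y^k}^2 + 2\norms{Fy^k}^2$ combined with the two preceding summable estimates gives the factor $\Gamma = \frac{2L^2}{C_1} + \frac{2}{\Lambda\eta^2}$. The $\BigOs{1/\sqrt{K}}$ best-iterate rates in \eqref{eq:EG4NE_limits_best_iterates} follow by taking square roots, and the limits follow because each series is summable, forcing $\norms{y^k-x^k}\to 0$, $\norms{Fy^k}\to 0$, and $\norms{Fx^k} \leq L\norms{x^k-y^k} + \norms{Fy^k}\to 0$.

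Finally, boundedness is immediate from $\norms{x^k-x^\star}^2 \leq \Pc_k \leq \Pc_0 = \norms{x^0-x^\star}^2$. For convergence of the whole sequence, monotonicity of $\Pc_k$ gives $\lim_k \Pc_k$, and since the two extra terms in \eqref{eq:EG4NE_Lyapunov_func} vanish (they are controlled by $\norms{x^k-x^{k-1}}\to 0$ and $\norms{x^{k-1}-y^{k-1}}\to 0$), we get that $\lim_k\norms{x^k-x^\star}$ exists; extracting a cluster point $x^{\infty}$ of the bounded sequence and using $\norms{Fx^k}\to 0$ with continuity of $F$ shows $x^{\infty}\in\zer{F}$. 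The main obstacle is upgrading this to convergence of the entire sequence rather than a subsequence: the clean route is to re-run the Lyapunov estimate anchored at $x^{\infty}$ instead of $x^\star$, where the weak-Minty / co-hypomonotone inequality supplies the needed control of $\iprods{Fy^k, y^k - x^{\infty}}$, so that $\lim_k\norms{x^k - x^{\infty}}$ also exists; since a subsequence tends to $0$, this limit must be $0$, whence $x^k \to x^{\infty}$. This last step is delicate precisely because the star-condition is a priori anchored at a single solution, and the argument must justify invoking it at the limit point.
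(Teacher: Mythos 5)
Your proposal is correct and follows essentially the same route as the paper's proof: the same parameter choices $\gamma := L\eta$, $s := 1$, $\mu := \frac{r}{r+2\kappa_2}$ together with the identity $r^2 = \kappa_1(1+r)$ to collapse the coefficients of Lemma~\ref{le:EG4NE_key_estimate1b} into $C_1$, $C_1$, $C_2$, the appeal to Lemma~\ref{le:EG4NE_para_conditions} for nonnegativity on $[\underline{\eta},\bar{\eta}]$, the telescoping of $\Pc_k$ with $\Pc_0 = \norms{x^0-x^{\star}}^2$, and the identical derivations of the $\Lambda$- and $\Gamma$-bounds (your direct splitting $\norms{x^{k+1}-x^k}^2 \leq 2\norms{x^{k+1}-y^k}^2 + 2\norms{y^k-x^k}^2$ is exactly what the paper's parametrized Young inequality with $a_1 := 1-\frac{C_2}{\Lambda}$ and $a_2 := 1$ reduces to). On the final convergence of $\sets{x^k}$ you are, if anything, more explicit than the paper, which simply cites standard arguments in finite dimensions; the subtlety you flag---that the star-condition is anchored at a single $x^{\star}$ and cannot automatically be re-anchored at a cluster point---is genuine, but the paper's own proof does not resolve it in any more detail either.
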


\begin{proof}
First, we choose $\gamma := L\eta$ in Lemma~\ref{le:EG4NE_key_estimate1b}.
Next, substituting  $\alpha := \frac{1 + r}{r}$ and $s := 1$ into the following quantities with $r := \frac{\kappa_1 + \sqrt{\kappa_1^2 + 4\kappa_1}}{2}$, we get
\begin{equation*}
\arraycolsep=0.2em
\left\{\begin{array}{lcllcl}
C_1 & := & \beta - \frac{(1 + r)L^2\eta^2}{ \gamma} - \frac{2\mu\rho(1 + s)}{s \eta} &= & \beta - (1+r)L\eta - \frac{4\mu\rho}{\eta}, \vspace{1ex}\\
\hat{C}_1 & := & \beta - \gamma - \frac{\kappa_1(1 + r) L^2\eta^2}{r \gamma} - \frac{2\mu\rho(1 + s)}{\eta} &= & \beta - (1+r) L\eta - \frac{4\mu\rho}{\eta}, \vspace{1ex}\\
C_2 & := & 1 - \beta - \frac{\kappa_2(1 + r) L^2\eta^2}{r \gamma} - \frac{2(1-\mu)\rho}{\eta} & = & 1 - \beta - \alpha \kappa_2L\eta  - \frac{2(1-\mu)\rho}{\eta}.
\end{array}\right.
\end{equation*}
This is exactly the constants defined in \eqref{eq:EG4NE_C_constants}.
By Lemma~\ref{le:EG4NE_para_conditions}, for $\beta \in (0, 1]$, we have $\underline{\eta} \leq \bar{\eta}$.
In addition, for any $\eta \in [\underline{\eta}, \bar{\eta}]$, $C_1 = \hat{C}_1 \geq 0$ and $C_2 \geq 0$.

Now, for  given $\eta$ and $C_1$ and $C_2$ defined by \eqref{eq:EG4NE_C_constants},  \eqref{eq:EG4NE_key_estimate1b} is rewritten as follows:
\begin{equation}\label{eq:EG4NE_th31_proof2}
\arraycolsep=0.2em
\begin{array}{lcl}
C_1 \norms{y^k - x^k}^2 +  C_1 \norms{x^{k+1} - y^k}^2  + C_2 \norms{x^{k+1} - x^k}^2  \leq \Pc_k - \Pc_{k+1}.
\end{array}
\end{equation}
Summing up \eqref{eq:EG4NE_th31_proof2} from $k=0$ to $K$, and noting that $\Pc_k \geq 0$, we obtain 
\begin{equation*} 
\arraycolsep=0.2em
\begin{array}{lcl}
\sum_{k=0}^K \big[  C_1 \norms{x^{k+1} - y^k}^2 + C_1 \norms{y^k - x^k}^2 + C_2 \norms{x^{k+1} - x^k}^2  \big] \leq \Pc_0.
\end{array}
\end{equation*}
Since we choose $y^{-1} = x^{-1} = x^0$, it follows from \eqref{eq:EG4NE_Lyapunov_func} that $\Pc_0 = \norms{x^0 - x^{\star} }^2$.
Substituting this fact into the last inequality, we obtain \eqref{eq:EG4NE_convergence_est1}.

Next, from the first line of \eqref{eq:EG4NE}, we have $\eta^2\norms{u^k}^2 = \beta^2\norms{y^k - x^k}^2$.
Combining this relation and \eqref{eq:EG4NE_convergence_est1} we get the first line of \eqref{eq:EG4NE_convergence_est1_details}.

Then, from the second line of \eqref{eq:EG4NE}, by Young's inequality, we have 
\begin{equation*} 
\arraycolsep=0.2em
\begin{array}{lcl}
\eta^2\norms{Fy^k}^2 & = & \norms{x^{k+1} - x^k}^2 \vspace{1ex}\\
& \leq &  a_1(1+a_2^{-1})\norms{y^k - x^k}^2 + a_1(1+a_2)\norms{x^{k+1} - y^k}^2 \vspace{1ex}\\
&& + {~} (1-a_1)\norms{x^{k+1} - x^k}^2,
\end{array}
\end{equation*}
for any $a_1 \in [0, 1]$ and $a_2 > 0$.
If $C_1  > 0$, then by setting $\Lambda := \frac{ C_1  + 2C_2}{2}$, $a_1 := 1 - \frac{C_2}{\Lambda}$, and $a_2 := 1$, the last inequality is equivalent to
\begin{equation*}
\begin{array}{lcl}
\Lambda\eta^2\norms{Fy^k}^2 \leq   C_1 \norms{y^k - x^k}^2 + C_1 \norms{x^{k+1} - y^k}^2 + C_2 \norms{x^{k+1} - x^k}^2.
\end{array}
\end{equation*}
Combining this inequality and \eqref{eq:EG4NE_convergence_est1} we get the second line of \eqref{eq:EG4NE_convergence_est1_details}.

Again, by Young's inequality and the $L$-Lipschitz continuity of $F$,  we have 
\begin{equation*}
\begin{array}{lcl}
\eta^2\norms{Fx^k}^2 \leq 2\eta^2\norms{Fy^k}^2 + 2\eta^2\norms{Fx^k - Fy^k}^2 \leq 2 \eta^2\norms{Fy^k}^2 + 2L^2\eta^2\norms{x^k - y^k}^2.
\end{array}
\end{equation*}
Combining this inequality, $\sum_{k=0}^K2L^2\eta^2\norms{y^k - x^k}^2 \leq \frac{2L^2\eta^2}{C_1}\norms{x^0 - x^{\star}}^2$, and $\sum_{k=0}^K2\eta^2\norms{Fy^k}^2 \leq \frac{2}{\Lambda}\norms{x^0 - x^{\star}}^2$, we obtain the third line of \eqref{eq:EG4NE_convergence_est1_details}, where $\Gamma := \frac{2L^2}{C_1} + \frac{2}{\Lambda\eta^2}$.

The bounds and limits in \eqref{eq:EG4NE_limits_best_iterates} are direct consequences of \eqref{eq:EG4NE_convergence_est1} and \eqref{eq:EG4NE_convergence_est1_details}.
From \eqref{eq:EG4NE_th31_proof2}, we can see that $\sets{\Pc_k}$ is non-increasing, and $\norms{x^k - x^{\star}}^2 \leq \Pc_k \leq \Pc_0 = \norms{x^0 - x^{\star}}^2$, showing that $\sets{\norms{x^k - x^{\star}}}$ is bounded.
Finally, the convergence of $\sets{x^k}$ to $x^{\star} \in \zer{F}$ can be proven using standard arguments as in, e.g., \cite{malitsky2015projected} by noticing that we are working on a finite-dimensional space $\R^p$.
\Eproof
\end{proof}

\begin{remark}\label{re:EG4NE_rm1}
If $\rho = 0$ in Theorem~\ref{th:EG4NE_convergence1}, i.e. $F$ is star-monotone (and in particular, monotone) and $\kappa_2 = 0$, then the choice $\eta \in [\underline{\eta}, \bar{\eta}]$ reduces to $0 < \eta < \frac{\beta}{(1+r)L}$.
For EG with $u^k := Fx^k$, we have $\kappa_1 = 0$, and hence $0 < \eta < \frac{\beta}{L}$.
For past-EG with $u^k := Fy^{k-1}$, we have $\kappa_1 = 1$, and hence $0 < \eta < \frac{2\beta}{(3+\sqrt{5})L}$. 
These choices are standard in the literature \cite{Facchinei2003,popov1980modification}. 
\end{remark}

\beforesubsec
\subsection{\bf The Sublinear Last-Iterate Convergence Rate of \eqref{eq:EG4NE}}\label{subsec:EG4NE_last_iterate}
\aftersubsec
For simplicity of our presentation, let us first define the following quantities:
\begin{equation}\label{eq:EG4NE_last_iterate_constant}
\arraycolsep=0.2em
\begin{array}{llcl}
& m := [(1+\sqrt{2}) (\kappa_1 + \sqrt{2})]^{1/2}, \quad & &\hat{\Delta} := \frac{1}{16m}, \vspace{1ex}\\
& \underline{\hat{\eta}} := \frac{1 - \sqrt{1 - 16mL\rho}}{2mL},  \quad & \text{and}  \quad & \bar{\hat{\eta}} := \frac{1 + \sqrt{1 - 16mL\rho} }{ 2mL }.
\end{array}
\end{equation}
Then, for $\underline{\eta}$ and $\bar{\eta}$ defined by \eqref{eq:EG4NE_eta_bounds}, by Lemma~\ref{le:EG4NE_nonemptyness2} in the appendix, under given conditions on the parameters specified below,  we have $[ \underline{\eta}, \bar{\eta}] \cap [\underline{\hat{\eta}}, \bar{\hat{\eta}}] \neq \emptyset$.

Now, we can state the last-iterate convergence rate of \eqref{eq:EG4NE} as follows.

\begin{theorem}[Last-iterate convergence rate]\label{th:EG4NE_convergence2}
For Equation \eqref{eq:NE}, suppose that $\zer{F}\neq\emptyset$, and $F$ is $L$-Lipschitz continuous and $\rho$-co-hypomonotone.

Let $\Delta$, $\underline{\eta}$ and $\bar{\eta}$ be defined by \eqref{eq:EG4NE_kappa_i_quantities} and \eqref{eq:EG4NE_eta_bounds}, respectively, and $\hat{\Delta}$,  $\underline{\hat{\eta}}$, and $\bar{\hat{\eta}}$ be defined by \eqref{eq:EG4NE_last_iterate_constant}.
Let $\sets{(x^k, y^k)}$ be generated by \eqref{eq:EG4NE} starting from $x^0 \in\dom{F}$ with $y^{-1} = x^{-1} := x^0$ and $\beta := 1$ such that $u^k$ satisfies \eqref{eq:EG4NE_u_cond} with $\kappa_2 := 0$.
Assume that $L\rho \leq \hat{\Delta}$ and $\eta$ is chosen such that $\eta \in [ \underline{\eta}, \bar{\eta}] \cap [\underline{\hat{\eta}}, \bar{\hat{\eta}}] \neq \emptyset$.
Then, for $\omega := \frac{2(1+\sqrt{2})\kappa_1L^2\eta^2}{1 - (1+\sqrt{2})\kappa_1L^2\eta^2} \geq 0$, we have
\begin{equation}\label{eq:EG4NE_last_iterate_convergence1_general}
\arraycolsep=0.1em
\begin{array}{lcl}
& \norms{Fx^{k+1}}^2   +   \omega \norms{Fx^{k+1} - Fy^k}^2  \leq  \norms{Fx^k}^2 + \omega \norms{Fx^k - Fy^{k-1}}^2, \vspace{1ex} \\
& \norms{Fx^K}^2 \leq \norms{Fx^K}^2 + \omega\norms{Fx^K - Fy^{K-1}}^2   \leq \left(\frac{\omega L^2}{C_1} + \Gamma \right)\frac{ \norms{x^0 - x^{\star}}^2}{K+1}.
\end{array}
\end{equation}
Thus we conclude that $\norms{Fx^K} = \BigOs{1/\sqrt{K}}$ on the last-iterate $x^K$.
\end{theorem}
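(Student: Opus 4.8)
The plan is to build a Lyapunov (potential) function on the gradient norm,
\begin{equation*}
\mcal{E}_k := \norms{Fx^k}^2 + \omega\norms{Fx^k - Fy^{k-1}}^2,
\end{equation*}
establish that $\sets{\mcal{E}_k}$ is non-increasing (this is exactly the first line of \eqref{eq:EG4NE_last_iterate_convergence1_general}), and then convert the already-available best-iterate \emph{summability} bounds of Theorem~\ref{th:EG4NE_convergence1} into a last-iterate rate. The mechanism is standard: a non-increasing sequence is dominated by its running average, so $\mcal{E}_K \leq \tfrac{1}{K+1}\sum_{k=0}^K \mcal{E}_k$, and the right-hand side decays like $\BigOs{1/K}$ once the sum is shown to be finite uniformly in $K$. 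Since $\norms{Fx^K}^2 \leq \mcal{E}_K$, this delivers the claim.

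The core of the argument is the descent of $\mcal{E}_k$. I would start from Lemma~\ref{le:EG4NE_monotonicity} specialized to $\beta = 1$, keeping the free parameters $s > 0$ and $\hat{\omega} \geq 0$ at my disposal. Since $\beta = 1$ and $\kappa_2 = 0$, condition \eqref{eq:EG4NE_u_cond} reduces to $\norms{Fx^k - u^k}^2 \leq \kappa_1\norms{Fx^k - Fy^{k-1}}^2$, so by Young's inequality with a parameter $t > 0$,
\begin{equation*}
\norms{Fy^k - u^k}^2 \leq (1+t)\norms{Fy^k - Fx^k}^2 + \tfrac{(1+t)\kappa_1}{t}\norms{Fx^k - Fy^{k-1}}^2.
\end{equation*}
Substituting this into \eqref{eq:EG4NE_monotonicity} produces an estimate of the form $\norms{Fx^{k+1}}^2 + \omega\norms{Fx^{k+1} - Fy^k}^2 + \hat{\omega}\norms{Fx^{k+1} - Fx^k}^2 \leq \norms{Fx^k}^2 + A\norms{Fy^k - Fx^k}^2 + B\norms{Fx^k - Fy^{k-1}}^2$, where $A$ and $B$ are explicit functions of $(s,t,\hat{\omega},\rho,L,\eta,\kappa_1,\omega)$. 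The goal is to choose $s$, $t$, and $\hat{\omega}$ so that simultaneously $A \leq 0$ (the interior term $\norms{Fy^k - Fx^k}^2$ is absorbed with the correct sign) and $B \leq \omega$ (so the cross term matches the $\omega\norms{Fx^k - Fy^{k-1}}^2$ carried inside $\mcal{E}_k$). Dropping the nonnegative $\hat{\omega}\norms{Fx^{k+1} - Fx^k}^2$ on the left then yields precisely $\mcal{E}_{k+1} \leq \mcal{E}_k$. Forcing these two inequalities to hold with equality at the boundary is what pins down the displayed value $\omega = \tfrac{2(1+\sqrt{2})\kappa_1 L^2\eta^2}{1 - (1+\sqrt{2})\kappa_1 L^2\eta^2}$; the factors $1+\sqrt{2}$ and $\kappa_1+\sqrt{2}$ appearing in $m$ and $\hat{\Delta}$ in \eqref{eq:EG4NE_last_iterate_constant} encode the optimized choice of the balancing constants $s$ and $t$.

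For the rate itself, I would use the monotonicity to write $\mcal{E}_K \leq \tfrac{1}{K+1}\sum_{k=0}^K\mcal{E}_k$ and split the sum as $\sum_{k=0}^K\norms{Fx^k}^2 + \omega\sum_{k=0}^K\norms{Fx^k - Fy^{k-1}}^2$. The first sum is bounded by $\Gamma\norms{x^0 - x^{\star}}^2$ directly from the third line of \eqref{eq:EG4NE_convergence_est1_details}. For the second, I use $L$-Lipschitz continuity, $\norms{Fx^k - Fy^{k-1}}^2 \leq L^2\norms{x^k - y^{k-1}}^2$, together with the reindexed bound $\sum_{k=0}^K\norms{x^k - y^{k-1}}^2 \leq \tfrac{1}{C_1}\norms{x^0 - x^{\star}}^2$, which follows from the $\norms{x^{k+1} - y^k}^2$ term in \eqref{eq:EG4NE_convergence_est1} (the $k=0$ contribution vanishes since $y^{-1} = x^0$). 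Combining gives $\sum_{k=0}^K\mcal{E}_k \leq \big(\Gamma + \tfrac{\omega L^2}{C_1}\big)\norms{x^0 - x^{\star}}^2$, which is exactly the second line of \eqref{eq:EG4NE_last_iterate_convergence1_general}.

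The main obstacle is the \emph{simultaneous feasibility} of $A \leq 0$ and $B \leq \omega$: each is an inequality in $\eta$ (for fixed $s, t, \hat{\omega}$) that restricts the stepsize, and the resulting window must overlap the best-iterate window $[\underline{\eta}, \bar{\eta}]$ needed to invoke Theorem~\ref{th:EG4NE_convergence1} for the summability bounds. This is precisely why the hypotheses require $L\rho \leq \hat{\Delta}$ and $\eta \in [\underline{\eta}, \bar{\eta}] \cap [\underline{\hat{\eta}}, \bar{\hat{\eta}}]$, with nonemptiness of the intersection supplied by the auxiliary Lemma~\ref{le:EG4NE_nonemptyness2}. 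Verifying that the optimized constants make both $A \leq 0$ and $B = \omega$ achievable on this nonempty interval is the delicate bookkeeping that the rest of the proof carries out.
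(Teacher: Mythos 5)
Your proposal is correct and follows essentially the same route as the paper's proof: you specialize Lemma~\ref{le:EG4NE_monotonicity} at $\beta = 1$, split $\norms{Fy^k - u^k}^2$ via Young's inequality using \eqref{eq:EG4NE_u_cond} with $\kappa_2 = 0$, impose the same two absorption conditions (your $A \leq 0$ and $B \leq \omega$ are the paper's $A_1 - (1+b)A_2 \geq 0$ and $\tfrac{\kappa_1(1+b)A_2}{b} \leq \omega$) to pin down $\omega$ and the constant $m$ in \eqref{eq:EG4NE_last_iterate_constant}, and then convert the summability bounds of Theorem~\ref{th:EG4NE_convergence1} into the last-iterate rate by dominating the monotone potential by its running average, exactly as the paper does (including the reindexed $\sum_k \norms{x^k - y^{k-1}}^2 \leq \norms{x^0 - x^{\star}}^2/C_1$ step). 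The only cosmetic difference is that you keep $\hat{\omega} \geq 0$ free and drop its term at the end, whereas the paper simply sets $\hat{\omega} = 0$ at the outset.
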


\begin{proof}
First, since $\beta = 1$ and $\kappa_2 = 0$, for any $b > 0$, it follows from Young's inequality and \eqref{eq:EG4NE_u_cond} that 
\begin{equation}\label{eq:EG4NE_th32_proof1}
\arraycolsep=0.2em
\begin{array}{lcl}
\norms{Fy^k - u^k}^2 & \leq & (1+ b)\norms{Fx^k - Fy^k}^2 + \frac{(1 + b)}{b}\norms{Fx^k - u^k}^2 \vspace{1ex}\\
& \leq & (1+b)\norms{Fx^k - Fy^k}^2 + \frac{(1+b)\kappa_1}{b}\norms{Fx^k - Fy^{k-1}}^2.
\end{array}
\end{equation}
We can choose $\hat{\omega} = 0$ and substitute $\beta = 1$ into \eqref{eq:EG4NE_monotonicity} to get
\begin{equation*} 
\arraycolsep=0.1em
\begin{array}{lcl}
\norms{Fx^{k+1}}^2 & + & \omega\norms{Fx^{k+1} - Fy^k}^2  \leq \norms{Fx^k}^2 -  \big[1 - \frac{2(1+s)\rho}{s\eta} \big] \norms{Fy^k - Fx^k}^2 \vspace{1ex}\\
&& + {~} L^2\eta^2 \big[1+ \omega + \frac{2(1+s)\rho}{\eta} \big] \norms{Fy^k - u^k}^2.
\end{array}
\end{equation*}
Let us denote $A_1 := 1 -  \frac{2(1+s)\rho}{s\eta}$ and $A_2 :=  \big[1+ \omega + \frac{2(1+s)\rho}{\eta} \big] L^2\eta^2$.
Then, utilizing \eqref{eq:EG4NE_th32_proof1}, $A_1$, and $A_2$ into the last inequality, we can show that
\begin{equation}\label{eq:EG4NE_last_iterate_proof2} 
\arraycolsep=0.1em
\begin{array}{lcl}
\norms{Fx^{k+1}}^2 & + &  \omega \norms{Fx^{k+1} - Fy^k}^2  \leq  \norms{Fx^k}^2 + \frac{\kappa_1(1+b)A_2}{b} \norms{Fx^k - Fy^{k-1}}^2 \vspace{1ex}\\
&& - {~}  [ A_1 - (1+b)A_2 ] \norms{Fy^k - Fx^k}^2. 
\end{array}
\end{equation}
Suppose that there exists $m > 0$ such that $1 - \frac{2(1+s)\rho}{s\eta} - m L\eta \geq 0$.
This implies that $\frac{2(1+s)\rho}{\eta} \leq s$ and $L\eta \leq \frac{1}{m}$.
Hence, we can upper bound $A_2$ as $A_2 \leq \bar{A}_2 := (1 + s + \omega)L^2\eta^2$.
If we impose $\omega =  \frac{\kappa_1(1+b)\bar{A}_2}{b}$, then using $A_2 \leq \bar{A}_2$, we have $ \frac{\kappa_1(1+b)A_2}{b} \leq \omega$.
Clearly, $\omega := \frac{\kappa_1(1+b)(1+s)L^2\eta^2}{b - \kappa_1(1+b)L^2\eta^2}$ satisfies $\omega =  \frac{\kappa_1(1+b)\bar{A}_2}{b}$, provided that $\kappa_1(1+b)L^2\eta^2 < b$.

For the choice of $\omega$ above, we assume that $(1+b)\bar{A}_2 \leq m L\eta$.  
Using $L\eta \leq \frac{1}{m}$, we can easily show that if we choose $m := \sqrt{\frac{(1+b)[\kappa_1 + b(1+s)] }{b}}$, then $(1+b)\bar{A}_2 \leq m L\eta$.
As a consequence, we have
\begin{equation*} 
\arraycolsep=0.1em
\begin{array}{lcl}
A_1 - (1+b)A_2 &\geq & 1 - \frac{2(1+s)\rho}{s\eta} - (1+b)\bar{A}_2 \geq  1 - \frac{2(1+s)\rho}{s\eta} - m L\eta \geq 0.
\end{array}
\end{equation*}
Utilizing $A_1 - (1+b)A_2 \geq 0$ and $\frac{\kappa_1(1+b)A_2}{b} \leq \omega$ proven above, \eqref{eq:EG4NE_last_iterate_proof2} reduces to
\begin{equation*} 
\arraycolsep=0.1em
\begin{array}{lcl}
\norms{Fx^{k+1}}^2 & + &  \omega \norms{Fx^{k+1} - Fy^k}^2  \leq  \norms{Fx^k}^2 + \omega \norms{Fx^k - Fy^{k-1}}^2,
\end{array}
\end{equation*}
which proves the first line of \eqref{eq:EG4NE_last_iterate_convergence1_general}.

On the one hand,  if $\underline{\hat{\eta}}  := \frac{1 - \sqrt{1 - 8(1+s)mL\rho/s}}{2mL} \leq \eta \leq \bar{\hat{\eta}} := \frac{1 + \sqrt{1 - 8(1+s)mL\rho/s }}{2mL}$, then the condition $1 - \frac{2(1+s)\rho}{s\eta} - m L\eta \geq 0$ holds,  provided that $L\rho \leq \frac{s}{8(1+s)m}$.
On the other hand, for $\beta = 1$ and $\kappa_2 = 0$, $\Delta$ defined by \eqref{eq:EG4NE_kappa_i_quantities} becomes $\Delta = \frac{1}{16(1+r)}$.
Moreover, $\underline{\eta}$ and $\bar{\eta}$ defined by \eqref{eq:EG4NE_eta_bounds} reduce to $\underline{\eta} = \frac{1 - \sqrt{1 - 16(1+r)L\rho}}{2(1+r)L}$ and $\bar{\eta} = \frac{1 - \sqrt{1 - 16(1+r)L\rho}}{2(1+r)L}$, respectively.

If we choose $s := 1$ and $b := \frac{1}{\sqrt{2}}$, then $m = [(1+\sqrt{2})(\kappa_1 + \sqrt{2})]^{1/2}$ and $L\rho \leq \frac{s}{8(1+s)m} = \frac{1}{16m} =:\hat{\Delta}$.
In addition, we can verify that $\omega :=  \frac{2(1+\sqrt{2})\kappa_1L^2\eta^2}{1 - (1+\sqrt{2})\kappa_1L^2\eta^2}$,  $\underline{\hat{\eta}} := \frac{1 - \sqrt{1 - 16mL\rho}}{2mL}$, and $\bar{\hat{\eta}} := \frac{1 + \sqrt{1 - 16 mL\rho }}{2mL}$.
Applying Lemma~\ref{le:EG4NE_nonemptyness2} in the appendix below, we can easily show that $[\underline{\eta}, \bar{\eta}] \cap [\underline{\hat{\eta}}, \bar{\hat{\eta}}] \neq \emptyset$.

Finally, from \eqref{eq:EG4NE_convergence_est1}, the third line of \eqref{eq:EG4NE_convergence_est1_details}, and the $L$-Lipschitz continuity of $F$, it is straightforward to get
\begin{equation*}
\arraycolsep=0.1em
\begin{array}{llcl}
&\frac{\omega}{K+1}\sum_{k=0}^K \norms{Fx^{k} - Fy^{k-1}}^2 \leq \frac{\omega L^2}{K+1}\sum_{k=0}^K \norms{x^{k} - y^{k-1}}^2 & \leq &  \frac{\omega L^2\norms{x^0 - x^{\star}}^2 }{C_1(K+1)}, \vspace{1ex}\\
& \frac{1}{K+1}\sum_{k=0}^K\norms{Fx^{k}}^2 & \leq & \frac{ \Gamma  \norms{x^0 - x^{\star}}^2 }{ K+1 }.
\end{array}
\end{equation*}
Adding both inequalities and using the first line of \eqref{eq:EG4NE_last_iterate_convergence1_general}, we can show that 
\begin{equation*}
\arraycolsep=0.1em
\begin{array}{lcl}
\norms{Fx^{K}}^2 & \leq &  \norms{Fx^{K}}^2 + \omega \norms{Fx^{K} - Fy^{K-1}}^2 \vspace{1ex}\\
& \leq & \frac{1}{K+1}\sum_{k=0}^K[ \norms{Fx^{k}}^2 + \omega \norms{Fx^{k} - Fy^{k-1}}^2 ] \vspace{1ex}\\
& \leq & \big( \frac{\omega L^2}{C_1} + \Gamma \big) \frac{\norms{x^0 - x^{\star}}^2 }{K+1},
\end{array}
\end{equation*}
which proves the second line of \eqref{eq:EG4NE_last_iterate_convergence1_general}.
\Eproof
\end{proof}

\beforesubsec
\subsection{\bf The Sublinear Convergences Rates  of Two Instances}\label{subsec:special_case_of_EG}
\aftersubsec
Now, we specify \eqref{eq:EG4NE} for two instances: \textbf{Variant 1} -- the EG method, and \textbf{Variant 2} -- the past-EG method.
Then, we apply Theorem~\ref{th:EG4NE_convergence1} and Theorem~\ref{th:EG4NE_convergence2} to derive the following corollaries. 

\begin{corollary}[The EG method]\label{co:EG4NE_convergence}
For Equation~\eqref{eq:NE}, suppose that $\zer{F} \neq \emptyset$, $F$ is $L$-Lipschitz continuous, and there exist $x^{\star}\in\zer{F}$ and $\rho \geq 0$ such that $\iprods{Fx, x - x^{\star}} \geq -\rho\norms{Fx}^2$ for all $x\in\dom{F}$ $($the last condition holds if, in particular, $F$ is $\rho$-co-hypomonotone$)$.

Let $\sets{(x^k, y^k)}$ be generated by \eqref{eq:EG4NE} using $u^k := Fx^k$ $($\textbf{Variant 1}$)$.
Suppose that $L\rho \leq \frac{\beta^2}{16}$ and $\eta$ is chosen as
\begin{equation}\label{eq:EG4NE_stepsize1}
\arraycolsep=0.2em
\begin{array}{l}
0 \leq \frac{\beta - \sqrt{\beta^2 - 16L\rho}}{2L} < \eta < \frac{\beta + \sqrt{\beta^2 - 16L\rho}}{2L} \leq \frac{\beta}{L}.
\end{array}
\end{equation}
Then, with $C_1 := \beta - L\eta - \frac{4\rho}{\eta} > 0$, we have
\begin{equation}\label{eq:EG4NE_convergence_est1a}
{\displaystyle\min_{0\leq k \leq K}} \norms{Fx^k}^2 \leq \frac{1}{K+1}\sum_{k=0}^K\norms{Fx^k}^2 \leq \frac{\beta^2 \norms{x^0 - x^{\star}}^2}{ C_1\eta^2 ( K+1 ) }. 
\end{equation} 
Thus we conclude that $\min_{0\leq k \leq K}\norms{Fx^k} = \BigOs{1/\sqrt{K}}$ showing a $\BigOs{1/\sqrt{K}}$ best-iterate convergence rate of $\sets{x^k}$.

In addition, $\sets{\norms{x^k - x^{\star}}}$ is nonincreasing and $\sets{x^k}$ converges to $x^{\star} \in \zer{F}$.
Moreover, we have
\begin{equation}\label{eq:EG4NE_limits}
\arraycolsep=0.2em
\begin{array}{l}
\lim_{k\to\infty}\norms{x^k - y^k} = \lim_{k\to\infty}\norms{Fx^k} = \lim_{k\to\infty}\norms{Fy^k} =  0.
\end{array}
\end{equation}
In particular, if  $F$ is $\rho$-co-hypomonotone such that $L\rho \leq \frac{1}{16\sqrt{2}} \approx 0.044194$, then for $\beta := 1$ and $\eta$ is chosen such that
\begin{equation}\label{eq:EG4NE_EG_last_iterate_eta}
\begin{array}{ll}
\frac{1 - \sqrt{1 - 16\sqrt{2}L\rho}}{2\sqrt{2}L} \leq  \eta \leq \frac{1 + \sqrt{1 - 16\sqrt{2}L\rho}}{2\sqrt{2}L},
\end{array}
\end{equation}
we have
\begin{equation}\label{eq:EG4NE_convergence_est1b}
\norms{Fx^{k+1}}^2   \leq   \norms{Fx^k}^2 - \psi \norms{Fx^k - Fy^k}^2  \quad \text{and} \quad \norms{Fx^K}^2  \leq  \frac{\norms{x^0 - x^{\star}}^2 }{C_1\eta^2 (K+1 ) },
\end{equation}
where $\psi := 1 - \frac{4\rho}{\eta} - \big(1 + \frac{4\rho}{\eta}\big)L^2\eta^2 \geq 0$.
The last bound shows that $\norms{Fx^K} = \BigOs{1/\sqrt{K}}$ on the last-iterate $x^K$.
\end{corollary}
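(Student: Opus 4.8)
The plan is to specialize the two general results, Theorem~\ref{th:EG4NE_convergence1} and Theorem~\ref{th:EG4NE_convergence2}, to the extragradient instance $u^k := Fx^k$. First observe that this choice forces $\kappa_1 = \kappa_2 = 0$ in \eqref{eq:EG4NE_u_cond}, hence $r = 0$, and the two auxiliary terms in the Lyapunov function \eqref{eq:EG4NE_Lyapunov_func} drop out, leaving $\Pc_k = \norms{x^k - x^{\star}}^2$. The one delicate point is that several quantities in the general parameter recipe, e.g. $\alpha = \frac{1+r}{r}$ and $\mu = \frac{r}{r+2\kappa_2}$, become $\frac{0}{0}$ in this degenerate regime, so rather than substituting blindly I would recompute the effective constants directly from Lemma~\ref{le:EG4NE_key_estimate1b}. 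Taking $\gamma := L\eta$, $s := 1$, and $\mu := 1$ there, the three coefficients collapse to $C_1 = \beta - L\eta - \frac{4\rho}{\eta}$ and $C_2 = 1 - \beta$. Since $\beta \in (0,1]$ gives $C_2 \geq 0$ for free, the only nontrivial requirement is $C_1 \geq 0$, which is equivalent to $L\eta^2 - \beta\eta + 4\rho \leq 0$; its discriminant is nonnegative exactly when $L\rho \leq \frac{\beta^2}{16}$, and its roots produce the step-size window \eqref{eq:EG4NE_stepsize1} (the upper root being $\leq \beta/L$). With these constants in hand, the machinery of Theorem~\ref{th:EG4NE_convergence1} applies verbatim.

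The best-iterate bound \eqref{eq:EG4NE_convergence_est1a} then comes almost for free: since $u^k = Fx^k$, the first line of \eqref{eq:EG4NE_convergence_est1_details} is already a bound on $\frac{1}{K+1}\sum_{k=0}^K \norms{Fx^k}^2$, which yields the $\BigOs{1/\sqrt{K}}$ rate on $\min_{0\leq k\leq K}\norms{Fx^k}$. The monotonicity of $\sets{\norms{x^k - x^{\star}}}$ (stronger than mere boundedness) follows because $\Pc_k = \norms{x^k - x^{\star}}^2$ is nonincreasing along the per-iteration descent established in the proof of Theorem~\ref{th:EG4NE_convergence1}, and the limits \eqref{eq:EG4NE_limits} are read off from \eqref{eq:EG4NE_limits_best_iterates}, while convergence of $\sets{x^k}$ is inherited directly from the theorem.

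For the last-iterate claim I would set $\beta := 1$ and invoke Lemma~\ref{le:EG4NE_monotonicity} with $\omega = \hat{\omega} = 0$ and $s = 1$. Because $u^k = Fx^k$ and $\beta = 1$ we have $\norms{\beta Fy^k - u^k} = \norms{Fy^k - Fx^k}$, so \eqref{eq:EG4NE_monotonicity} reduces exactly to the descent $\norms{Fx^{k+1}}^2 \leq \norms{Fx^k}^2 - \psi\norms{Fx^k - Fy^k}^2$ with $\psi = 1 - \frac{4\rho}{\eta} - \big(1 + \frac{4\rho}{\eta}\big)L^2\eta^2$, which is the first inequality in \eqref{eq:EG4NE_convergence_est1b}. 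The rate then follows painlessly: once $\psi \geq 0$, the sequence $\sets{\norms{Fx^k}^2}$ is nonincreasing, so $\norms{Fx^K}^2 \leq \frac{1}{K+1}\sum_{k=0}^K\norms{Fx^k}^2$, and substituting the best-iterate bound \eqref{eq:EG4NE_convergence_est1a} (with $\beta = 1$) gives the stated $\BigOs{1/\sqrt{K}}$ last-iterate estimate.

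The crux, and the step I expect to be the main obstacle, is verifying $\psi \geq 0$ under the prescribed window \eqref{eq:EG4NE_EG_last_iterate_eta} and the threshold $L\rho \leq \frac{1}{16\sqrt{2}}$; here the general constant $m$ from \eqref{eq:EG4NE_last_iterate_constant} would give a worse bound and should be replaced by a sharper direct estimate tailored to the $\kappa_1 = 0$ case. My plan is to split $\psi$ into a quadratic and a linear piece: first bound $\big(1 + \frac{4\rho}{\eta}\big)L^2\eta^2 \leq \sqrt{2}\,L\eta$, which after dividing by $L\eta$ amounts to $L\eta + 4L\rho \leq \sqrt{2}$, and this holds comfortably since $\eta \leq \frac{1 + \sqrt{1 - 16\sqrt{2}L\rho}}{2\sqrt{2}L}$ forces $L\eta \leq \frac{1}{\sqrt{2}}$ while $L\rho \leq \frac{1}{16\sqrt{2}}$ gives $4L\rho \leq \frac{1}{4\sqrt{2}}$, whose sum $\frac{5}{4\sqrt{2}}$ stays below $\sqrt{2}$. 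Consequently $\psi \geq 1 - \frac{4\rho}{\eta} - \sqrt{2}\,L\eta$, and this right-hand side is nonnegative precisely when $\sqrt{2}L\eta^2 - \eta + 4\rho \leq 0$, i.e. when $\eta$ lies between the two roots $\frac{1 \pm \sqrt{1 - 16\sqrt{2}L\rho}}{2\sqrt{2}L}$, which is exactly \eqref{eq:EG4NE_EG_last_iterate_eta}, the reality of the roots requiring $L\rho \leq \frac{1}{16\sqrt{2}}$. Finally, nonemptiness of the intersection $[\underline{\eta},\bar{\eta}] \cap [\underline{\hat{\eta}},\bar{\hat{\eta}}]$ needed to run both parts simultaneously is supplied by Lemma~\ref{le:EG4NE_nonemptyness2}, exactly as in the proof of Theorem~\ref{th:EG4NE_convergence2}.
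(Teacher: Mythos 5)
Your proposal is correct, and its skeleton matches the paper's proof: the paper likewise derives \eqref{eq:EG4NE_convergence_est1a} by specializing Theorem~\ref{th:EG4NE_convergence1} (noting $\kappa_1 = \kappa_2 = 0$, so $L\rho \leq \Delta$ becomes $L\rho \leq \frac{\beta^2}{16}$ and $[\underline{\eta},\bar{\eta}]$ becomes \eqref{eq:EG4NE_stepsize1}), reads off the first line of \eqref{eq:EG4NE_convergence_est1_details} since $u^k = Fx^k$, and obtains the last-iterate bounds by setting $\beta = 1$, $\kappa_1 = \kappa_2 = 0$ in Theorem~\ref{th:EG4NE_convergence2}, remarking only that the $\psi$-term comes from \eqref{eq:EG4NE_last_iterate_proof2} via $\psi \leq A_1 - (1+b)A_2$. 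Where you genuinely diverge — and improve on the paper — is the last-iterate constant: you correctly observed that literal substitution of $\kappa_1 = 0$ into \eqref{eq:EG4NE_last_iterate_constant} gives $m = [(1+\sqrt{2})\sqrt{2}]^{1/2} = \sqrt{2+\sqrt{2}} \approx 1.848$, hence $\hat{\Delta} = \frac{1}{16\sqrt{2+\sqrt{2}}} \approx 0.0338$ and a narrower stepsize window, which does \emph{not} reproduce the corollary's stated threshold $L\rho \leq \frac{1}{16\sqrt{2}} \approx 0.0442$ or the window \eqref{eq:EG4NE_EG_last_iterate_eta}; the paper's proof simply asserts this reduction without addressing the mismatch. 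Your direct re-derivation from Lemma~\ref{le:EG4NE_monotonicity} with $\omega = \hat{\omega} = 0$, $s = 1$ (exploiting that $\norms{\beta Fy^k - u^k} = \norms{Fy^k - Fx^k}$ exactly, so the $b$-split of \eqref{eq:EG4NE_th32_proof1} is unnecessary and the effective constant drops to $m = \sqrt{2}$), together with your elementary verification $L\eta + 4L\rho \leq \frac{1}{\sqrt{2}} + \frac{1}{4\sqrt{2}} = \frac{5}{4\sqrt{2}} < \sqrt{2}$ and the root computation for $\sqrt{2}L\eta^2 - \eta + 4\rho \leq 0$, supplies precisely the justification the paper leaves implicit; it buys the sharper threshold and window actually stated in the corollary. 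Your handling of the degenerate $\frac{0}{0}$ quantities $\alpha$ and $\mu$ by recomputing from Lemma~\ref{le:EG4NE_key_estimate1b} with $\mu := 1$ also agrees with the paper's convention ($\mu = 1$ when $\kappa_2 = 0$), and the remaining pieces — monotonicity of $\sets{\norms{x^k - x^{\star}}}$ from $\Pc_k = \norms{x^k - x^{\star}}^2$, the limits from \eqref{eq:EG4NE_limits_best_iterates}, and $\norms{Fx^K}^2 \leq \frac{1}{K+1}\sum_{k=0}^K \norms{Fx^k}^2$ once $\psi \geq 0$ — are exactly as in the paper.
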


\begin{proof}
Since we choose $u^k := Fx^k$ in \eqref{eq:EG4NE}, \eqref{eq:EG4NE_u_cond} holds with $\kappa_1 = \kappa_2 = 0$.
The condition $L\rho \leq \Delta$ of Theorem~\ref{th:EG4NE_convergence1} for $\Delta$ defined by \eqref{eq:EG4NE_kappa_i_quantities} reduces to $L\rho \leq \frac{\beta^2}{16}$ as we stated.
The condition $\eta \in [\underline{\eta}, \bar{\eta}]$ in Theorem~\ref{th:EG4NE_convergence1} becomes \eqref{eq:EG4NE_stepsize1}.
Consequently, \eqref{eq:EG4NE_stepsize1} holds strictly and we can derive \eqref{eq:EG4NE_convergence_est1a} directly from the first line of \eqref{eq:EG4NE_convergence_est1_details}.
The next two statements of Corollary~\ref{co:EG4NE_convergence} are also consequences of Theorem~\ref{th:EG4NE_convergence1}.

To obtain the last-iterate convergence rates, we set $\beta = 1$ and $\kappa_1 = \kappa_2 = 0$ in Theorem~\ref{th:EG4NE_convergence2}.
Hence, the condition  $L\rho \leq \hat{\Delta}$ in Theorem~\ref{th:EG4NE_convergence2} reduces to $L\rho \leq \frac{1}{16\sqrt{2}}$ as in Corollary~\ref{co:EG4NE_convergence}.
The choice of $\eta \in [ \underline{\eta}, \bar{\eta}] \cap [\underline{\hat{\eta}}, \bar{\hat{\eta}}]$ in Theorem~\ref{th:EG4NE_convergence2} becomes \eqref{eq:EG4NE_EG_last_iterate_eta}.
As a consequence, \eqref{eq:EG4NE_convergence_est1b} is obtained from \eqref{eq:EG4NE_last_iterate_convergence1_general} of Theorem~\ref{th:EG4NE_convergence2}.
Here, the term $\psi \norms{Fx^k - Fy^k}^2$ comes from \eqref{eq:EG4NE_last_iterate_proof2} since $\psi \leq A_1 - (1+b)A_2$.
\Eproof
\end{proof}

\begin{corollary}[The past-EG method]\label{co:EG4NE_convergence_option2}
For Equation \eqref{eq:NE}, suppose that $\zer{F}\neq\emptyset$, $F$ is $L$-Lipschitz continuous,  and there exist $x^{\star}\in\zer{F}$ and $\rho \geq 0$ such that $\iprods{Fx, x - x^{\star}} \geq -\rho\norms{Fx}^2$ for all $x\in\dom{F}$ $($the last condition holds if $F$ is $\rho$-co-hypomonotone$)$.

Let $\sets{(x^k, y^k)}$ be generated by \eqref{eq:EG4NE} using $u^k := Fy^{k-1}$ $($\textbf{Variant 2}$)$.
For any $\beta \in (0, 1]$, assume that $L\rho \leq  \frac{\beta^2}{8(3+\sqrt{5})}$ and $\eta$ is chosen as
\begin{equation}\label{eq:EG4NE_stepsize2}
\arraycolsep=0.2em
\begin{array}{l}
0 \leq \frac{\beta - \sqrt{\beta^2 - 8(3+\sqrt{5})L\rho}}{(3+\sqrt{5})L} < \eta < \frac{\beta + \sqrt{\beta^2 - 8(3+\sqrt{5})L\rho}}{(3+\sqrt{5})L} \leq \frac{2\beta}{(3+\sqrt{5})L}.
\end{array}
\end{equation}
Then, we have
\begin{equation}\label{eq:EG4NE_convergence_est2a}
\arraycolsep=0.2em
\begin{array}{lclcl}
{\displaystyle\min_{0\leq k \leq K}}  \norms{Fy^k }^2 &  \leq &  \frac{1}{K + 1}\sum_{k=0}^K \norms{Fy^k}^2 & \leq & \frac{ \norms{x^0 - x^{\star}}^2}{\Lambda \eta^2 (K+1) }, \vspace{1ex}\\
{\displaystyle\min_{0\leq k \leq K}}  \norms{Fx^k }^2 &  \leq & \frac{1}{K + 1}\sum_{k=0}^K \norms{Fx^k}^2  &  \leq & \frac{\Gamma \norms{x^0 - x^{\star}}^2}{K+1}, 
\end{array}
\end{equation} 
where $\Lambda := \frac{C_1 + 2C_2}{2}$ and $\Gamma := \frac{2}{\Lambda\eta^2} + \frac{2L^2}{C_1} > 0$ with $C_1 := 1 - \frac{(3+\sqrt{5})L\eta}{2} - \frac{4\rho}{\eta} > 0$ and $C_2 = 1-\beta \geq 0$.
Consequently, we have $\min_{0\leq k \leq K}\norms{Fx^k} = \BigOs{1/\sqrt{K}}$ showing the $\BigOs{1/\sqrt{K}}$ best-iterate convergence rate of $\sets{x^k}$.
Moreover, $\sets{x^k}$ converges to $x^{\star} \in \zer{F}$, and \eqref{eq:EG4NE_limits} still holds.

In particular, if $F$ is $\rho$-co-hypomonotone such that $L\rho \leq \frac{1}{16(1+\sqrt{2})} \approx 0.025888$, then for $\beta := 1$ and $\eta$ is chosen as in \eqref{eq:EG4NE_stepsize2}, we have
\begin{equation}\label{eq:EG4NE_convergence_est2b}
\arraycolsep=0.2em
\begin{array}{lcl}
\norms{Fx^{k+1}}^2 + \omega \norms{Fx^{k+1} - Fy^k}^2 & \leq &  \norms{Fx^k}^2 + \omega \norms{Fx^k - Fy^{k-1}}^2, \vspace{1ex}\\
\norms{Fx^K}^2  & \leq & \frac{\hat{C} \norms{x^0 - x^{\star}}^2 }{K+1},
\end{array}
\end{equation}
where $\omega := \frac{2(1+\sqrt{2}) L^2\eta^2}{1 - (1+\sqrt{2})L^2\eta^2} > 0$  and $\hat{C} := \Gamma + \frac{\omega L^2}{C_1}$.
Hence, we conclude that $\norms{Fx^K} = \BigOs{1/\sqrt{K}}$ on the last-iterate $x^K$.
\end{corollary}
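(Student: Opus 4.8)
The plan is to obtain both claims as direct specializations of the two master results for \eqref{eq:EG4NE}, namely Theorem~\ref{th:EG4NE_convergence1} for the best-iterate rate and Theorem~\ref{th:EG4NE_convergence2} for the last-iterate rate, exactly in the spirit of the proof of Corollary~\ref{co:EG4NE_convergence} for the EG method. The first step is to pin down the admissible constants in \eqref{eq:EG4NE_u_cond} for the choice $u^k := Fy^{k-1}$. Since $\norms{Fx^k - u^k}^2 = \norms{Fx^k - Fy^{k-1}}^2$, condition \eqref{eq:EG4NE_u_cond} holds with equality for $\kappa_1 = 1$ and $\kappa_2 = 0$. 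Consequently $r = \frac{\kappa_1 + \sqrt{\kappa_1^2 + 4\kappa_1}}{2} = \frac{1+\sqrt{5}}{2}$, so that $1 + r = \frac{3+\sqrt{5}}{2}$, and because $\kappa_2 = 0$ we automatically get $\mu = \frac{r}{r + 2\kappa_2} = 1$. These three quantities drive every subsequent simplification; note also that $\kappa_2 = 0$ removes the $\norms{x^k - x^{k-1}}^2$ term from the Lyapunov function \eqref{eq:EG4NE_Lyapunov_func}.

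For the best-iterate claim I would feed $\kappa_1 = 1$, $\kappa_2 = 0$, $\mu = 1$ into the definitions \eqref{eq:EG4NE_kappa_i_quantities}, \eqref{eq:EG4NE_eta_bounds}, and \eqref{eq:EG4NE_C_constants}. Because $\kappa_2 = 0$ the second branch of the minimum in $\Delta$ drops out, leaving $\Delta = \frac{\beta^2}{16(1+r)} = \frac{\beta^2}{8(3+\sqrt{5})}$, which is precisely the stated admissibility threshold $L\rho \leq \frac{\beta^2}{8(3+\sqrt{5})}$. With $\mu = 1$ only the $\underline{\eta}_1, \bar{\eta}_1$ branch survives, and substituting $2(1+r)L = (3+\sqrt{5})L$ together with $16(1+r) = 8(3+\sqrt{5})$ collapses $[\underline{\eta}, \bar{\eta}]$ exactly to the interval \eqref{eq:EG4NE_stepsize2}, whose upper endpoint $\frac{2\beta}{(3+\sqrt{5})L}$ is the $\rho = 0$ limit. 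The same substitutions turn $C_1 = \beta - (1+r)L\eta - \frac{4\mu\rho}{\eta}$ into $\beta - \frac{(3+\sqrt{5})L\eta}{2} - \frac{4\rho}{\eta}$ and $C_2 = 1 - \beta - \alpha\kappa_2 L\eta - \frac{2(1-\mu)\rho}{\eta}$ into $1 - \beta$, matching the corollary. The rate estimates in \eqref{eq:EG4NE_convergence_est2a}, together with the limits \eqref{eq:EG4NE_limits} and the convergence of $\sets{x^k}$ to $x^{\star}\in\zer{F}$, are then read off verbatim from the second and third lines of \eqref{eq:EG4NE_convergence_est1_details} and the concluding assertions of Theorem~\ref{th:EG4NE_convergence1}.

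For the last-iterate claim I would set $\beta := 1$, keep $\kappa_1 = 1$, $\kappa_2 = 0$, and invoke Theorem~\ref{th:EG4NE_convergence2}. Substituting $\kappa_1 = 1$ into $m = [(1+\sqrt{2})(\kappa_1 + \sqrt{2})]^{1/2}$ from \eqref{eq:EG4NE_last_iterate_constant} yields $m = 1 + \sqrt{2}$, whence $\hat{\Delta} = \frac{1}{16m} = \frac{1}{16(1+\sqrt{2})}$, reproducing the threshold $L\rho \leq \frac{1}{16(1+\sqrt{2})}$. The stabilizing weight likewise simplifies to $\omega = \frac{2(1+\sqrt{2})\kappa_1 L^2\eta^2}{1 - (1+\sqrt{2})\kappa_1 L^2\eta^2} = \frac{2(1+\sqrt{2})L^2\eta^2}{1 - (1+\sqrt{2})L^2\eta^2}$, exactly as stated. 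The monotone decrease of $\norms{Fx^k}^2 + \omega\norms{Fx^k - Fy^{k-1}}^2$ and the $\BigOs{1/\sqrt{K}}$ bound in \eqref{eq:EG4NE_convergence_est2b} are then the two lines of \eqref{eq:EG4NE_last_iterate_convergence1_general}, with the constant identified as $\hat{C} := \Gamma + \frac{\omega L^2}{C_1}$.

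I expect no genuine obstacle in the analysis itself: the entire argument is algebraic bookkeeping of the substitution $(\kappa_1, \kappa_2) = (1, 0)$ into two already-proven theorems. The only points requiring a little care are $\mathrm{(i)}$ confirming that the stepsize interval $[\underline{\eta}, \bar{\eta}]$ is nonempty under $L\rho \leq \Delta$, which is precisely what Lemma~\ref{le:EG4NE_para_conditions} supplies, and $\mathrm{(ii)}$ for the last-iterate case, verifying that $[\underline{\eta}, \bar{\eta}] \cap [\underline{\hat{\eta}}, \bar{\hat{\eta}}]\neq\emptyset$ via Lemma~\ref{le:EG4NE_nonemptyness2}. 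Both are available from the appendix, so the corollary follows directly from the specialization.
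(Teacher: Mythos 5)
Your proposal is correct and follows essentially the same route as the paper's own proof: both obtain the best-iterate rate by substituting $\kappa_1 = 1$, $\kappa_2 = 0$ (hence $r = \frac{1+\sqrt{5}}{2}$, $\mu = 1$) into Theorem~\ref{th:EG4NE_convergence1}, which collapses $\Delta$, $[\underline{\eta},\bar{\eta}]$, $C_1$, and $C_2$ to the stated quantities, and both obtain the last-iterate rate by setting $\beta = 1$ in Theorem~\ref{th:EG4NE_convergence2}, where $m = 1+\sqrt{2}$, $\hat{\Delta} = \frac{1}{16(1+\sqrt{2})}$, and $\omega$ simplify exactly as you computed, with Lemmas~\ref{le:EG4NE_para_conditions} and \ref{le:EG4NE_nonemptyness2} supplying the nonemptiness of the stepsize intervals. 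The only cosmetic difference is that your $C_1 = \beta - \frac{(3+\sqrt{5})L\eta}{2} - \frac{4\rho}{\eta}$ keeps the $\beta$ from the theorem, whereas the corollary's statement writes $1$ in its place; your version is the faithful specialization.
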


\begin{proof}
Since we choose $u^k := Fy^{k-1}$ in \eqref{eq:EG4NE}, \eqref{eq:EG4NE_u_cond} holds with $\kappa_1 = 1$ and $\kappa_2 = 0$.
In this case, $r := \frac{\kappa_1 + \sqrt{\kappa_1^2 + 4\kappa_1}}{2} = \frac{1+\sqrt{5}}{2}$ in Theorem~\ref{th:EG4NE_convergence1}.
The condition $L\rho \leq \Delta$ of Theorem~\ref{th:EG4NE_convergence1} for $\Delta$ defined by \eqref{eq:EG4NE_kappa_i_quantities} reduces to $L\rho \leq \frac{\beta^2}{8(3+\sqrt{5})}$ as we stated.
The condition $\eta \in [\underline{\eta}, \bar{\eta}]$ in Theorem~\ref{th:EG4NE_convergence1} becomes \eqref{eq:EG4NE_stepsize2}.
Consequently, we can derive \eqref{eq:EG4NE_convergence_est2a} directly from the second and third lines of \eqref{eq:EG4NE_convergence_est1_details}, respectively.
The next two statements of Corollary~\ref{co:EG4NE_convergence_option2} are also consequences of Theorem~\ref{th:EG4NE_convergence1}.

To obtain the last-iterate convergence rates, we set $\beta = 1$, $\kappa_1 = 1$, and $\kappa_2 = 0$ in Theorem~\ref{th:EG4NE_convergence2}.
Hence, the condition  $L\rho \leq \hat{\Delta}$ in Theorem~\ref{th:EG4NE_convergence2} reduces to $L\rho \leq \frac{1}{16(1+\sqrt{2})}$ as in Corollary~\ref{co:EG4NE_convergence_option2}.
The choice of $\eta \in [ \underline{\eta}, \bar{\eta}] \cap [\underline{\hat{\eta}}, \bar{\hat{\eta}}]$ in Theorem~\ref{th:EG4NE_convergence2} becomes \eqref{eq:EG4NE_stepsize2}.
As a consequence, \eqref{eq:EG4NE_convergence_est2b} is obtained from \eqref{eq:EG4NE_last_iterate_convergence1_general}.
\Eproof
\end{proof}

\begin{remark}\label{re:EG4NE_last_iterate_remark1}
If $\rho = 0$, i.e., there exists $x^{\star}\in\zer{F}$ such that $\iprods{Fx, x-x^{\star}} \geq 0$ for all $x \in\dom{F}$, then the condition $\eta \in [ \underline{\eta}, \bar{\eta}] \cap [\underline{\hat{\eta}}, \bar{\hat{\eta}}]$ in Theorem~\ref{th:EG4NE_convergence2} reduces to $0 < \eta \leq \frac{1}{\sigma L}$ for $\sigma := [(1+\sqrt{2}) (\kappa_1 + \sqrt{2})]^{1/2}$.
In particular, for EG with $u^k := Fx^k$, we have $\kappa_1 = 0$, leading to $0 < \eta \leq \frac{1}{\sqrt{2+\sqrt{2}}L}$.
Similarly, for past-EG with $u^k := Fy^{k-1}$, we have $\kappa_1 = 1$, leading to $0 < \eta \leq \frac{1}{(1+\sqrt{2})L}$.
Hence, the ranges of stepsize $\eta$ for the last-iterate convergence rates are smaller than the ones for the best-iterate convergence rates. 
\end{remark}

\begin{remark}\label{re:EG4NE_comparison}
The two instances in Corollary~\ref{co:EG4NE_convergence} and Corollary~\ref{co:EG4NE_convergence_option2} were proven in our unpublished report \cite{luo2022last}, and then were further revised in \cite{gorbunov2022convergence}.
The last-iterate convergence rates for these two instances were proven in previous works such as \cite{golowich2020last} for the monotone case but with an additional assumption.
Note that the best-iterate rates for the monotone or the star-monotone case are classical, which can be found, e.g., in \cite{Facchinei2003,korpelevich1976extragradient}.
The last-iterate convergence for the monotone case can be found in recent works such as \cite{golowich2020last,gorbunov2022extragradient}.
The best-iterate rates for the co-hypomonotone or the star-co-hypomonotone case can be found in \cite{diakonikolas2021efficient}, while the last-iterate convergence rates were  proven in \cite{luo2022last}.
Nevertheless, in this subsection, we derive Corollary~\ref{co:EG4NE_convergence} and Corollary~\ref{co:EG4NE_convergence_option2} as special cases of Theorem~\ref{th:EG4NE_convergence1} and Theorem~\ref{th:EG4NE_convergence2}.
\end{remark}

\beforesec
\section{A Class of Extragradient Methods for Monotone Inclusions}\label{sec:EG4NI}
\aftersec
In this section, we go beyond \eqref{eq:NE} to generalize EG for a class of ``monotone'' \eqref{eq:NI}.
Our best-iterate convergence rate analysis generalizes classical results to a broader class of methods and allows $T$ to be  maximally $3$-cyclically monotone instead of a normal cone.
Our last-iterate convergence rate analysis is new and elementary, compared to, e.g., \cite{gorbunov2022convergence}.
We also specify our results to cover two classical instances similar to our results in Section~\ref{sec:EG4NE}.

\beforesubsec
\subsection{\mytb{A Class of Extragradient Methods for Solving \eqref{eq:NI}} }\label{subsect:EG4NI}
\aftersubsec
We propose the following generalized EG scheme (called GEG2) for solving \eqref{eq:NI}.
Starting from $x^0 \in \dom{\Phi}$, at each iteration $k\geq 0$, we update
\begin{equation}\label{eq:EG4NI}
\arraycolsep=0.2em
\left\{\begin{array}{lcl}
y^k &:= & J_{\frac{\eta}{\beta}T}(x^k - \frac{\eta}{\beta} u^k), \vspace{1ex}\\
x^{k+1} &:= & J_{\eta T}(x^k - \eta Fy^k),
\end{array}\right.
\tag{GEG2}
\end{equation}
where $J_{\eta T}$ is the resolvent of $\eta T$, $\eta > 0$ is a given stepsize, $\beta > 0$ is a scaling factor, and $u^k \in \R^p$ satisfies the following condition:
\begin{equation}\label{eq:EG4NI_u_cond}
\norms{Fx^k - u^k}^2 \leq \kappa_1\norms{Fx^k - Fy^{k-1}}^2 +  \kappa_2\norms{Fx^k - Fx^{k-1}}^2,
\end{equation}
for given parameters $\kappa_1 \geq 0$ and $\kappa_2 \geq 0$ and $x^{-1} = y^{-1} := x^0$.
Note that \eqref{eq:EG4NE_u_cond} also makes \eqref{eq:EG4NI} different from the hybrid extragradient method in \cite{Monteiro2011,solodov1999hybrid}.

Similar to \eqref{eq:EG4NE}, we have at least three concrete choices of $u^k$ as follows.
\begin{itemize}
\item[(a)]\mytb{Variant 1.} If $u^k := Fx^k$ (i.e. $\kappa_1 = \kappa_2 = 0$ in \eqref{eq:EG4NI_u_cond}), then we obtain $y^k := J_{\frac{\eta}{\beta}T}(x^k - \frac{\eta}{\beta} Fx^k)$.
Clearly, if $\beta = 1$, then we get the well-known \mytb{extragradient method}.
If $\beta \in (0, 1)$, then we obtain the \mytb{extragradient-plus} -- EG+ in \cite{diakonikolas2021efficient} but to solve \eqref{eq:NI}.

\item[(b)]\mytb{Variant 2.} If $u^k := Fy^{k-1}$ (i.e. $\kappa_1 = 1$ and $\kappa_2 = 0$), then we obtain $y^k := J_{\frac{\eta}{\beta}T}(x^k - \frac{\eta}{\beta} Fy^{k-1})$, leading to the \mytb{past-extragradient method} (or equivalently, \mytb{Popov's method} \cite{popov1980modification}) for solving \eqref{eq:NI}.

\item[(c)]\mytb{Variant 3.} We can construct $u^k := \alpha_1Fx^k + \alpha_2Fy^{k-1} + (1-\alpha_1 - \alpha_2)Fx^{k-1}$ as an affine combination of $Fx^k$, $Fy^{k-1}$ and $Fx^{k-1}$ for given constants $\alpha_1, \alpha_2 \in \R$.
Then, $u^k$ satisfies \eqref{eq:EG4NI_u_cond} with $\kappa_1 = (1 + c)(1-\alpha_1)^2$ and $\kappa_2 = (1+c^{-1})(1-\alpha_1-\alpha_2)^2$ by Young's inequality for any $c > 0$.
\end{itemize}
Clearly, when $T = \Nc_{\Xc}$, the normal cone of a nonempty, closed, and convex set $\Xc$, then $J_{\gamma T} = \proj_{\Xc}$, the projection onto $\Xc$ and hence, \eqref{eq:EG4NI} with \textbf{Variant 1} reduces to the extragradient variant for solving \eqref{eq:VIP} widely studied in the literature \cite{Facchinei2003,Konnov2001}.
In terms of computational complexity, \eqref{eq:EG4NI} with \mytb{Variants 1} and \mytb{3} requires two evaluations of $F$ at $x^k$ and $y^k$, and two evaluations of the resolvent $J_{\eta T}$ at each iteration.
It costs as twice as one iteration of the forward-backward splitting method \eqref{eq:FBS4NI}.
However, its does not require the co-coerciveness of $F$ to guarantee convergence.
Again, we use a scaling factor $\beta$ as in \eqref{eq:EG4NE}, which covers EG+ in \cite{diakonikolas2021efficient} as a special case.

Now, for given $\zeta^k \in Ty^k$ and $\xi^{k+1} \in Tx^{k+1}$, we denote  $\tilde{w}^k := Fx^k + \zeta^k$, and $\hat{w}^{k+1} := Fy^k + \xi^{k+1}$.
Then, we can rewrite \eqref{eq:EG4NI} equivalently to
\begin{equation}\label{eq:EG4NI_reform}
\arraycolsep=0.2em
\left\{\begin{array}{lclcl}
y^k &:= & x^k - \frac{\eta}{\beta}(u^k + \zeta^k) & = & x^k - \frac{\eta}{\beta}(\tilde{w}^k + u^k - Fx^k), \vspace{1ex}\\
x^{k+1} &:= & x^k - \eta (Fy^k + \xi^{k+1}) & = & x^k - \eta\hat{w}^{k+1},
\end{array}\right.
\end{equation}
where $\zeta^k  \in  Ty^k$ and $\xi^{k+1}  \in  Tx^{k+1}$.
This representation makes \eqref{eq:EG4NI} look like \eqref{eq:EG4NE}, and it is a key step for our convergence analysis below.

\beforesubsec
\subsection{\bf Key Estimates for Convergence Analysis}\label{subsec:EG4NI_key_lemmas}
\aftersubsec
We establish convergence rates of \eqref{eq:EG4NI} under the assumptions that $F$ is monotone and $L$-Lipschitz continuous, and $T$ is maximally $3$-cyclically monotone.
We first define
\begin{equation}\label{eq:EG4NI_w}
w^k := Fx^k + \xi^k \quad \text{for some}\quad \xi^k \in Tx^k.
\end{equation}
Then, the following lemmas are keys  to establishing the convergence of \eqref{eq:EG4NI}.

\begin{lemma}\label{le:EG4NI_key_estimate}
Suppose that $T$ is maximally $3$-cyclically monotone and $x^{\star} \in \zer{\Phi}$ exists.
Let  $\sets{(x^k, y^k)}$ be generated by \eqref{eq:EG4NI} and $w^k$ be defined by \eqref{eq:EG4NI_w}.
Then, for any $\gamma > 0$, we have
\begin{equation}\label{eq:EG4NI_key_est1}
\arraycolsep=0.2em
\begin{array}{lcl}
\norms{x^{k+1} - x^{\star}}^2 & \leq & \norms{x^k - x^{\star}}^2 -  (1 - \beta)\norms{x^{k+1} - x^k}^2 - \beta\norms{x^k - y^k}^2 \vspace{1ex}\\
&& - {~}  (\beta - \gamma)\norms{x^{k+1} - y^k}^2  + \frac{\eta^2}{\gamma}\norms{Fy^k - u^k}^2 \vspace{1ex}\\
&& - {~} 2\eta\iprods{Fy^k - Fx^{\star}, y^k - x^{\star}}.
\end{array}
\end{equation}
\end{lemma}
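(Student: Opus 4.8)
The plan is to mirror the proof of Lemma~\ref{le:EG4NE_key_estimate1}, but to absorb the multivalued part of $\Phi$ through the $3$-cyclic monotonicity of $T$. I would begin from the reformulation \eqref{eq:EG4NI_reform}, which supplies elements $\zeta^k \in Ty^k$ and $\xi^{k+1}\in Tx^{k+1}$ with $\eta(u^k + \zeta^k) = \beta(x^k - y^k)$ and $x^{k+1} - x^k = -\eta(Fy^k + \xi^{k+1})$. Since $x^{\star}\in\zer{\Phi}$, there is $\xi^{\star}\in Tx^{\star}$ with $Fx^{\star} + \xi^{\star} = 0$, i.e. $Fx^{\star} = -\xi^{\star}$. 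Expanding $\norms{x^{k+1} - x^{\star}}^2$ via the cosine identity and substituting the second relation yields
\begin{equation*}
\norms{x^{k+1} - x^{\star}}^2 = \norms{x^k - x^{\star}}^2 - 2\eta\iprods{Fy^k + \xi^{k+1}, x^{k+1} - x^{\star}} - \norms{x^{k+1} - x^k}^2.
\end{equation*}

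Next I would split $x^{k+1} - x^{\star} = (x^{k+1} - y^k) + (y^k - x^{\star})$ and reorganize the inner product so that three kinds of terms appear: (i) the monotonicity term $\iprods{Fy^k - Fx^{\star}, y^k - x^{\star}}$, extracted by writing $Fx^{\star} = -\xi^{\star}$; (ii) the error term $\iprods{Fy^k - u^k, x^{k+1} - y^k}$; and (iii) the term $\iprods{u^k + \zeta^k, x^{k+1} - y^k} = \tfrac{\beta}{\eta}\iprods{x^k - y^k, x^{k+1} - y^k}$, using the first relation of \eqref{eq:EG4NI_reform} to replace $u^k$ by $\tfrac{\beta}{\eta}(x^k - y^k) - \zeta^k$. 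After cancellation, the remaining subgradient pieces collect precisely into the combination
\begin{equation*}
S := \iprods{\xi^{k+1}, x^{k+1} - x^{\star}} + \iprods{\xi^{\star}, x^{\star} - y^k} + \iprods{\zeta^k, y^k - x^{k+1}}.
\end{equation*}

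This is the crux, and the step I expect to be the main obstacle. Because $(x^{k+1}, \xi^{k+1})$, $(x^{\star}, \xi^{\star})$, and $(y^k, \zeta^k)$ all lie in $\gra{T}$ and form a length-three cycle, the $3$-cyclic monotonicity of $T$ recalled in Subsection~\ref{subsec:basic_concepts} guarantees $S \geq 0$. Mere monotonicity of $T$ would \emph{not} suffice here, since $S$ is a genuine three-point cyclic sum rather than a single pairwise difference $\iprods{\xi - \xi', x - x'}$; this is exactly why the hypothesis is $3$-cyclic monotonicity rather than plain monotonicity. Since $2\eta S$ enters the estimate with a negative sign, discarding the nonnegative quantity $2\eta S$ only weakens the bound, leaving the monotonicity term $-2\eta\iprods{Fy^k - Fx^{\star}, y^k - x^{\star}}$ intact.

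Finally, I would convert the surviving cross-terms into squared norms. The identity $2\iprods{x^{k+1} - y^k, x^k - y^k} = \norms{x^k - y^k}^2 + \norms{x^{k+1} - y^k}^2 - \norms{x^{k+1} - x^k}^2$ turns the $\beta$-term from (iii) into $-\beta\norms{x^k - y^k}^2 - \beta\norms{x^{k+1} - y^k}^2 + \beta\norms{x^{k+1} - x^k}^2$, whose last summand merges with $-\norms{x^{k+1} - x^k}^2$ to give the coefficient $-(1-\beta)$; and the elementary bound $2\iprods{w, z} \leq \tfrac{1}{\gamma}\norms{w}^2 + \gamma\norms{z}^2$ applied to the error term (ii) yields $\tfrac{\eta^2}{\gamma}\norms{Fy^k - u^k}^2 + \gamma\norms{x^{k+1} - y^k}^2$, so the two $\norms{x^{k+1} - y^k}^2$ contributions combine into $-(\beta - \gamma)$. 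Collecting all terms reproduces \eqref{eq:EG4NI_key_est1} exactly.
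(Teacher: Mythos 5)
Your proposal is correct and follows essentially the same route as the paper's proof: the same reformulation \eqref{eq:EG4NI_reform}, the same three-point cycle $(x^{k+1},\xi^{k+1})$, $(x^{\star},\xi^{\star})$, $(y^k,\zeta^k)$ in $\gra{T}$ handled by $3$-cyclic monotonicity, and the same cosine identity plus Young's inequality yielding identical coefficients. The only difference is bookkeeping—the paper applies the cyclic inequality up front to bound $\iprods{\xi^{k+1}-\zeta^k,\, x^{k+1}-x^{\star}}$, whereas you collect all the subgradient pieces into the cyclic sum $S$ at the end—and your observation that plain monotonicity would not suffice for this three-point sum is exactly the reason for the hypothesis.
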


\begin{proof}
First, since $\xi^{k+1} \in Tx^{k+1}$, $\zeta^k \in Ty^k$, and  $\xi^{\star} = -Fx^{\star} \in Tx^{\star}$,  by the maximally $3$-cyclic monotonicity of $T$, we have 
\begin{equation*}
\iprods{\xi^{k+1}, x^{k+1} - x^{\star}} + \iprods{\xi^{\star}, x^{\star} - y^k} + \iprods{\zeta^k, y^k - x^{k+1}} \geq 0.
\end{equation*}
This inequality leads to 
\begin{equation*}
\iprods{\xi^{k+1} - \zeta^k, x^{k+1} - x^{\star}}  \geq \iprods{\zeta^k - \xi^{\star}, x^{\star} - y^k} = -\iprods{Fx^{\star} + \zeta^k, y^k - x^{\star}}.
\end{equation*}
Utilizing this estimate and the second line $x^k - x^{k+1} = \eta (Fy^k + \xi^{k+1})$ of \eqref{eq:EG4NI_reform}, for any $x^{\star} \in \zer{\Phi}$, we can derive that
\begin{equation}\label{eq:EG4NI_proof5}
\hspace{-2ex}
\arraycolsep=0.2em
\begin{array}{lcl}
\norms{x^{k+1} - x^{\star}}^2 & = & \norms{x^k - x^{\star}}^2 - 2\iprods{x^k - x^{k+1}, x^{k+1} - x^{\star}} - \norms{x^{k+1} - x^k}^2 \vspace{1ex}\\
&= & \norms{x^k - x^{\star}}^2 - 2\eta\iprods{Fy^k + \xi^{k+1}, x^{k+1} - x^{\star}} - \norms{x^{k+1} - x^k}^2 \vspace{1ex}\\
&= & \norms{x^k -x^{\star}}^2 - 2\eta\iprods{Fy^k + \zeta^k, x^{k+1} - x^{\star}} - \norms{x^{k+1} - x^k}^2 \vspace{1ex}\\
&& - {~} 2\eta\iprods{\xi^{k+1} - \zeta^k, x^{k+1} - x^{\star}} \vspace{1ex}\\
&\leq & \norms{x^k - x^{\star}}^2  - \norms{x^{k+1} - x^k}^2 - 2\eta\iprods{Fy^k + \zeta^k, x^{k+1} - y^k} \vspace{1ex}\\
&& - {~} 2\eta\iprods{Fy^k - Fx^{\star}, y^k - x^{\star}}.
\end{array}
\hspace{-2ex}
\end{equation}
Next, from the first line of  \eqref{eq:EG4NI_reform}, we have $\eta(Fy^k + \zeta^k) = \beta(x^k - y^k) + \eta(Fy^k - u^k)$.
Therefore, by the Cauchy-Schwarz inequality and Young's inequality, for any $\gamma > 0$, we can prove that
\begin{equation*}
\arraycolsep=0.2em
\begin{array}{lcl}
2\eta\iprods{Fy^k + \zeta^k, x^{k+1} - y^k} &= & 2\beta \iprods{x^k - y^k, x^{k+1} - y^k} + 2\eta\iprods{Fy^k - u^k, x^{k+1} - y^k} \vspace{1ex}\\
&\geq & \beta\left[\norms{x^k - y^k}^2 + \norms{x^{k+1} - y^k}^2 - \norms{x^{k+1} - x^k}^2\right]  \vspace{1ex}\\
&& - {~}  2\eta\norms{Fy^k - u^k}\norms{x^{k+1} - y^k} \vspace{1ex}\\
&\geq & \beta \norms{x^k - y^k}^2 + (\beta - \gamma) \norms{x^{k+1} - y^k}^2 \vspace{1ex}\\
&& - {~} \beta \norms{x^{k+1} - x^k}^2 - \frac{\eta^2}{\gamma}\norms{Fy^k - u^k}^2.
\end{array}
\end{equation*}
Finally, substituting this estimate into \eqref{eq:EG4NI_proof5}, we obtain \eqref{eq:EG4NI_key_est1}.
\Eproof
\end{proof}

\begin{lemma}\label{le:EG4NI_key_estimate2}
Suppose that  $F$ is monotone and $T$ is maximally $3$-cyclically monotone.
Let $\sets{(x^k, y^k)}$ be generated by \eqref{eq:EG4NI} and $w^k$ be defined by \eqref{eq:EG4NI_w}
Then, for $\omega \geq 0$, $\gamma > 0$, and $s > 0$, we have
\begin{equation}\label{eq:EG4NI_monotone_est1}
\arraycolsep=0.2em
\begin{array}{lcl}
\norms{w^{k+1}}^2  & + &  \omega \norms{w^{k+1} - \hat{w}^{k+1}}^2 \leq  \norms{w^k}^2 - (1 - \gamma)\norms{w^k -  \tilde{w}^k}^2  \vspace{1ex}\\
&& + {~} \left[\frac{1}{\gamma} + \frac{(1+\omega)(1+s) L^2\eta^2}{s} \right] \norms{Fx^k - u^k}^2  \vspace{1ex}\\
&& - {~}  \big[ 1 -  (1+\omega) (1+s) L^2\eta^2 \big] \norms{\hat{w}^{k+1} - \tilde{w}^k}^2.
\end{array}
\end{equation}
\end{lemma}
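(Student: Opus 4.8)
The plan is to run the residual-monotonicity argument of Lemma~\ref{le:EG4NE_monotonicity} in the inclusion setting, routing the resolvent terms through the hybrid residual $\tilde{w}^k = Fx^k + \zeta^k$. First I would read off from the reformulation \eqref{eq:EG4NI_reform} the exact representatives: choosing $\xi^{k+1} := \tfrac{1}{\eta}(x^k - x^{k+1}) - Fy^k \in Tx^{k+1}$ makes $\hat{w}^{k+1} = \tfrac{1}{\eta}(x^k - x^{k+1})$, i.e. $x^{k+1} - x^k = -\eta\hat{w}^{k+1}$; choosing $\zeta^k := \tfrac{\beta}{\eta}(x^k - y^k) - u^k \in Ty^k$ makes $\tilde{w}^k = \tfrac{\beta}{\eta}(x^k - y^k) + (Fx^k - u^k)$; and $w^{k+1} - \hat{w}^{k+1} = Fx^{k+1} - Fy^k$. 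Combining the two lines gives $x^{k+1} - y^k = -\tfrac{\eta}{\beta}\big[\beta\hat{w}^{k+1} - \tilde{w}^k + (Fx^k - u^k)\big]$, the exact analogue of the relation $x^{k+1} - y^k = -\eta(Fy^k - \tfrac{1}{\beta}u^k)$ used in the equation case, which for $\beta = 1$ reads $x^{k+1} - y^k = -\eta\big[(\hat{w}^{k+1} - \tilde{w}^k) + (Fx^k - u^k)\big]$ and reproduces the $\beta$-free coefficients of \eqref{eq:EG4NI_monotone_est1}.

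\textbf{Base estimate and the cross-iterate term.} Since $F$ is monotone and $T$ is $3$-cyclically monotone (hence monotone), $\Phi = F + T$ is monotone, so $\iprods{w^{k+1} - w^k, x^{k+1} - x^k} \geq 0$ for the chosen $w^{k+1} \in \Phi x^{k+1}$, $w^k \in \Phi x^k$. Substituting $x^{k+1} - x^k = -\eta\hat{w}^{k+1}$ and invoking the polarization identity $2\iprods{w^k - w^{k+1}, \hat{w}^{k+1}} = \norms{w^k}^2 - \norms{\hat{w}^{k+1} - w^k}^2 - \norms{w^{k+1}}^2 + \norms{w^{k+1} - \hat{w}^{k+1}}^2$ yields the base inequality $\norms{w^{k+1}}^2 + \norms{\hat{w}^{k+1} - w^k}^2 \leq \norms{w^k}^2 + \norms{w^{k+1} - \hat{w}^{k+1}}^2$. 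I would add $\omega\norms{w^{k+1} - \hat{w}^{k+1}}^2$ to both sides and control the cross-iterate term by $L$-Lipschitz continuity, $\norms{w^{k+1} - \hat{w}^{k+1}}^2 = \norms{Fx^{k+1} - Fy^k}^2 \leq L^2\norms{x^{k+1} - y^k}^2$, splitting $\norms{x^{k+1} - y^k}^2$ with Young's inequality (parameter $s$) via the identity above. This produces the coefficient $(1+\omega)(1+s)L^2\eta^2$ on $\norms{\hat{w}^{k+1} - \tilde{w}^k}^2$ together with the $\tfrac{(1+\omega)(1+s)L^2\eta^2}{s}\norms{Fx^k - u^k}^2$ piece of the final error.

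\textbf{Reshaping the negative term (main obstacle).} It remains to turn the base gain $-\norms{\hat{w}^{k+1} - w^k}^2$ into the two stated negative terms. Using $\hat{w}^{k+1} - w^k = (\hat{w}^{k+1} - \tilde{w}^k) - (w^k - \tilde{w}^k)$ and expanding, the whole target collapses to the single scalar inequality $2\iprods{\hat{w}^{k+1} - \tilde{w}^k,\, w^k - \tilde{w}^k} \leq \gamma\norms{w^k - \tilde{w}^k}^2 + \tfrac{1}{\gamma}\norms{Fx^k - u^k}^2$. After inserting the identities, the harmless part $-2\iprods{Fx^k - u^k,\, w^k - \tilde{w}^k}$ is absorbed by Young's inequality with parameter $\gamma$, so everything reduces to controlling $\iprods{x^{k+1} - y^k,\, \xi^k - \zeta^k}$ with $\xi^k \in Tx^k$ and $\zeta^k \in Ty^k$. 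I expect this to be the crux: two-point monotonicity of $T$ only controls $\iprods{x^k - y^k,\, \xi^k - \zeta^k}$, and writing $x^{k+1} - y^k = (x^k - y^k) - \eta\hat{w}^{k+1}$ leaves a non-sign-definite coupling $\iprods{\hat{w}^{k+1},\, \xi^k - \zeta^k}$ between the current residual and a $T$-increment across the three points $x^k, y^k, x^{k+1}$. Taming this leftover is exactly where the $3$-cyclic monotonicity of $T$ over $\{x^k, y^k, x^{k+1}\}$ must enter — in the same spirit as Lemma~\ref{le:EG4NI_key_estimate} — so that all slack is carried by the residual gap $Fx^k - u^k$ and \eqref{eq:EG4NI_monotone_est1} follows. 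The error term $\norms{Fx^k - u^k}^2$ will subsequently be bounded through \eqref{eq:EG4NI_u_cond} and Lipschitz continuity to link consecutive iterates.
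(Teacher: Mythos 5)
Your identities, the base estimate from the monotonicity of $\Phi$, the polarization steps, the $\gamma$- and $s$-Young splittings, and the $(1+\omega)$ bookkeeping all check out, and your reduction is exact: granting the base inequality $\norms{w^{k+1}}^2 + \norms{\hat{w}^{k+1}-w^k}^2 \leq \norms{w^k}^2 + \norms{w^{k+1}-\hat{w}^{k+1}}^2$, the claim \eqref{eq:EG4NI_monotone_est1} collapses to $2\iprods{\hat{w}^{k+1}-\tilde{w}^k, w^k-\tilde{w}^k} \leq \gamma\norms{w^k-\tilde{w}^k}^2 + \tfrac{1}{\gamma}\norms{Fx^k-u^k}^2$, and after absorbing $-2\iprods{Fx^k-u^k,\, w^k-\tilde{w}^k}$ by Young's inequality, to $\iprods{x^{k+1}-y^k,\, \xi^k-\zeta^k} \geq 0$. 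But this last step is a genuine gap, and $3$-cyclic monotonicity cannot play the role you assign to it. The relevant cyclic inequality over the triple, $\iprods{\xi^{k+1}, x^{k+1}-x^k} + \iprods{\xi^k, x^k-y^k} + \iprods{\zeta^k, y^k-x^{k+1}} \geq 0$, rewrites as $\iprods{\xi^{k+1}-\xi^k, x^{k+1}-x^k} + \iprods{\xi^k-\zeta^k, x^{k+1}-y^k} \geq 0$: it only bounds your leftover from below by $-\iprods{\xi^{k+1}-\xi^k, x^{k+1}-x^k}$, a nonnegative quantity whose nonnegativity you have already spent inside $\iprods{w^{k+1}-w^k, x^{k+1}-x^k}\geq 0$ when deriving the base estimate. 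Demanding the base inequality \emph{and} separately leftover $\geq 0$ is strictly stronger than what the monotonicity of $F$ plus the $3$-cyclic monotonicity of $T$ jointly supply, and the leftover alone is not sign-definite; no slack proportional to $\norms{Fx^k-u^k}^2$ is generated by $T$.

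The repair is structural and small: do not split into two stages. The paper's proof of Lemma~\ref{le:EG4NI_key_estimate2} adds the cyclic inequality above to the monotonicity of $F$ across $(x^k, x^{k+1})$ \emph{at the outset}, obtaining the aggregate \eqref{eq:EG4NI_lm2_proof1}, namely $\iprods{w^{k+1}-\tilde{w}^k, x^{k+1}-x^k} + \iprods{w^k-\tilde{w}^k, x^k-y^k} \geq 0$ --- which is precisely your base pairing and your leftover bundled into a single nonnegative sum. Substituting $x^{k+1}-x^k = -\eta\hat{w}^{k+1}$ and $x^k-y^k = \eta\tilde{w}^k + \eta(u^k-Fx^k)$ (your identities, which indeed require $\beta=1$, consistent with the $\beta$-free coefficients of \eqref{eq:EG4NI_monotone_est1}) into this aggregate, and then running your own polarization, $\gamma$-Young, Lipschitz, and $(1+\omega)$ steps, yields \eqref{eq:EG4NI_monotone_est1} verbatim. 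So every computation in your proposal survives; only the order of operations must change, with the $3$-cyclic inequality consumed once, jointly, rather than invoked post hoc to sign a term it does not control.
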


\begin{proof}
First, using the $3$-cyclic monotonicity of $T$ but with $\xi^k \in Tx^k$, we have 
\begin{equation*}
\iprods{\xi^{k+1}, x^{k+1} - x^k}  + \iprods{\xi^k, x^k - y^k} + \iprods{\zeta^k, y^k - x^{k+1}}  \geq 0.
\end{equation*}
Next, by the monotonicity of $F$, we get $\iprods{Fx^{k+1} - Fx^k, x^{k+1} - x^k} \geq 0$.
Summing up these inequalities and using $w^k = Fx^k + \xi^k$ and $\tilde{w}^k := Fx^k + \zeta^k$, we have 
\begin{equation}\label{eq:EG4NI_lm2_proof1}
\iprods{w^{k+1} - \tilde{w}^k, x^{k+1} - x^k} + \iprods{w^k - \tilde{w}^k, x^k - y^k}  \geq 0.
\end{equation}
From the second line of \eqref{eq:EG4NI_reform}, we have $x^{k+1} - x^k = -\eta(Fy^k + \xi^{k+1}) = -\eta\hat{w}^{k+1}$.
From the first line of  \eqref{eq:EG4NI_reform} and $\beta = 1$, we also have $x^k - y^k = \frac{\eta}{\beta}(\tilde{w}^k + u^k - Fx^k) = \eta \tilde{w}^k + \eta(u^k - Fx^k)$.
Substituting these expressions into \eqref{eq:EG4NI_lm2_proof1}, and using an elementary inequality $2\iprods{z, s} \leq \gamma \norms{s}^2 +  \frac{\norms{z}^2}{\gamma}$ for any $\gamma > 0$ and vectors $z$ and $s$, we can show that
\begin{equation*} 
\arraycolsep=0.2em
\begin{array}{lcl}
0 &\leq & 2 \iprods{\tilde{w}^k, \hat{w}^{k+1}} - 2 \iprods{w^{k+1}, \hat{w}^{k+1}}  + 2\iprods{w^k, \tilde{w}^k} - 2\norms{\tilde{w}^k}^2 + 2\iprods{w^k - \tilde{w}^k, u^k - Fx^k} \vspace{1ex}\\
&= & \norms{w^k}^2  - \norms{w^{k+1}}^2  + \norms{w^{k+1} - \hat{w}^{k+1}}^2 - \norms{\hat{w}^{k+1} - \tilde{w}^k}^2 \vspace{1ex}\\
&&  - {~} \norms{w^k - \tilde{w}^k}^2  + 2\iprods{w^k - \tilde{w}^k, u^k - Fx^k} \vspace{1ex}\\
&\leq & \norms{w^k}^2 - \norms{w^{k+1}}^2  + \norms{w^{k+1} - \hat{w}^{k+1}}^2 - \norms{\hat{w}^{k+1} - \tilde{w}^k}^2 \vspace{1ex}\\
&& - {~} (1 - \gamma)\norms{w^k - \tilde{w}^k}^2  + \frac{1}{\gamma}\norms{Fx^k - u^k}^2.
\end{array}
\end{equation*}
This inequality leads to 
\begin{equation*}
\arraycolsep=0.2em
\begin{array}{lcl}
\norms{w^{k+1}}^2 &\leq & \norms{w^k}^2 + \norms{w^{k+1} - \hat{w}^{k+1}}^2 + \frac{1}{\gamma}\norms{Fx^k - u^k}^2 \vspace{1ex}\\
&& - {~}  (1- \gamma)\norms{w^k - \tilde{w}^k}^2 - \norms{\hat{w}^{k+1} - \tilde{w}^k}^2.
\end{array}
\end{equation*}
Now, by the $L$-Lipschitz continuity of $F$, $x^{k+1} - y^k =  -\eta (\hat{w}^{k+1} - \tilde{w}^k) - \eta(Fx^k - u^k)$ from \eqref{eq:EG4NI_reform} with $\beta = 1$, and Young's inequality, for any $s > 0$, we have 
\begin{equation*} 
\arraycolsep=0.2em
\begin{array}{lcl}
\norms{w^{k+1} - \hat{w}^{k+1}}^2 & = & \norms{Fx^{k+1} - Fy^k}^2 \leq L^2\norms{x^{k+1} - y^k}^2 \vspace{1ex}\\
& = & L^2\eta^2\norms{\hat{w}^{k+1} - \tilde{w}^k + Fx^k - u^k}^2 \vspace{1ex}\\
& \leq & (1 + s)L^2\eta^2\norms{\hat{w}^{k+1} - \tilde{w}^k }^2 + \frac{(1 + s)L^2\eta^2}{s}\norms{Fx^k - u^k}^2.
\end{array}
\end{equation*}
Multiplying this inequality by $1+\omega \geq 0$ and adding the result to the last inequality above, we obtain \eqref{eq:EG4NI_monotone_est1}.
\Eproof
\end{proof}

\beforesubsec
\subsection{\bf The Sublinear Convergence Rates of \eqref{eq:EG4NI}}\label{subsec:EG4NI_analysis}
\aftersubsec
For simplicity of our analysis, we denote $r := \frac{\kappa_1 + \sqrt{\kappa_1^2 + 4\kappa_1}}{2}$ and define
\begin{equation}\label{eq:EG4NI_C_constants}
\begin{array}{lcl}
C_1 := \beta - (1+r)L\eta  \quad \text{and} \quad C_2 := 1 - \beta - \frac{\kappa_2(1+r)}{r}L\eta.
\end{array}
\end{equation}
Then, we can prove the convergence of \eqref{eq:EG4NI} under \eqref{eq:EG4NI_u_cond} as follows.

\begin{theorem}[Best-iterate convergence rate]\label{th:EG4NI_convergence1}
For Inclusion \eqref{eq:NI}, suppose that $\zer{\Phi} \neq \emptyset$, $F$  is $L$-Lipschitz continuous, $T$ is maximally 3-cyclically monotone, and there exists $x^{\star} \in \zer{\Phi}$  such that $\iprods{Fx - Fx^{\star}, x - x^{\star}} \geq 0$ for all $x\in\dom{F}$.

Let $\sets{(x^k, y^k)}$ be generated by \eqref{eq:EG4NI} starting from $x^0 \in\dom{\Phi}$ and $y^{-1} = x^{-1} := x^0$ such that $u^k$ satisfies \eqref{eq:EG4NI_u_cond}.
Suppose that $\eta$ is chosen as
\begin{equation}\label{eq:EG4NI_eta_choice}
\arraycolsep=0.2em
\begin{array}{lcl}
0 < \eta \leq \min\Big\{ \frac{\beta}{(1+r)L}, \frac{(1-\beta)r}{\kappa_2(1+r)L} \Big\} \quad\text{with} \quad r := \frac{\kappa_1 + \sqrt{\kappa_1^2 + 4\kappa_1}}{2}.
\end{array}
\end{equation}
Then, $C_1$ and $C_2$ given by \eqref{eq:EG4NI_C_constants} are nonnegative, and 
\begin{equation}\label{eq:EG4NI_convergence_est1a}
\sum_{k=0}^K \big[  C_1 \norms{y^k - x^k}^2 + C_1\norms{x^{k+1} - y^k}^2  + C_2 \norms{x^{k+1} - x^k}^2 \big] \leq \norms{x^0 - x^{\star}}^2.
\end{equation} 
Moreover, if $C_1 > 0$, then we also have
\begin{equation}\label{eq:EG4NI_convergence_est1b}
\hspace{-2ex}
\left\{\begin{array}{ll}
{\displaystyle\min_{0\leq k \leq K}} \norms{u^k + \zeta^k}^2 \leq \frac{1}{K+1}\sum_{k=0}^K\norms{u^k + \zeta^k}^2 \leq \frac{\beta^2 \norms{x^0 - x^{\star}}^2 }{ C_1\eta^2 (K+1)}, \vspace{1ex}\\
{\displaystyle\min_{0\leq k \leq K}} \norms{Fx^{k+1} + \xi^{k+1}}^2 \leq \frac{1}{K+1}\sum_{k=0}^K\norms{Fx^{k+1} + \xi^{k+1}}^2 \leq \frac{\Lambda \norms{x^0 - x^{\star}}^2 }{  \eta^2 (K+1)},
\end{array}\right.
\hspace{-3ex}
\end{equation} 
where $\Lambda := \frac{3[3C_1 + 2(C_1 + 3C_2)L^2\eta^2]}{3C_1(C_1 + 3C_2)}$.
Hence, we conclude that $\min_{0\leq k \leq K}\norms{Fx^k + \xi^k } = \BigOs{1/\sqrt{K}}$.
Furthermore, $\sets{\norms{x^k - x^{\star}}}$ is bounded and $\sets{x^k}$ converges to $x^{\star} \in \zer{\Phi}$.
In addition, we have
\begin{equation*}
\lim_{k\to\infty}\norms{x^k - y^k} = 0 \quad \text{and} \quad \lim_{k\to\infty}\norms{Fx^k + \xi^k } = 0.
\end{equation*}
\end{theorem}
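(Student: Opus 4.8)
The plan is to mirror the analysis of Theorem~\ref{th:EG4NE_convergence1}, but now driven by Lemma~\ref{le:EG4NI_key_estimate}, in which the maximal $3$-cyclic monotonicity of $T$ plays the role that the projection/resolvent identity plays in the classical variational-inequality setting. First I would invoke \eqref{eq:EG4NI_key_est1}: since the hypothesis $\iprods{Fx - Fx^{\star}, x - x^{\star}} \geq 0$ makes the last term $-2\eta\iprods{Fy^k - Fx^{\star}, y^k - x^{\star}}$ nonpositive, it can simply be dropped. Next, exactly as in the equation case, I would bound the error term $\norms{Fy^k - u^k}^2$ by combining Young's inequality, the admissibility condition \eqref{eq:EG4NI_u_cond}, and the $L$-Lipschitz continuity of $F$, producing for a free parameter $r > 0$ an upper bound of the form $(1+r)L^2\norms{x^k - y^k}^2 + \tfrac{\kappa_1(1+r)L^2}{r}\norms{x^k - y^{k-1}}^2 + \tfrac{\kappa_2(1+r)L^2}{r}\norms{x^k - x^{k-1}}^2$.

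The key algebraic step is to choose $\gamma := L\eta$ in \eqref{eq:EG4NI_key_est1} and to fix $r := \tfrac{\kappa_1 + \sqrt{\kappa_1^2 + 4\kappa_1}}{2}$, which is precisely the positive root of $r^2 - \kappa_1 r - \kappa_1 = 0$. This choice equalizes the coefficients of $\norms{y^k - x^k}^2$ and $\norms{x^{k+1} - y^k}^2$, both becoming $C_1 = \beta - (1+r)L\eta$, while the coefficient of $\norms{x^{k+1} - x^k}^2$ becomes $C_2 = 1 - \beta - \tfrac{\kappa_2(1+r)}{r}L\eta$, exactly as in \eqref{eq:EG4NI_C_constants}. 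Defining the Lyapunov function $\Pc_k := \norms{x^k - x^{\star}}^2 + \tfrac{\kappa_1(1+r)L\eta}{r}\norms{x^k - y^{k-1}}^2 + \tfrac{\kappa_2(1+r)L\eta}{r}\norms{x^k - x^{k-1}}^2$ and absorbing the future differences $\norms{x^{k+1} - y^k}^2$ and $\norms{x^{k+1} - x^k}^2$ into $\Pc_{k+1}$, I obtain the one-step descent $\Pc_{k+1} \leq \Pc_k - C_1\norms{y^k - x^k}^2 - C_1\norms{x^{k+1} - y^k}^2 - C_2\norms{x^{k+1} - x^k}^2$. The stepsize bound \eqref{eq:EG4NI_eta_choice} is exactly what forces $C_1, C_2 \geq 0$; telescoping from $k = 0$ to $K$ and using $\Pc_0 = \norms{x^0 - x^{\star}}^2$ (guaranteed by the initialization $y^{-1} = x^{-1} = x^0$) yields \eqref{eq:EG4NI_convergence_est1a}.

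It then remains to convert the summable iterate differences into the two residual quantities in \eqref{eq:EG4NI_convergence_est1b}. The first bound is immediate: the first line of \eqref{eq:EG4NI_reform} gives $\eta(u^k + \zeta^k) = \beta(x^k - y^k)$, so $\eta^2\norms{u^k + \zeta^k}^2 = \beta^2\norms{x^k - y^k}^2$, and the $C_1$-weighted sum does the rest. The second bound is the delicate one, because the natural residual $w^{k+1} = Fx^{k+1} + \xi^{k+1}$ with $\xi^{k+1} \in Tx^{k+1}$ is not what the scheme directly produces; the scheme only yields $\hat{w}^{k+1} = Fy^k + \xi^{k+1}$ with $x^{k+1} - x^k = -\eta\hat{w}^{k+1}$. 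I would write $\eta w^{k+1} = -(x^{k+1} - x^k) + \eta(Fx^{k+1} - Fy^k)$, control the last summand by $L$-Lipschitz continuity via $\norms{Fx^{k+1} - Fy^k} \leq L\norms{x^{k+1} - y^k}$, and then redistribute $\norms{x^{k+1} - x^k}^2$ among the $C_1$- and $C_2$-weighted quantities using the triangle and Young inequalities (tuning the parameters so as not to waste the possibly-vanishing $C_2$ budget). This produces $\eta^2\norms{w^{k+1}}^2 \leq \Lambda\big[C_1\norms{y^k - x^k}^2 + C_1\norms{x^{k+1} - y^k}^2 + C_2\norms{x^{k+1} - x^k}^2\big]$ with the explicit constant $\Lambda$ of the statement (equivalently $\Lambda = \tfrac{3}{C_1 + 3C_2} + \tfrac{2L^2\eta^2}{C_1}$); summing against \eqref{eq:EG4NI_convergence_est1a} then gives the $\BigOs{1/\sqrt{K}}$ best-iterate rate for $\min_{0 \leq k \leq K}\norms{Fx^k + \xi^k}$.

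Finally, for the asymptotic claims I would note that the descent shows $\set{\Pc_k}$ is nonincreasing with $\norms{x^k - x^{\star}}^2 \leq \Pc_k \leq \Pc_0$, so $\set{x^k}$ is bounded, and that \eqref{eq:EG4NI_convergence_est1a}--\eqref{eq:EG4NI_convergence_est1b} force $\norms{x^k - y^k} \to 0$ and, because the partial sums of $\norms{w^{k+1}}^2$ are uniformly bounded, also $\norms{Fx^k + \xi^k} \to 0$. The genuinely nonroutine part is the convergence of the full sequence $\set{x^k}$: I would take any cluster point $x^{\infty}$ of the bounded sequence (finite dimension), pass to the limit along the corresponding subsequence using the continuity of $F$ and the closedness of $\gra{T}$ to conclude $0 \in \Phi x^{\infty}$, and then close the argument with the standard Fej\'er-type/Opial reasoning --- valid here because, under the monotonicity of $F$, the one-step estimate remains true with any solution in place of $x^{\star}$, so $\set{\norms{x^k - z}}$ converges for every $z \in \zer{\Phi}$ --- exactly as in \cite{malitsky2015projected}. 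I expect this last step, rather than the telescoping, to require the most care.
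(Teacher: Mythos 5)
Your proposal matches the paper's proof essentially step for step: the same use of Lemma~\ref{le:EG4NI_key_estimate} with the star-monotonicity term dropped, the same bound on $\norms{Fy^k - u^k}^2$, the same choices $\gamma := L\eta$ and $r := \frac{\kappa_1 + \sqrt{\kappa_1^2 + 4\kappa_1}}{2}$ equalizing the coefficients into $C_1$, the same potential function and telescoping, and the same conversion of $\norms{x^{k+1} - x^k - \eta(Fx^{k+1} - Fy^k)}^2$ via Young's inequality into the $C_1$/$C_2$-weighted sum, with your $\Lambda = \frac{3}{C_1 + 3C_2} + \frac{2L^2\eta^2}{C_1}$ algebraically identical to the paper's constant. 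The only difference is that you spell out the final full-sequence convergence argument (cluster points, graph closedness, Fej\'er/Opial reasoning), which the paper simply delegates to the standard arguments of \cite{malitsky2015projected}, so your version is, if anything, slightly more careful on that last step.
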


\begin{proof}
First, for any $r > 0$, by Young's inequality, \eqref{eq:EG4NI_u_cond}, and the $L$-Lipschitz continuity of $F$, we can show that
\begin{equation*} 
\arraycolsep=0.2em
\begin{array}{lcl}
\norms{Fy^k - u^k}^2 & \leq & (1 + r)\norms{Fx^k - Fy^k}^2 + \frac{(1+r)}{r}\norms{Fx^k - u^k}^2 \vspace{1ex}\\
& \leq & (1+r)L^2\norms{x^k - y^k}^2 + \frac{(1+r)\kappa_1}{r}\norms{Fx^k - Fy^{k-1}}^2 \vspace{1ex}\\
&& + {~} \frac{(1+r)\kappa_2}{r}\norms{Fx^k - Fx^{k-1}}^2 \vspace{1ex}\\
& \leq & (1+r)L^2\norms{x^k - y^k}^2 + \frac{(1+r)\kappa_1L^2}{r}\norms{x^k - y^{k-1}}^2 \vspace{1ex}\\
&& + {~} \frac{(1+r)\kappa_2L^2}{r}\norms{x^k - x^{k-1}}^2.
\end{array}
\end{equation*}
Utilizing this expression and the condition $\iprods{Fy^k - Fx^{\star}, y^k - x^{\star}} \geq 0$, we can show from \eqref{eq:EG4NI_key_est1} that
\begin{equation*} 
\arraycolsep=0.1em
\begin{array}{lcl}
\norms{x^{k+1} - x^{\star}}^2 & + &  \frac{\kappa_1(1+r)L^2\eta^2}{r\gamma}\norms{x^{k+1} - y^k}^2 +  \frac{\kappa_2(1+r)L^2\eta^2}{r\gamma}\norms{x^{k+1} - x^k}^2   \leq  \norms{x^k - x^{\star}}^2 \vspace{1ex}\\
&& + {~} \frac{\kappa_1(1+r)L^2\eta^2}{r\gamma} \norms{x^k - y^{k-1}}^2 + \frac{ \kappa_2(1+r)L^2\eta^2}{r\gamma} \norms{x^k - x^{k-1}}^2  \vspace{1ex}\\
&& - {~} \big( \beta - \frac{(1+r)L^2\eta^2}{\gamma}\big) \norms{x^k - y^k}^2 -  \big(\beta - \gamma - \frac{\kappa_1(1+r)L^2\eta^2}{r\gamma} \big)\norms{x^{k+1} - y^k}^2 \vspace{1ex}\\
&& - {~}  \big( 1 - \beta - \frac{\kappa_2(1+r)L^2\eta^2}{r\gamma} \big)\norms{x^{k+1} - x^k}^2.
\end{array}
\end{equation*}
Now, we define a new potential function:
\begin{equation*}
\arraycolsep=0.2em
\begin{array}{lcl}
\hat{\Pc}_k :=  \norms{x^k - x^{\star}}^2 + \frac{\kappa_1(1+r)L^2\eta^2}{r\gamma} \norms{x^k - y^{k-1}}^2 + \frac{ \kappa_2(1+r)L^2\eta^2}{r\gamma} \norms{x^k - x^{k-1}}^2.
\end{array}
\end{equation*}
We also denote
\begin{equation*}
\arraycolsep=0.2em
\begin{array}{lcl}
C_1 :=  \beta - \frac{(1+r)L^2\eta^2}{\gamma}, \  \hat{C}_1 := \beta - \gamma - \frac{\kappa_1(1+r)L^2\eta^2}{r\gamma}, \ \text{and} \ C_2 :=  1 - \beta - \frac{\kappa_2(1+r)L^2\eta^2}{r\gamma}.
\end{array}
\end{equation*}
Then, it is obvious that $\hat{\Pc}_k \geq 0$.
Moreover, the last inequality is equivalent to
\begin{equation}\label{eq:EG4NI_th41_proof1} 
\arraycolsep=0.2em
\begin{array}{lcl}
C_1\norms{y^k - x^k}^2 + \hat{C}_1 \norms{x^{k+1} - y^k}^2 + C_2\norms{x^{k+1} - x^k}^2 \leq \hat{\Pc}_k - \hat{\Pc}_{k+1}.
\end{array}
\end{equation}
Let us choose $\gamma := L\eta$ and $r := \frac{\kappa_1 + \sqrt{\kappa_1^2 + 4\kappa_1}}{2}$. 
Then, we obtain $C_1 = \hat{C}_1 =  \beta - (1+r)L\eta$ and $C_2 = 1 - \beta - \frac{\kappa_2(1+r)L\eta}{r}$ as in \eqref{eq:EG4NI_C_constants}.
Furthermore, if we choose $\eta$ as in \eqref{eq:EG4NI_eta_choice}, then $C_1$ and $C_2$ are nonnegative.

Now, we sum up \eqref{eq:EG4NI_th41_proof1} from $k= 0$ to $K$, and note that $\hat{\Pc}_{K+1} \geq 0$ and $\hat{\Pc}_0 := \norms{x^0 - x^{\star}}^2$, we obtain \eqref{eq:EG4NI_convergence_est1a}.

Next, from the first line of \eqref{eq:EG4NI_reform}, we get $\norms{u^k + \zeta^k}^2 = \frac{\beta^2}{\eta^2}\norms{y^k - x^k}^2$.
Using this relation and \eqref{eq:EG4NI_convergence_est1a}, we obtain the first line of \eqref{eq:EG4NI_convergence_est1b}.

From the second line of \eqref{eq:EG4NI_reform}, for any $a_1 > 0$, $a_2 \in [0, 1]$, and $a_3 > 0$, by Young's inequality and the $L$-Lipschitz continuity of $F$, we can show that 
\begin{equation*}
\arraycolsep=0.1em
\begin{array}{lcl}
\eta^2\norms{Fx^{k+1} + \xi^{k+1}}^2 & \overset{\tiny \eqref{eq:EG4NI_reform} }{=} & \norms{x^{k+1} - x^k - \eta(Fx^{k+1} - Fy^k)}^2 \vspace{1ex}\\
& \leq & (1 + a_1)\norms{x^{k+1} - x^k}^2 + (1+a_1^{-1})L^2\eta^2\norms{x^{k+1} - y^k}^2 \vspace{1ex}\\
& \leq & a_2(1 + a_1)(1+a_3)\norms{y^k - x^k}^2 \vspace{1ex}\\
&& + {~} \big[ a_2(1+a_1)(1+a_3^{-1}) + (1+a_1^{-1})L^2\eta^2 \big] \norms{x^{k+1} - y^k}^2 \vspace{1ex}\\
&&  + {~} (1+a_1)(1-a_2)\norms{x^{k+1} - x^k}^2.
\end{array}
\end{equation*}
In this case, we assume that $\Lambda C_1 = a_2(1 + a_1)(1+a_3) = \big[ a_2(1+a_1)(1+a_3^{-1}) + (1+a_1^{-1})L^2\eta^2\big]$, and $\Lambda C_2 =   (1+a_1)(1-a_2)$ for some $\Lambda > 0$.
By appropriately chosen $a_1$, $a_2$, and $a_3 := 2$, we obtain 
\begin{equation*}
\arraycolsep=0.1em
\begin{array}{lcl}
\Lambda := \frac{3[3C_1 + 2(C_1 + 3C_2)L^2\eta^2]}{3C_1(C_1 + 3C_2)}.
\end{array}
\end{equation*}
Hence, the last inequality is equivalent to 
\begin{equation*}
\arraycolsep=0.1em
\begin{array}{lcl}
\frac{\eta^2}{\Lambda} \norms{Fx^{k+1} + \xi^{k+1} }^2 & \leq & C_1\norms{x^k - y^k}^2 + C_1\norms{x^{k+1} - y^k}^2 + C_2\norms{x^{k+1} - x^k}^2.
\end{array}
\end{equation*}
Combining this inequality and \eqref{eq:EG4NI_convergence_est1a}, we obtain the second line of \eqref{eq:EG4NI_convergence_est1b}.
The remaining statements are proven similar to Theorem~\ref{th:EG4NE_convergence1} and we omit.
\Eproof
\end{proof}

Next, we prove the last-iterate convergence rate of \eqref{eq:EG4NI} when $\kappa_2 = 0$.

\begin{theorem}[Last-iterate convergence rate]\label{th:EG4NI_convergence1b}
For Inclusion \eqref{eq:NI}, suppose that $\zer{\Phi} \neq \emptyset$, $F$ is monotone and $L$-Lipschitz continuous, and $T$ is maximally 3-cyclically monotone.

Let $\sets{(x^k, y^k)}$ be generated by \eqref{eq:EG4NI} starting from $x^0 \in \dom{\Phi}$ and $y^{-1} = x^{-1} := x^0$ such that $u^k$ satisfies \eqref{eq:EG4NI_u_cond}.
Suppose that $\eta$ is chosen as
\begin{equation}\label{eq:EG4NI_eta_choice_1b}
\arraycolsep=0.2em
\begin{array}{lcl}
0 < \eta <  \frac{\beta}{(1 + r)L} \quad \text{with} \quad r := \frac{\kappa_1 + \sqrt{\kappa_1^2 + 4\kappa_1}}{2}.
\end{array}
\end{equation}
Then, for $w^k := Fx^k + \xi^k$ with $\xi^k \in Tx^k$ and $\omega :=  \frac{\kappa_1[ s + (1+s)L^2\eta^2]}{s - (1 + s)\kappa_1L^2\eta^2}$ with $s := \frac{(1+r)^2 - 2\kappa_1 - 1 + \sqrt{[(1+r)^2 - 2\kappa_1 - 1]^2 - 4\kappa_1(1+\kappa_1) }}{2(1 + \kappa_1)} \geq 0$, we have
\begin{equation}\label{eq:EG4NI_monotone_1b}
\arraycolsep=0.2em
\begin{array}{ll}
& \norms{w^{k+1}}^2 + \omega\norms{Fx^{k+1} - Fy^k}^2 \leq \norms{w^k}^2 + \omega\norms{Fx^k - Fy^{k-1}}^2, \vspace{2ex}\\
& \norms{Fx^{K+1} + \xi^{K+1}}^2 \leq \frac{\Gamma \norms{x^0 - x^{\star}}^2}{K+1},
\end{array}
\end{equation}
where $\Gamma := \frac{\Lambda}{\eta^2} + \frac{L^2\omega}{C_1}$.
Hence, we conclude that $\norms{Fx^K + \xi^K} = \BigOs{1/\sqrt{K}}$ on the last iterate $x^K$ for some $\xi^K \in Tx^K$.
\end{theorem}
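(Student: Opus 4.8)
The plan is to prove the result in two stages, mirroring the last-iterate argument for \eqref{eq:NE} in Theorem~\ref{th:EG4NE_convergence2}: first establish the one-step ``non-increase'' of a shifted residual potential (the first line of \eqref{eq:EG4NI_monotone_1b}), and then convert this monotonicity, together with the summable best-iterate bounds of Theorem~\ref{th:EG4NI_convergence1}, into the $\BigOs{1/\sqrt{K}}$ rate on the last iterate.

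For the first stage, I would start from the monotone estimate \eqref{eq:EG4NI_monotone_est1} of Lemma~\ref{le:EG4NI_key_estimate2} (applied with $\beta = 1$, as in that lemma's proof). The key observations are that $w^{k+1} - \hat{w}^{k+1} = Fx^{k+1} - Fy^k$, so the second term on its left-hand side is exactly $\omega\norms{Fx^{k+1} - Fy^k}^2$, and that with $\kappa_2 = 0$ the condition \eqref{eq:EG4NI_u_cond} reads $\norms{Fx^k - u^k}^2 \leq \kappa_1\norms{Fx^k - Fy^{k-1}}^2$. I would then choose $\gamma := 1$, which annihilates the term $-(1-\gamma)\norms{w^k - \tilde{w}^k}^2$. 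After substituting the bound on $\norms{Fx^k - u^k}^2$, the coefficient multiplying $\norms{Fx^k - Fy^{k-1}}^2$ becomes $\kappa_1\big[1 + \frac{(1+\omega)(1+s)L^2\eta^2}{s}\big]$; the defining identity for $\omega$ is precisely the requirement that this equals $\omega$, and solving that linear equation in $\omega$ reproduces $\omega = \frac{\kappa_1[s + (1+s)L^2\eta^2]}{s - (1+s)\kappa_1 L^2\eta^2}$. With this choice the $\norms{Fx^k - Fy^{k-1}}^2$ term is absorbed into the shift on the right-hand side, and the remaining term $-\big[1 - (1+\omega)(1+s)L^2\eta^2\big]\norms{\hat{w}^{k+1} - \tilde{w}^k}^2$ is dropped once its coefficient is shown nonnegative, giving the first line of \eqref{eq:EG4NI_monotone_1b}.

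For the second stage, set $V_k := \norms{w^k}^2 + \omega\norms{Fx^k - Fy^{k-1}}^2$; the first line of \eqref{eq:EG4NI_monotone_1b} says exactly $V_{k+1} \leq V_k$, so $\sets{V_k}$ is non-increasing. I would then bound $\sum_{k=1}^{K+1} V_k$ by splitting it: the part $\sum_{k=1}^{K+1}\norms{w^k}^2 = \sum_{k=0}^K\norms{Fx^{k+1} + \xi^{k+1}}^2 \leq \frac{\Lambda}{\eta^2}\norms{x^0 - x^{\star}}^2$ comes from the second line of \eqref{eq:EG4NI_convergence_est1b}, while the part $\omega\sum_{k=1}^{K+1}\norms{Fx^k - Fy^{k-1}}^2 \leq \omega L^2\sum_{k=1}^{K+1}\norms{x^k - y^{k-1}}^2 \leq \frac{\omega L^2}{C_1}\norms{x^0 - x^{\star}}^2$ follows from the $L$-Lipschitz continuity of $F$ and the summable bound \eqref{eq:EG4NI_convergence_est1a}. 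Adding these gives $\sum_{k=1}^{K+1} V_k \leq \Gamma\norms{x^0 - x^{\star}}^2$ with $\Gamma = \frac{\Lambda}{\eta^2} + \frac{L^2\omega}{C_1}$. Since $\sets{V_k}$ is non-increasing, $V_{K+1} \leq \frac{1}{K+1}\sum_{k=1}^{K+1} V_k \leq \frac{\Gamma\norms{x^0 - x^{\star}}^2}{K+1}$, and $\norms{Fx^{K+1} + \xi^{K+1}}^2 = \norms{w^{K+1}}^2 \leq V_{K+1}$ yields the second line of \eqref{eq:EG4NI_monotone_1b}.

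The main obstacle is the parameter bookkeeping in the first stage: one must verify that the prescribed $s$ simultaneously keeps $\omega \geq 0$ (that is, $s - (1+s)\kappa_1 L^2\eta^2 > 0$) and makes the leftover coefficient $1 - (1+\omega)(1+s)L^2\eta^2$ nonnegative for every admissible $\eta \in \big(0, \frac{\beta}{(1+r)L}\big)$. Using $1 + \omega = \frac{s(1+\kappa_1)}{s - (1+s)\kappa_1 L^2\eta^2}$, the nonnegativity condition reduces to the scalar inequality $(1+s)L^2\eta^2[s(1+\kappa_1) + \kappa_1] \leq s$, whose worst case is the largest stepsize $L^2\eta^2 = \frac{1}{(1+r)^2}$ (taking $\beta = 1$). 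At that endpoint the inequality becomes $(1+\kappa_1)s^2 - [(1+r)^2 - 1 - 2\kappa_1]s + \kappa_1 \leq 0$, and the stated $s$ is precisely the larger root of the associated quadratic, at which equality holds; for smaller $\eta$ the left-hand side only decreases, so the inequality is preserved. Verifying these reductions, and that the root is real with the quadratic opening upward so the chosen $s$ lies in the admissible region, is where the identity $r^2 = \kappa_1(1+r)$ satisfied by $r = \frac{\kappa_1 + \sqrt{\kappa_1^2 + 4\kappa_1}}{2}$ does the essential work.
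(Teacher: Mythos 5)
Your proposal is correct and follows essentially the same route as the paper's own proof: it starts from Lemma~\ref{le:EG4NI_key_estimate2} with $\gamma = 1$, defines $\omega$ by the fixed-point identity $\omega = \kappa_1\big[1 + \frac{(1+\omega)(1+s)L^2\eta^2}{s}\big]$, picks $s$ as the larger root of $(1+\kappa_1)s^2 - [(1+r)^2 - 2\kappa_1 - 1]s + \kappa_1 = 0$ (which is exactly the paper's condition $\frac{s}{(1+s)(\kappa_1 s + \kappa_1 + s)} = \frac{1}{(1+r)^2}$ rearranged), and then averages the non-increasing potential $V_k = \norms{w^k}^2 + \omega\norms{Fx^k - Fy^{k-1}}^2$ against the summable bounds \eqref{eq:EG4NI_convergence_est1a} and \eqref{eq:EG4NI_convergence_est1b} to get the last-iterate rate with the same $\Gamma$. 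Your endpoint check at $L^2\eta^2 = \frac{1}{(1+r)^2}$ using $r^2 = \kappa_1(1+r)$ (under which the discriminant collapses to the perfect square $r^4$) simply makes explicit a verification the paper leaves implicit.
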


\begin{proof}
First, since $w^k = Fx^k + \xi^k$ and $\hat{w}^k = Fy^{k-1} + \xi^k$ for $\xi^k \in Tx^k$, we have $Fx^k - Fy^{k-1} = w^k - \hat{w}^{k-1}$.
Next, since $\kappa_2 = 0$,  \eqref{eq:EG4NI_u_cond} reduces to $\norms{Fx^k - u^k}^2 \leq \kappa_1\norms{Fx^k - Fy^{k-1}}^2 = \kappa_1\norms{w^k - \hat{w}^k}^2$ due to $Fx^k - Fy^{k-1} = w^k - \hat{w}^k$.
Using this relation into \eqref{eq:EG4NI_monotone_est1} and choosing $\gamma = 1$, we get
\begin{equation}\label{eq:EG4NI_th42_proof2} 
\hspace{-3ex}
\arraycolsep=0.1em
\begin{array}{lcl}
\norms{w^{k+1}}^2  & + &  \omega \norms{w^{k+1} - \hat{w}^{k+1}}^2 \leq  \norms{w^k}^2 + \kappa_1\left[1 + \frac{(1+\omega)(1+s) L^2\eta^2}{s} \right] \norms{w^k - \hat{w}^k}^2  \vspace{1ex}\\
&& - {~}  \big[ 1 -  (1+\omega) (1+s) L^2\eta^2 \big] \norms{\hat{w}^{k+1} - \tilde{w}^k}^2.
\end{array}
\hspace{-5ex}
\end{equation}
Now, we choose $\omega = \frac{\kappa_1[ s + (1+s)L^2\eta^2]}{s - (1 + s)\kappa_1L^2\eta^2}$, provided that $(1 + s)\kappa_1L^2\eta^2 < s$.
Then, we have  $\omega = \kappa_1\big[1 + \frac{(1+\omega)(1+s) L^2\eta^2}{s} \big]$.

We also need to guarantee $(1+\omega) (1 + s) L^2\eta^2 \leq 1$, which is equivalent to $L^2\eta^2 \leq \frac{s}{(1 + s)(\kappa_1s + \kappa_1 + s)}$.
If we choose $s := \frac{(1+r)^2 - 2\kappa_1 - 1 + \sqrt{[(1+r)^2 - 2\kappa_1 - 1]^2 - 4\kappa_1(1+\kappa_1) }}{2(1 + \kappa_1)}$ for $r := \frac{\kappa_1 + \sqrt{\kappa_1^2 + 4\kappa_1}}{2}$, then we have $\frac{s}{(1 + s)(\kappa_1s + \kappa_1 + s)} = \frac{1}{(1+r)^2}$.
Furthermore, if we choose $\eta < \frac{1}{(1+r)L}$ as in \eqref{eq:EG4NI_eta_choice_1b}, then $L^2\eta^2 \leq \frac{s}{(1 + s)(\kappa_1s + \kappa_1 + s)}$.

Next, under \eqref{eq:EG4NI_eta_choice_1b} and the choice of $\omega$, \eqref{eq:EG4NI_th42_proof2}  reduces to
\begin{equation}\label{eq:EG4NI_th42_proof3} 
\arraycolsep=0.2em
\begin{array}{lcl}
\norms{w^{k+1}}^2  & + &  \omega \norms{w^{k+1} - \hat{w}^{k+1}}^2 \leq  \norms{w^k}^2 +  \omega \norms{w^k - \hat{w}^k}^2.
\end{array}
\end{equation}
which proves the first line of \eqref{eq:EG4NI_monotone_1b} due to $Fx^k - Fy^{k-1} = w^k - \hat{w}^k$.

Finally, from \eqref{eq:EG4NI_convergence_est1a} and the second line of \eqref{eq:EG4NI_convergence_est1b}, we have
\begin{equation*}
\arraycolsep=0.2em
\begin{array}{llcl}
& \frac{1}{K+1} \sum_{k=0}^K \norms{Fx^{k+1} - Fy^{k}}^2 & \leq & \frac{1}{K+1}\sum_{k=0}^KL^2\norms{x^{k+1} - y^{k}}^2 \leq \frac{L^2\norms{x^0 - x^{\star}}^2}{ C_1(K+1)}, \vspace{1ex}\\
& \frac{1}{K+1} \sum_{k=0}^K\norms{Fx^{k+1} + \xi^{k+1}}^2 & \leq & \frac{\Lambda \norms{x^0 - x^{\star}}^2 }{  \eta^2 (K+1)}.
\end{array}
\end{equation*}
Multiplying the first line by $\omega$ and adding the result to the second line, and then using \eqref{eq:EG4NI_th42_proof3} with $w^{k+1} := Fx^{k+1} + \xi^{k+1}$ and $w^{k+1} - \hat{w}^{k+1} = Fx^{k+1} - Fy^k$, we obtain the second line of \eqref{eq:EG4NI_monotone_1b}, where $\Gamma := \frac{L^2\omega}{C_1} + \frac{\Lambda}{\eta^2}$.
\Eproof
\end{proof}

\beforesubsec
\subsection{\bf The Sublinear Convergence Rates of Two Instances}\label{subsec:EG4NI_special_cases}
\aftersubsec
Now, we specify Theorems~\ref{th:EG4NI_convergence1} and \ref{th:EG4NI_convergence1b} for two instances: the extragradient method (\textbf{Variant 1}) and Popov's past-extragradient method (\textbf{Variant 2}).

\begin{corollary}[The EG Method]\label{co:EG4NI_convergence_case1}
For Inclusion~\eqref{eq:NI}, suppose that $\zer{\Phi} \neq \emptyset$, $F$  is $L$-Lipschitz continuous, $T$ is maximally $3$-cyclically monotone, and there exists  $x^{\star} \in \zer{\Phi}$ such that $\iprods{Fx - Fx^{\star}, x - x^{\star}} \geq 0$ for all $x\in\dom{F}$.
Let $\sets{(x^k, y^k)}$ be generated by \eqref{eq:EG4NI} starting from $x^0 \in \dom{\Phi}$ using $u^k := Fx^k$ $($\textbf{Variant 1}$)$  and  $0 < \eta \leq \frac{\beta}{L}$.

\noindent$\mathrm{(a)}$~\textbf{\textit{The best-iterate convergence rate of EG}}.
Then, we have
\begin{equation}\label{eq:EG4NI_convergence_case1_01}
\arraycolsep=0.2em
\begin{array}{l}
{\displaystyle\min_{0\leq k \leq K}}\norms{Fx^{k+1} + \xi^{k+1}}^2 \leq \frac{1}{K+1}\sum_{k=0}^{K}\norms{Fx^{k+1} + \xi^{k+1}}^2 \leq \frac{\Lambda\norms{x^0 - x^{\star}}^2}{\eta^2( K+1 )},
\end{array}
\end{equation} 
where $\xi^k \in Tx^k$ and  $\Lambda$ is defined in \eqref{eq:EG4NI_convergence_est1b}.

Furthermore, $\sets{\norms{x^k - x^{\star}}}$ is nonincreasing and $\sets{x^k}$ converges to $x^{\star} \in \zer{\Phi}$.
We also have
\begin{equation}\label{eq:EG4NI_convergence_case1_01b}
\lim_{k\to\infty}\norms{x^k - y^k} =  \lim_{k\to\infty}\norms{Fx^k + \xi^k} = \lim_{k\to\infty}\norms{Fy^k + \zeta^k} = 0,
\end{equation}
where $\xi^k \in Tx^k$ and $\zeta^k \in Ty^k$.

\noindent$\mathrm{(b)}$~\textbf{\textit{The last-iterate convergence rate of EG}}.
If, in addition, $F$ is monotone, then $\sets{\norms{Fx^k + \xi^k}}$ is monotonically non-increasing and
\begin{equation}\label{eq:EG4NI_monotone_est1_case1}
\hspace{-1ex}
 \norms{Fx^{K+1} + \xi^{K+1} }^2 \leq \frac{\Lambda \norms{x^0 - x^{\star}}^2}{\eta^2( K+1) }.
\hspace{-2ex}
\end{equation}
Hence, we conclude that $\norms{Fx^K + \xi^K} = \BigOs{1/\sqrt{K}}$ on the last-iterate $x^k$.

\noindent$\mathrm{(c)}$~\textbf{\textit{Convergence in $\norms{\Gc_{\eta}x^k}$}}.
For $\Gc_{\eta }$ defined by \eqref{eq:FB_residual}, we also have
\begin{equation}\label{eq:EG4NI_convergence_case1_02}
\arraycolsep=0.2em
\begin{array}{l}
{\displaystyle\min_{0\leq k \leq K}} \norms{ \Gc_{\eta}x^k} = \BigO{\frac{1}{\sqrt{K}}} \quad \text{and} \quad \norms{ \Gc_{\eta }x^K} = \BigO{\frac{1}{ \sqrt{K}}}.
\end{array}
\end{equation}
\end{corollary}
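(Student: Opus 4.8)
The plan is to read off all three parts from the general results of this subsection, specialized to \textbf{Variant 1}. First I would record that $u^k := Fx^k$ makes the error condition \eqref{eq:EG4NI_u_cond} hold with $\kappa_1 = \kappa_2 = 0$, hence $r := \frac{\kappa_1 + \sqrt{\kappa_1^2 + 4\kappa_1}}{2} = 0$. Substituting into \eqref{eq:EG4NI_C_constants} yields $C_1 = \beta - L\eta$ and $C_2 = 1 - \beta$, and the step-size window \eqref{eq:EG4NI_eta_choice} collapses to $0 < \eta \le \beta/L$ (the second entry of the minimum being vacuous once $\kappa_2 = 0$). Note $C_2 = 1 - \beta \ge 0$ automatically for $\beta \in (0,1]$, and that the explicit rates will require $C_1 > 0$, i.e. $\eta < \beta/L$ strictly.

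For part (a), all hypotheses of Theorem~\ref{th:EG4NI_convergence1} hold, so the second line of \eqref{eq:EG4NI_convergence_est1b} gives \eqref{eq:EG4NI_convergence_case1_01} verbatim, while the boundedness of $\{\norms{x^k - x^{\star}}\}$, its monotone decrease, the convergence of $\{x^k\}$ to some $x^{\star} \in \zer{\Phi}$, and the limits \eqref{eq:EG4NI_convergence_case1_01b} are the remaining conclusions of the same theorem. Here only the star-monotone inequality $\iprods{Fx - Fx^{\star}, x - x^{\star}} \ge 0$ is used, not full monotonicity of $F$.

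For part (b), I would exploit that $u^k := Fx^k$ forces $Fx^k - u^k = 0$, so the error term in Lemma~\ref{le:EG4NI_key_estimate2} disappears. Taking $\omega = 0$ and $\gamma = 1$ there reduces \eqref{eq:EG4NI_monotone_est1} to $\norms{w^{k+1}}^2 \le \norms{w^k}^2 - [\,1 - (1+s)L^2\eta^2\,]\norms{\hat{w}^{k+1} - \tilde{w}^k}^2$, and choosing any $s > 0$ with $(1+s)L^2\eta^2 \le 1$ (feasible since $\eta \le \beta/L \le 1/L$) proves that $\{\norms{Fx^k + \xi^k}\}$ is monotonically non-increasing. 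Equivalently this is Theorem~\ref{th:EG4NI_convergence1b} at $\kappa_1 = 0$, where $\omega = 0$ and therefore $\Gamma = \frac{\Lambda}{\eta^2}$; combining the monotone non-increase with the running average in \eqref{eq:EG4NI_convergence_est1b} then gives the last-iterate bound \eqref{eq:EG4NI_monotone_est1_case1}. This part additionally invokes the monotonicity of $F$, as required by Lemma~\ref{le:EG4NI_key_estimate2}.

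For part (c), the bridge is the forward-backward residual bound \eqref{eq:FBR_bound2}, namely $\norms{\Gc_\eta x} \le \norms{Fx + \xi}$ for all $(x, \xi) \in \gra{T}$ when $J_{\eta T}$ is firmly nonexpansive. Applying it at $x = x^k$ with $\xi = \xi^k \in Tx^k$ gives $\norms{\Gc_\eta x^k} \le \norms{Fx^k + \xi^k}$, and \eqref{eq:EG4NI_convergence_case1_02} follows termwise from the best-iterate bound of part (a) and the last-iterate bound of part (b). The main, if modest, obstacle is justifying the firm nonexpansiveness of $J_{\eta T}$, since $T$ is assumed only maximally $3$-cyclically monotone and not maximally monotone. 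I would handle this by recalling that $3$-cyclic monotonicity implies plain monotonicity, and that for a well-defined single-valued resolvent the inequality $\norms{J_{\eta T}a - J_{\eta T}b}^2 \le \iprods{a - b,\, J_{\eta T}a - J_{\eta T}b}$ is a consequence of the monotonicity of $T$ alone; maximality would only be needed to guarantee $\dom{J_{\eta T}} = \R^p$, which is unnecessary here because the generated iterates already lie in $\dom{J_{\eta T}}$.
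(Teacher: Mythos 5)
Your proposal is correct and follows essentially the same route as the paper: specialize $\kappa_1 = \kappa_2 = 0$ (hence $r = 0$) in Theorem~\ref{th:EG4NI_convergence1} for part (a), invoke Theorem~\ref{th:EG4NI_convergence1b} with $\omega = 0$ and $\Gamma = \Lambda/\eta^2$ for part (b), and combine with the residual bound \eqref{eq:FBR_bound2} for part (c). Your additional justification that the firm nonexpansiveness of $J_{\eta T}$ needed in \eqref{eq:FBR_bound2} follows from plain monotonicity of $T$ (implied by $3$-cyclic monotonicity) at points where the resolvent is defined is a careful touch that the paper leaves implicit, but it does not change the argument.
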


\begin{proof}
For \textbf{Variant 1} with $u^k := Fx^k$, we have $\kappa_1 = \kappa_2 = 0$, leading to $r = 0$.
Hence, \eqref{eq:EG4NI_eta_choice} reduces to $0 < \eta \leq \frac{\beta}{L}$, and \eqref{eq:EG4NI_convergence_case1_01} is a direct consequence of the second line of \eqref{eq:EG4NI_convergence_est1b}.
Combining  \eqref{eq:EG4NI_convergence_case1_01} and \eqref{eq:EG4NI_convergence_est1a}, we obtain \eqref{eq:EG4NI_convergence_case1_01b}.

Next, since $\kappa_1 = 0$, we have $\omega = 0$ in Theorem~\ref{th:EG4NI_convergence1b}.
Therefore, the second estimate of \eqref{eq:EG4NI_monotone_1b} reduces to \eqref{eq:EG4NI_monotone_est1_case1}, where $\Gamma = \frac{\Lambda}{\eta^2}$.
Finally, combining our results and \eqref{eq:FBR_bound2} we obtain the remaining conclusions. 
\Eproof
\end{proof}

\begin{corollary}[The Past-EG Method]\label{co:EG4NI_convergence_case2}
For Inclusion~\eqref{eq:NI}, suppose that $\zer{\Phi} \neq \emptyset$, $F$ is $L$-Lipschitz continuous, $T$ is maximally $3$-cyclically monotone, and there exists $x^{\star} \in \zer{\Phi}$ such that  $\iprods{Fx - Fx^{\star}, x - x^{\star}} \geq 0$ for all $x\in\dom{F}$. 
Let $\sets{(x^k, y^k)}$ be generated by \eqref{eq:EG4NI} starting from $x^0 \in \dom{\Phi}$ and $y^{-1} := x^0$ using $u^k := Fy^{k-1}$ $($\textbf{Variant 2}$)$ and $\eta$ such that $0 < \eta \leq \frac{2\beta}{(3+\sqrt{5})L}$.

\noindent$\mathrm{(a)}$~\textbf{\textit{The best-iterate convergence rate of Past-EG}}.
Then, we have
\begin{equation}\label{eq:EG4NI_convergence_case2_01}
\arraycolsep=0.2em
\begin{array}{lcl}
{\displaystyle\min_{0\leq k \leq K }\norms{Fx^{k+1} + \xi^{k+1} }^2} & \leq &  \frac{1}{K+1}\sum_{k=0}^{K}  \norms{Fx^{k+1} + \xi^{k+1} }^2  \leq  \frac{\Lambda \norms{x^0 - x^{\star}}^2}{\eta^2( K+ 1) },
\end{array}
\end{equation} 
where $\xi^k \in Tx^k$ and  $\Lambda$ is defined in \eqref{eq:EG4NI_convergence_est1b}.
Moreover, $\sets{\norms{x^k - x^{\star}}}$ is bounded and $\sets{x^k}$ converges to  $x^{\star}$.
Furthermore, \eqref{eq:EG4NI_convergence_case1_01b} still holds.

\noindent$\mathrm{(b)}$~\textbf{\textit{The last-iterate convergence rate of Past-EG}}.
If, in addition, $F$ is monotone, then we have 
\begin{equation}\label{eq:EG4NI_monotone_est2_case2}
\hspace{-3ex}
\arraycolsep=0.0em
\begin{array}{ll}
& \norms{Fx^{k+1} + \xi^{k+1}}^2 + \omega \norms{Fx^{k+1} - Fy^k}^2 \leq \norms{Fx^k + \xi^k}^2 + \omega \norms{Fx^k - Fy^{k-1}}^2 \vspace{2ex}\\
& \norms{Fx^{K+1} + \xi^{K+1} }^2 \leq   \frac{\Lambda \norms{x^0 - x^{\star}}^2}{ \eta^2 (K + 1) }.
\end{array}
\hspace{-5ex}
\end{equation}
Thus we conclude that $\norms{Fx^K + \xi^K} = \BigOs{1/\sqrt{K}}$ on the last iterate $x^K$.

\noindent$\mathrm{(c)}$~\textbf{\textit{Convergence in $\norms{\Gc_{\eta}x^k}$}}.
For $\Gc_{\eta }$ defined by \eqref{eq:FB_residual}, we also have
\begin{equation}\label{eq:EG4NI_convergence_case2_02}
\hspace{-2ex}
\arraycolsep=0.0em
\begin{array}{l}
{\displaystyle\min_{0 \leq k \leq K}} \norms{\Gc_{\eta}x^{k} } = \BigO{\frac{1}{\sqrt{K}}} \quad \text{and} \quad  \norms{\Gc_{\eta}x^K } = \BigO{\frac{1}{ \sqrt{K}}}.
\end{array}
\hspace{-2ex}
\end{equation}
\end{corollary}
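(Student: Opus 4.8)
The plan is to derive Corollary~\ref{co:EG4NI_convergence_case2} entirely as a specialization of the two master results for \eqref{eq:EG4NI}: Theorem~\ref{th:EG4NI_convergence1} supplies part (a) (best-iterate), Theorem~\ref{th:EG4NI_convergence1b} supplies part (b) (last-iterate), and the forward-backward residual bound \eqref{eq:FBR_bound2} converts both into statements on $\Gc_{\eta}$ for part (c). First I would record the parameters forced by \textbf{Variant 2}: taking $u^k := Fy^{k-1}$ makes \eqref{eq:EG4NI_u_cond} hold with $\kappa_1 = 1$ and $\kappa_2 = 0$, so $r := \frac{\kappa_1 + \sqrt{\kappa_1^2 + 4\kappa_1}}{2} = \frac{1+\sqrt{5}}{2}$ and $1+r = \frac{3+\sqrt{5}}{2}$. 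Since $\kappa_2 = 0$, the second entry of the minimum in the stepsize rule \eqref{eq:EG4NI_eta_choice} is vacuous and $C_2 = 1-\beta \geq 0$ automatically; the surviving constraint $\eta \leq \frac{\beta}{(1+r)L}$ is precisely $0 < \eta \leq \frac{2\beta}{(3+\sqrt{5})L}$, matching the hypothesis.

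With the stepsize in this range, part (a) is immediate: Theorem~\ref{th:EG4NI_convergence1} yields $C_1 = \beta - (1+r)L\eta \geq 0$ and $C_2 = 1-\beta \geq 0$, and \eqref{eq:EG4NI_convergence_case2_01} is read off from the second line of \eqref{eq:EG4NI_convergence_est1b}, while the boundedness of $\sets{\norms{x^k - x^{\star}}}$, convergence $x^k \to x^{\star}$, and the limits \eqref{eq:EG4NI_convergence_case1_01b} are inherited verbatim. For part (b), assuming in addition that $F$ is monotone, I would invoke Theorem~\ref{th:EG4NI_convergence1b}, which again needs only $\kappa_2 = 0$. The clean step here is the algebraic simplification: using the golden-ratio identity $r^2 = r + 1$ one gets $(1+r)^2 = 3r + 2$, hence the discriminant inside the formula for $s$ collapses to $\sqrt{(1+r)^2} = 1+r$, giving the remarkably clean value $s = r = \frac{1+\sqrt{5}}{2}$. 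One then checks $\frac{s}{(1+s)(\kappa_1 s + \kappa_1 + s)} = \frac{r}{(1+r)(1+2r)} = \frac{1}{(1+r)^2}$ (again via $r^2 = r+1$), so the threshold in \eqref{eq:EG4NI_eta_choice_1b} is exactly $\eta < \frac{2\beta}{(3+\sqrt{5})L}$ and $\omega := \frac{\kappa_1[s + (1+s)L^2\eta^2]}{s - (1+s)\kappa_1 L^2\eta^2} > 0$ is well-defined. The two lines of \eqref{eq:EG4NI_monotone_est2_case2} are then \eqref{eq:EG4NI_monotone_1b} transcribed with $w^{k+1} = Fx^{k+1} + \xi^{k+1}$ and $w^{k+1} - \hat{w}^{k+1} = Fx^{k+1} - Fy^k$.

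Part (c) is a transfer argument: because $T$ is maximally $3$-cyclically monotone (hence monotone) its resolvent $J_{\eta T}$ is firmly nonexpansive, so \eqref{eq:FBR_bound2} gives $\norms{\Gc_{\eta}x^k} \leq \norms{Fx^k + \xi^k}$ for the relevant $\xi^k \in Tx^k$; applying this pointwise to the best-iterate bound of part (a) and the last-iterate bound of part (b) yields both claims in \eqref{eq:EG4NI_convergence_case2_02}, exactly as in Corollary~\ref{co:EG4NI_convergence_case1}. I expect the only non-routine work to be the algebraic verification in part (b) above, together with the mild bookkeeping subtlety that part (a)'s explicit constant $\Lambda$ requires $C_1 > 0$ (strict), so the endpoint $\eta = \frac{2\beta}{(3+\sqrt{5})L}$ is legitimate only for the summed inequality \eqref{eq:EG4NI_convergence_est1a}; for the stated per-iterate rate one takes $\eta$ strictly below the threshold.
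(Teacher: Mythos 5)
Your proposal is correct and follows essentially the same route as the paper's own proof: part (a) specializes Theorem~\ref{th:EG4NI_convergence1} with $\kappa_1 = 1$, $\kappa_2 = 0$, $r = \frac{1+\sqrt{5}}{2}$ (so \eqref{eq:EG4NI_eta_choice} collapses to $0 < \eta \leq \frac{2\beta}{(3+\sqrt{5})L}$), part (b) specializes Theorem~\ref{th:EG4NI_convergence1b} — your golden-ratio computation $s = r$ via $r^2 = r+1$ and the identity $\frac{s}{(1+s)(\kappa_1 s + \kappa_1 + s)} = \frac{1}{(1+r)^2}$ is exactly the verification carried out inside that theorem's proof — and part (c) transfers both rates through \eqref{eq:FBR_bound2}. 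Your two bookkeeping remarks are apt; note in particular that a literal transcription of \eqref{eq:EG4NI_monotone_1b} yields the constant $\Gamma = \frac{\Lambda}{\eta^2} + \frac{L^2\omega}{C_1}$ with $\omega > 0$ in this instance, so the constant $\frac{\Lambda}{\eta^2}$ displayed in \eqref{eq:EG4NI_monotone_est2_case2} should be read as $\Gamma$ (a slip in the corollary's statement rather than in your argument, and consistent with your observation that $C_1 > 0$ requires $\eta$ strictly below the threshold).
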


\begin{proof}
For \textbf{Variant 2} with $u^k := Fy^{k-1}$, we have $\kappa_1 = 1$ and $\kappa_2 = 0$ leading to $r = \frac{1 + \sqrt{5}}{2}$.
Hence, \eqref{eq:EG4NI_eta_choice} reduces to $0 < \eta \leq \frac{2\beta}{(3+\sqrt{5})L}$, and \eqref{eq:EG4NI_convergence_case2_01} is a direct consequence of the second line of \eqref{eq:EG4NI_convergence_est1b}.
Next, since $\kappa_1 = 1$, we have $\omega > 0$ in Theorem~\ref{th:EG4NI_convergence1b}.
Therefore, the second estimate of \eqref{eq:EG4NI_monotone_1b} reduces to \eqref{eq:EG4NI_monotone_est2_case2}.
Finally, combining our results and \eqref{eq:FBR_bound2} we obtain the remaining conclusions. 
\Eproof
\end{proof}

\begin{remark}\label{re:EG4NI_remark1}
If $\beta = 1$, then the condition $0 < \eta < \frac{1}{L}$ in Corollary~\ref{co:EG4NI_convergence_case1} is the same as in the classical EG method, see \cite{Facchinei2003}.
Similarly, when $\beta = 1$, the condition $0 < \eta \leq \frac{2}{(3+\sqrt{5})L}$ in Corollary~\ref{co:EG4NI_convergence_case2} is slightly relaxed than the condition $0 < \eta \leq \frac{1}{3L}$ in \cite{popov1980modification}.
It remains open to establish both the best-iterate and last-iterate convergence rates of \eqref{eq:EG4NI} under the weak-Minty solution condition \cite{diakonikolas2021efficient} and the co-hypomonotonicity of $\Phi$.
\end{remark}

\beforesec
\section{A Class of Forward-Backward-Forward Splitting Methods for \eqref{eq:NI}}\label{sec:FBFS4NI}
\aftersec
Alternative to \eqref{eq:EG4NI}, we now generalize the FBFS method in \cite{tseng2000modified} for solving \eqref{eq:NI}, but when a weak-Minty solution exists.

\beforesubsec
\subsection{\mytb{A Class of Forward-Backward-Forward Splitting Methods}}\label{subsec:FBF4NI}
\aftersubsec
The forward-backward-forward splitting (FBFS) method was proposed by P. Tseng in \cite{tseng2000modified} for solving \eqref{eq:NI}, which is originally called a \textit{modified forward-backward splitting} method.

Now, we propose to consider the following generalized FBFS scheme for solving \eqref{eq:NI}.
Starting from $x^0 \in \dom{\Phi}$, at each iteration $k\geq 0$, we update
\begin{equation}\label{eq:FBFS4NI}
\arraycolsep=0.2em
\left\{\begin{array}{lcl}
y^k &:= & J_{\frac{\eta}{\beta} T}(x^k - \frac{\eta}{\beta} u^k), \vspace{1ex}\\
x^{k+1} &:= & \beta y^k + (1-\beta)x^k - \eta(Fy^k - u^k),
\end{array}\right.
\tag{GFBFS2}
\end{equation}
where $J_{\frac{\eta}{\beta} T}$ is the resolvent of $\frac{\eta}{\beta} T$, $\eta > 0$ is a given stepsize, and $\beta > 0$ is a scaling factor, and $u^k \in \R^p$ satisfies the following condition:
\begin{equation}\label{eq:FBFS4NI_u_cond}
\norms{Fx^k - u^k}^2 \leq \kappa_1\norms{Fx^k - Fy^{k-1}}^2 +  \kappa_2\norms{Fx^k - Fx^{k-1}}^2,
\end{equation}
for given constants $\kappa_1 \geq 0$ and $\kappa_2 \geq 0$ and $x^{-1} = y^{-1} := x^0$.
Here, we have our flexibility to choose $u^k$.
Let us consider three special cases of $u^k$ as before.

\begin{itemize}
\itemsep=0.1em
\item[(a)]\mytb{Variant 1.} If we choose $u^k := Fx^k$, then we obtain a variant of \mytb{Tseng's FBFS method}.
In particular, if $\beta = 1$, then we get exactly \mytb{Tseng's FBFS method} in \cite{tseng2000modified} for solving \eqref{eq:NI}.
Note that one can extend \eqref{eq:FBFS4NI} to cover the case $\zer{\Phi}\cap\Cc \neq\emptyset$ for some subset $\Cc$ of $\R^p$ as presented in \cite{tseng2000modified}.
Nevertheless, for simplicity, we assume that $\Cc = \R^p$.
As shown in \eqref{eq:FBFS4NE}, if $T = 0$, then \eqref{eq:FBFS4NI} reduces to the extragradient method \eqref{eq:EG4NE} for \eqref{eq:NE}.
However, if $T\neq 0$, then \eqref{eq:FBFS4NI} is different from \eqref{eq:EG4NI}.

\item[(b)]\mytb{Variant 2.} If we choose $u^k := Fy^{k-1}$, where $y^{-1} := x^0$, then we obtain a \mytb{past-FBFS variant}.
This variant can also be referred to as a generalized variant of the \mytb{optimistic gradient (OG) method}, see, e.g., \cite{daskalakis2018training,mokhtari2020unified,mokhtari2020convergence}.
If $\beta = 1$, then $y^{k+1} = J_{\eta T}(x^{k+1} - \eta Fy^k)$ and $x^{k+1} = y^k - \eta(Fy^k - Fy^{k-1})$.
Combining these two expressions, \eqref{eq:FBFS4NI} reduces to 
\begin{equation}\label{eq:FRBS4NI2}
\arraycolsep=0.2em
\begin{array}{lcl}
y^{k+1} & = &  J_{\eta T}\left(y^k - \eta (2Fy^k - Fy^{k-1}) \right).
\end{array}
\tag{FRBS2}
\end{equation}
This is exactly the \mytb{forward-reflected-backward splitting (FRBS) method} proposed in \cite{malitsky2020forward}.

\item[(c)]\mytb{Variant 3.} We can form $u^k := \alpha_1Fx^k + \alpha_2Fy^{k-1} + (1-\alpha_1 - \alpha_2)Fx^{k-1}$ as an affine combination of $Fx^k$, $Fy^{k-1}$ and $Fx^{k-1}$ for given constants $\alpha_1, \alpha_2 \in \R$.
Then, $u^k$ satisfies \eqref{eq:FBFS4NI_u_cond} with $\kappa_1 = (1 + c)(1-\alpha_1)^2$ and $\kappa_2 = (1 + c^{-1})(1-\alpha_1-\alpha_2)^2$ for some $c > 0$ by Young's inequality.
\end{itemize}
Compared to \eqref{eq:EG4NI}, we do not require $T$ to be monotone in \eqref{eq:FBFS4NI}.
However, to guarantee the well-definedness of $\sets{(x^k, y^k)}$, we need $y^k \in \ran{J_{\eta T}}$ and $y^k \in \dom{F}$.
Hence, we can assume that $\ran{J_{\eta T}}\subseteq\dom{F} = \R^p$ and $\dom{J_{\eta T}} = \R^p$. 
This requirement makes \eqref{eq:FBFS4NI} cover a broader class of problems than \eqref{eq:EG4NI}, and it obviously holds if $T$ is maximally monotone and $F$ is partially monotone and Lipschitz continuous as in \eqref{eq:EG4NI}.
Here, we assume that $J_{\eta T}$ is single-valued, but it can be extended to multivalued $J_{\eta T}$.
In addition, \eqref{eq:FBFS4NI} only requires one evaluation of $J_{\eta T}$ instead of two  as in \eqref{eq:EG4NI}, reducing the per-iteration complexity when $J_{\eta T}$ is expensive.

Similar to \eqref{eq:EG4NI_reform}, we can rewrite  \eqref{eq:FBFS4NI} equivalently to 
\begin{equation}\label{eq:FBFS4NI_reform}
\arraycolsep=0.2em
\left\{\begin{array}{lcl}
y^k &:= &  x^k - \frac{\eta}{\beta}(u^k + \zeta^k), \quad \zeta^k \in Ty^k, \vspace{1ex}\\
x^{k+1} &:= & x^k + \beta (y^k - x^k) - \eta(Fy^k - u^k) = x^k - \eta(Fy^k + \zeta^k).
\end{array}\right.
\end{equation}
This representation is an important step for our convergence analysis below.

\beforesubsec
\subsection{\bf Key Estimates for Convergence Analysis}\label{subsec:FBFS4NI_key_lemmas}
\aftersubsec
The following lemma is key to establishing convergence of \eqref{eq:FBFS4NI}.

\begin{lemma}\label{le:FBFS4NI_key_estimate}
Suppose that  $\sets{(x^k, y^k)}$ is generated by \eqref{eq:FBFS4NI} and $T$ is not necessary monotone, but $\ran{J_{\eta T}} \subseteq \dom{F} = \R^p$ and $\dom{J_{\eta T}} = \R^p$.
Then, for any $\gamma > 0$, any $x^{\star} \in \zer{\Phi}$, we have
\begin{equation}\label{eq:FBFS4NI_key_est1}
\arraycolsep=0.2em
\begin{array}{lcl}
\norms{x^{k+1} - x^{\star}}^2 & \leq & \norms{x^k - x^{\star}}^2 - \beta\norms{x^k - y^k}^2 +  \frac{\eta^2}{\gamma}\norms{Fy^k - u^k}^2  \vspace{1ex}\\
&& - {~}  (\beta - \gamma)\norms{x^{k+1} - y^k}^2 -  2\eta\iprods{Fy^k + \zeta^k, y^k - x^{\star}} \vspace{1ex}\\
&& - {~}  (1 - \beta)\norms{x^{k+1} - x^k}^2.
\end{array}
\end{equation}
For any $r > 0$, let us introduce the following potential function:
\begin{equation}\label{eq:FBFS4NI_Lyapunov_func}
\hspace{-1ex}
\arraycolsep=0.1em
\begin{array}{lcl}
\Pc_k &:= & \norms{x^k - x^{\star} }^2 +  \tfrac{\kappa_1(1+r)L^2\eta^2}{r \gamma}\norms{x^k - y^{k-1}}^2 +  \tfrac{\kappa_2(1+r)L^2\eta^2}{r \gamma}\norms{x^k - x^{k-1}}^2.
\end{array}
\hspace{-1ex}
\end{equation}
If $u^k$ satisfies \eqref{eq:FBFS4NI_u_cond} and there exist $x^{\star} \in \zer{\Phi}$ and $\rho \geq 0$ such that $\iprods{w, x - x^{\star}} \geq -\rho\norms{w}^2$ for all $(x, w) \in \gra{\Phi}$, then for any  $s > 0$ and $\mu \in [0, 1]$, we have
\begin{equation}\label{eq:FBFS4NI_key_estimate1b}
\hspace{-2ex}
\arraycolsep=0.2em
\begin{array}{lcl}
\Pc_{k+1} & \leq & \Pc_k - \left( \beta - \frac{(1 + r)L^2\eta^2}{ \gamma} - \frac{2\mu\rho(1 + s)}{s \eta}  \right) \norms{y^k - x^k}^2 \vspace{1ex} \\
&& - {~} \left(\beta - \gamma - \frac{\kappa_1(1 + r) L^2\eta^2}{r \gamma} - \frac{2\mu\rho(1 + s)}{\eta}  \right)\norms{x^{k+1} - y^k}^2 \vspace{1ex}\\
&& - {~} \big(1 - \beta - \frac{\kappa_2(1 + r) L^2\eta^2}{r \gamma} - \frac{2(1-\mu)\rho}{\eta} \big)\norms{x^{k+1} - x^k}^2.
\end{array}
\hspace{-2ex}
\end{equation}
\end{lemma}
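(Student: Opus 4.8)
The plan is to follow exactly the two-step template of Lemma~\ref{le:EG4NE_key_estimate1} and Lemma~\ref{le:EG4NE_key_estimate1b}, exploiting the reformulation \eqref{eq:FBFS4NI_reform}, whose second line gives the single identity $x^{k+1} - x^k = -\eta(Fy^k + \zeta^k)$ with $\zeta^k \in Ty^k$, and whose first line gives $\eta(u^k + \zeta^k) = \beta(x^k - y^k)$.

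For the first estimate \eqref{eq:FBFS4NI_key_est1}, I would start from the elementary identity
\[
\norms{x^{k+1} - x^{\star}}^2 = \norms{x^k - x^{\star}}^2 + 2\iprods{x^{k+1} - x^k, x^{k+1} - x^{\star}} - \norms{x^{k+1} - x^k}^2,
\]
substitute $x^{k+1} - x^k = -\eta(Fy^k + \zeta^k)$, and split $x^{k+1} - x^{\star} = (x^{k+1} - y^k) + (y^k - x^{\star})$. The cross-term $2\eta\iprods{Fy^k+\zeta^k,\, x^{k+1}-y^k}$ is handled by writing $\eta(Fy^k + \zeta^k) = \beta(x^k - y^k) + \eta(Fy^k - u^k)$ via the first line of \eqref{eq:FBFS4NI_reform}, then applying the three-point identity $2\iprods{x^k - y^k,\, x^{k+1} - y^k} = \norms{x^k - y^k}^2 + \norms{x^{k+1} - y^k}^2 - \norms{x^{k+1} - x^k}^2$ to the $\beta$-part and Young's inequality $2\eta\iprods{Fy^k - u^k,\, x^{k+1}-y^k} \geq -\tfrac{\eta^2}{\gamma}\norms{Fy^k - u^k}^2 - \gamma\norms{x^{k+1}-y^k}^2$ to the rest. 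Collecting the $\norms{x^{k+1}-x^k}^2$ coefficients, which combine to $-(1-\beta)$, yields \eqref{eq:FBFS4NI_key_est1}.

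For the second estimate \eqref{eq:FBFS4NI_key_estimate1b}, I would bound the two remaining nonconstant pieces of \eqref{eq:FBFS4NI_key_est1}. The crucial observation is that $Fy^k + \zeta^k \in Fy^k + Ty^k = \Phi y^k$, so $(y^k, Fy^k+\zeta^k) \in \gra{\Phi}$ and the weak-Minty hypothesis applies directly at this point, giving $\iprods{Fy^k+\zeta^k,\, y^k-x^{\star}} \geq -\rho\norms{Fy^k+\zeta^k}^2$; combined with $\norms{Fy^k+\zeta^k}^2 = \tfrac{1}{\eta^2}\norms{x^{k+1}-x^k}^2$, this splits via Young's inequality with parameters $\mu \in [0,1]$ and $s > 0$ exactly as in \eqref{eq:EG4NE_lm32_proof1}. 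The term $\tfrac{\eta^2}{\gamma}\norms{Fy^k - u^k}^2$ is bounded using \eqref{eq:FBFS4NI_u_cond} and the $L$-Lipschitz continuity of $F$, precisely as in \eqref{eq:EG4NE_lm32_proof2}. Substituting both bounds and then absorbing the resulting $\norms{x^{k+1}-y^k}^2$ and $\norms{x^{k+1}-x^k}^2$ momentum terms into $\Pc_{k+1}$ through the definition \eqref{eq:FBFS4NI_Lyapunov_func} recovers the three bracketed coefficients of \eqref{eq:FBFS4NI_key_estimate1b}.

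The main obstacle is conceptual rather than computational: verifying that the weak-Minty condition on $\Phi$ can be invoked at $y^k$ with residual $Fy^k + \zeta^k$. This is precisely the feature distinguishing \eqref{eq:FBFS4NI} from \eqref{eq:EG4NI} — because FBFS produces a \emph{single} $\Phi$-residual at the intermediate point $y^k$, one never needs the maximal $3$-cyclic monotonicity of $T$ required for EG in Lemma~\ref{le:EG4NI_key_estimate}; the entire $T$-contribution is packaged into one element of $\gra{\Phi}$, so the star-comonotonicity of $\Phi$ alone suffices. Once this identification is in place, the remaining manipulations are identical to the equation case and reduce to routine bookkeeping.
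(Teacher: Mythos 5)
Your proposal is correct and follows essentially the same route as the paper: the first estimate is obtained exactly as in the paper's proof, via the reformulation \eqref{eq:FBFS4NI_reform}, the splitting $\eta(Fy^k + \zeta^k) = \beta(x^k - y^k) + \eta(Fy^k - u^k)$, the three-point identity, and Young's inequality with parameter $\gamma$. For the second estimate the paper simply states that the argument mirrors Lemma~\ref{le:EG4NE_key_estimate1b} and omits it; you fill in the omitted details correctly, in particular the key identification $(y^k, Fy^k + \zeta^k) \in \gra{\Phi}$, which lets the star-comonotonicity of $\Phi$ be invoked at $y^k$ with $\norms{Fy^k + \zeta^k}^2 = \tfrac{1}{\eta^2}\norms{x^{k+1} - x^k}^2$, exactly as the paper intends.
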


\begin{proof}
First, combining the first and second lines of  \eqref{eq:FBFS4NI_reform}, we get $x^{k+1} = x^k - \beta(x^k - y^k) + \eta(u^k - Fy^k) = x^k - \eta(Fy^k + \zeta^k)$.
Using this relation, we have
\begin{equation*} 
\arraycolsep=0.2em
\begin{array}{lcl}
\norms{x^{k+1} - x^{\star}}^2  &= & \norms{x^k - x^{\star}}^2 - 2\eta\iprods{Fy^k + \zeta^k, x^{k+1} - y^k} - \norms{x^{k+1} - x^k}^2 \vspace{1ex}\\
&& - {~}  2\eta\iprods{Fy^k + \zeta^k, y^k - x^{\star}}.
\end{array} 
\end{equation*}
Next, from the first line of \eqref{eq:FBFS4NI_reform}, we have $\eta(Fy^k + \zeta^k) = \beta(x^k - y^k) + \eta(Fy^k - u^k)$.
Hence, by the Cauchy-Schwarz's and Young's inequality, for any $\gamma > 0$, we can derive that
\begin{equation*}
\arraycolsep=0.2em
\begin{array}{lcl}
2\eta\iprods{Fy^k + \zeta^k, x^{k+1} - y^k} &= & 2\beta \iprods{x^k - y^k, x^{k+1} - y^k} + 2\eta\iprods{Fy^k - u^k, x^{k+1} - y^k} \vspace{1ex}\\
&\geq & \beta\left[\norms{x^k - y^k}^2 + \norms{x^{k+1} - y^k}^2 - \norms{x^{k+1} - x^k}^2\right] \vspace{1ex}\\
&& - {~} 2\eta\norms{Fy^k - u^k}\norms{x^{k+1} - y^k} \vspace{1ex}\\
&\geq & \beta \norms{x^k - y^k}^2 + (\beta - \gamma) \norms{x^{k+1} - y^k}^2  \vspace{1ex}\\
&& - {~} \beta \norms{x^{k+1} - x^k}^2 -  \frac{\eta^2}{\gamma}\norms{Fy^k - u^k}^2.
\end{array}
\end{equation*}
Finally, combining the last two expressions, we obtain \eqref{eq:FBFS4NI_key_est1}.
The proof of \eqref{eq:FBFS4NI_key_estimate1b} is similar to the proof of Lemma~\ref{le:EG4NE_key_estimate1b}, and we omit.
\Eproof
\end{proof}

\beforesubsec
\subsection{\bf The Sublinear Best-Iterate Convergence Rate of \eqref{eq:FBFS4NI}}\label{subsec:FBFS4NI_analysis}
\aftersubsec
Now, we are ready to state the best-iterate convergence rate of \eqref{eq:FBFS4NI} for any $u^k$ satisfying \eqref{eq:FBFS4NI_u_cond}.

\begin{theorem}[Best-iterate convergence rate]\label{th:FBFS4NI_convergence1}
For Inclusion~\eqref{eq:NI}, suppose that $\zer{\Phi} \neq \emptyset$, $F$  is $L$-Lipschitz continuous, and there exist $x^{\star} \in \zer{\Phi}$ and $\rho \geq 0$ such that $\iprods{u, x - x^{\star}} \geq - \rho \norms{u}^2$ for all $(x, u) \in\gra{\Phi}$.
Suppose further that $T$ is not necessarily monotone, but $\ran{J_{\eta T}} \subseteq \dom{F} = \R^p$ and $\dom{J_{\eta T}} = \R^p$.

Let $\Delta$ be defined by  \eqref{eq:EG4NE_kappa_i_quantities}, and $\underline{\eta}$ and $\bar{\eta}$ be defined by \eqref{eq:EG4NE_eta_bounds}.
Let $\sets{(x^k, y^k)}$ be generated by \eqref{eq:FBFS4NI} starting from $x^0 \in \dom{\Phi}$ and $y^{-1} = x^{-1} := x^0$ such that $u^k$ satisfies \eqref{eq:FBFS4NI_u_cond}.
If $L\rho \leq \Delta$ and $\eta$ is chosen such that $\eta \in [\underline{\eta}, \bar{\eta}]$, then $C_1$ and $C_2$ in \eqref{eq:EG4NE_C_constants}  are nonnegative.

Moreover, for any $K \geq 0$, we also have
\begin{equation}\label{eq:FBFS4NI_convergence_est1a}
\sum_{k=0}^K \big[  C_1 \norms{y^k - x^k}^2 + C_1 \norms{x^{k+1} - y^k}^2  + C_2 \norms{x^{k+1} - x^k}^2 \big] \leq \norms{x^0 - x^{\star}}^2.
\end{equation} 
If $C_1 > 0$, the following bounds also hold:
\begin{equation}\label{eq:FBFS4NI_convergence_est1b}
\left\{\begin{array}{ll}
{\displaystyle\min_{0\leq k \leq K}} \norms{Fy^k + \zeta^k}^2 \leq \frac{1}{K+1}\sum_{k=0}^K\norms{Fy^k + \zeta^k }^2 \leq \frac{\norms{x^0 - x^{\star}}^2 }{\Lambda  \eta^2 (K+1)}, \vspace{1ex}\\
{\displaystyle\min_{0\leq k \leq K}} \norms{Fx^k + \xi^k }^2 \leq \frac{1}{K+1}\sum_{k=0}^K\norms{Fx^k + \xi^k }^2 \leq \frac{ \Gamma \norms{x^0 - x^{\star}}^2 }{ K+1 }, 
\end{array}\right.
\end{equation} 
where $\zeta^k \in Ty^k$, $\xi^k \in Tx^k$, $\Lambda := \frac{C_1  + 2 C_2}{2}$, and $\Gamma :=   \frac{2L^2}{C_1} + \frac{2}{\Lambda\eta^2}$.
Thus we have
\begin{equation*}
\min_{0\leq k \leq K}\norms{Fx^k + \xi^k } = \BigO{\frac{1}{\sqrt{K}}} \quad  \text{and} \quad  \min_{0\leq k \leq K}\norms{Fy^k + \zeta^k } = \BigO{\frac{1}{\sqrt{K}}}.
\end{equation*}
The sequence $\sets{\norms{x^k - x^{\star}}}$ is bounded and $\sets{x^k}$ converges to $x^{\star}\in\zer{\Phi}$.
In addition, we have
\begin{equation*}
\lim_{k\to\infty}\norms{x^k - y^k} = \lim_{k\to\infty}\norms{Fx^k + \xi^k } = \lim_{k\to\infty}\norms{Fy^k + \zeta^k} =  0.
\end{equation*}
\end{theorem}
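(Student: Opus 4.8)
The plan is to follow the same Lyapunov-descent template used for Theorem~\ref{th:EG4NE_convergence1}, because Lemma~\ref{le:FBFS4NI_key_estimate} already delivers, for the potential $\Pc_k$ in \eqref{eq:FBFS4NI_Lyapunov_func}, the one-step estimate \eqref{eq:FBFS4NI_key_estimate1b}, which is algebraically identical to \eqref{eq:EG4NE_key_estimate1b} from the equation case. Hence the descent mechanics and the stepsize window are inherited essentially verbatim, and the only genuinely new ingredient relative to the equation setting is carrying the set-valued part $T$ through the residual quantities.

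First I would fix the free parameters in \eqref{eq:FBFS4NI_key_estimate1b} exactly as in Theorem~\ref{th:EG4NE_convergence1}: take $\gamma := L\eta$, $s := 1$, $r := \tfrac{\kappa_1 + \sqrt{\kappa_1^2 + 4\kappa_1}}{2}$, $\alpha := \tfrac{1+r}{r}$, and $\mu := \tfrac{r}{r + 2\kappa_2}$. With these choices the coefficients of $\norms{y^k - x^k}^2$ and of $\norms{x^{k+1} - y^k}^2$ both collapse to the single constant $C_1$, while the coefficient of $\norms{x^{k+1} - x^k}^2$ collapses to $C_2$, where $C_1,C_2$ are precisely those in \eqref{eq:EG4NE_C_constants}. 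The nonnegativity of $C_1,C_2$ for $\eta \in [\underline{\eta}, \bar{\eta}]$, and the nonemptiness $[\underline{\eta}, \bar{\eta}] \neq \emptyset$ whenever $L\rho \leq \Delta$ (with $\Delta,\underline{\eta},\bar{\eta}$ from \eqref{eq:EG4NE_kappa_i_quantities}--\eqref{eq:EG4NE_eta_bounds}), are exactly the content of Lemma~\ref{le:EG4NE_para_conditions}. Then \eqref{eq:FBFS4NI_key_estimate1b} reads $C_1\norms{y^k - x^k}^2 + C_1\norms{x^{k+1} - y^k}^2 + C_2\norms{x^{k+1} - x^k}^2 \le \Pc_k - \Pc_{k+1}$; summing from $k=0$ to $K$, dropping $\Pc_{K+1}\ge 0$, and using $\Pc_0 = \norms{x^0 - x^{\star}}^2$ (which holds because $x^{-1} = y^{-1} := x^0$ annihilates the extra terms in \eqref{eq:FBFS4NI_Lyapunov_func}) yields \eqref{eq:FBFS4NI_convergence_est1a}.

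Next I would convert this telescoped bound into the residual rates \eqref{eq:FBFS4NI_convergence_est1b}. For the first line, the second identity of \eqref{eq:FBFS4NI_reform} gives $x^{k+1} - x^k = -\eta(Fy^k + \zeta^k)$ with $\zeta^k \in Ty^k$, so $\eta^2\norms{Fy^k + \zeta^k}^2 = \norms{x^{k+1} - x^k}^2$; bounding the right-hand side by $C_1\norms{y^k - x^k}^2 + C_1\norms{x^{k+1} - y^k}^2 + C_2\norms{x^{k+1} - x^k}^2$ via Young's inequality with $\Lambda := \tfrac{C_1 + 2C_2}{2}$ (the same weights as in Theorem~\ref{th:EG4NE_convergence1}) and summing gives the stated bound on $\tfrac{1}{K+1}\sum_k \norms{Fy^k + \zeta^k}^2$. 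For the second line I would transfer this genuine residual at the resolvent point $y^k$ to an iterate residual $\norms{Fx^k + \xi^k}$, $\xi^k \in Tx^k$, through the triangle inequality together with the $L$-Lipschitz continuity of $F$, absorbing the discrepancy $\norms{Fx^k - Fy^k} \le L\norms{x^k - y^k}$ which is summable by \eqref{eq:FBFS4NI_convergence_est1a}. This mirrors the $\norms{Fx^k}$-bound in the third line of the equation case, now transporting the multivalued parts along.

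Finally, boundedness and convergence follow from the descent: \eqref{eq:FBFS4NI_key_estimate1b} shows $\sets{\Pc_k}$ is non-increasing, so $\norms{x^k - x^{\star}}^2 \le \Pc_k \le \Pc_0$, and \eqref{eq:FBFS4NI_convergence_est1a} forces $\norms{x^k - y^k}\to 0$ together with the vanishing of the residuals; the full convergence of $\sets{x^k}$ to some $x^{\star}\in\zer{\Phi}$ then comes from the standard finite-dimensional argument (a bounded sequence with a cluster point that solves \eqref{eq:NI} by closedness of $\gra{\Phi}$, upgraded to full convergence by quasi-Fej\'er monotonicity), as in \cite{malitsky2015projected}. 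I expect the main obstacle to be precisely this residual-transfer step producing a \emph{legitimate} $\xi^k \in Tx^k$: unlike \eqref{eq:EG4NI}, here $x^k$ is a pure forward point rather than a resolvent output, so one must choose $\xi^k$ with care and lean on the hypotheses $\ran{J_{\eta T}} \subseteq \dom{F} = \R^p$ and $\dom{J_{\eta T}} = \R^p$ to keep every quantity well defined, whereas the descent itself is routine once the parameters are matched to Lemma~\ref{le:EG4NE_para_conditions}.
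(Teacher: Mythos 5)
Your proposal is correct and coincides with the paper's own proof, which is stated only as ``very similar to the proof of Theorem~\ref{th:EG4NE_convergence1}, but using Lemma~\ref{le:FBFS4NI_key_estimate}'': you reconstruct exactly that route, with the same choices $\gamma := L\eta$, $s := 1$, $\mu := \tfrac{r}{r+2\kappa_2}$ collapsing \eqref{eq:FBFS4NI_key_estimate1b} to the constants of \eqref{eq:EG4NE_C_constants}, the same appeal to Lemma~\ref{le:EG4NE_para_conditions}, the same telescoping of $\Pc_k$ to get \eqref{eq:FBFS4NI_convergence_est1a}, and the same Young-inequality conversions (via $x^{k+1}-x^k = -\eta(Fy^k+\zeta^k)$ from \eqref{eq:FBFS4NI_reform}) to the residual bounds \eqref{eq:FBFS4NI_convergence_est1b}. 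Your closing caveat is the one genuinely delicate point, and you identify it accurately: since $x^k$ is a forward point rather than a resolvent output, the quantity the argument actually controls is the mixed residual $\norms{Fx^k + \zeta^k}$ with $\zeta^k \in Ty^k$ (bounded by $\norms{Fy^k+\zeta^k} + L\norms{x^k - y^k}$), and the paper's omitted proof glosses over precisely this selection issue, so your reconstruction is faithful to its intended argument.
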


\begin{proof}
The proof of this theorem is very similar to the proof of Theorem~\ref{th:EG4NE_convergence1}, but using Lemma~\ref{le:FBFS4NI_key_estimate}, and we do not repeat it here.
\Eproof
\end{proof}

\beforesubsec
\subsection{\bf The Sublinear Convergence Rates of Two Instances}\label{subsec:FBFS4NI_special_cases}
\aftersubsec
Now, we specify Theorem~\ref{th:FBFS4NI_convergence1} for two instances: Tseng's method (\textbf{Variant 1}) and the forward-reflected-backward splitting method (\textbf{Variant 2}).
These results are very similar to the ones in Subsection~\ref{subsec:special_case_of_EG}, but we state them here for completeness without proof.

\begin{corollary}[FBFS method]\label{co:FBFS4NI_convergence_case1}
For Inclusion \eqref{eq:NI}, suppose that $\zer{\Phi} \neq \emptyset$, $F$  is $L$-Lipschitz continuous, and there exist $x^{\star} \in \zer{\Phi}$ and $\rho \geq 0$ such that $\iprods{w, x - x^{\star}} \geq - \rho \norms{w}^2$ for all $(x, w) \in\gra{\Phi}$.
Suppose further that $T$ is not necessarily monotone, but $\ran{J_{\eta T}} \subseteq \dom{F} = \R^p$ and $\dom{J_{\eta T}} = \R^p$.

Let $\sets{(x^k, y^k)}$ be generated by \eqref{eq:FBFS4NI} using  $u^k := Fx^k$.
Assume that $L\rho \leq \frac{\beta^2}{16}$ for any $0 < \beta \leq 1$, and $\eta$ is chosen as
\begin{equation}\label{eq:FBFS4NI_stepsize} 
\arraycolsep=0.2em
\begin{array}{lcl}
0 \leq \frac{\beta - \sqrt{\beta^2 - 16L\rho}}{2L} < \eta < \frac{\beta  + \sqrt{\beta^2 - 16L\rho} }{2L} \leq \frac{\beta}{L}.
\end{array}
\end{equation}
Then, for some $\xi^k \in Tx^k$  and $\Gamma$ given in Theorem~\ref{th:FBFS4NI_convergence1}, we have
\begin{equation}\label{eq:FBFS4NI_convergence_02}
\min_{0 \leq k \leq K}\norms{Fx^{k} + \xi^{k}}^2 \leq \frac{1}{K+1}\sum_{k=0}^{K}\norms{Fx^{k} + \xi^{k}}^2 \leq \frac{ \Gamma \norms{x^0 - x^{\star}}^2}{ K+1 }.
\end{equation} 
Moreover, $\sets{\norms{x^k - x^{\star}}}$ is nonincreasing and bounded, and $\sets{x^k}$ converges to $x^{\star} \in \zer{\Phi}$.
For $\xi^k \in Tx^k$ and $\zeta^k \in Ty^k$, we also have
\begin{equation}\label{eq:FBFS4NI_convergence_01}
\lim_{k\to\infty}\norms{x^k - y^k}  =  \lim_{k\to\infty}\norms{Fx^k + \xi^k} = \lim_{k\to\infty}\norms{Fy^k + \zeta^k} = 0.
\end{equation}
If $J_{\eta T}$ is nonexpansive and $\Gc_{\eta }x := \frac{1}{\eta}(x - J_{\eta T}(x - \eta Fx))$ is given by \eqref{eq:FB_residual}, then 
\begin{equation}\label{eq:FBFS4NI_convergence_02_03}
\min_{0 \leq k \leq K}\norms{\Gc_{\eta }x^k} = \BigO{\frac{1}{\sqrt{K}}} \quad \text{and} \quad \min_{0 \leq k \leq K}\norms{ \Gc_{\eta }y^k} = \BigO{\frac{1}{ \sqrt{K}}}.
\end{equation}
\end{corollary}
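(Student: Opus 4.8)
The plan is to obtain this corollary as a direct specialization of Theorem~\ref{th:FBFS4NI_convergence1} to the choice $u^k := Fx^k$, so that no new estimate is required beyond a careful bookkeeping of the generic constants. First I would note that $u^k := Fx^k$ gives $Fx^k - u^k = 0$, whence \eqref{eq:FBFS4NI_u_cond} holds trivially with $\kappa_1 = \kappa_2 = 0$, and therefore $r := \frac{\kappa_1 + \sqrt{\kappa_1^2 + 4\kappa_1}}{2} = 0$. The standing hypotheses of Theorem~\ref{th:FBFS4NI_convergence1} ($\zer{\Phi}\neq\emptyset$, the $L$-Lipschitz continuity of $F$, the weak-Minty bound $\iprods{u, x - x^{\star}} \geq -\rho\norms{u}^2$ on $\gra{\Phi}$, and $\ran{J_{\eta T}}\subseteq\dom{F} = \R^p$ with $\dom{J_{\eta T}} = \R^p$) are inherited verbatim, so the only task is to evaluate the constants of \eqref{eq:EG4NE_kappa_i_quantities}--\eqref{eq:EG4NE_C_constants} at $\kappa_1 = \kappa_2 = 0$.

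The second step is this reduction. Since $\kappa_2 = 0$, every $\kappa_2$-weighted term (in the potential function \eqref{eq:FBFS4NI_Lyapunov_func}, in $C_2$, and in the second branch of the $\min$ defining $\Delta$) vanishes identically; adopting the convention $\mu = 1$ recorded after \eqref{eq:EG4NE_eta_bounds}, together with $r = 0$, then yields $\Delta = \frac{\beta^2}{16}$, $C_1 = \beta - L\eta - \frac{4\rho}{\eta}$, and $C_2 = 1 - \beta \geq 0$. Because the constraint $C_2 \geq 0$ now holds for every $\eta > 0$, the auxiliary bounds $\underline{\eta}_2, \bar{\eta}_2$ become vacuous, so $[\underline{\eta}, \bar{\eta}] = [\underline{\eta}_1, \bar{\eta}_1]$ collapses precisely to the interval in \eqref{eq:FBFS4NI_stepsize}; choosing $\eta$ strictly inside it forces $C_1 > 0$. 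With these identifications, \eqref{eq:FBFS4NI_convergence_02} is exactly the second line of \eqref{eq:FBFS4NI_convergence_est1b}, while the boundedness and monotonicity of $\sets{\norms{x^k - x^{\star}}}$, the convergence of $\sets{x^k}$ to some $x^{\star}\in\zer{\Phi}$, and the limits \eqref{eq:FBFS4NI_convergence_01} are carried over unchanged from Theorem~\ref{th:FBFS4NI_convergence1}.

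The final step handles the forward-backward residual $\Gc_{\eta}$. The key is the identity $x = J_{\eta T}(x + \eta\xi)$ valid for any $\xi\in Tx$, which combined with nonexpansiveness of $J_{\eta T}$ gives $\eta\norms{\Gc_{\eta}x} = \norms{J_{\eta T}(x + \eta\xi) - J_{\eta T}(x - \eta Fx)} \leq \eta\norms{Fx + \xi}$; thus the residual estimate \eqref{eq:FBR_bound2} in fact holds under the stated nonexpansiveness hypothesis (slightly weaker than the firm nonexpansiveness under which it was recorded, and automatic when $T$ is maximally monotone). Applying it at $x = x^k$ with $\xi^k\in Tx^k$ and at $x = y^k$ with $\zeta^k\in Ty^k$ gives $\norms{\Gc_{\eta}x^k} \leq \norms{Fx^k + \xi^k}$ and $\norms{\Gc_{\eta}y^k} \leq \norms{Fy^k + \zeta^k}$, so \eqref{eq:FBFS4NI_convergence_02_03} follows by feeding in the best-iterate bounds \eqref{eq:FBFS4NI_convergence_est1b}. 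The only genuinely delicate point is the bookkeeping at the degenerate values $\kappa_1 = \kappa_2 = r = 0$, in particular resolving the $0/0$ expressions for $\mu$ and for $\underline{\eta}_2, \bar{\eta}_2$ through the $\mu = 1$ convention and the vacuity of the $C_2$-constraint; everything else is a transcription of Theorem~\ref{th:FBFS4NI_convergence1} and Lemma~\ref{le:FBFS4NI_key_estimate}.
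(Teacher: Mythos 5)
Your proposal is correct and follows exactly the route the paper intends: the corollary is stated there without proof as the direct specialization of Theorem~\ref{th:FBFS4NI_convergence1} to $u^k := Fx^k$ (mirroring the proof of Corollary~\ref{co:EG4NE_convergence}), and your bookkeeping at $\kappa_1 = \kappa_2 = r = 0$ with the $\mu = 1$ convention reproduces the paper's constants $\Delta = \frac{\beta^2}{16}$, $C_1 = \beta - L\eta - \frac{4\rho}{\eta}$, $C_2 = 1-\beta$, and the stepsize interval \eqref{eq:FBFS4NI_stepsize}. Your two added observations --- that mere nonexpansiveness of $J_{\eta T}$ (rather than the firm nonexpansiveness under which \eqref{eq:FBR_bound2} was recorded) suffices via the identity $x = J_{\eta T}(x + \eta\xi)$ for $\xi \in Tx$, and that the nonincreasing property of $\sets{\norms{x^k - x^{\star}}}$ (not stated in the theorem, which only asserts boundedness) follows because the potential \eqref{eq:FBFS4NI_Lyapunov_func} collapses to $\norms{x^k - x^{\star}}^2$ when $\kappa_1 = \kappa_2 = 0$ --- are precisely the details the paper leaves implicit.
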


\begin{corollary}[Past-FBFS/OG method]\label{co:FBFS4NI_convergence_case2}
For Inclusion~\eqref{eq:NI}, suppose that $\zer{\Phi} \neq \emptyset$, $F$  is $L$-Lipschitz continuous, and  there exist $x^{\star} \in \zer{\Phi}$ and $\rho \geq 0$ such that $\iprods{u, x - x^{\star}} \geq - \rho \norms{u}^2$ for all $(x, u) \in\gra{\Phi}$.
Suppose further that $T$ is not necessarily monotone, but $\ran{J_{\eta T}} \subseteq \dom{F} = \R^p$ and $\dom{J_{\eta T}} = \R^p$.

Let $\sets{(x^k, y^k)}$ be generated by \eqref{eq:FBFS4NI} starting from $x^0 \in\dom{\Phi}$ and $y^{-1} := x^0$ using $u^k := Fy^{k-1}$.
Assume that $L\rho \leq  \frac{\beta^2}{8(3+\sqrt{5})}$ and $\eta$ is chosen as
\begin{equation}\label{eq:FBFS4NI_stepsize2}
\arraycolsep=0.2em
\begin{array}{l}
0 \leq \frac{\beta - \sqrt{\beta^2 - 8(3+\sqrt{5})L\rho}}{(3+\sqrt{5})L} < \eta < \frac{\beta + \sqrt{\beta^2 - 8(3+\sqrt{5})L\rho}}{(3+\sqrt{5})L} \leq \frac{2\beta}{(3+\sqrt{5})L}.
\end{array}
\end{equation}
Then, we have
\begin{equation}\label{eq:FBFS4NI_convergence_03}
\hspace{-2ex}
\arraycolsep=0.1em
\begin{array}{lcl}
{\displaystyle\min_{0\leq k \leq K}}\norms{Fx^{k} + \xi^{k} }^2 & \leq & {\displaystyle\frac{1}{K+1}\sum_{k=0}^{K}}  \norms{Fx^{k} + \xi^{k} }^2 \leq \dfrac{  \Gamma \norms{x^0 - x^{\star}}^2}{ K + 1 },
\end{array}
\hspace{-2ex}
\end{equation} 
where $\xi^k \in Tx^k$ and $\Gamma$ is given in Theorem~\ref{th:FBFS4NI_convergence1}.
Moreover, $\sets{\norms{x^k - x^{\star}}}$ is nonincreasing and bounded, and $\sets{x^k}$ converges to $x^{\star} \in \zer{\Phi}$.
If $J_{\eta T}$ is  nonexpansive, then \eqref{eq:FBFS4NI_convergence_02_03} remains valid.
\end{corollary}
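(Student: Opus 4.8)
The plan is to obtain Corollary~\ref{co:FBFS4NI_convergence_case2} as a direct specialization of Theorem~\ref{th:FBFS4NI_convergence1}, exactly mirroring the way Corollary~\ref{co:EG4NE_convergence_option2} was derived from Theorem~\ref{th:EG4NE_convergence1}. First I would record that the choice $u^k := Fy^{k-1}$ forces \eqref{eq:FBFS4NI_u_cond} to hold with $\kappa_1 = 1$ and $\kappa_2 = 0$, since then $\norms{Fx^k - u^k}^2 = \norms{Fx^k - Fy^{k-1}}^2$. Substituting these into $r := \frac{\kappa_1 + \sqrt{\kappa_1^2 + 4\kappa_1}}{2}$ gives $r = \frac{1+\sqrt{5}}{2}$, hence $1 + r = \frac{3+\sqrt{5}}{2}$.

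Next I would reduce each parameter quantity appearing in the hypotheses of Theorem~\ref{th:FBFS4NI_convergence1}. With $\kappa_2 = 0$ the second branch of the minimum in \eqref{eq:EG4NE_kappa_i_quantities} does not bind, so $\Delta = \frac{\beta^2}{16(1+r)} = \frac{\beta^2}{8(3+\sqrt{5})}$, which matches the assumed threshold $L\rho \leq \frac{\beta^2}{8(3+\sqrt{5})}$. Since $\mu := \frac{r}{r + 2\kappa_2} = 1$ when $\kappa_2 = 0$, the bounds $\underline{\eta}_1, \bar{\eta}_1$ in \eqref{eq:EG4NE_eta_bounds} collapse to precisely the endpoints of \eqref{eq:FBFS4NI_stepsize2} once one uses $16(1+r) = 8(3+\sqrt{5})$ and $2(1+r)L = (3+\sqrt{5})L$. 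I would then verify the constants in \eqref{eq:EG4NE_C_constants}: with $\mu = 1$ and $\kappa_2 = 0$ they reduce to $C_1 = \beta - \frac{(3+\sqrt{5})}{2}L\eta - \frac{4\rho}{\eta}$ and $C_2 = 1 - \beta \geq 0$, so both are nonnegative on the prescribed range of $\eta$, and $C_1 > 0$ holds strictly on the open interval \eqref{eq:FBFS4NI_stepsize2}.

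Once the hypotheses of Theorem~\ref{th:FBFS4NI_convergence1} are in place, the best-iterate bound \eqref{eq:FBFS4NI_convergence_03} follows immediately from the second line of \eqref{eq:FBFS4NI_convergence_est1b}, and the boundedness of $\sets{\norms{x^k - x^{\star}}}$ together with convergence of $\sets{x^k}$ to a point of $\zer{\Phi}$ is inherited verbatim. For the final claim I would invoke the residual bound \eqref{eq:FBR_bound2}, which yields $\norms{\Gc_{\eta}x^k} \leq \norms{Fx^k + \xi^k}$ and $\norms{\Gc_{\eta}y^k} \leq \norms{Fy^k + \zeta^k}$ when $J_{\eta T}$ is (firmly) nonexpansive; combining these with the two best-iterate estimates in \eqref{eq:FBFS4NI_convergence_est1b} gives \eqref{eq:FBFS4NI_convergence_02_03}.

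The only genuinely delicate point is the $\kappa_2 \to 0$ degeneracy: the bounds $\underline{\eta}_2, \bar{\eta}_2$ in \eqref{eq:EG4NE_eta_bounds} carry $\kappa_2$ in the denominator and formally read $0/0$. I would sidestep this rather than take a limit, by working with $C_2$ directly: because $\mu = 1$ annihilates the $2(1-\mu)\rho/\eta$ term and $\kappa_2 = 0$ annihilates the $\alpha\kappa_2 L\eta$ term, one simply has $C_2 = 1 - \beta$, nonnegative for every $\beta \in (0,1]$ with no extra constraint on $\eta$ beyond the $C_1$-interval; hence the admissible range is governed entirely by $[\underline{\eta}_1, \bar{\eta}_1]$, i.e.\ \eqref{eq:FBFS4NI_stepsize2}. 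A minor bookkeeping check is the gap between the stated \emph{nonexpansiveness} of $J_{\eta T}$ and the firm nonexpansiveness underlying \eqref{eq:FBR_bound2}; since resolvents of monotone operators are automatically firmly nonexpansive, this introduces no real difficulty.
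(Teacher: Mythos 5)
Your proposal is correct and follows exactly the route the paper intends: Corollary~\ref{co:FBFS4NI_convergence_case2} is stated without proof as a direct specialization of Theorem~\ref{th:FBFS4NI_convergence1} with $\kappa_1 = 1$, $\kappa_2 = 0$, $r = \frac{1+\sqrt{5}}{2}$, $\mu = 1$, and your reductions of $\Delta$, $[\underline{\eta}_1,\bar{\eta}_1]$, $C_1$, and $C_2 = 1-\beta$ (including the clean handling of the $\kappa_2 \to 0$ degeneracy in $\underline{\eta}_2,\bar{\eta}_2$) match the paper's parameter bookkeeping precisely, mirroring how Corollary~\ref{co:EG4NE_convergence_option2} follows from Theorem~\ref{th:EG4NE_convergence1}. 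One small correction to your closing remark: in this section $T$ is \emph{not} assumed monotone, so you cannot invoke firm nonexpansiveness of resolvents of monotone operators; however, no difficulty arises because plain nonexpansiveness of $J_{\eta T}$ already yields \eqref{eq:FBR_bound2}, since $x = J_{\eta T}(x+\eta\xi)$ for $(x,\xi)\in\gra{T}$ gives $\eta\norms{\Gc_{\eta}x} = \norms{J_{\eta T}(x+\eta\xi) - J_{\eta T}(x-\eta Fx)} \leq \eta\norms{Fx+\xi}$.
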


\begin{remark}\label{re:FBFS4NI_last_iterate}
If $\rho = 0$, i.e. there exist $x^{\star}\in\zer{\Phi}$ such that $\iprods{w, x - x^{\star}} \geq 0$ for all $(x, w) \in \gra{\Phi}$, then the condition on $\eta$ from Theorem~\ref{th:FBFS4NI_convergence1} reduces to $0 < \eta < \frac{\beta^2}{(1+r)L}$ as we have  seen in Remark~\ref{re:EG4NE_rm1}.
For FBFS with $u^k := Fx^k$,  we have $0 < \eta < \frac{\beta^2}{L}$, while for  past-FBFS with $u^k := Fy^{k-1}$, we have $0 < \eta < \frac{2\beta^2}{(3+\sqrt{5})L}$.
Hitherto, we have only proven the best-iterate convergence rates of \eqref{eq:FBFS4NI}.
The last-iterate convergence rate of this method remains open.
\end{remark}

\beforesec
\section{Two Other Variants of The Extragradient Method}\label{sec:other_methods}
\aftersec
In this section, we first provide a new convergence analysis for the reflected forward-backward splitting  (RFBS) algorithm \cite{cevher2021reflected,malitsky2015projected}.
Our best-iterate convergence rate analysis is different from the one in \cite{cevher2021reflected} for monotone inclusions.
Our last-iterate convergence rate analysis is new, simple, and general than the one in  \cite{cai2022baccelerated}, which is only for \eqref{eq:VIP}.
Next, we extend the convergence analysis for the  golden ratio (GR) method \cite{malitsky2019golden} to a wider range of its parameters.
Our new range shows a potential improvement of GR on concrete examples in Section~\ref{sec:numerical_experiments}.
Nevertheless, its last-iterate convergence rate is still open.

\beforesubsec
\subsection{\mytb{New Convergence Analysis of The RFBS Method for \eqref{eq:NI}}}\label{subsec:RFBS4NI}
\aftersubsec
The RFBS method was proposed by Malitsky in  \cite{malitsky2015projected} to solve \eqref{eq:VIP} and it is called the \mytb{projected reflected gradient} method.
It was generalized to solve monotone \eqref{eq:NI} in \cite{cevher2021reflected}. 
The last iterate convergence rate of the \mytb{projected reflected gradient} method for \eqref{eq:VIP} was recently proven in \cite{cai2022baccelerated}.
Here, we provide a new and full convergence rate analysis for RFBS to solve \eqref{eq:NI} by exploiting our analysis techniques in the previous sections.

\beforesubsubsec
\subsubsection{The RFBS Method for \eqref{eq:NI}}\label{subsubsec:RFBFS_scheme}
\aftersubsubsec
The reflected forward-backward splitting (RFBS) method to approximate a solution of \eqref{eq:NI} is described as follows.
Starting from $x^0 \in \dom{\Phi}$, we set $x^{-1} := x^0$ and at each iteration $k\geq 0$, we update
\vspace{-0.5ex}
\begin{equation}\label{eq:RFBS4NI}
\arraycolsep=0.2em
\left\{\begin{array}{lcl}
y^k &:= & 2x^k - x^{k-1}, \vspace{1ex}\\
x^{k+1} &:= & J_{\eta T}(x^k - \eta Fy^k),
\end{array}\right.
\tag{RFBS2}
\vspace{-0.5ex}
\end{equation}
where $\eta > 0$ is a given step-size, determined later.

Clearly, if we eliminate $y^k$, then \eqref{eq:RFBS4NI} can be rewritten in one line:
\vspace{-0.5ex}
\begin{equation*}
x^{k+1} := J_{\eta T}(x^k - \eta F(2x^k - x^{k-1})).
\vspace{-0.5ex}
\end{equation*}
From the second line of \eqref{eq:RFBS4NI}, we have $\xi^{k+1} := \frac{1}{\eta}( x^k - \eta Fy^k - x^{k+1})  \in  Tx^{k+1}$.
As before, if we denote
\vspace{-0.5ex}
\begin{equation}\label{eq:wk_terms}
w^k := Fx^k + \xi^k \quad \text{and} \quad \hat{w}^k := Fy^{k-1} + \xi^k,
\vspace{-0.5ex}
\end{equation}
then we can rewrite \eqref{eq:RFBS4NI} equivalently to 
\begin{equation}\label{eq:RFBS4NI_reform}
\vspace{-0.5ex}
\arraycolsep=0.2em
\left\{\begin{array}{lclcl}
y^k &:= & x^k + x^k - x^{k-1} &= & x^k - \eta\hat{w}^k, \vspace{1ex}\\
x^{k+1} &:= & x^k - \eta\hat{w}^{k+1}.
\end{array}\right.
\vspace{-0.5ex}
\end{equation}
This expression leads to $x^{k+1}  =  y^k -\eta(\hat{w}^{k+1} - \hat{w}^k)$. 

\beforesubsubsec
\subsubsection{Key Estimates for Convergence Analysis}\label{subsubsec:RFBS4NI_key_estimate}
\aftersubsubsec
Next, we prove the following lemmas for our analysis.

\begin{lemma}\label{le:RFBS4NI_key_est1}
For Inclusion~\eqref{eq:NI}, assume that $\zer{\Phi} \neq \emptyset$, $T$  is maximally monotone and $F$ is  $L$-Lipschitz continuous and satisfies $\iprods{Fx - Fx^{\star}, x - x^{\star}} \geq 0$ for all $x\in\dom{F}$ and some $x^{\star}\in\zer{\Phi}$.
Let $\sets{(x^k, y^k)}$ be generated by \eqref{eq:RFBS4NI} using $\eta > 0$ and $\Vc_k$ be defined as
\begin{equation}\label{eq:RFBS4NI_potential_func}
\arraycolsep=0.2em
\begin{array}{lcl}
\Vc_k & := & \norms{x^k - x^{\star}}^2 + 2\norms{x^k - x^{k-1}}^2 +  \big( 1 - \sqrt{2}L \eta \big) \norms{x^k - y^{k-1}}^2 \vspace{1ex}\\
& & + {~} 2\eta \iprods{Fy^{k-1} - Fx^{\star}, x^k - x^{k-1}}.
\end{array}
\end{equation}
Then, we have
\vspace{-0.5ex}
\begin{equation}\label{eq:RFBS4NI_est1} 
\arraycolsep=0.2em
\begin{array}{lcl}
\Vc_k  & \geq & \Vc_{k+1} + \big[ 1 - (1+\sqrt{2})L\eta \big] \left[ \norms{y^k - x^k}^2 + \norms{x^k - y^{k-1}}^2 \right], \vspace{1.5ex}\\
\Vc_k  & \geq &   (1 - L\eta) \norms{x^k - x^{\star}}^2 + (1 - (1+\sqrt{2})L\eta) \norms{x^k - y^{k-1}}^2 \vspace{1ex}\\
&& + {~} 2(1 -  L\eta) \norms{x^k - x^{k-1}}^2.
\end{array}
\vspace{-0.5ex}
\end{equation}
\end{lemma}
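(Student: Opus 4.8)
The plan is to establish the two inequalities in \eqref{eq:RFBS4NI_est1} separately, relying throughout on the reformulation \eqref{eq:RFBS4NI_reform} and the elementary identity $y^k - x^k = x^k - x^{k-1}$ that follows from $y^k = 2x^k - x^{k-1}$. For the descent inequality (first line of \eqref{eq:RFBS4NI_est1}), I would first expand $\norms{x^{k+1} - x^{\star}}^2$ using $x^{k+1} - x^k = -\eta\hat{w}^{k+1} = -\eta(Fy^k + \xi^{k+1})$, then invoke the monotonicity of $T$ at $(x^{k+1},\xi^{k+1})$ and $(x^{\star},\xi^{\star})$ with $\xi^{\star} = -Fx^{\star}$, together with the star-monotonicity $\iprods{Fy^k - Fx^{\star}, y^k - x^{\star}} \geq 0$, to reach $\norms{x^{k+1}-x^{\star}}^2 \leq \norms{x^k-x^{\star}}^2 - 2\eta\iprods{Fy^k - Fx^{\star}, x^{k+1}-y^k} - \norms{x^{k+1}-x^k}^2$. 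Adding the remaining three terms of $\Vc_{k+1}$ and using $y^k - x^k = x^k - x^{k-1}$ makes the $Fx^{\star}$-contributions cancel, leaving a discrete energy difference controlled by the single cross term $-2\eta\iprods{Fy^k - Fy^{k-1}, x^k - x^{k-1}}$ and the quadratics $\norms{x^{k+1}-x^k}^2$ and $\norms{x^{k+1}-y^k}^2$.

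The crux is then to show that $\Vc_k - \Vc_{k+1}$ minus the claimed residual is nonnegative. Writing $a := x^{k+1}-x^k$, $b := x^k - x^{k-1}$, $g := Fy^k - Fy^{k-1}$ and $h := \xi^{k+1}-\xi^k$, the reformulation gives $a - b = -\eta(g+h)$, so substituting $a = b - \eta(g+h)$ causes the two copies of $\iprods{g,b}$ to cancel exactly. What survives is handled by the monotonicity of $T$ between consecutive iterates, $\iprods{h, a} \geq 0$, followed by Young's inequality with the sharp parameter $\delta = \sqrt{2}\, L\eta^3$, the Lipschitz bound $\norms{g} \leq L\norms{y^k - y^{k-1}} = L\norms{b + c}$ with $c := x^k - y^{k-1}$ (which uses $y^k - y^{k-1} = b + c$), and the split $\norms{b+c}^2 \leq (1+\tau)\norms{b}^2 + (1+\tau^{-1})\norms{c}^2$ with $\tau = 1 + \sqrt{2}$. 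This exact choice of $\tau$ forces both leftover coefficients of $\norms{b}^2$ and $\norms{c}^2$ to vanish, which is precisely where the constant $1 - (1+\sqrt{2})L\eta$ and the factor $\sqrt{2}$ appearing in $\Vc_k$ originate.

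For the lower bound (second and third lines of \eqref{eq:RFBS4NI_est1}), the argument is short: I would bound the cross term $2\eta\iprods{Fy^{k-1}-Fx^{\star}, x^k - x^{k-1}}$ from below by Cauchy--Schwarz, the triangle inequality $\norms{y^{k-1}-x^{\star}} \leq \norms{y^{k-1}-x^k} + \norms{x^k - x^{\star}}$, the $L$-Lipschitz continuity of $F$, and Young's inequality. The resulting negative multiples of $\norms{x^k-x^{\star}}^2$, $\norms{x^k-y^{k-1}}^2$ and $\norms{x^k-x^{k-1}}^2$ are each exactly $L\eta$ times those norms and are absorbed by the corresponding coefficient deficits between $\Vc_k$ and the claimed lower bound, so the three $L\eta$ terms cancel identically.

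I expect the descent inequality to be the main obstacle: keeping track of the exact algebraic cancellation after the substitution $a = b - \eta(g+h)$, and pinning down the Young parameters $\delta = \sqrt{2}\,L\eta^3$ and $\tau = 1+\sqrt{2}$ so that the estimate is tight rather than merely sufficient. By contrast, the lower bound is a routine application of Cauchy--Schwarz, the triangle inequality, and Young's inequality, with the constants already dictated by the structure of $\Vc_k$.
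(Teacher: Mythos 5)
Your proposal is correct and follows essentially the same route as the paper's proof: the same potential $\Vc_k$, the same two monotonicity inequalities for $T$ (at the pair $(x^{k+1}, x^{\star})$ and between consecutive iterates, i.e. $\iprods{\xi^{k+1}-\xi^k, x^{k+1}-x^k}\geq 0$), the star-monotonicity of $F$ at $y^k$, and the Lipschitz/Young estimates built on the split $y^k - y^{k-1} = (x^k - x^{k-1}) + (x^k - y^{k-1})$ that generate the constants $\sqrt{2}$ and $1+\sqrt{2}$; your only deviation is organizational, in that you substitute $a = b - \eta(g+h)$ and perform one consolidated Young step, whereas the paper bounds the two inner products $\iprods{Fy^k - Fy^{k-1}, x^{k+1}-x^k}$ and $\iprods{Fy^k - Fy^{k-1}, y^k - x^{k+1}}$ separately. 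Your sharp parameters check out ($\tau = 1+\sqrt{2}$ is indeed forced by matching both the $\norms{x^k - x^{k-1}}^2$ and $\norms{x^k - y^{k-1}}^2$ coefficients, and $\delta = \sqrt{2}L\eta^3$ makes the residual $\norms{g}^2$ coefficient collapse to $\eta/(\sqrt{2}L)$), and your lower-bound argument coincides with the paper's Cauchy--Schwarz/triangle/Young absorption of the three $L\eta$ terms.
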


\begin{proof}
First, since \eqref{eq:RFBS4NI} is equivalent to \eqref{eq:RFBS4NI_reform}, we have $x^{k+1} - x^k = -\eta\hat{w}^{k+1}$ from line 2 of \eqref{eq:RFBS4NI_reform}.
Using this expression, for any $x^{\star}\in\zer{\Phi}$, we get
\begin{equation}\label{eq:RFBS4NI_proof1}
\arraycolsep=0.2em
\begin{array}{lcl}
\norms{x^{k+1} - x^{\star}}^2 &= & \norms{x^k - x^{\star}}^2 - 2\eta\iprods{\hat{w}^{k+1}, x^{k+1} - x^{\star}} - \norms{x^{k+1} - x^k}^2.
\end{array} 
\end{equation}
Next, since  $Fx^{\star} + \xi^{\star} = 0$ from the fact that $x^{\star} \in \zer{\Phi}$ and $T$ is monotone, it is obvious to show that $\iprods{\xi^{k+1}, x^{k+1} - x^{\star}} \geq \iprods{\xi^{\star}, x^{k+1} - x^{\star}} = -\iprods{Fx^{\star}, x^{k+1} - x^{\star}}$, where $\xi^{\star}\in Tx^{\star}$.
Using this relation, we can prove that
\begin{equation}\label{eq:RFBS4NI_proof2a} 
\arraycolsep=0.2em
\begin{array}{lcl}
\iprods{\hat{w}^{k+1}, x^{k+1} - x^{\star}} &= &  \iprods{Fy^k, x^{k+1} - x^{\star}} + \iprods{\xi^{k+1}, x^{k+1} - x^{\star}} \vspace{1ex}\\
& \geq & \iprods{Fy^k - Fx^{\star}, y^k - x^{\star}} - \iprods{Fy^k - Fx^{\star},  y^k - x^{k+1} }.
\end{array} 
\end{equation}
Utilizing $y^k - x^k = x^k - x^{k-1}$ from the first line of \eqref{eq:RFBS4NI}, we can further expand the term $\Tc_{[1]} :=  \iprods{Fy^k - Fx^{\star}, y^k - x^{k+1}}$ as
\begin{equation}\label{eq:RFBS4NI_proof2}
\hspace{-0ex}
\arraycolsep=0.2em
\begin{array}{lcl}
\Tc_{[1]} &:= &  \iprods{Fy^k - Fx^{\star}, y^k - x^{k+1}} \vspace{1ex}\\
&= & \iprods{Fy^{k-1} - Fx^{\star}, y^k - x^k} - \iprods{Fy^k - Fx^{\star}, x^{k+1} - x^k}  \vspace{1ex}\\
 && + {~}  \iprods{Fy^k - Fy^{k-1}, y^k - x^k}   \vspace{1ex}\\
&= & \iprods{Fy^{k-1} - Fx^{\star}, x^k - x^{k-1}} - \iprods{Fy^k - Fx^{\star}, x^{k+1} - x^k}  \vspace{1ex}\\
&& + {~} \iprods{Fy^k - Fy^{k-1}, y^k - x^k}.
\end{array} 
\hspace{-6ex}
\end{equation}
Now, from the second line of \eqref{eq:RFBS4NI_reform}, we have $\eta \xi^{k+1} = x^k - x^{k+1} - \eta Fy^k$, leading to $\eta (\xi^{k+1} - \xi^k) = 2x^k - x^{k-1} - x^{k+1} - \eta(Fy^k - Fy^{k-1}) = y^k - x^{k+1} - \eta(Fy^k - Fy^{k-1})$.
By the monotonicity of $T$, we have $ \iprods{y^k - x^{k+1} - \eta(Fy^k - Fy^{k-1}), x^{k+1} - x^k} = \eta\iprods{\xi^{k+1} - \xi^k, x^{k+1} - x^k} \geq 0$, leading to
\begin{equation*} 
\arraycolsep=0.2em
\begin{array}{lcl}
2\eta\iprods{Fy^k - Fy^{k-1}, x^{k+1} - x^k} &\leq & 2\iprods{y^k - x^{k+1}, x^{k+1} - x^k} \vspace{1ex}\\
&= & \norms{y^k - x^k}^2 - \norms{x^{k+1} - x^k}^2 - \norms{x^{k+1} - y^k}^2.
\end{array} 
\end{equation*}
By the Cauchy-Schwarz inequality, the Lipschitz continuity of $F$, and Young's inequality, we can show that
\begin{equation*} 
\arraycolsep=0.2em
\begin{array}{lcl}
2\eta\iprods{Fy^k - Fy^{k-1},y^k - x^{k+1}} 
& \leq & \eta L(\sqrt{2} + 1)\norms{y^k - x^k}^2 + \eta L\norms{x^k - y^{k-1}}^2 \vspace{1ex}\\
&& + {~}  \sqrt{2}\eta L \norms{x^{k+1} - y^k}^2.
\end{array} 
\end{equation*}
Summing up the last two inequalities, we get
\begin{equation*} 
\arraycolsep=0.2em
\begin{array}{lcl}
2\eta \iprods{Fy^k - Fy^{k-1}, y^k - x^k} &\leq & \left[ 1 +  \eta L(\sqrt{2} + 1)\right] \norms{y^k - x^k}^2 + \eta L\norms{x^k - y^{k-1}}^2 \vspace{1ex}\\
&& - {~} (1 - \sqrt{2} L \eta ) \norms{x^{k+1} - y^k}^2 - \norms{x^{k+1} - x^k}^2.
\end{array} 
\end{equation*}
Combining this inequality, \eqref{eq:RFBS4NI_proof2a}, and \eqref{eq:RFBS4NI_proof2} with \eqref{eq:RFBS4NI_proof1}, we can  prove that
\begin{equation*} 
\arraycolsep=0.2em
\begin{array}{lcl}
\norms{x^{k+1} - x^{\star}}^2 
&\leq & \norms{x^k - x^{\star}}^2 +  2\eta \iprods{Fy^{k-1} - Fx^{\star}, x^k - x^{k-1}}  \vspace{1ex}\\
&& + {~} \left[ 1 +   L\eta(\sqrt{2} + 1)\right] \norms{y^k - x^k}^2 - 2\norms{x^{k+1} - x^k}^2 \vspace{1ex}\\
&& +  {~} L\eta \norms{x^k - y^{k-1}}^2 -  (1 - \sqrt{2}L\eta ) \norms{x^{k+1} - y^k}^2 \vspace{1ex}\\
&&  - {~} 2\eta \iprods{Fy^k - Fx^{\star}, x^{k+1} - x^k} - 2\eta \iprods{Fy^k - Fx^{\star}, y^k - x^{\star}}.
\end{array} 
\end{equation*}
Using the definition \eqref{eq:RFBS4NI_potential_func} of $\Vc_k$, $\iprods{Fy^k - Fx^{\star}, y^k - x^{\star}} \geq 0$, and $y^k - x^k = x^k - x^{k-1}$, this estimate becomes
\begin{equation*} 
\arraycolsep=0.2em
\begin{array}{lcl}
\Vc_{k+1} &:= & \norms{x^{k+1} - x^{\star}}^2 + 2\norms{x^{k+1} - x^k}^2 +  (1 - \sqrt{2}L\eta ) \norms{x^{k+1} - y^k}^2 \vspace{1ex}\\
&& + {~} 2\eta \iprods{Fy^k - Fx^{\star}, x^{k+1} - x^k} \vspace{1ex}\\
&\leq & \norms{x^k - x^{\star}}^2 +  \left[ 1 +  (\sqrt{2} + 1)L\eta \right] \norms{y^k - x^k}^2 +  L\eta \norms{x^k - y^{k-1}}^2 \vspace{1ex}\\
&& + {~}  2\eta \iprods{Fy^{k-1} - Fx^{\star}, x^k - x^{k-1}} \vspace{1ex}\\
&= & \Vc_k - \left[1 - (\sqrt{2} + 1)L\eta \right] \left(  \norms{y^k - x^k}^2  +  \norms{x^k - y^{k-1}}^2 \right),
\end{array} 
\end{equation*}
which proves the first inequality of \eqref{eq:RFBS4NI_est1}.

\noindent Next, using Young's inequality twice, we can show that
\begin{equation*}
\arraycolsep=0.2em
\begin{array}{lcl}
2\eta \iprods{Fy^{k-1} - Fx^{\star}, x^k - x^{k-1}}  
& \geq & - \frac{L\eta}{2} \norms{y^{k-1} - x^{\star}}^2 - 2 L\eta \norms{x^k - x^{k-1}}^2 \vspace{1ex}\\
& \geq & - L\eta \norms{x^k - x^{\star}}^2 - L\eta \norms{x^k - y^{k-1}}^2 \vspace{1ex}\\
&& - {~} 2 L\eta \norms{x^k - x^{k-1}}^2.
\end{array} 
\end{equation*}
Substituting this estimate into \eqref{eq:RFBS4NI_potential_func}, we get
\begin{equation*}
\arraycolsep=0.2em
\begin{array}{lcl}
\Vc_k & := &  \norms{x^k - x^{\star}}^2 + 2\norms{x^k - x^{k-1}}^2 +  \left(1 - \sqrt{2}L \eta\right) \norms{x^k - y^{k-1}}^2 \vspace{1ex}\\
&& + {~} 2\eta \iprods{Fy^{k-1} - Fx^{\star}, x^k - x^{k-1}} \vspace{1ex}\\
&\geq & (1 - L\eta) \norms{x^k - x^{\star}}^2 + (1 - (1+\sqrt{2})L\eta) \norms{x^k - y^{k-1}}^2 \vspace{1ex}\\
&& + {~} 2(1 -  L\eta) \norms{x^k - x^{k-1}}^2,
\end{array} 
\end{equation*}
which proves the second line of \eqref{eq:RFBS4NI_est1}.
\Eproof
\end{proof}

\begin{lemma}\label{le:RFBS4NI_key_est2}
For Inclusion \eqref{eq:NI}, suppose  that $\Phi$  is maximally monotone and $F$ is $L$-Lipschitz continuous.
Let $\sets{(x^k, y^k)}$ be generated by \eqref{eq:RFBS4NI}.
Then
\begin{equation}\label{eq:RFBS4NI_est2}
\hspace{-0ex}
\arraycolsep=0.2em
\begin{array}{lcl}
& \norms{ Fx^k + \xi^k}^2 \leq  \frac{5L^2\eta^2 + 3}{3\eta^2}  \norms{x^k - y^k}^2 + \frac{5L^2\eta^2 + 3}{5\eta^2} \norms{x^k - y^{k-1}}^2.
\end{array}
\end{equation}
Moreover, if $\sqrt{2}L\eta < 1$, then  with $\omega := \frac{2L^2\eta^2}{1 - 2L^2\eta^2} > 0$, we also have
\begin{equation}\label{eq:RFBS4NI_est3}
\hspace{-2ex}
\arraycolsep=0.1em
\begin{array}{lcl}
\norms{Fx^{k+1} + \xi^{k+1}}^2 & + &  \omega \norms{Fx^{k+1} - Fy^k}^2 \leq   \norms{Fx^k + \xi^k}^2 + \omega \norms{Fx^k - Fy^{k-1}}^2 \vspace{1ex}\\
&& - {~} \left( \frac{1 - 4L^2\eta^2 }{1 - 2L^2\eta^2} \right) \norms{Fy^k - Fx^k + \xi^{k+1} - \xi^k}^2.
\end{array} 
\hspace{-4ex}
\end{equation}
\end{lemma}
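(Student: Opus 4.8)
The plan is to treat the two displayed inequalities separately, both built on the reformulation \eqref{eq:RFBS4NI_reform} and the definitions in \eqref{eq:wk_terms}. For the first bound \eqref{eq:RFBS4NI_est2}, I would start from the decomposition $w^k = Fx^k + \xi^k = (Fx^k - Fy^{k-1}) + \hat{w}^k$, which follows directly from $w^k = Fx^k + \xi^k$ and $\hat{w}^k = Fy^{k-1} + \xi^k$. Applying Young's inequality $\norms{a + b}^2 \leq (1 + c)\norms{b}^2 + (1 + c^{-1})\norms{a}^2$ with $b = \hat{w}^k$ and $a = Fx^k - Fy^{k-1}$, and then using $\hat{w}^k = \tfrac{1}{\eta}(x^k - y^k)$ from line~1 of \eqref{eq:RFBS4NI_reform} together with the $L$-Lipschitz continuity $\norms{Fx^k - Fy^{k-1}}^2 \leq L^2\norms{x^k - y^{k-1}}^2$, produces the two squared-distance terms. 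Choosing $c = \tfrac{5L^2\eta^2}{3}$ makes both coefficients match \eqref{eq:RFBS4NI_est2} exactly; this is a short computation.

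For \eqref{eq:RFBS4NI_est3}, the engine is the maximal monotonicity of $\Phi$. Since $w^{k+1} \in \Phi x^{k+1}$ and $w^k \in \Phi x^k$, monotonicity gives $\iprods{w^{k+1} - w^k, x^{k+1} - x^k} \geq 0$; substituting $x^{k+1} - x^k = -\eta\hat{w}^{k+1}$ from line~2 of \eqref{eq:RFBS4NI_reform} yields $\iprods{w^{k+1} - w^k, \hat{w}^{k+1}} \leq 0$. I would then expand this via the identity $2\iprods{w^{k+1} - w^k, \hat{w}^{k+1}} = \norms{w^{k+1}}^2 - \norms{w^k}^2 + \norms{\hat{w}^{k+1} - w^k}^2 - \norms{w^{k+1} - \hat{w}^{k+1}}^2$, which rearranges into the descent relation $\norms{w^{k+1}}^2 \leq \norms{w^k}^2 - \norms{\hat{w}^{k+1} - w^k}^2 + \norms{w^{k+1} - \hat{w}^{k+1}}^2$.

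Next I would control the overshoot $\norms{w^{k+1} - \hat{w}^{k+1}}^2 = \norms{Fx^{k+1} - Fy^k}^2$. Using the $L$-Lipschitz continuity of $F$ and the relation $x^{k+1} - y^k = -\eta(\hat{w}^{k+1} - \hat{w}^k)$ noted after \eqref{eq:RFBS4NI_reform}, this is bounded by $L^2\eta^2\norms{\hat{w}^{k+1} - \hat{w}^k}^2$. Splitting $\hat{w}^{k+1} - \hat{w}^k = (\hat{w}^{k+1} - w^k) + (w^k - \hat{w}^k)$ and applying Young's inequality with parameter $s$ produces a combination of $\norms{\hat{w}^{k+1} - w^k}^2$ and $\norms{w^k - \hat{w}^k}^2$. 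After adding $\omega\norms{w^{k+1} - \hat{w}^{k+1}}^2$ to both sides and bounding it the same way, the plan is to take $s = 1$ and $\omega = \tfrac{2L^2\eta^2}{1 - 2L^2\eta^2}$: with these choices the coefficient of $\norms{w^k - \hat{w}^k}^2$ collapses exactly to $\omega$ (since then $1 + s^{-1} = 2$), while the coefficient of $\norms{\hat{w}^{k+1} - w^k}^2$ becomes $\tfrac{1 - 4L^2\eta^2}{1 - 2L^2\eta^2}$. Translating back through $w^{k+1} - \hat{w}^{k+1} = Fx^{k+1} - Fy^k$, $w^k - \hat{w}^k = Fx^k - Fy^{k-1}$, and $\hat{w}^{k+1} - w^k = Fy^k - Fx^k + \xi^{k+1} - \xi^k$ recovers \eqref{eq:RFBS4NI_est3}.

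The hard part will not be any single estimate but the exact coefficient matching in the last step: the two free parameters $s$ and $\omega$ are forced simultaneously, so the care lies in verifying that $1 + s^{-1} = 2$ and that the bracket $1 - (1+\omega)L^2\eta^2(1+s)$ simplify cleanly to the advertised constants, which they do once one notes $(1+\omega)L^2\eta^2 = \tfrac{L^2\eta^2}{1 - 2L^2\eta^2}$. The hypothesis $\sqrt{2}L\eta < 1$ is exactly what guarantees $1 - 2L^2\eta^2 > 0$, so that $\omega$ is positive and well-defined; the sign of the factor $1 - 4L^2\eta^2$ is immaterial for the derivation itself and only matters when \eqref{eq:RFBS4NI_est3} is later used to extract a genuine descent property.
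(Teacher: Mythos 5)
Your proposal is correct and follows essentially the same route as the paper's proof: the same Young split $w^k = (Fx^k - Fy^{k-1}) + \hat{w}^k$ with $c = \tfrac{5L^2\eta^2}{3}$ for \eqref{eq:RFBS4NI_est2}, and for \eqref{eq:RFBS4NI_est3} the same monotonicity step $\iprods{w^{k+1}-w^k, \hat{w}^{k+1}} \leq 0$, the same Lipschitz bound via $x^{k+1}-y^k = -\eta(\hat{w}^{k+1}-\hat{w}^k)$, and the same fixed-point choice $\omega = 2(\omega+1)L^2\eta^2$ (your Young parameter $s=1$ is exactly the paper's factor of $2$). Your closing observations — that $\sqrt{2}L\eta<1$ is needed only for $\omega$ to be well-defined and positive, and that the sign of $1-4L^2\eta^2$ is irrelevant to the derivation itself — are also accurate.
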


\begin{proof}
First, by Young's inequality, $x^k - y^k = \eta(Fy^{k-1} + \xi^k)$ from \eqref{eq:RFBS4NI_reform}, and the $L$-Lipschitz continuity of $F$, we have
\begin{equation*}
\arraycolsep=0.2em
\begin{array}{lcl}
\norms{ Fx^k + \xi^k }^2 & \leq &   \big(1 + \frac{5L^2\eta^2}{3}\big) \norms{ Fy^{k-1} + \xi^k }^2  + \left(1 + \frac{3}{5L^2\eta^2}\right)\norms{ Fx^k - Fy^{k-1} }^2  \vspace{1ex}\\
& \leq & \frac{5L^2\eta^2 + 3}{3\eta^2}\norms{x^k - y^k}^2 + \frac{5L^2\eta^2 + 3}{5\eta^2} \norms{x^k - y^{k-1}}^2.
\end{array} 
\end{equation*}
This estimate is exactly \eqref{eq:RFBS4NI_est2}.

Next, by the monotonicity of $\Phi$, we have $\iprods{w^{k+1} - w^k, x^{k+1} - x^k}  \geq 0$.
Substituting $x^{k+1} - x^k = -\eta\hat{w}^{k+1}$ into this inequality, we get
\begin{equation*}
\arraycolsep=0.2em
\begin{array}{lcl}
0 &\leq &  2\iprods{w^k, \hat{w}^{k+1}} -2\iprods{w^{k+1}, \hat{w}^{k+1}}  \vspace{1ex}\\
& = &  \norms{w^k}^2 - \norms{w^{k+1}}  + \norms{w^{k+1} - \hat{w}^{k+1}}^2  - \norms{\hat{w}^{k+1} - w^k}^2.
\end{array} 
\end{equation*}
This inequality can be simplified as 
\begin{equation*}
\arraycolsep=0.2em
\begin{array}{lcl}
\norms{w^{k+1}}^2 &\leq & \norms{w^k}^2 + \norms{w^{k+1} - \hat{w}^{k+1}}^2  - \norms{\hat{w}^{k+1} - w^k}^2.
\end{array} 
\end{equation*}
By the Lipschitz continuity of $F$ and $x^{k+1} - y^k = -\eta(\hat{w}^{k+1} - \hat{w}^k)$, we have 
\begin{equation*}
\arraycolsep=0.2em
\begin{array}{lcl}
\norms{w^{k+1} - \hat{w}^{k+1}}^2 & = &  \norms{Fx^{k+1} - Fy^k}^2 \leq L^2\norms{x^{k+1} - y^k}^2 \vspace{1ex}\\
& = &   L^2\eta^2\norms{\hat{w}^{k+1} - \hat{w}^k}^2 \vspace{1ex}\\
& \leq &  2L^2\eta^2\norms{\hat{w}^{k+1} - w^k}^2 + 2L^2\eta^2\norms{w^k - \hat{w}^k}^2.
\end{array}
\end{equation*}  
Multiplying this expression by $\omega + 1$ for $\omega \geq 0$, and adding the result to the last inequality, we get
\begin{equation*}
\arraycolsep=0.2em
\begin{array}{lcl}
\norms{w^{k+1}}^2 + \omega \norms{w^{k+1} - \hat{w}^{k+1}}^2 &\leq & \norms{w^k}^2  + 2(\omega + 1) L^2\eta^2\norms{w^k - \hat{w}^k}^2 \vspace{1ex}\\
&&  - {~} (1 -  2(\omega + 1) L^2\eta^2) \norms{\hat{w}^{k+1} - w^k}^2.
\end{array} 
\end{equation*}
Finally, let us choose $\omega \geq 0$ such that $\omega  = 2(\omega + 1) L^2\eta^2$. 
If $2L^2\eta^2 < 1$, then $\omega := \frac{2L^2\eta^2}{ 1 - 2L^2\eta^2}$ satisfies $\omega  = 2(\omega + 1) L^2\eta^2$. 
Consequently, the last estimate leads to \eqref{eq:RFBS4NI_est3}.
\Eproof
\end{proof}

\beforesubsubsec
\subsubsection{The Sublinear Convergence Rate Analysis of \eqref{eq:RFBS4NI} }\label{subsubsec:RFBS4NI_convergence}
\aftersubsubsec
Now, we are ready to establish the convergence rates of \eqref{eq:RFBS4NI}.

\begin{theorem}\label{th:RFBS_convergence}
For Inclusion \eqref{eq:NI}, suppose that $\zer{\Phi} \neq\emptyset$,  $F$  is  $L$-Lipschitz continuous, there exists $x^{\star} \in \zer{\Phi}$ such that $\iprods{Fx - Fx^{\star}, x - x^{\star}} \geq 0$ for all $x\in\dom{F}$, and $T$ is maximally monotone.
Let $\sets{(x^k, y^k)}$ be generated by \eqref{eq:RFBS4NI} using $\eta \in \big(0, \frac{\sqrt{2}-1}{L} \big)$.
Then, we have the following statements.
\begin{itemize}
\item[$\mathrm{(a)}$] \textbf{The $\BigOs{1/\sqrt{k}}$ best-iterate rate.} The following bound holds:
\begin{equation}\label{eq:RFBS4NI_convergence1}
\hspace{-5ex}
\arraycolsep=0.2em
\begin{array}{lcl}
{\displaystyle\frac{1}{K+1}\sum_{k=0}^K } \norms{Fx^k + \xi^k }^2  & \leq & {\displaystyle \frac{1}{K+1}\sum_{k=0}^K } \left[ \norms{Fx^k + \xi^k }^2 + \omega \norms{Fx^k - Fy^{k-1}}^2\right]  \vspace{1ex}\\
& \leq & \dfrac{C_0 \norms{x^0 - x^{\star}}^2}{K+1},
\end{array}
\hspace{-5ex}
\end{equation}
where $\omega :=  \frac{2L^2\eta^2}{1 - L^2\eta^2} > 0$ and $C_0 := \frac{5L^2\eta^2 + 3}{3\eta^2\left[ 1 - (1 + \sqrt{2})L\eta \right]} > 0$.

\item[$\mathrm{(b)}$] \textbf{The $\BigOs{1/\sqrt{k}}$ last-iterate rate.}
If $\Phi$ is additionally monotone, then 
\begin{equation}\label{eq:RFBS4NI_convergence2}
\norms{ Fx^K + \xi^K }^2 \leq \norms{ Fx^K + \xi^K }^2 + \omega \norms{Fx^K - Fy^{K-1}}^2   \leq  \frac{C_0\norms{x^0 - x^{\star}}^2}{K+1},
\end{equation}
where $\omega := \frac{2L^2\eta^2}{1 - 2L^2\eta^2} > 0$.
Hence, we conclude that $\norms{\Gc_{\eta}x^K} \leq \norms{Fx^K + \xi^K} = \BigO{\frac{1}{\sqrt{K}}}$ on the last-iterate $x^K$.
\end{itemize}
\end{theorem}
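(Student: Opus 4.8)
The plan is to combine the two lemmas just established: Lemma~\ref{le:RFBS4NI_key_est1} supplies a telescoping (Lyapunov) decrease that controls the accumulated motion of the iterates, while Lemma~\ref{le:RFBS4NI_key_est2} converts this motion into the target residual $\norms{Fx^k + \xi^k}$. For part $\mathrm{(a)}$, I would first note that the step-size condition $\eta \in (0, \tfrac{\sqrt{2}-1}{L})$ forces $\theta := 1 - (1+\sqrt{2})L\eta > 0$, since $(1+\sqrt{2})(\sqrt{2}-1) = 1$. The first inequality of \eqref{eq:RFBS4NI_est1} is then a genuine descent, and summing it over $k = 0,\dots,K$ telescopes to $\sum_{k=0}^K [\norms{y^k - x^k}^2 + \norms{x^k - y^{k-1}}^2] \leq \theta^{-1}(\Vc_0 - \Vc_{K+1})$. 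Because $\eta < \tfrac{1}{L}$, the second inequality of \eqref{eq:RFBS4NI_est1} gives $\Vc_{K+1} \geq 0$, and the initialization $x^{-1} = y^{-1} := x^0$ collapses $\Vc_0$ to exactly $\norms{x^0 - x^{\star}}^2$; hence the accumulated motion is bounded by $\theta^{-1}\norms{x^0 - x^{\star}}^2$.

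Next I would insert the Lipschitz bound $\norms{Fx^k - Fy^{k-1}}^2 \leq L^2\norms{x^k - y^{k-1}}^2$ into \eqref{eq:RFBS4NI_est2} to estimate the augmented residual $\norms{Fx^k + \xi^k}^2 + \omega\norms{Fx^k - Fy^{k-1}}^2$ by a weighted combination of $\norms{x^k - y^k}^2$ and $\norms{x^k - y^{k-1}}^2$. The coefficient of the first is already $\tfrac{5L^2\eta^2+3}{3\eta^2}$; the key point is that the coefficient of the second, namely $\tfrac{5L^2\eta^2+3}{5\eta^2} + \omega L^2$ with $\omega = \tfrac{2L^2\eta^2}{1-L^2\eta^2}$, can also be bounded by $\tfrac{5L^2\eta^2+3}{3\eta^2}$. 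Writing $t := L^2\eta^2$, this reduces to the scalar inequality $20t^2 - 2t - 3 \leq 0$, which holds on the whole admissible range because $t < (\sqrt{2}-1)^2 \approx 0.172$ lies below the positive root. Summing the resulting pointwise bound and using the telescoped estimate then yields \eqref{eq:RFBS4NI_convergence1} with $C_0 = \tfrac{5L^2\eta^2+3}{3\eta^2\theta}$; the first inequality in \eqref{eq:RFBS4NI_convergence1} is free since $\omega \geq 0$.

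For part $\mathrm{(b)}$, with $\Phi$ now monotone, I would invoke \eqref{eq:RFBS4NI_est3}: since $\eta < \tfrac{\sqrt{2}-1}{L} < \tfrac{1}{2L}$ we have $1 - 4L^2\eta^2 \geq 0$ (and $1 - 2L^2\eta^2 > 0$), so the last term in \eqref{eq:RFBS4NI_est3} is non-positive and the sequence $S_k := \norms{Fx^k + \xi^k}^2 + \omega\norms{Fx^k - Fy^{k-1}}^2$ (now with $\omega = \tfrac{2L^2\eta^2}{1-2L^2\eta^2}$) is non-increasing. Consequently $S_K \leq \tfrac{1}{K+1}\sum_{k=0}^K S_k$, and I would bound the average by repeating the part-$\mathrm{(a)}$ summation argument with this slightly larger $\omega$; the analogous coefficient check becomes $25t^2 + t - 3 \leq 0$, again valid on $t < 0.172$. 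This gives $S_K \leq \tfrac{C_0\norms{x^0-x^{\star}}^2}{K+1}$, and since $\norms{Fx^K + \xi^K}^2 \leq S_K$ we obtain \eqref{eq:RFBS4NI_convergence2}. The closing claim $\norms{\Gc_{\eta}x^K} \leq \norms{Fx^K + \xi^K}$ follows from \eqref{eq:FBR_bound2}, which applies because $T$ maximally monotone makes $J_{\eta T}$ firmly nonexpansive.

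The telescoping and averaging steps are routine once the lemmas are in hand; the main obstacle is the two coefficient verifications, i.e. showing that the extra $\omega\norms{Fx^k - Fy^{k-1}}^2$ contribution can be absorbed into the common factor $\tfrac{5L^2\eta^2+3}{3\eta^2}$ \emph{without enlarging} $C_0$. This is precisely where the sharp bound $\eta < \tfrac{\sqrt{2}-1}{L}$ is consumed, and I would take care to confirm that the single constant $C_0$ is admissible for both relevant values of $\omega$, namely $\tfrac{2L^2\eta^2}{1-L^2\eta^2}$ in $\mathrm{(a)}$ and $\tfrac{2L^2\eta^2}{1-2L^2\eta^2}$ in $\mathrm{(b)}$.
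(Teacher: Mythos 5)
Your proposal is correct and follows essentially the same route as the paper: telescope the Lyapunov decrease in the first inequality of \eqref{eq:RFBS4NI_est1} (with $\Vc_{K+1}\geq 0$ from the second inequality and $\Vc_0 = \norms{x^0-x^{\star}}^2$ from the initialization), absorb the $\omega\norms{Fx^k - Fy^{k-1}}^2$ term into the common coefficient $\tfrac{5L^2\eta^2+3}{3\eta^2}$ via \eqref{eq:RFBS4NI_est2}, and use the non-increase of $S_k$ from \eqref{eq:RFBS4NI_est3} to pass from the average to the last iterate. The only difference is computational: the paper absorbs the extra term via the cruder bound $\omega < \tfrac{2}{3}$ (which makes its coefficient check trivially true), whereas you verify the exact quadratic inequalities $20t^2 - 2t - 3 \leq 0$ and $25t^2 + t - 3 \leq 0$ separately for the two values of $\omega$ --- a slightly more careful bookkeeping, since the paper's proof silently works with the part-(b) value $\omega = \tfrac{2L^2\eta^2}{1-2L^2\eta^2}$ throughout.
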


\begin{proof}
First, since $0 < \eta < \frac{\sqrt{2} - 1}{L}$, we have $1 - (1 + \sqrt{2})L\eta > 0$ and $\omega := \frac{2L^2\eta^2}{1 - 2L^2\eta^2} < \frac{2}{3}$.
From  \eqref{eq:RFBS4NI_est2}, we have
\begin{equation*}
\arraycolsep=0.2em
\begin{array}{lcl}
\norms{ Fx^k + \xi^k}^2 & + &   \omega \norms{Fx^k - Fy^{k-1}}^2    \leq   \frac{5L^2\eta^2 + 3}{3\eta^2}\norms{x^k - y^k}^2 \vspace{1ex}\\
&& + {~}  \left( \frac{5L^2\eta^2 + 3}{5\eta^2} + \frac{2L^2}{3}\right)\norms{x^k - y^{k-1}}^2 \vspace{1ex}\\
& \leq & \frac{5L^2\eta^2 + 3}{3\eta^2}\left[ \norms{x^k - y^k}^2 + \norms{x^k - y^{k-1}}^2 \right].
\end{array} 
\end{equation*}
Combining this estimate and \eqref{eq:RFBS4NI_est1}, we get
\begin{equation*}
\arraycolsep=0.2em
\begin{array}{lcl}
\Tc_{[1]} & := & \frac{3\eta^2\left[ 1 - (1 + \sqrt{2})L\eta \right]}{5L^2\eta^2 + 3} \left[ \norms{ Fx^k + \xi^k}^2 +  \omega \norms{Fx^k - Fy^{k-1}}^2  \right]  \vspace{1ex}\\
&\leq &  (1 - (1 + \sqrt{2})L\eta)\left[\norms{x^k - y^k}^2 + \norms{x^k - y^{k-1}}^2  \right] \vspace{1ex}\\
&  \leq & \Vc_k - \Vc_{k+1}.
\end{array} 
\end{equation*}
Summing this inequality from $k=0$ to $k=K$, and using $x^{-1} = y^{-1} := x^0$, we get
\begin{equation*}
\arraycolsep=0.2em
\begin{array}{lcl}
\frac{3\eta^2\left[ 1 - (1 + \sqrt{2})L\eta \right]}{5L^2\eta^2 + 3} \sum_{k=0}^K \left[ \norms{ Fx^k + \xi^k}^2 +  \omega \norms{Fx^k - Fy^{k-1}}^2  \right]  
&  \leq & \Vc_0 - \Vc_{K+1} \vspace{1ex}\\
& \leq &  \Vc_0 = \norms{x^0 - x^{\star}}^2,
\end{array} 
\end{equation*}
which leads to \eqref{eq:RFBS4NI_convergence1}.
Finally, combining \eqref{eq:RFBS4NI_convergence1} and \eqref{eq:RFBS4NI_est3}, we obtain \eqref{eq:RFBS4NI_convergence2}.
\Eproof
\end{proof}

\begin{remark}\label{re:RFBS4NI_remark1}
We can modify \eqref{eq:RFBS4NI} to capture adaptive parameters as $y^k := x^k + \beta_k(x^k - x^{k-1})$ and $x^{k+1} := J_{\eta_kT}(x^k - \eta_kFy^k)$, where $\eta_k := \beta_k\eta_{k-1}$ for some $\beta_k > 0$.
Then, by imposing appropriate bounds on $\eta_k$, we can still prove the convergence of this variant by modifying the proof of Theorem~\ref{th:RFBS_convergence}.
The best-iterate and last-iterate convergence rates of  \eqref{eq:RFBS4NI} under weaker conditions (e.g., a weak-Minty solution and the co-hypomonotonicity) is still  open.
\end{remark}

\beforesubsec
\subsection{\mytb{New Golden Ratio Method for \eqref{eq:NI}}}\label{subsec:GR4NI}
\aftersubsec
The golden ratio (GR) method  was proposed by Malitsky in \cite{malitsky2019golden} to solve monotone \eqref{eq:MVIP}, where the ratio parameter $\tau := \frac{\sqrt{5} + 1}{2}$, leading to a so-called \textit{golden ratio}.
Here, we extend it to solve \eqref{eq:NI} for the case $F$ is monotone and $L$-Lipschitz continuous, and $T$ is maximally $3$-cyclically monotone.
Moreover, we extend our analysis for any $\tau \in (1,  1 + \sqrt{3})$.

\beforesubsubsec
\subsubsection{The GR Method for \eqref{eq:NI}}\label{subsubsec:GR4NI_scheme}
\aftersubsubsec
The golden ratio (GR) method for solving \eqref{eq:NI} is presented as follows.
Starting from $x^0 \in \dom{\Phi}$, we set $y^{-1} := x^0$, and at each iteration $k \geq 0$, we update
\begin{equation}\label{eq:GR4NI}
\left\{\begin{array}{lcl}
y^k &:= & \frac{\tau -1}{\tau}x^k + \frac{1}{\tau}y^{k-1}, \vspace{1ex}\\
x^{k+1} &:= & J_{\eta T}(y^k - \eta Fx^k),
\end{array}\right.
\tag{GR2+}
\end{equation}
where $J_{\eta T}$ is the resolvent of $\eta T$, $\tau > 1$ is given, and $\eta \in (0, \frac{\tau}{2L})$.

Let us denote $\breve{w}^k := Fx^{k-1} + \xi^k$ for $\xi^k \in Tx^k$.
Then, we can rewrite the second line of \eqref{eq:GR4NI} as $x^{k+1} :=  y^k - \eta(Fx^k + \xi^{k+1}) = y^k - \eta \breve{w}^{k+1}$ for $\xi^{k+1} \in Tx^{k+1}$.
In this case, we have $x^k = y^{k-1} - \eta(Fx^{k-1} + \xi^k)$, leading to $y^{k-1} = x^k + \eta \breve{w}^k$.
Combining this expression and the first line of \eqref{eq:GR4NI}, we have $y^k = \frac{\tau - 1}{\tau}x^k + \frac{1}{\tau}(x^k + \eta\breve{w}^k) = x^k + \frac{\eta}{\tau}\breve{w}^k$.
Consequently, we can rewrite \eqref{eq:GR4NI} equivalently to the following one:
\begin{equation}\label{eq:GR4NI_reform}
\arraycolsep=0.2em
\left\{\begin{array}{lcllcl}
y^k &:= & x^k + \frac{\eta}{\tau}\breve{w}^k, \vspace{1ex}\\
x^{k+1} &:= & y^k - \eta(Fx^k + \xi^{k+1}) &= & y^k - \eta\breve{w}^{k+1}.
\end{array}\right.
\end{equation}
If we eliminate $y^k$, then we obtain
\begin{equation}\label{eq:GR4NI_reform2}
\arraycolsep=0.1em
\left\{\begin{array}{lcl}
x^{k+1} &:= & J_{\eta T}\left(x^k - \eta \left( Fx^k - \frac{1}{\tau}(Fx^{k-1} + \xi^k)  \right) \right), \vspace{1ex}\\  
\xi^{k+1} &:= & \frac{1}{\eta}(x^k - x^{k+1}) -  \left( Fx^k - \frac{1}{\tau}(Fx^{k-1} + \xi^k)  \right).
\end{array}\right.
\end{equation}
This scheme is another interpretation of  \eqref{eq:GR4NI}.

\beforesubsubsec
\subsubsection{Key Estimates for Convergence Analysis}\label{subsubsec:GR4NI_key_estimates}
\aftersubsubsec
The convergence of \eqref{eq:GR4NI} is established based on the following key lemma.

\begin{lemma}\label{le:GR4NI_key_est1}
For Inclusion~\eqref{eq:NI}, suppose that $\zer{\Phi} \neq\emptyset$, $T$ is maximally $3$-cyclically monotone, and $F$ is $L$-Lipschitz continuous.
Let $\sets{(x^k, y^k)}$ be generated by \eqref{eq:GR4NI} with $\tau > 1$.
Then, for any $x^{\star}\in\zer{\Phi}$, we have
\begin{equation}\label{eq:GR4NI_key_est1}
\hspace{-3ex}
\arraycolsep=0.1em
\begin{array}{lcl}
\tau \norms{y^{k+1} - x^{\star}}^2  & + &   (\tau - 1)(\tau - \gamma) \norms{x^{k+1} - x^k}^2  \leq  \tau \norms{y^k - x^{\star}}^2  \vspace{1ex}\\
&& + {~}  \frac{(\tau-1)L^2\eta^2}{\gamma}\norms{x^k - x^{k-1}}^2  -  \frac{(\tau - 1)(1-\tau^2+\tau)}{\tau}\norms{ x^{k+1} - y^k}^2 \vspace{1ex}\\
& & - {~}  \tau(\tau - 1)\norms{x^k - y^k}^2 -  2\eta(\tau - 1)\iprods{Fx^k - Fx^{\star}, x^k - x^{\star}}.
\end{array}
\hspace{-3ex}
\end{equation}
\end{lemma}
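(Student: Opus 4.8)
The plan is to work throughout with the reformulation \eqref{eq:GR4NI_reform}, which replaces the resolvent step by the explicit relations $x^{k+1} = y^k - \eta\breve{w}^{k+1}$ with $\breve{w}^{k+1} = Fx^k + \xi^{k+1}$, $\xi^{k+1}\in Tx^{k+1}$, together with $y^k - x^k = \frac{\eta}{\tau}\breve{w}^k$ where $\breve{w}^k = Fx^{k-1}+\xi^k$, $\xi^k\in Tx^k$. First I would convert the first line $y^{k+1} = \frac{\tau-1}{\tau}x^{k+1} + \frac{1}{\tau}y^k$ of \eqref{eq:GR4NI} into the convex-combination identity
\[
\tau\norms{y^{k+1}-x^{\star}}^2 = (\tau-1)\norms{x^{k+1}-x^{\star}}^2 + \norms{y^k-x^{\star}}^2 - \tfrac{\tau-1}{\tau}\norms{x^{k+1}-y^k}^2,
\]
and then eliminate $\norms{x^{k+1}-x^{\star}}^2$ via the three-point expansion of $x^{k+1}=y^k-\eta\breve{w}^{k+1}$, namely $\norms{x^{k+1}-x^{\star}}^2 = \norms{y^k-x^{\star}}^2 - 2\eta\iprods{\breve{w}^{k+1},x^{k+1}-x^{\star}} - \norms{x^{k+1}-y^k}^2$. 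This produces the master identity
\[
\tau\norms{y^{k+1}-x^{\star}}^2 = \tau\norms{y^k-x^{\star}}^2 - 2\eta(\tau-1)\iprods{\breve{w}^{k+1},x^{k+1}-x^{\star}} - \tfrac{(\tau-1)(\tau+1)}{\tau}\norms{x^{k+1}-y^k}^2,
\]
so that the entire problem reduces to producing a sharp lower bound for $\iprods{\breve{w}^{k+1},x^{k+1}-x^{\star}}$.

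For that inner product I would split $\breve{w}^{k+1} = Fx^k + \xi^{k+1}$ and invoke the maximal $3$-cyclic monotonicity of $T$ on the cyclic triple $(x^{k+1},x^{\star},x^k)$, using $\xi^{k+1}\in Tx^{k+1}$, $\xi^k\in Tx^k$, and $\xi^{\star} = -Fx^{\star}\in Tx^{\star}$. This yields $\iprods{\xi^{k+1},x^{k+1}-x^{\star}} \geq -\iprods{Fx^{\star},x^k-x^{\star}} + \iprods{\xi^k,x^{k+1}-x^k}$, whence, after recombining the $Fx^k$ contribution,
\[
\iprods{\breve{w}^{k+1},x^{k+1}-x^{\star}} \geq \iprods{Fx^k-Fx^{\star},x^k-x^{\star}} + \iprods{Fx^k+\xi^k,x^{k+1}-x^k}.
\]
The decisive observation is then $Fx^k+\xi^k = \breve{w}^k + (Fx^k-Fx^{k-1})$, which simultaneously reintroduces $\breve{w}^k = \frac{\tau}{\eta}(y^k-x^k)$ and isolates the one-step difference $Fx^k-Fx^{k-1}$.

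The remaining work is mechanical. For $\iprods{\breve{w}^k,x^{k+1}-x^k} = \frac{\tau}{\eta}\iprods{y^k-x^k,x^{k+1}-x^k}$ I would apply the polarization identity $2\iprods{y^k-x^k,x^{k+1}-x^k} = \norms{y^k-x^k}^2 + \norms{x^{k+1}-x^k}^2 - \norms{x^{k+1}-y^k}^2$, while for $\iprods{Fx^k-Fx^{k-1},x^{k+1}-x^k}$ I would use Young's inequality with the free parameter $\gamma>0$ together with $L$-Lipschitz continuity, $\norms{Fx^k-Fx^{k-1}}^2\leq L^2\norms{x^k-x^{k-1}}^2$; this is exactly what generates the $\frac{(\tau-1)L^2\eta^2}{\gamma}\norms{x^k-x^{k-1}}^2$ term and a matching $(\tau-1)\gamma\norms{x^{k+1}-x^k}^2$. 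Substituting back into the master identity and collecting coefficients then yields \eqref{eq:GR4NI_key_est1}: the $\norms{x^{k+1}-x^k}^2$ terms combine to $-(\tau-1)(\tau-\gamma)$, the $\norms{x^{k+1}-y^k}^2$ terms combine to $-\frac{(\tau-1)(1+\tau-\tau^2)}{\tau}$ (using $\tau-\frac{\tau+1}{\tau}=\frac{\tau^2-\tau-1}{\tau}$ and $\tau^2-\tau-1=-(1+\tau-\tau^2)$), and the $\norms{x^k-y^k}^2$ term is $-\tau(\tau-1)$. I expect the main obstacle to be not any single estimate but the correct choice of the cyclic ordering $(x^{k+1},x^{\star},x^k)$: it is precisely this ordering that trades $\xi^{k+1}$ for $\xi^k$ and thereby surfaces $\breve{w}^k$ and the one-step difference $Fx^k-Fx^{k-1}$, whereas a different ordering would leave a $Fx^k-Fx^{\star}$ cross term that cannot be absorbed into the $\norms{x^k-x^{k-1}}^2$ slot. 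Note that monotonicity of $F$ is never invoked, in agreement with the hypotheses; the term $\iprods{Fx^k-Fx^{\star},x^k-x^{\star}}$ is simply carried through and will only be discarded via $F$-monotonicity at the level of the convergence theorem.
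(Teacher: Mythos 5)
Your proposal is correct and follows essentially the same route as the paper's proof: the identical $3$-cyclic monotonicity inequality on the triple $(x^{k+1}, x^{\star}, x^k)$ with $\xi^{\star} = -Fx^{\star}$, the same resolvent substitutions (your $\breve{w}^k = \frac{\tau}{\eta}(y^k - x^k)$ is exactly the paper's $y^{k-1} - x^k = \tau(y^k - x^k)$ step), the same Young inequality with parameter $\gamma$ on $Fx^k - Fx^{k-1}$, and the same polarization and golden-ratio convexity identities. The only difference is cosmetic ordering --- you apply the convex-combination identity for $\tau\norms{y^{k+1} - x^{\star}}^2$ at the start to form a master identity, whereas the paper bounds $\norms{x^{k+1} - x^{\star}}^2$ first and converts at the end --- and your coefficient bookkeeping checks out exactly against \eqref{eq:GR4NI_key_est1}.
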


\begin{proof}
Since $T$ is $3$-cyclically monotone, for $\xi^{k+1} \in Tx^{k+1}$, $\xi^k \in Tx^k$, and $x^{\star} \in Tx^{\star}$, we have 
\begin{equation}\label{eq:GR4NI_lm1_proof1}
\arraycolsep=0.2em
\begin{array}{lcll}
\iprods{\xi^{k+1}, x^{k+1} - x^{\star}} + \iprods{\xi^{\star}, x^{\star} - x^k} + \iprods{\xi^k, x^k - x^{k+1}} \geq 0.
\end{array}
\end{equation}
From \eqref{eq:GR4NI_reform}, we get $\eta\xi^{k+1} = y^k - x^{k+1} - \eta Fx^k$ and $\eta \xi^k = y^{k-1} - x^k - \eta Fx^{k-1}$. 
Moreover, since $x^{\star} \in \zer{\Phi}$, we obtain $Fx^{\star} + \xi^{\star} = 0$, leading to $\eta\xi^{\star} = - \eta Fx^{\star}$.
Substituting these expressions into \eqref{eq:GR4NI_lm1_proof1}, we can show that
\begin{equation*}
\begin{array}{lcll}
\iprods{y^k - x^{k+1} - \eta Fx^k, x^{k+1} - x^{\star}} & + &  \iprods{y^{k-1} - x^k - \eta Fx^{k-1}, x^k - x^{k+1}} \vspace{1ex}\\
&&  - {~} \iprods{Fx^{\star}, x^{\star} - x^k}  \geq 0.
\end{array}
\end{equation*}
However, since $y^k :=  \frac{\tau -1}{\tau}x^k + \frac{1}{\tau}y^{k-1}$ from the first line of \eqref{eq:GR4NI},  we get $y^{k-1} - x^k =  \tau(y^k - x^k)$.
Substituting this relation into the last inequality, rearranging the result, we obtain
\begin{equation}\label{eq:GR4NI_lm1_proof2}
\begin{array}{ll}
& \iprods{y^k - x^{k+1}, x^{k+1} - x^{\star}}  +    \tau \iprods{x^k  - y^k,  x^{k+1} - x^k}  \vspace{1ex}\\
& + {~}  \eta\iprods{Fx^{k-1} - Fx^k, x^{k+1} - x^k} -  \eta\iprods{Fx^k - Fx^{\star}, x^k - x^{\star}} \geq 0.
\end{array}
\end{equation}
By Young's inequality and the $L$-Lipschitz continuity of $F$, for any $\gamma > 0$, we get the first line of the following:
\begin{equation*}
\arraycolsep=0.2em
\begin{array}{lcl}
2\eta\iprods{Fx^{k-1} - Fx^k, x^{k+1} - x^k} &\leq & \frac{L^2\eta^2}{\gamma}\norms{x^k - x^{k-1}}^2 + \gamma \norms{x^{k+1} - x^k}^2, \vspace{1ex}\\
2\iprods{y^k - x^{k+1}, x^{k+1} - x^{\star} } &= & \norms{y^k - x^{\star}}^2 - \norms{x^{k+1} - x^{\star}}^2 - \norms{x^{k+1} - y^k}^2, \vspace{1ex}\\
2\iprods{x^k - y^k, x^{k+1} - x^k} &= & \norms{x^{k+1} - y^k}^2 -\norms{x^k - y^k}^2 - \norms{x^{k+1} - x^k}^2.
\end{array}
\end{equation*}
Substituting these expressions into  \eqref{eq:GR4NI_lm1_proof2}, and rearranging the results, we obtain
\begin{equation}\label{eq:GR4NI_lm1_proof3}
\arraycolsep=0.2em
\begin{array}{lcl}
\norms{x^{k+1} - x^{\star}}^2 &\leq & \norms{y^k - x^{\star}}^2 + (\tau - 1) \norms{x^{k+1} - y^k}^2 -   \tau \norms{x^k - y^k}^2 \vspace{1ex}\\
&& +  {~}   \frac{L^2\eta^2}{\gamma}\norms{x^k - x^{k-1}}^2   - ( \tau  - \gamma )\norms{x^{k+1} - x^k}^2 \vspace{1ex}\\
&& - {~} 2\eta \iprods{Fx^k - Fx^{\star}, x^k - x^{\star}}.
\end{array}
\end{equation}
Now, using $(\tau - 1)x^{k+1} = \tau y^{k+1} - y^k$ and $\tau(y^{k+1} - y^k) = (\tau - 1)(x^{k+1} - y^k)$ from the first line of \eqref{eq:GR4NI}, we can derive that 
\begin{equation*} 
\arraycolsep=0.2em
\begin{array}{lcl}
(\tau - 1)^2\norms{x^{k+1} - x^{\star}}^2  &= & \tau(\tau - 1)\norms{y^{k+1} - x^{\star}}^2 - (\tau - 1) \norms{y^k - x^{\star}}^2 \vspace{1ex}\\
&& + {~} \tau \norms{y^{k+1} -  y^k}^2 \vspace{1ex}\\
&= & \tau(\tau - 1)\norms{y^{k+1} - x^{\star}}^2 - (\tau - 1) \norms{y^k - x^{\star}}^2 \vspace{1ex}\\
&& + {~} \frac{(\tau - 1)^2}{ \tau }\norms{x^{k+1} -  y^k}^2.
\end{array}
\end{equation*}
Simplifying this expression to get $(\tau - 1)\norms{x^{k+1} - x^{\star}}^2 = \tau \norms{y^{k+1} - x^{\star}}^2 - \norms{y^k - x^{\star}}^2 + \frac{(\tau - 1)}{\tau }\norms{x^{k+1} -  y^k}^2$.
Combining it and \eqref{eq:GR4NI_lm1_proof3}, and rearranging the result, we obtain \eqref{eq:GR4NI_key_est1}.
\Eproof
\end{proof}

\beforesubsubsec
\subsubsection{The Sublinear Best-Iterate Convergence Rate Analysis}\label{subsubsec:GR4NI_convergence}
\aftersubsubsec
Now, we are ready to state the convergence of \eqref{eq:GR4NI}.

\begin{theorem}\label{th:GR4NI_convergence}
For Inclusion~\eqref{eq:NI}, suppose that $\zer{\Phi} \neq\emptyset$, $T$  is maximally $3$-cyclically monotone, and $F$ is  $L$-Lipschitz continuous and satisfies $\iprods{Fx - Fx^{\star}, x - x^{\star}} \geq 0$ for all $x\in\dom{F}$ and some $x^{\star} \in \zer{\Phi}$.
Let $\sets{(x^k, y^k)}$ be generated by \eqref{eq:GR4NI}. 
Then, the following statements hold.
\begin{itemize}
\item[$\mathrm{(a)}$] \textbf{The best-iterate rate of GR.}  If $1 < \tau \leq \frac{1+\sqrt{5}}{2}$ and  $\eta \in \left(0, \frac{\tau}{2L}\right)$, then
\begin{equation}\label{eq:GR4NI_convergence1}
\hspace{-5ex}
\arraycolsep=0.1em
\begin{array}{lcl}
{\displaystyle\frac{1}{K+1}\sum_{k=0}^{K} } \norms{Fx^k + \xi^k}^2  & \leq & {\displaystyle \frac{C_0}{K+1}\sum_{k=0}^K } (\tau - 1)\left[ \tau \norms{x^k - y^k}^2 + \varphi  \norms{x^{k} -  x^{k-1}}^2\right]  \vspace{1ex}\\
& \leq & \dfrac{C_0 \norms{x^0 - x^{\star}}^2}{K+1},
\end{array}
\hspace{-5ex}
\end{equation}
where $\xi^k \in Tx^k$,  $\varphi :=  \frac{\tau^2 - 4L^2\eta^2}{2\tau} > 0$, and $C_0 :=  \frac{ (\tau^2 - 2L^2\eta^2)\tau }{(\tau^2 - 4L^2\eta^2)\eta^2(\tau - 1)} > 0$.

\item[$\mathrm{(b)}$] \textbf{The best-iterate rate of \eqref{eq:GR4NI}.} If we choose $ \frac{1+\sqrt{5}}{2} < \tau < 1 + \sqrt{3}$ and $0 < \eta < \frac{\psi}{2L}$, then 
\begin{equation}\label{eq:GR4NI_convergence2}
\hspace{-5ex}
\arraycolsep=0.1em
\begin{array}{lcl}
{\displaystyle\frac{1}{K+1}\sum_{k=0}^{K} } \norms{Fx^k + \xi^k }^2  & \leq &  {\displaystyle \frac{1}{K+1}\sum_{k = 0}^K } (\tau - 1)\left[ \psi  \norms{x^k - y^k }^2 + \kappa \norms{x^{k} -  y^{k-1}}^2\right] \vspace{1ex}\\
& \leq & \dfrac{\hat{C}_0 \norms{x^0 - x^{\star}}^2}{K+1},
\end{array}
\hspace{-5ex}
\end{equation}
where $\psi :=  \frac{2\tau + 2 - \tau^2}{\tau} > 0$, $\kappa := \frac{\psi^2 - 4L^2\eta^2}{2\psi}$, and $\hat{C}_0 :=  \frac{[\psi^2 - 2L^2\eta^2(2\tau^2 - \psi^2)]\tau}{(\tau - 1)(\psi^2 - 4L^2\eta^2)\eta^2\psi}$.

\end{itemize}
\end{theorem}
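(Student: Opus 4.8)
The plan is to turn the one-step estimate \eqref{eq:GR4NI_key_est1} of Lemma~\ref{le:GR4NI_key_est1} into a telescoping Lyapunov inequality and to pair it with a residual bound that controls $\norms{Fx^k + \xi^k}^2$ by the displacement quantities appearing on the right-hand side of \eqref{eq:GR4NI_key_est1}. First, using the star-monotonicity assumption $\iprods{Fx^k - Fx^{\star}, x^k - x^{\star}} \geq 0$, I would discard the last term of \eqref{eq:GR4NI_key_est1}. It then remains to (i) produce a bound of the form $\norms{Fx^k + \xi^k}^2 \leq (\textrm{const})\,(\tau-1)[\tau\norms{x^k - y^k}^2 + \varphi\norms{x^k - x^{k-1}}^2]$ and (ii) assemble a nonnegative potential $\Pc_k$ with $\Pc_{k+1} \leq \Pc_k - (\tau-1)[\tau\norms{x^k-y^k}^2 + \varphi\norms{x^k - x^{k-1}}^2]$, so that summation and the trivial $\Pc_{K+1}\geq 0$ give \eqref{eq:GR4NI_convergence1} (and its analogue for part (b)).

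For the residual bound I would exploit the reformulation \eqref{eq:GR4NI_reform}. Writing $w^k := Fx^k + \xi^k$ and $\breve{w}^k := Fx^{k-1} + \xi^k$ gives $w^k = \breve{w}^k + (Fx^k - Fx^{k-1})$, while the first line of \eqref{eq:GR4NI_reform} yields the identity $\breve{w}^k = \frac{\tau}{\eta}(y^k - x^k)$. Hence Young's inequality together with the $L$-Lipschitz continuity of $F$ produces $\norms{w^k}^2 \leq (1+a)\frac{\tau^2}{\eta^2}\norms{x^k - y^k}^2 + (1 + a^{-1})L^2\norms{x^k - x^{k-1}}^2$ for any $a>0$; choosing $a$ (namely $a = \frac{2L^2\eta^2}{\tau^2 - 4L^2\eta^2}$ for part (a)) matches the two coefficients to the combination in the potential and fixes the constant $C_0$. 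This is the exact analogue of Lemma~\ref{le:RFBS4NI_key_est2} for \eqref{eq:GR4NI}.

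For part (a) I would take $\gamma := \tau/2$ in \eqref{eq:GR4NI_key_est1}. Telescoping of the memory term $\norms{x^k - x^{k-1}}^2$ then requires $\frac{(\tau-1)L^2\eta^2}{\gamma} \leq (\tau-1)(\tau-\gamma)$, i.e. $L^2\eta^2 \leq \gamma(\tau-\gamma) = \tau^2/4$, which is exactly $\eta < \tau/(2L)$; the residual margin is then $\varphi = \frac{\tau^2 - 4L^2\eta^2}{2\tau}$. Crucially, the coefficient of the reflection term $\norms{x^{k+1} - y^k}^2$ in \eqref{eq:GR4NI_key_est1} is proportional to $1 - \tau^2 + \tau$, which is nonnegative precisely when $\tau \leq \frac{1+\sqrt{5}}{2}$; in that regime this term is a genuine decrease and may simply be dropped. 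Setting $\Pc_k := \tau\norms{y^k - x^{\star}}^2 + (\tau-1)(\tau-\gamma)\norms{x^k - x^{k-1}}^2 \geq 0$, the one-step inequality collapses to $\Pc_{k+1} \leq \Pc_k - (\tau-1)[\tau\norms{x^k - y^k}^2 + \varphi\norms{x^k - x^{k-1}}^2]$, and summing with $\Pc_0 = \tau\norms{x^0 - x^{\star}}^2$ (recall $y^0 = x^0$) yields \eqref{eq:GR4NI_convergence1}.

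The hard part is part (b), where $\frac{1+\sqrt{5}}{2} < \tau$ makes $1 - \tau^2 + \tau < 0$, so the reflection term $\norms{x^{k+1} - y^k}^2$ now carries the wrong sign and cannot be discarded. My plan is to absorb it using Young's inequality in the symmetric form $\norms{x^{k+1} - y^k}^2 \leq 2\norms{x^{k+1} - x^k}^2 + 2\norms{x^k - y^k}^2$ and to charge the two pieces against the two negative terms already present, namely $(\tau-1)(\tau-\gamma)\norms{x^{k+1}-x^k}^2$ and $\tau(\tau-1)\norms{x^k - y^k}^2$. A direct computation shows the coefficient of $\norms{x^k - y^k}^2$ becomes $-(\tau-1)\psi$ with $\psi = \frac{2\tau + 2 - \tau^2}{\tau}$, and the coefficient of $\norms{x^{k+1}-x^k}^2$ becomes $(\tau-1)(\psi - \gamma)$; thus $\psi$ plays exactly the role $\tau$ played in part (a). Positivity of these coefficients forces $\psi > 0$, i.e. $\tau < 1 + \sqrt{3}$, and then choosing $\gamma := \psi/2$ makes telescoping of the memory term require $L^2\eta^2 \leq \gamma(\psi-\gamma) = \psi^2/4$, i.e. $\eta < \psi/(2L)$, with residual margin $\kappa = \frac{\psi^2 - 4L^2\eta^2}{2\psi}$. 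The main obstacle, and the step I expect to demand the most care, is verifying that after this splitting the assembled potential remains nonnegative and strictly decreasing with the advertised margins, and tracking the constants through the residual bound so that the final rate constant $\hat{C}_0$ comes out as stated; this balancing is precisely what upgrades Malitsky's single golden-ratio value to the whole interval $1 < \tau < 1 + \sqrt{3}$.
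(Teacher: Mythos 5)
Your proposal is correct and is essentially the paper's own proof: for part (a) you drop the reflection term using $1+\tau-\tau^2\geq 0$, choose $\gamma=\tau/2$, telescope the potential $\tau\norms{y^k-x^{\star}}^2+\tfrac{\tau(\tau-1)}{2}\norms{x^k-x^{k-1}}^2$, and your Young/Lipschitz residual bound via $\breve{w}^k=\tfrac{\tau}{\eta}(y^k-x^k)$ with the matched coefficient choice is exactly the paper's estimate \eqref{eq:GR4NI_th1_proof2}. For part (b), your absorption of the sign-flipped reflection term through $\norms{x^{k+1}-y^k}^2\leq 2\norms{x^{k+1}-x^k}^2+2\norms{x^k-y^k}^2$, yielding the coefficients $-(\tau-1)\psi$ and $(\tau-1)(\psi-\gamma)$ with $\gamma=\psi/2$, reproduces the paper's inequality \eqref{eq:GR4NI_key_est1_v2} (and your memory term $\norms{x^k-x^{k-1}}^2$ matches the proof; the $\norms{x^k-y^{k-1}}^2$ appearing in the theorem statement is a typo).
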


\begin{proof}
First, to guarantee that $1 + \tau - \tau^2 \geq 0$ and $\tau > 1$, we need to choose $1 < \tau \leq \frac{\sqrt{5} + 1}{2}$.
If $0 < \eta < \frac{\tau}{2L}$, then by choosing $\gamma := \frac{\tau}{2}$, we have  $\psi := \frac{(\tau - 1)(\tau \gamma - \gamma^2 - L^2\eta^2)}{\gamma} = \frac{(\tau - 1)(\tau^2 - 4L^2\eta^2)}{2\tau } > 0$.
Using this relation and $\iprods{Fx^k - Fx^{\star}, x^k - x^{\star}} \geq 0$, if we define $\Vc_k := \tau \norms{y^k - x^{\star}}^2 + \frac{\tau (\tau - 1)}{2} \norms{x^k - x^{k-1}}^2 \geq 0$, then we can deduce from \eqref{eq:GR4NI_key_est1} that
\begin{equation}\label{eq:GR4NI_th1_proof1}
\arraycolsep=0.2em
\begin{array}{lcl}
\Vc_{k+1} &\leq & \Vc_k  -  \psi \cdot \norms{ x^k - x^{k-1}}^2  - \tau(\tau - 1)\norms{x^k - y^k}^2.
\end{array}
\end{equation}
Next, using $y^k - x^k = \frac{\eta}{ \tau }\breve{w}^k$ and $\breve{w}^k = Fx^{k-1} + \xi^k$, by Young's inequality, we can prove that
\begin{equation}\label{eq:GR4NI_th1_proof2}
\hspace{-3ex}
\arraycolsep=0.1em
\begin{array}{lcl}
\norms{w^k}^2 & = & \norms{Fx^k + \xi^k}^2 \vspace{1ex}\\
& \leq & \left(1 + \frac{\psi\tau}{L^2\eta^2(\tau - 1)} \right) \norms{Fx^k - Fx^{k-1}}^2 + \left( 1+ \frac{L^2\eta^2(\tau - 1)}{\psi\tau}\right)\norms{\breve{w}^k}^2 \vspace{1ex}\\
& \leq &  \left(1 + \frac{\psi\tau}{L^2\eta^2(\tau - 1)} \right) L^2\norms{x^k - x^{k-1}}^2 +  \left( 1+ \frac{L^2\eta^2(\tau - 1)}{ \psi\tau }\right)\frac{\tau^2}{\eta^2}\norms{x^k - y^k}^2 \vspace{1ex}\\
& = & \frac{L^2\eta^2(\tau-1) + \psi\tau }{\psi\eta^2(\tau - 1)} \left[ \psi\cdot\norms{x^k - x^{k-1}}^2 + \tau (\tau - 1)\norms{x^k - y^k}^2\right].
\end{array}
\hspace{-4ex}
\end{equation} 
Combining this estimate and \eqref{eq:GR4NI_th1_proof1}, and noting that $\Vc_k \geq 0$, we can show that
\begin{equation*}
\arraycolsep=0.2em
\begin{array}{lcl}
\sum_{l=0}^k\norms{w^l}^2 & \leq & \frac{L^2\eta^2(\tau - 1) + \psi \tau }{\psi\eta^2(\tau - 1)} \sum_{l=0}^k \left[ \psi \cdot \norms{x^{l} - x^{l-1}}^2 +   \tau (\tau - 1) \norms{x^l - y^l}^2\right] \vspace{1ex}\\
& \leq &    \frac{L^2\eta^2(\tau - 1) + \psi\tau }{\psi\eta^2(\tau - 1)}   \left[ \Vc_0 - \Vc_{k+1} \right] \leq   \frac{L^2\eta^2(\tau - 1) + \psi \tau }{\psi\eta^2(\tau - 1)}  \cdot \Vc_0  \vspace{1ex}\\
& = &  \frac{ (\tau^2 - 2L^2\eta^2)\tau}{(\tau^2 - 4L^2\eta )\eta^2(\tau - 1)} \cdot \norms{x^0 - x^{\star}}^2,
\end{array}
\end{equation*} 
which is exactly \eqref{eq:GR4NI_convergence1},  where we have used $\Vc_0 := \tau \norms{y^0 - x^{\star}}^2 + \frac{\tau(\tau - 1)}{2} \norms{x^0 - x^{-1}}^2 = \tau \norms{x^0 - x^{\star}}^2$ due to $x^{-1} = y^0 = x^0$.

Next, if $1.6180 \approx \frac{1 + \sqrt{5}}{2} < \tau  < 1 + \sqrt{3} \approx 2.7321$, then we have $\tau^2 - \tau - 1 > 0$ and $\psi := \tau - \frac{2(\tau^2 - \tau - 1)}{\tau } > 0$.
In this case, using $\norms{x^{k+1} - y^k}^2 \leq 2\norms{x^{k+1} - x^k}^2 + 2\norms{y^k - x^k}^2$ and $\iprods{Fx^k - Fx^{\star}, x^k - x^{\star}} \geq 0$ into \eqref{eq:GR4NI_key_est1}, rearranging the result, and using $\gamma := \frac{\psi}{2}$, we can derive that
\begin{equation}\label{eq:GR4NI_key_est1_v2}
\hspace{-3ex}
\arraycolsep=0.0em
\begin{array}{lcl}
\tau \norms{y^{k+1} - x^{\star}}^2  & + &   \frac{\psi(\tau - 1)}{2} \norms{x^{k+1} - x^k}^2   \leq  \tau \norms{y^k - x^{\star}}^2 +  \frac{\psi(\tau  - 1)}{2}\norms{x^k-x^{k-1}}^2 \vspace{1ex}\\
&&   - {~}  \psi(\tau - 1)\norms{x^k - y^k}^2 -   \frac{(\tau - 1)(\psi^2 - 4L^2\eta^2)}{2\psi} \norms{x^k - x^{k-1}}^2.
\end{array}
\hspace{-4ex}
\end{equation}
Similar to the proof of \eqref{eq:GR4NI_th1_proof2}, we have $\norms{w^k}^2 \leq \frac{\psi^2\tau^2 - 2L^2\eta^2(2\tau^2 - \psi^2)}{(\psi^2 - 4L^2\eta^2)\eta^2\psi} \big[\frac{\psi^2 - 4L^2\eta^2}{2\psi}\norms{x^k - x^{k-1}}^2 + \psi\norms{x^k - y^k}^2\big]$.
Combining this inequality and \eqref{eq:GR4NI_key_est1_v2}, with same argument as in the proof of \eqref{eq:GR4NI_convergence1}, we obtain \eqref{eq:GR4NI_convergence2}.
\Eproof
\end{proof}

\beforesec
\section{Numerical Experiments}\label{sec:numerical_experiments}
\aftersec
We provide an extensive experiment set to verify the algorithms we studied above for both equations and inclusions under new assumptions and parameters.
All the algorithms are implemented in Python running on a single node of a Linux server (called Longleaf) with the configuration: AMD EPYC 7713 64-Core Processor, 512KB cache, and 4GB RAM.

\beforesubsec
\subsection{\mytb{Quadratic Minimax Problems}}\label{subsec:numexp_quad_minimax}
\aftersubsec
Our first example is the following quadratic minimax problem:
\begin{align}\label{prob:minimax}
\min_{u \in \mathbb{R}^{p_1}}\max_{v\in\mathbb{R}^{p_2}}\Big\{ \mathcal{L}(u,v) = f(u) + \Hc(x, y) - g(v) \Big\},
\end{align}
where $\Hc(x, y) := \frac{1}{2}u^{\top}A u + b^{\top}u + u^{\top}L v - \frac{1}{2}v^{\top}B v - c^{\top}v$ such that  $A \in\mathbb{R}^{p_1\times p_1}$ and $B \in \mathbb{R}^{p_2\times p_2}$ are symmetric matrices, $b \in\mathbb{R}^{p_1}$, $c \in\mathbb{R}^{p_2}$ are given vectors, and $L \in \mathbb{R}^{p_1\times p_2}$ is a given matrix. 
The functions $f$ and $g$ are added to possibly handle constraints or regularizers associated with $u$ and $v$, respectively.

First, we denote $x \coloneqq [u,v] \in \mathbb R^p$ for $p := p_1 + p_2$, which is the concatenation of the primal variable $u \in \mathbb R^{p_1}$ and its dual variable $v \in \mathbb R^{p_2}$. 
Next, we define $\mathbf{F} \coloneqq \left[ [A, L], [-L^\top, B] \right]$ as the KKT matrix in $\mathbb R^{p\times p}$ constructed from the four blocks $A, L, -L^\top$, and $B$, and $\mathbf{f} \coloneqq [b, c] \in \mathbb R^p$. 
The operator $F: \mathbb R^p \to \mathbb R^p$ is then defined as $Fx \coloneqq \mathbf{F}x + \mathbf{f}$. 
When $f$ and $g$ are presented, we denote by $T := [\partial f, \partial g]$ the maximally monotone operator constructed from the subdifferentials of $f$ and $g$. 
Then, the optimality condition of \eqref{prob:minimax} becomes $0 \in Fx + Tx$ covered by \eqref{eq:NI}. 
If $f$ and $g$ are absent from \eqref{prob:minimax}, then its optimality condition reduces to $Fx = 0$ as a special case of \eqref{eq:NE}.

\beforesubsubsec
\subsubsection{The unconstrained case as \eqref{eq:NE}}\label{subsubsec:numexp_unconstr_quad_minimax}
\aftersubsubsec
First, we consider the unconstrained minimax problem as an instance of \eqref{prob:minimax} (or in particular, of \eqref{eq:NE}), where $f=0$ and $g=0$. 
We aim to test different variants covered by \eqref{eq:EG4NE}.

\textit{Data generation.} 
In what follows, all random matrices and vectors are generated randomly from the standard normal distribution. 
We generate the matrix $A = Q D Q^\top$, where $Q$ is an orthonormal matrix obtained from the QR factorization of a random matrix, and $D = \diag{d_{1}, \dots, d_{p_1}}$ is the diagonal matrix formed from $d_{1}, \dots, d_{p_1}$ randomly generated and then clipped by a lower bound $\ul{d}$, i.e. $d_{j} \coloneqq \max\{d_{j}, \ul{d}\}$. 
The matrix $B$ is also generated by the same way. 
The matrix $L$ and vectors $b$ and $c$ are randomly generated. 

\textit{Algorithms.}
We implement $5$ different instances of \eqref{eq:EG4NE} by using different choices of $u^k$.
These variants consist of EG with $u^k := Fx^k$ and $\beta = 1$, EG+ with $u^k := Fx^k$ and $\beta := 0.5$, PEG with $u^k := Fy^{k-1}$ and $\beta = 1$, PEG+ with $u^k := Fy^{k-1}$ and $\beta := 0.5$, and GEG with $u^k = 1.35 Fx^k - 0.25 Fy^{k-1} - 0.1 Fx^{k-1}$ and $\beta = 0.95$.
The reason we choose different $\beta$ in these algorithms is to avoid identical performance between EG and EG+, PEG and PEG+.
For GEG, we performed  an experiment and found at least one possible pair $(\alpha_1, \alpha_2) := (1.35, -0.25)$ that works relatively well.

\textit{Parameters.}
We choose the stepsize $\eta$ in each algorithm based on a grid search and then tuned it manually to obtain the best possible performance.
We run all algorithms up to $5000$ iterations (or $10000$ iterations for PEG and PEG+ due to one evaluation of $F$ per iteration) on 10 problems instances, and report the average of the relative residual norm $\norms{Fx^k}/\norms{Fx^0}$.
The starting point $x^0$ is always chosen as $x^0 = 0.01\cdot \texttt{ones}(p)$.

\textit{Experiment setup.} We perform four different experiments. 
In \textit{Experiment 1} and \textit{Experiment 2}, we choose $\ul{d} = 0.1$ (monotone), and run the five algorithms on 10 problem instances for each case: $p = 1000$ and $p = 2000$, respectively. 
In \textit{Experiment 3} and \textit{Experiment 4}, we choose $\ul{d} = -0.1$ (possibly nonmonotone) and run the five algorithms on 10 problem instances for each case: $p = 1000$ and $p = 2000$, respectively.
Then, we report the mean of the relative operator norm $\norms{Fx^k}/\norms{Fx^0}$ against the number of iterations $k$ and the number of operator evaluations $Fx^k$, respectively over the 10 instances.

\begin{figure}[hpt!]
\centering
\includegraphics[width=\linewidth]{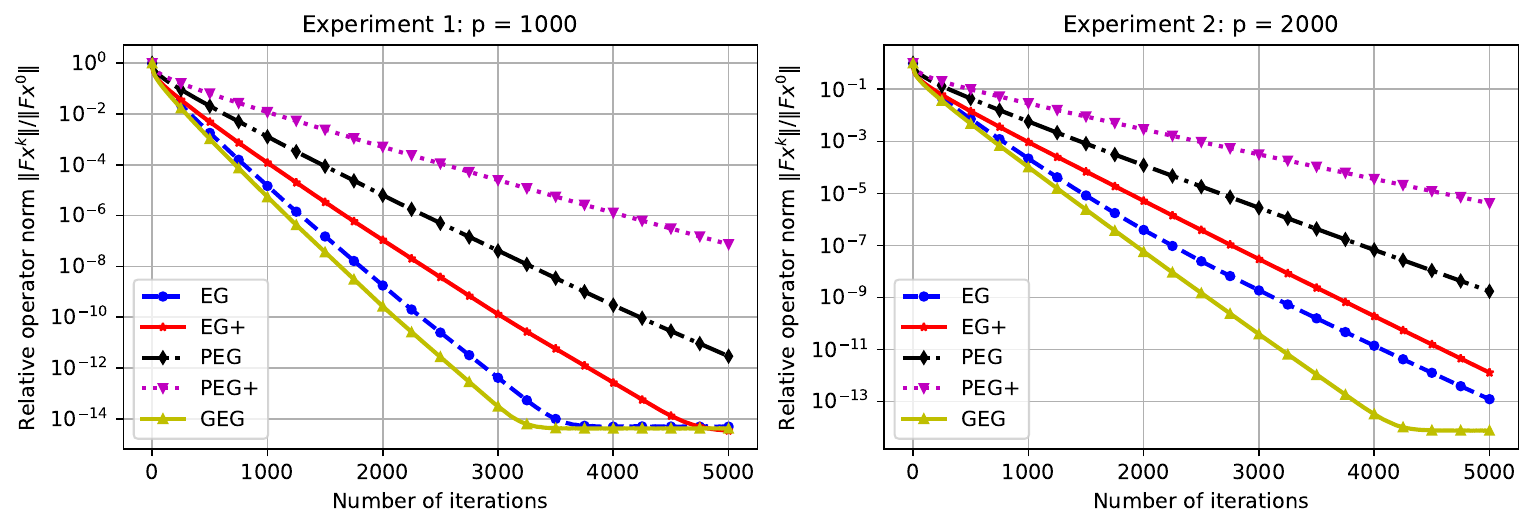}
\includegraphics[width=\linewidth]{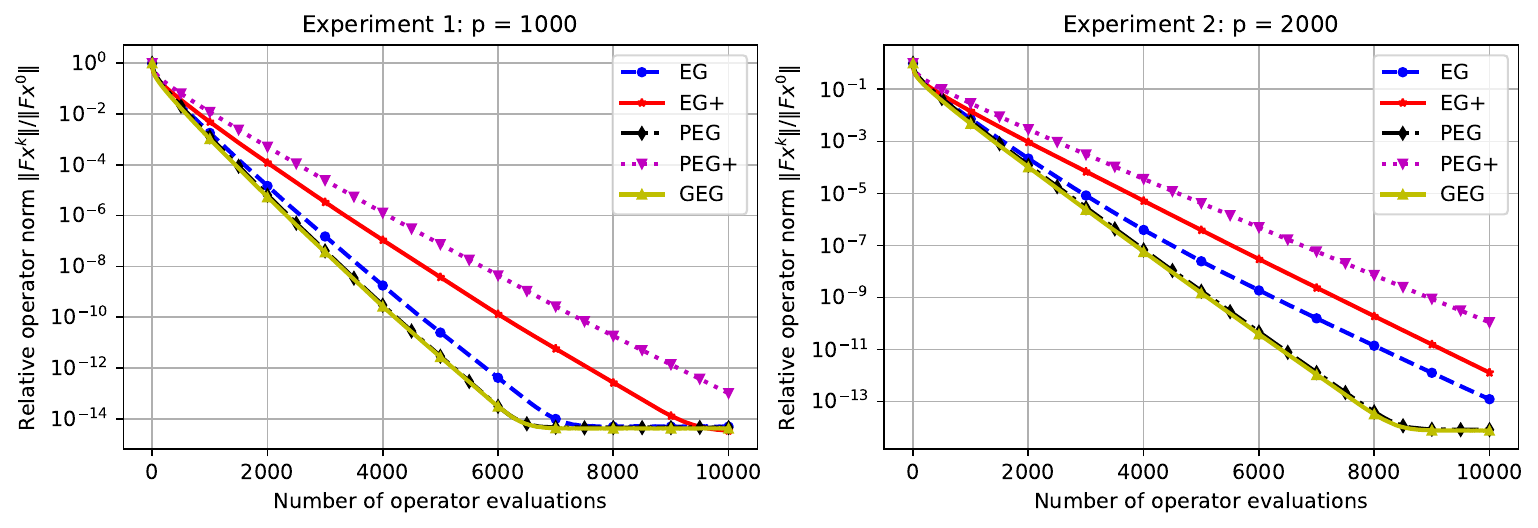}
\caption{
Comparison of the five variants of \eqref{eq:EG4NE} for solving \eqref{prob:minimax} as a special case of \eqref{eq:NE}. 
Two experiments, each is on 10 problem instances of  the monotone setting. 
}
\label{fig:unconstr_minimax1}
\vspace{-2ex}
\end{figure}

\begin{figure}[hpt!]
	\centering
	\includegraphics[width=\linewidth]{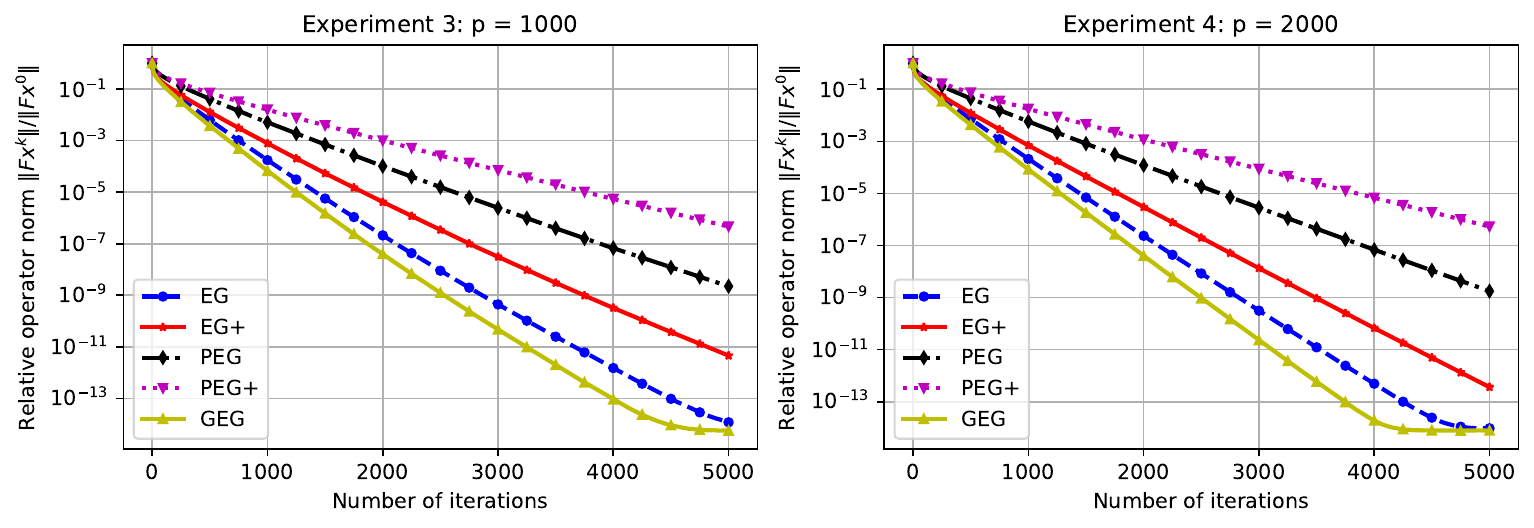}
	\includegraphics[width=\linewidth]{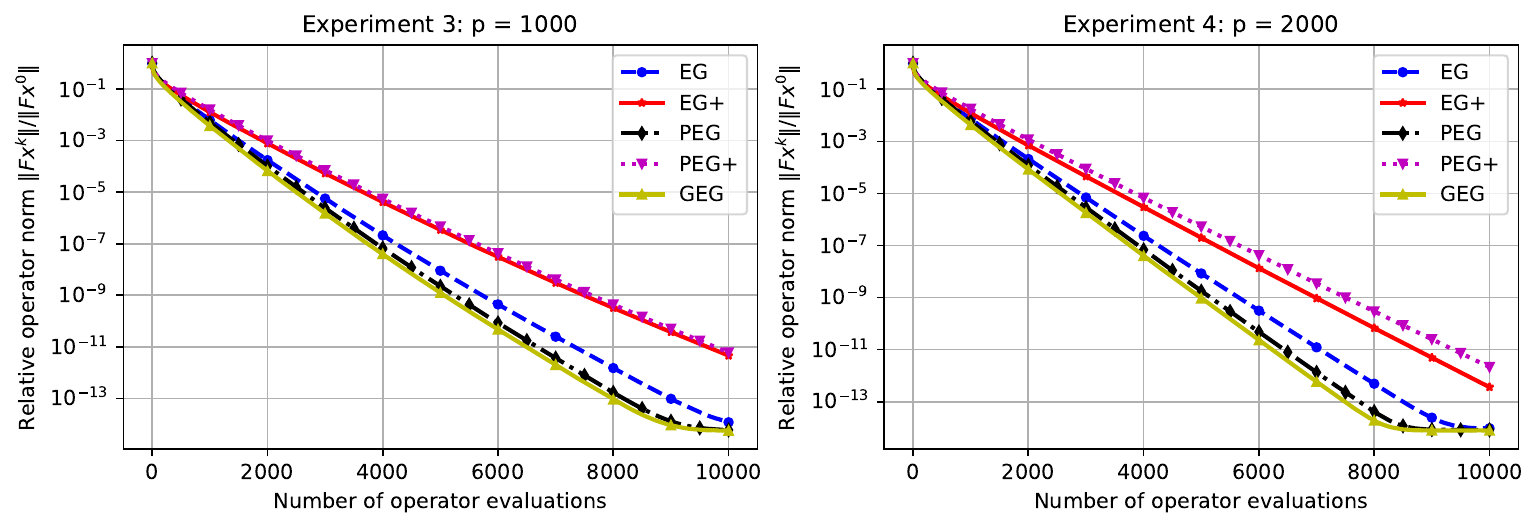}
	\caption{
	Comparison of the five variants of \eqref{eq:EG4NE} for solving \eqref{prob:minimax} as a special case of \eqref{eq:NE}. 
	Two experiments, each is on 10 problem instances of  the possibly nonmonotone setting. 
	}
	\label{fig:unconstr_minimax2}
\vspace{-3ex}	
\end{figure}

\textit{Results.} 
The numerical results after 5000 iterations and after 10000 operator evaluations are reported in Figures \ref{fig:unconstr_minimax1} and \ref{fig:unconstr_minimax2}, respectively. 

From Figures \ref{fig:unconstr_minimax1} and \ref{fig:unconstr_minimax2}, we observe that GEG outperforms the other algorithms across all experiments, particularly in larger instances ($p=2000$). The performances of the remaining algorithms are consistent across all experiments and can be ranked in terms of the number of iterations as follows: EG $>$ EG+ $>$ PEG $>$ PEG+. In terms of operator evaluations, PEG slightly outperforms EG, while the performances of PEG+ and EG+ become more comparable. This can be attributed to PEG/PEG+ saving one operator evaluation per iteration. Interestingly, GEG continues to deliver the best overall performance in terms of the number of operator evaluations and is comparable to PEG in this regard.

\beforesubsubsec
\subsubsection{The constrained case as \eqref{eq:NI}}\label{subsubsec:numexp_constr_quad_minimax}
\aftersubsubsec
Next, we consider the minimax problem \eqref{prob:minimax} with the constraints $u \in \Delta_{p_1}$ and $v \in \Delta_{v_2}$, where $\Delta_{p_1}$ and $\Delta_{p_2}$ are the standard simplexes in $\mathbb R^{p_1}$ and $\mathbb R^{p_2}$, respectively. 
To handle this constraint, in \eqref{prob:minimax}, we use $f(u) = \delta_{\Delta_{p_1}}(u)$ and $g(v) = \delta_{\Delta_{p_2}}(v)$, where $\delta_{\mathcal X}$ is the indicator function of a closed convex set $\mathcal X$. 
The optimality condition of \eqref{prob:minimax} becomes $0 \in Fx + Tx$, where $T \coloneqq [\partial\delta_{\Delta_{p_1}}, \partial\delta_{\Delta_{p_2}}]$ from $\mathbb R^p$ to $2^{\mathbb R^p}$ is a maximally monotone operator.

\textit{Experiment setup.} 
We consider two settings of \eqref{prob:minimax}: \textit{Test 1} with monotone operator $F$ and \textit{Test 2} with [possibly] nonmonotone operator $F$. 
In \textit{Test 1}, we generate data as described in unconstrained case. 
In \textit{Test 2},  data is generated very similarly, except that the diagonal elements $d_j$ of $D$ are sampled from a uniform distribution $\mathbb{U}(-10,10)$.
In each setting, we consider two experiments as before corresponding to $p=1000$ and $p=2000$, respectively.
Then, we report the mean of the relative operator norm $\frac{\norms{\Gc_\eta x^k}}{\norms{\Gc_\eta x^0}}$ over 10 problem instances, where $\Gc_\eta \coloneqq \eta^{-1} \left( x - J_{\eta T}(x-\eta Fx) \right)$ is defined in \eqref{eq:FB_residual}.

\textit{Algorithms and parameters.} 
In \textit{Test 1}, we implement three different variants of \eqref{eq:EG4NI}: EG2 with $u^k = Fx^k$ and $\beta = 1$, EG2+ with $u^k = Fx^k$ and $\beta = 0.5$, GEG2 with $u^k = 1.35Fx^k - 0.45Fy^{k-1} + 0.1Fx^{k-1}$ and $\beta = 0.975$, \eqref{eq:RFBS4NI}, and two variants of \eqref{eq:GR4NI}: GR2 with $\tau = \frac{1+\sqrt{5}}{2}$ and GR2+ with $\tau = \frac{1}{2}\left( \frac{1+\sqrt{5}}{2} + 1 + \sqrt{3} \right) = \frac{3 + 2\sqrt{3} + \sqrt{5}}{4}$.
Next, in \textit{Test 2}, we test 3 variants of \eqref{eq:FBFS4NI}: FBFS2 with $u^k = Fx^k$ and $\beta = 1$, FBFS2+ with $u^k = Fx^k$ and $\beta = 0.25$, and GFBFS2 with $u^k = 1.45 Fx^k - 0.45 Fy^{k-1}$ and $\beta = 1$. 
The stepsize of each algorithm is obtained from a grid search and then fine tuned to obtain the best possible performance. 
The starting points are always chosen as $x^0 \coloneqq 0.01\cdot\texttt{ones}(p)$.

\begin{figure}[hpt!]
\centering
\includegraphics[width=\linewidth]{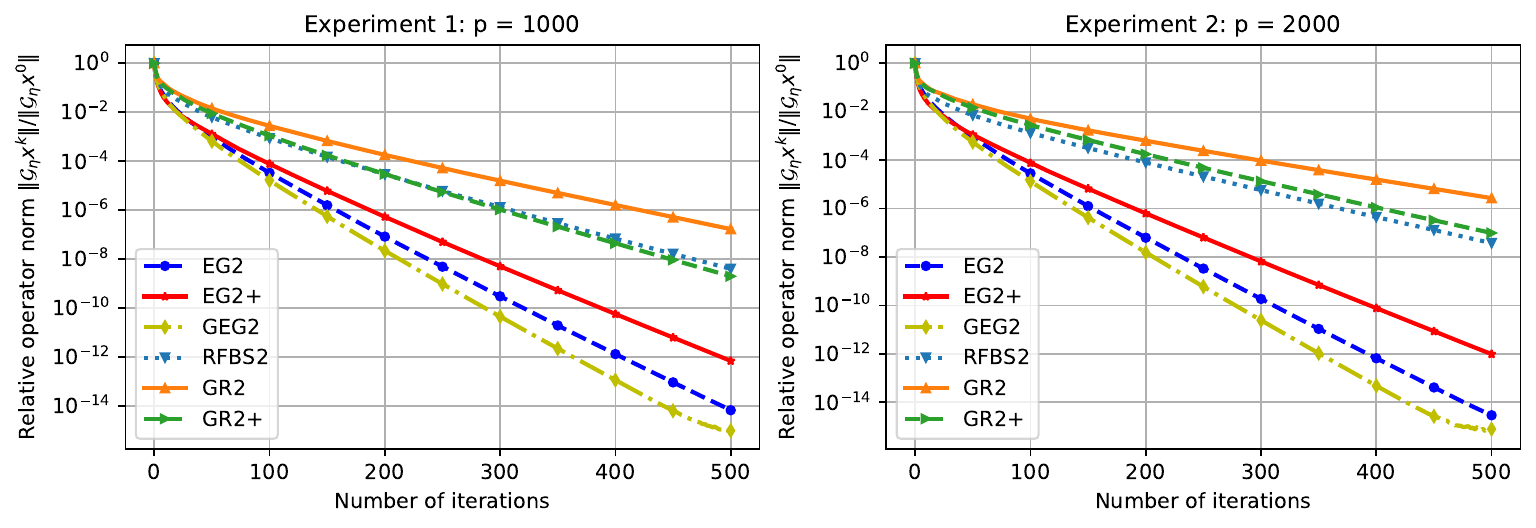}
\includegraphics[width=\linewidth]{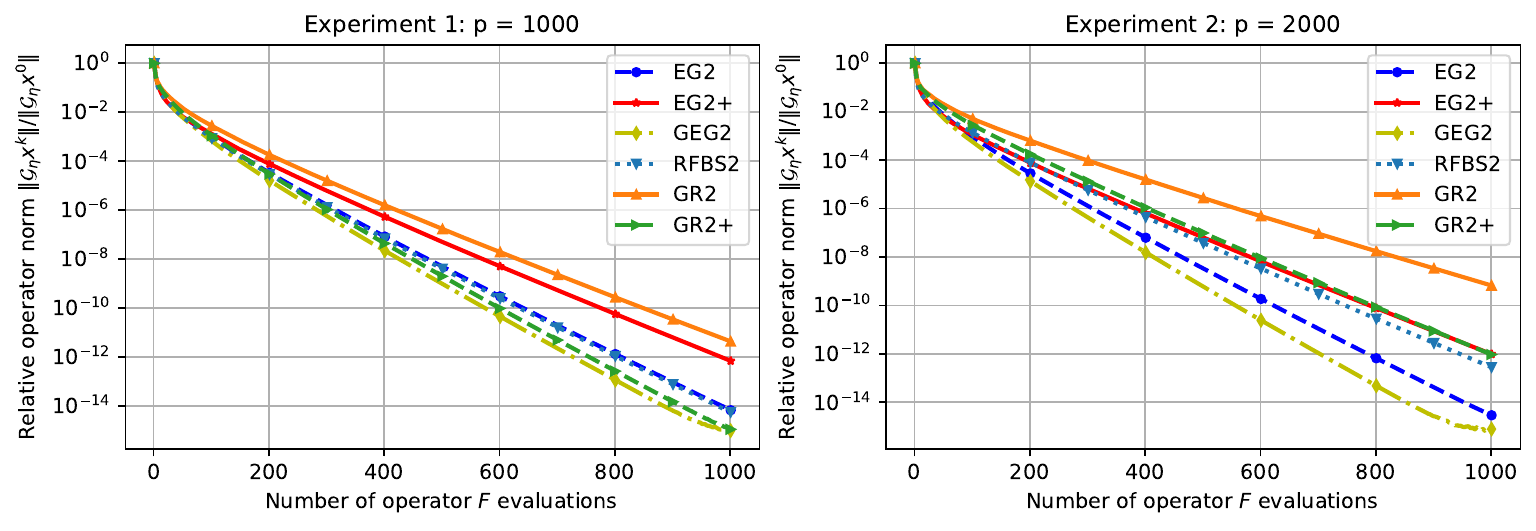}
\caption{Comparison of 6 different algorithms for solving (NI) in \textit{Test 1}. The plot reveals the mean of 10 problem instances.}
\label{fig:constr_minimax_1}
\vspace{-2ex}
\end{figure}

\begin{figure}[hpt!]
\centering
\includegraphics[width=\linewidth]{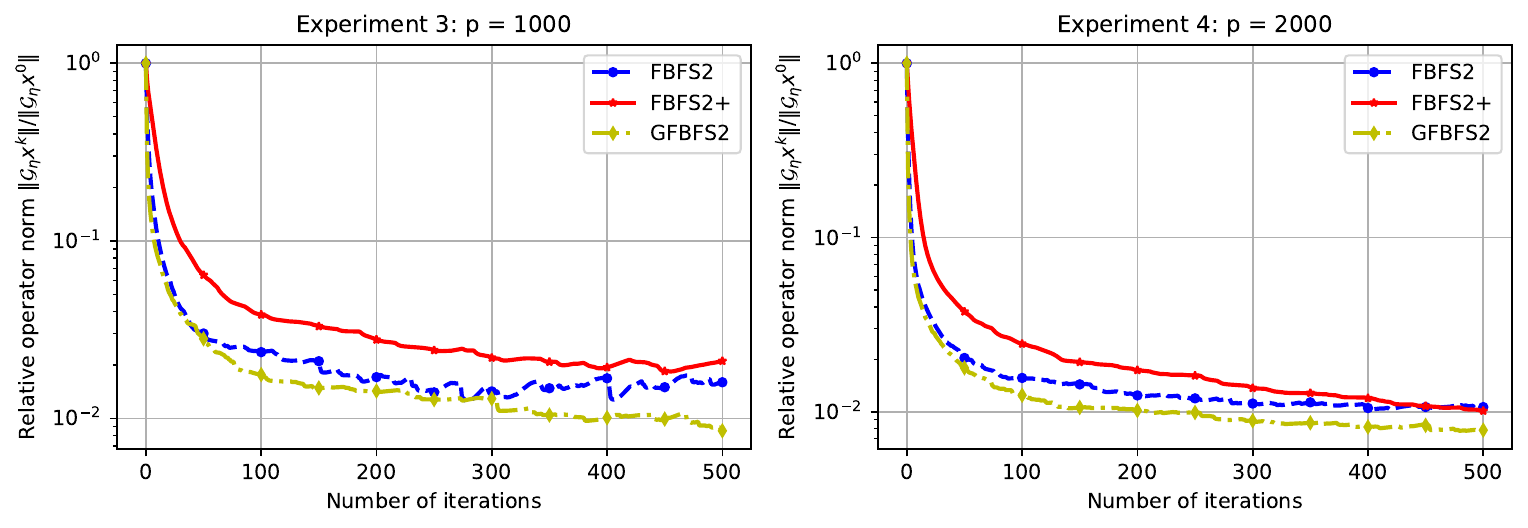}
\caption{Comparison of 3 different algorithms for solving (NI) in \textit{Test 2}. The plot reveals the mean of 10 problem instances.}
\label{fig:constr_minimax_2}
\vspace{-3ex}
\end{figure}

\textit{Results.} 
The numerical results of \textit{Test 1} are reported in Figure \ref{fig:constr_minimax_1}.
As we can see from Figure \ref{fig:constr_minimax_1}, GEG2 performs better than EG2 and outperforms all the remaining algorithms in both experiments. 
We also see that the three variants of \eqref{eq:EG4NI} outperform RFBS2 when they reach the solution accuracy in the range of $10^{-12}$ to $10^{-14}$ after $500$ iterations compared to $10^{-8}$ of RFBS2. 
However, as compensation, RFBS2 saves one operator evaluation and one resolvent evaluation per iteration compared to the three variants of \eqref{eq:EG4NI}. 
Finally, the performance of \eqref{eq:GR4NI} has been improved by extending the value of $\tau$ from $\tau = \frac{1+\sqrt{5}}{2}$ to $\tau = \frac{1}{2} ( \frac{1+\sqrt{5}}{2} + 1 + \sqrt{3} )$.

The numerical results of \textit{Test 2} are reported in Figure \ref{fig:constr_minimax_2}.
This figure shows that GFBFS2 provides a better performance than FBFS2 and FBFS2+. However, their performances are comparable and become more stable in a higher dimensional space.

\beforesubsec
\subsection{\mytb{Bilinear Matrix Games}}\label{subsec:numexp_bilinear_game}
\aftersubsec
Our second example is the following classical bilinear matrix game problem:
\begin{equation}\label{eq:matrix_game}
\min_{u \in \mathcal U} \max_{v \in \mathcal V}\big\{ \Hc(x, y) :=  \iprods{ \mbf{L} u, v } \big\},
\end{equation}
where $\mbf{L} \in \mathbb R^{p_2 \times p_1}$, $\mathcal U \subset \mathbb R^{p_1}$ and $\mathcal V \subset \mathbb R^{p_2}$ are closed convex sets. 

The optimality condition  $ 0 \in Fx + Tx$ of \eqref{eq:matrix_game} is a special case of \eqref{eq:NI}, where we define $x := [u, v]$, $Fx := [\mbf{L}^\top v; -\mbf{L}u]$, $Tx := [\partial \delta_{\mathcal U}, \partial \delta_{\mathcal V}]$, and $\delta_{\mathcal X}$ is the indicator function of $\mathcal X$. 
Clearly, $F$ is skew-symmetric, and thus monotone.
However, it is well-known that \eqref{eq:FBS4NI} for solving this problem may not converge.
Therefore, it is important to apply extragradient-type methods.

\textit{Models and data.}
We follow the suggestions in \cite{Nemirovski2009} to generate data for our test.
We consider a symmetric matrix $\mbf{L}$ of size $q\times q$, where $q:= p_1 = p_2$, with two options:
\begin{itemize}
\item $\mbf{L}$ belongs to the first family with $\mbf{L}_{ij} := \big(\frac{i + j - 1}{2q - 1} \big)^{\alpha}$ for $1 \leq i, j \leq q$.
\item $\mbf{L}$ belongs to the second family with $\mbf{L}_{ij} := \big( \frac{\vert i - j\vert + 1}{2q - 1} \big)^{\alpha}$ for $1 \leq i, j \leq q$.
\end{itemize}
Here, $\alpha > 0$ is a given parameter.

In addition to the above two artificial instances, we also follow \cite{nemirovski2013mini} and consider the Policeman vs. Burglar problem as follows.
There are $q$ houses in a city, the $i$-th house is with the wealth $w_i$. 
Every evening, Burglar chooses a house $i$ to attack, and Policeman chooses his post near a house $j$, $1 \leq i, j \leq q$. 
After Burglary starts, Policeman becomes aware where it happens, and his probability to catch Burglar is  $\exp\{-\theta \cdot \text{dist}(i, j)\}$,  where $\text{dist}(i, j)$ is the distance between houses $i$ and $j$. 
On the other hand, Burglar seeks to maximize his expected profit $w_i(1 - \exp\{-\theta \cdot \text{dist}(i, j)\})$,  the interest of Policeman is completely opposite. 
This leads to the third family of symmetric matrices $\mbf{L}$ with $\mbf{L}_{ij} = w_i(1 - \exp\{-\theta \cdot \text{dist}(i, j)\})$ for $1 \leq i, j \leq q$.
The set $\Uc$ and $\Vc$ are chosen to be the standard simplexes in the respective spaces. 

In the above three problems, we fix $q: = 500$. 
Moreover, in the first two problems, we also take $\alpha = 1$.
In the Burglar and Policeman problem, we choose $\text{dist}(i,j) \coloneqq |i - j|$, $\theta := 0.005$, and the vector of wealth $w \in \mathbb R^q$ is generated randomly from a standard normal distribution and then take the absolute value so that it is nonnegative. 

\textit{Experiment with 5 variants of \eqref{eq:EG4NE}.} 
We first reformulate the optimality $0 \in Fx + Tx$ of \eqref{eq:matrix_game} into \eqref{eq:NE} by using Tseng's FBFS operator as $\hat{F}x := x - J_{\lambda T}(x - \lambda Fx) - \lambda(Fx - F(J_{\lambda T}(x - \lambda Fx) ))$ for any $\lambda > 0$.
It is clear that solving $0 \in Fx^{\star} + Fx^{\star}$ is equivalent to solving $\hat{F}x^{\star} = 0$.
We implement our \eqref{eq:EG4NE} with different choices of $u^k$ as in Subsection \ref{subsec:numexp_quad_minimax} to solve $\hat{F}x^{\star} = 0$ with $\lambda := 0.5$, except that now GEG uses $u^k = 1.1Fx^{k} - 0.1Fy^{k-1}$ and $\beta = 0.1$. 
The starting point for all experiments is always chosen as $x^0 := 0.5\cdot\texttt{ones}(p)$.
For each algorithm, the stepsize $\eta$ is chosen by a grid search so that it achieves the best possible performance. 
The numerical results  are reported in Figure \ref{fig:matrix_game_1}. 

\begin{figure}
\centering
\includegraphics[width=1\linewidth]{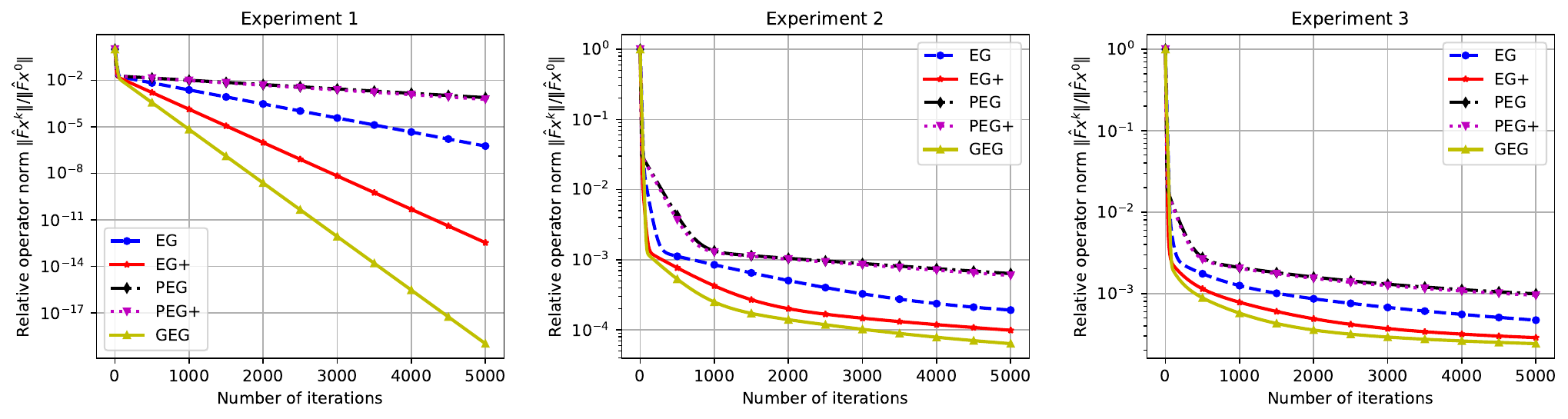}
\includegraphics[width=1\linewidth]{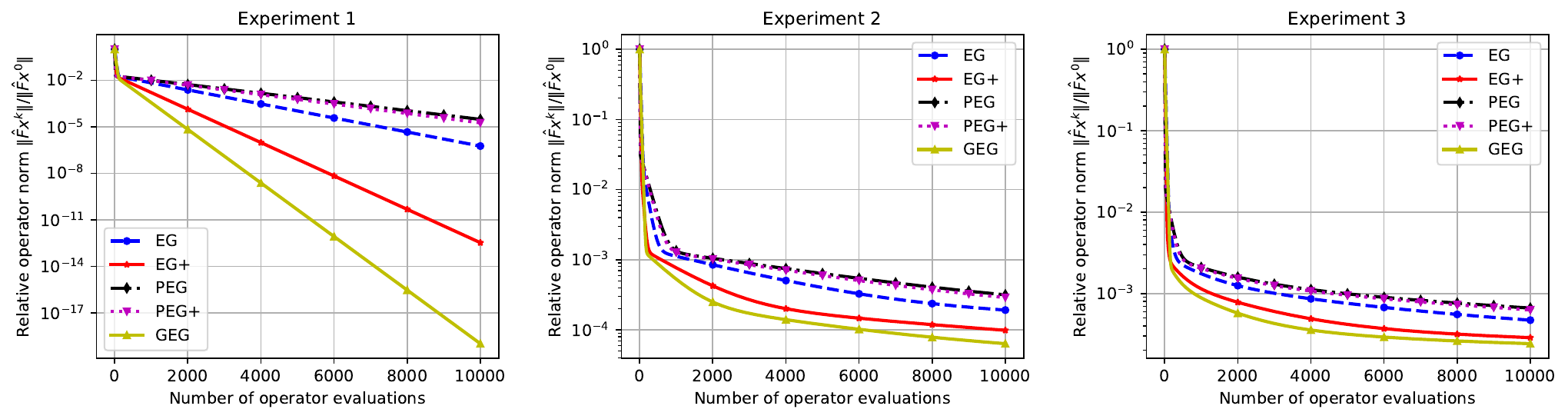}
\caption{The performance of $5$ variants of \eqref{eq:EG4NE} for solving  $\hat{F}x = 0$ resulting from \eqref{eq:matrix_game} using three different problems.}
\label{fig:matrix_game_1}
\vspace{-4ex}
\end{figure}

As we can observe from Figure \ref{fig:matrix_game_1}, our GEG outperforms all the remaining algorithms on all $3$ problems in both two criteria, namely the number of iterations and the number of operator evaluations. 
Among the remaining algorithms, while EG+ provides better performances than EG, both still outperform PEG and PEG+, which have comparable performances in all experiments. 
The differences between the performances of PEG/PEG+ and the other algorithms are diminished  as expected when we report the number of operator evaluations, since PEG and PEG+, each saves one operator evaluation per iteration compared to EG, EG+, or our new GEG.

\textit{Experiment with different variants of \eqref{eq:EG4NI}, \eqref{eq:RFBS4NI}, and \eqref{eq:GR4NI}}
Now, we will test different variants of \eqref{eq:EG4NI}, \eqref{eq:RFBS4NI}, and \eqref{eq:GR4NI} described in Subsection \ref{subsec:numexp_quad_minimax} for solving \eqref{eq:matrix_game} as a special case of \eqref{eq:NI}. 
Again, for each algorithm, the stepsize $\eta$ is obtained from a grid search and then tuned (if necessary) so that it produces the best possible performance. 
Our numerical results of this test are depicted in Figure \ref{fig:matrix_game_2}.

\begin{figure}
	\centering
	\includegraphics[width=1\linewidth]{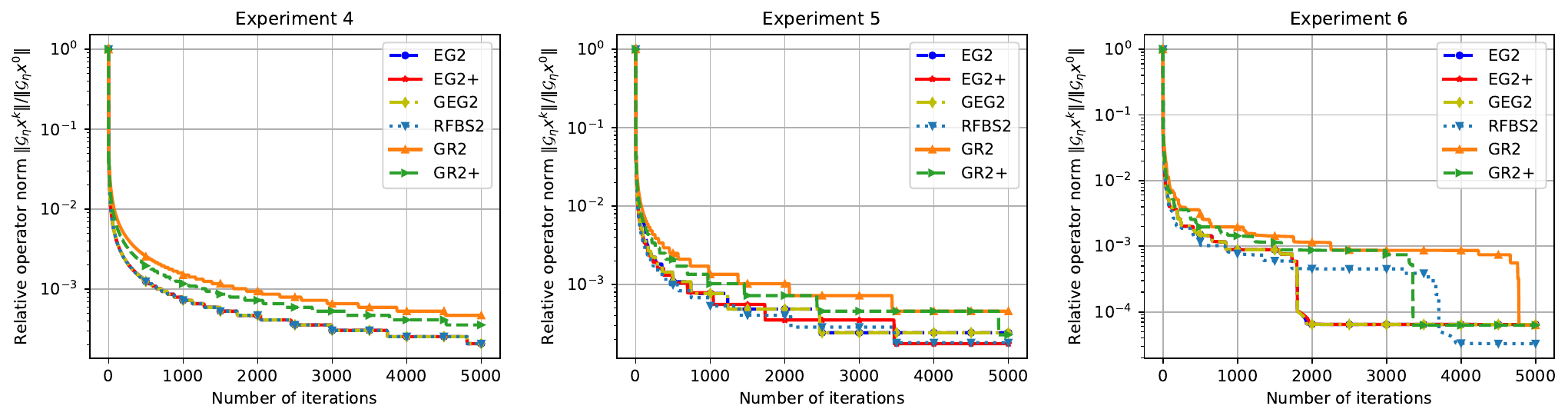}
	\caption{The performance of $6$ different algorithms for solving  $0 \in Fx + Tx$ resulting from \eqref{eq:matrix_game} using three different problems.}
	\label{fig:matrix_game_2}
	\vspace{-3ex}
\end{figure}

Figure \ref{fig:matrix_game_2} shows that the three variants of \eqref{eq:EG4NI} and \eqref{eq:RFBS4NI} have comparable performance in all three problems, and they also have a better performance than GR2 and GR2+. 
Unlike the previous tests, in this particular example, we do not see a clear improvement of GEG2 over EG2, but it is better than EG2+.
We can also verify the effectiveness of extending the range of $\tau$ in \eqref{eq:GR4NI} from this set of experiments when GR2+ with a larger value of $\tau$ produces a slightly better performance than GR2.

\beforesubsec
\subsection{\mytb{Regularized Logistic Regression with Ambiguous Features}}\label{subsec:numexp_logistic}
\aftersubsec
Finally, we consider a standard regularized logistic regression model associated with a given dataset $\{(\hat X_i, y_i)\}_{i=1}^N$, where $\hat X_i$ is an i.i.d. sample of a feature vector and $y_i \in \{0,1\}$ is the corresponding label of $\hat X_i$. 
Unfortunately, $\hat X_i$ is ambiguous, i.e. it belongs to one of $m$ possible examples $\{X_{ij}\}_{j=1}^m$. 
Since we do not know which $\hat X_i$ to evaluate a loss, we consider the worst-case loss $f_i(w) \coloneqq \max_{1\le j \le m} \ell(\iprod{X_{ij}, w}, y_i)$ computed from $m$ examples, where $\ell(t, s) \coloneqq \log(1+\exp(t)) - st$ is the standard logistic loss.

Using the fact that $\max_{1 \le j \le m} \ell_j(\cdot) = \max_{v \in \Delta_m} \sum_{j=1}^m v_j \ell_j(\cdot)$, where $\Delta_m$ is the standard simplex in $\mathbb R^m$, we can model this regularized logistic regression into the following minimax problem:
\begin{align}\label{prob:minimax_logit}
\min_{w \in \mathbb R^d} \max_{v \in \mathbb R^m} \Big\{ \mathcal L(w,v) \coloneqq \frac{1}{N} \sum_{i=1}^N \sum_{j=1}^m v_j \ell(\iprod{X_{ij}, w}, y_i) + \gamma R(w) - \delta_{\Delta_m}(v)  \Big\},
\end{align}
where $R(w) \coloneqq \norm{w}_1$ is an $\ell_1$-norm regularizer used to introduce sparsity to $w$, $\gamma > 0$ is a regularization parameter, and $\delta_{\Delta_m}$ is the indicator of $\Delta_m$ that handles the constraint $v \in \Delta_m$.

Let us define $x \coloneqq [w;v]$, $T(x) \coloneqq \left[ \gamma \partial R(w); \partial \delta_{\Delta_m}(v) \right]$, and
\begin{equation*}
F_i(x) \coloneqq \Big[ \sum_{j=1}^m v_j\ell'(\iprod{X_{ij}, w}, y_i) X_{ij}; - \ell(\iprod{X_{i1}, w}, y_i); \cdots; - \ell(\iprod{X_{im}, w}, y_i) \Big], 
\end{equation*}
where $\ell'(t, s) = \frac{\exp(t)}{1+\exp(t)} - s$. Then, the optimality condition of \eqref{prob:minimax_logit} can be written as $0 \in Fx + Tx$, where $F \coloneqq \frac{1}{N}\sum_{i=1}^N F_i$.

\textit{Data generation.} 
We use the following two real datasets for the experiments: \texttt{w7a} ($300$ features and $24,692$ data points) and \texttt{duke breast-cancer} ($7,129$ features and $44$ data points) downloaded from \texttt{LIBSVM} \cite{CC01a}. 
We first normalize the feature vector $\hat X_i$ such that each sample has unit norm, and add a column of all ones to address the bias term. 
To generate ambiguous features, we take the nominal feature vector $\hat X_i$ and add a random noise generated from a standard normal distribution. 
In our test, we choose $\gamma := 5\cdot10^{-4}$ and $m\coloneqq 5$. 
The starting point for every experiment is chosen as $x^0 := 0.5\cdot \texttt{ones}(p)$.  

\textit{Experiment with 5 variants of \eqref{eq:EG4NE}.} 
We first reformulate the optimality $0 \in Fx + Tx$ of \eqref{prob:minimax_logit} into \eqref{eq:NE} by using Tseng's FBFS operator $\hat F$ as stated in Subsection \ref{subsec:numexp_bilinear_game}.
We implement our \eqref{eq:EG4NE} with different choices of $u^k$ as in Subsection \ref{subsec:numexp_quad_minimax} to solve $\hat{F}x^{\star} = 0$, except that GEG now uses the direction $u^k = 0.7 Fx^k + 0.3 Fy^{k-1}$ and $\beta = 1$. 
As before, for each algorithm, the stepsize $\eta$ is tuned manually to obtain the best possible performance. 
Then, the relative norm $\frac{\norms{\hat F x^k}}{\norms{\hat F x^0}}$ against the number of iterations $k$ of $5$ variants of \eqref{eq:EG4NE} are reported in Figure \ref{fig:logistic_reg_NE}.

\begin{figure}[h]
	\centering
	\includegraphics[width=\linewidth]{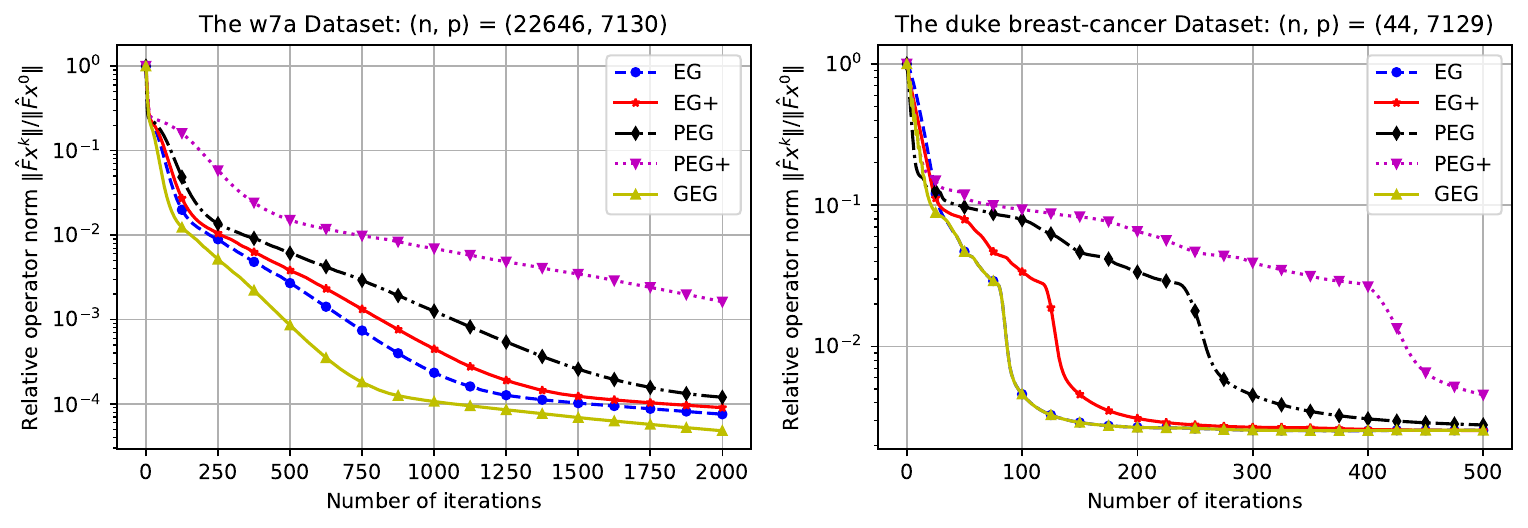}
	\includegraphics[width=\linewidth]{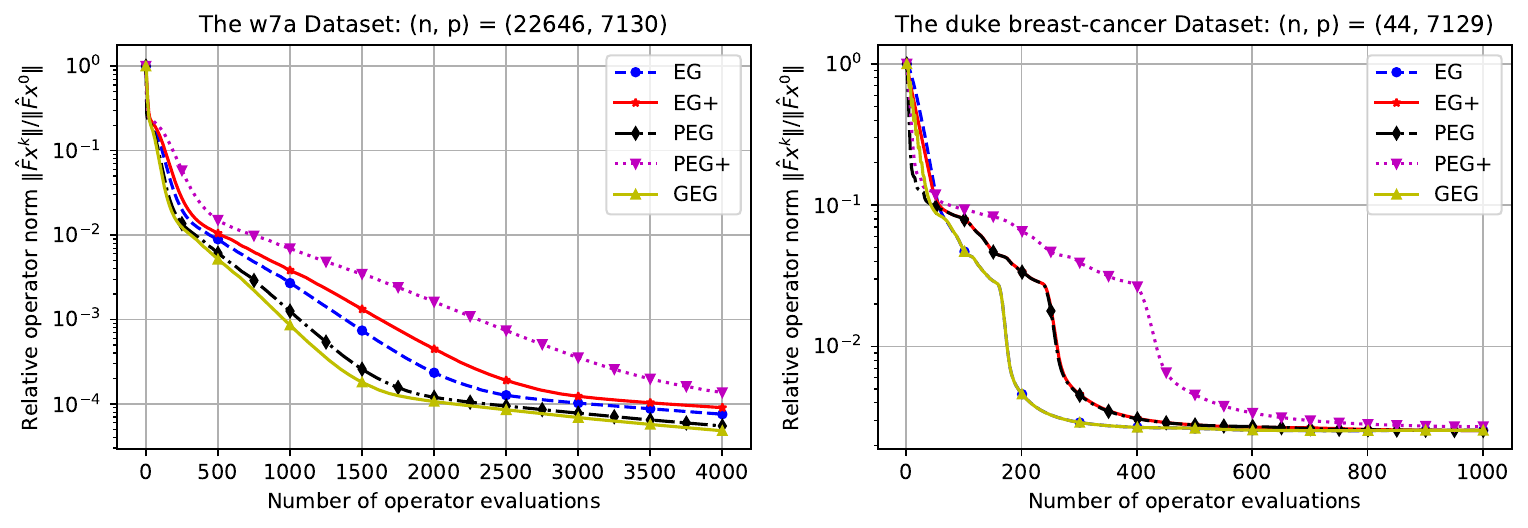}
	\caption{The performance of $5$ variants of \eqref{eq:EG4NE} for solving  $\hat{F}x = 0$ resulting from \eqref{prob:minimax_logit} using the two datasets: \texttt{w7a} and \texttt{duke breast-cancer} from \texttt{LibSVM}.}
	\label{fig:logistic_reg_NE}
\vspace{-3ex}	
\end{figure}

From Figure \ref{fig:logistic_reg_NE}, we can see that for the \texttt{w7a} dataset, our GEG outperforms all the remaining variants in both the number of iterations and the number of operator evaluations. 
If we consider the number of iterations, then the remaining variants can be ranked as EG $>$ EG+ $>$ PEG $>$ PEG+ based on their performance, which agrees with the observation in Subsection \ref{subsec:numexp_quad_minimax}. 
For the \texttt{duke breast-cancer} dataset, we can see that the ranking of the first four classical variants of \eqref{eq:EG4NE} based on their performances are still consistent with those in the \texttt{w7a}. 
The GEG variant still produces the best performance, though there is no significant difference from EG.
If we consider the number of operator evaluations, then PEG becomes the second-best algorithm, which outperforms EG in the experiment with \texttt{w7a} dataset and is comparable to EG+ in the experiment with the \texttt{duke breast-cancer}. 
At the same time, PEG+ also provides a better performance, though still the humblest, thanks to the saving of one operator evaluation per iteration. 
It is worth noting that our GEG still provides the best performance when the number of operator evaluations is taken into account.

\textit{Experiment with different variants of \eqref{eq:EG4NI}, \eqref{eq:RFBS4NI}, and \eqref{eq:GR4NI}}
Next, we test different variants of \eqref{eq:EG4NI}, \eqref{eq:RFBS4NI}, and \eqref{eq:GR4NI} described in Subsection \ref{subsec:numexp_quad_minimax} for solving \eqref{prob:minimax_logit} as a special case of \eqref{eq:NI}, but now GEG2 uses the direction $u^k = 0.7 Fx^k + 0.3 Fy^{k-1}$ and $\beta = 1$.
Now, for each algorithm, the stepsize $\eta$ is tuned manually so that it provides the best possible performance. 
The corresponding relative norm $\frac{\norms{ \Gc_{\eta} x^k}}{\norms{ \Gc_{\eta}x^0}}$ against the number of iterations is presented in Figure \ref{fig:logistic_reg_NI}.

\begin{figure}[h]
\vspace{-3ex}
	\centering
	\includegraphics[width=\linewidth]{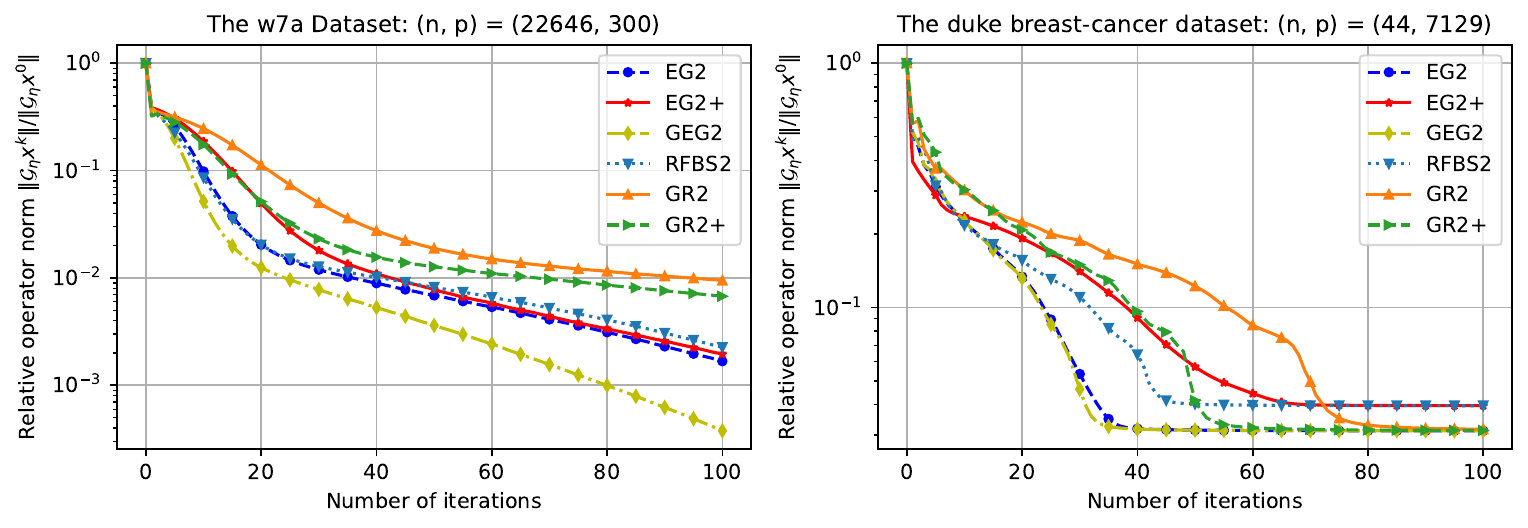}
	\caption{The performance of $5$ variants of different variants of \eqref{eq:EG4NI}, \eqref{eq:RFBS4NI}, and \eqref{eq:GR4NI} for solving  $0 \in Fx + Tx$ resulting from \eqref{prob:minimax_logit} using the two datasets: \texttt{w7a} and \texttt{duke breast-cancer} from \texttt{LibSVM}.}
	\label{fig:logistic_reg_NI}
\vspace{-3ex}	
\end{figure}

We can see from Figure \ref{fig:logistic_reg_NI} that for the \texttt{w7a} dataset, GEG2 highly outperforms all its competitors and reaches the accuracy of $10^{-4}$ in only 100 iterations, which is pretty fast. 
EG2, EG2+, and RFBS2 are the second-fastest algorithms, and their performance is comparable after the first 40 iterations. 
Again, we still see an improvement of GR2+ compared to GR2, which illustrates the effectiveness of extending the range of $\tau$. 
Finally, for the \texttt{duke breast-cancer} dataset, the difference between the performances of the algorithms becomes less clear, but GEG2 still shows the best performance overall.

{\vspace{1ex}
\noindent\textbf{Data availability.}
This paper uses both synthetic and publicly available data.
The procedure of generating synthetic data is clearly described in the paper.
The publicly available data is from \href{https://www.csie.ntu.edu.tw/~cjlin/libsvm}{LIBSVM} \cite{CC01a}.
}

\vspace{1ex}
\noindent\textbf{Acknowledgements.}
This work is  partially supported by the National Science Foundation (NSF), grant no. NSF-RTG DMS-2134107 and the Office of Naval Research (ONR), grant No. N00014-20-1-2088 (2020-2023) and grant No. N00014-23-1-2588 (2023-2026).

\appendix
\beforesec
\section{Technical Lemmas}\label{apdx:sec:proofs}
\aftersec
The following technical lemmas will be used to prove Theorem~\ref{th:EG4NE_convergence1} and Theorem~\ref{th:EG4NE_convergence2}.

\begin{lemma}\label{le:EG4NE_para_conditions}
Suppose that $L\rho \leq \Delta$ for $\Delta$ defined by \eqref{eq:EG4NE_kappa_i_quantities}.
Then, $\underline{\eta}$ and $\bar{\eta}$ defined by \eqref{eq:EG4NE_eta_bounds} are well-defined and $\underline{\eta} \geq 0$ and $\bar{\eta} \geq 0$.
\begin{compactitem}
\item[$($a$)$] If we choose $0 < \beta \leq  \frac{r}{r+\kappa_2}$,  then $\underline{\eta}_2 \le \underline{\eta}_1 \le \bar{\eta}_1 \le \bar{\eta}_2$, leading to $\underline{\eta} \leq \bar{\eta}$.
\item[$($b$)$] If we choose $\frac{r}{r+\kappa_2} \leq \beta < 1$, then $\underline{\eta}_1 \le \underline{\eta}_2 \le \bar{\eta}_2 \le \bar{\eta}_1$, leading to $\underline{\eta} \leq \bar{\eta}$.
\item[$($c$)$] If $\kappa_2 = 0$, then we can choose $\beta = 1$, leading to  $\underline{\eta} = \underline{\eta}_1 \leq \bar{\eta} = \bar{\eta}_1$.
\end{compactitem}
Moreover, in all cases above, $C_1$ and $C_2$ defined by \eqref{eq:EG4NE_C_constants} are nonnegative.
\end{lemma}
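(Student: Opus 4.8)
The plan is to read the four endpoints in \eqref{eq:EG4NE_eta_bounds} as the roots of the two quadratics obtained from the equations $C_1=0$ and $C_2=0$ in \eqref{eq:EG4NE_C_constants}. Multiplying $C_1 = \beta - (1+r)L\eta - \frac{4\mu\rho}{\eta}$ by $\eta>0$ produces the upward parabola $P_1(\eta) := (1+r)L\eta^2 - \beta\eta + 4\mu\rho$, whose roots are exactly $\underline{\eta}_1$ and $\bar{\eta}_1$; likewise $P_2(\eta) := \alpha\kappa_2 L\eta^2 - (1-\beta)\eta + 2(1-\mu)\rho$ has roots $\underline{\eta}_2$ and $\bar{\eta}_2$. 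Since the leading coefficients are positive, for $\eta>0$ one has $C_1\geq 0 \Leftrightarrow \eta\in[\underline{\eta}_1,\bar{\eta}_1]$ and $C_2\geq 0 \Leftrightarrow \eta\in[\underline{\eta}_2,\bar{\eta}_2]$. Hence the final assertion that $C_1,C_2$ are nonnegative on $[\underline{\eta},\bar{\eta}] = [\underline{\eta}_1,\bar{\eta}_1]\cap[\underline{\eta}_2,\bar{\eta}_2]$ is immediate, once I rule out emptiness; so the whole lemma reduces to well-definedness plus nonemptiness of this intersection.

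First I would check reality and nonnegativity of the roots. The discriminants of $P_1$ and $P_2$ are $\beta^2 - 16(1+r)\mu L\rho$ and $(1-\beta)^2 - 8\alpha(1-\mu)\kappa_2 L\rho$. Substituting $\mu = \frac{r}{r+2\kappa_2}$ and $\alpha = \frac{1+r}{r}$, a short simplification shows that these two discriminants are nonnegative precisely when $L\rho$ is bounded by the first and second terms, respectively, inside the $\min$ defining $\Delta$ in \eqref{eq:EG4NE_kappa_i_quantities}. Thus $L\rho\leq\Delta$ makes all four roots real, and each is nonnegative because the relevant square root is dominated by the linear coefficient ($\beta$ or $1-\beta$).

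The crux is to show the two intervals are nested and to tie the nesting direction to the sign of $\beta - \frac{r}{r+\kappa_2}$; I expect the decisive simplification to be that the two root-products coincide. By Vieta, $\underline{\eta}_1\bar{\eta}_1 = \frac{4\mu\rho}{(1+r)L}$ and $\underline{\eta}_2\bar{\eta}_2 = \frac{2(1-\mu)\rho}{\alpha\kappa_2 L}$, and inserting $\mu$ and $\alpha$ both equal the common value $\Pi := \frac{4r\rho}{(r+2\kappa_2)(1+r)L}$. Writing both quadratics in monic form $\eta^2 - S_i\eta + \Pi$ with sums $S_1 = \frac{\beta}{(1+r)L}$ and $S_2 = \frac{(1-\beta)r}{(1+r)\kappa_2 L}$, the roots are $\frac{S_i \mp \sqrt{S_i^2 - 4\Pi}}{2}$; for fixed $\Pi$ the smaller root is decreasing and the larger root is increasing in $S_i$, so the interval with the larger sum contains the one with the smaller sum. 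A one-line computation gives $S_1 \geq S_2 \Leftrightarrow \beta(\kappa_2+r) \geq r \Leftrightarrow \beta \geq \frac{r}{r+\kappa_2}$, which yields case (b), namely $\underline{\eta}_1 \leq \underline{\eta}_2 \leq \bar{\eta}_2 \leq \bar{\eta}_1$; the reversed inequality for $\beta \leq \frac{r}{r+\kappa_2}$ gives case (a), $\underline{\eta}_2 \leq \underline{\eta}_1 \leq \bar{\eta}_1 \leq \bar{\eta}_2$.

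For case (c), setting $\kappa_2=0$ forces $\mu=1$, so $C_2 = 1-\beta \geq 0$ holds for every $\eta>0$; the constraint coming from $P_2$ disappears, $\beta=1$ becomes admissible, and $\underline{\eta}=\underline{\eta}_1 \leq \bar{\eta}=\bar{\eta}_1$. In all three cases $[\underline{\eta},\bar{\eta}]$ is a nonempty subinterval of both $[\underline{\eta}_1,\bar{\eta}_1]$ and $[\underline{\eta}_2,\bar{\eta}_2]$, so $C_1,C_2\geq 0$ there, which closes the argument. The only subtlety I would flag is the boundary case $\rho=0$, where $\Pi=0$ and both smaller roots vanish; the nesting is then governed directly by comparing $\bar{\eta}_1=S_1$ with $\bar{\eta}_2=S_2$, so the same dichotomy in $\beta$ applies. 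The equal-product identity is the one nonobvious step; everything else is routine.
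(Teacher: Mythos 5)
Your proposal is correct, and all of its computations check out: multiplying $C_1$ and $C_2$ by $\eta>0$ does give the quadratics $P_1(\eta)=(1+r)L\eta^2-\beta\eta+4\mu\rho$ and $P_2(\eta)=\alpha\kappa_2L\eta^2-(1-\beta)\eta+2(1-\mu)\rho$ whose root intervals characterize $C_1\geq 0$ and $C_2\geq 0$; the two discriminant conditions are exactly the two terms in the $\min$ defining $\Delta$ (using $\mu=\frac{r}{r+2\kappa_2}$ and $\alpha=\frac{1+r}{r}$); and your Vieta identity is genuine, since $\underline{\eta}_1\bar{\eta}_1=\frac{4\mu\rho}{(1+r)L}=\frac{4r\rho}{(r+2\kappa_2)(1+r)L}=\frac{2(1-\mu)\rho}{\alpha\kappa_2L}=\underline{\eta}_2\bar{\eta}_2$, while $S_1\geq S_2$ reduces to $\beta(\kappa_2+r)\geq r$, i.e.\ $\beta\geq\frac{r}{r+\kappa_2}$. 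However, your route to the nesting differs from the paper's. The paper proves parts (a) and (b) by brute-force sign evaluation: it substitutes the endpoints of one interval into the other quadratic and verifies, by explicit algebraic simplification, that $\varphi_2(\underline{\eta}_1)\leq 0$ and $\varphi_2(\bar{\eta}_1)\leq 0$ when $\beta\leq\frac{r}{r+\kappa_2}$ (and symmetrically $\varphi_1(\underline{\eta}_2)\leq 0$, $\varphi_1(\bar{\eta}_2)\leq 0$ in the other regime), each evaluation producing a factored expression whose sign is read off from $(1-\beta)r-\kappa_2\beta$. Your argument replaces these four computations with a single structural observation: once both monic quadratics share the constant term $\Pi$, the smaller root $\frac{S-\sqrt{S^2-4\Pi}}{2}=\Pi\big/\frac{S+\sqrt{S^2-4\Pi}}{2}$ is nonincreasing and the larger root nondecreasing in the sum $S$, so the interval with the larger sum contains the other, and the dichotomy in $\beta$ falls out of $S_1\gtrless S_2$. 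This is shorter, explains \emph{why} the threshold $\frac{r}{r+\kappa_2}$ appears (it is precisely where $S_1=S_2$), and your handling of the degenerate cases ($\rho=0$, where $\Pi=0$ and both lower roots vanish, and $\kappa_2=0$, where the $P_2$-constraint disappears and $C_2=1-\beta$) is consistent with the paper's case (c). What the equal-product shortcut costs is that it exploits the specific couplings of $\mu$ and $\alpha$ to $r$ and $\kappa_2$ — if those choices were perturbed, the paper's direct evaluations would still be attemptable while your reduction would break — but for the lemma as stated both proofs establish the same chains of inequalities and the same nonnegativity of $C_1$ and $C_2$ on $[\underline{\eta},\bar{\eta}]$.
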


\begin{proof}
First, given $C_1$ and $C_2$ in \eqref{eq:EG4NE_C_constants}, to have $C_1 \geq 0$, we need to choose $\eta$ such that $(1+r)L\eta^2 - \beta \eta + 4\mu\rho < 0$.
This implies $\underline{\eta}_1 := \frac{\beta - \sqrt{\beta^2 - 16(1+r)\mu L\rho}}{2(1+r)L} \leq \eta \leq \bar{\eta}_1 := \frac{\beta + \sqrt{\beta^2 - 16(1+r)\mu L\rho }}{2(1+r)L}$, provided that $L\rho \leq \frac{\beta^2}{16(1+r)\mu} = \frac{\beta^2(r + 2\kappa_2)}{16(1+r)r}$.
This choice of $\eta$ leads to the first line of \eqref{eq:EG4NE_eta_bounds}.

Next, to guarantee $C_2 \geq 0$, we need to choose $\eta$ such that $\kappa_2\alpha  L \eta^2 - (1-\beta)\eta + 2(1-\mu) \rho \leq 0$.
This implies $\underline{\eta}_2 := \frac{1-\beta  -  \sqrt{(1-\beta)^2 - 8\alpha(1-\mu)\kappa_2L\rho}}{2\alpha \kappa_2L} \leq \eta \leq \bar{\eta}_2 := \frac{1-\beta  + \sqrt{ (1-\beta)^2 - 8\alpha(1-\mu)\kappa_2L\rho}}{2\alpha \kappa_2L}$, provided that $L\rho \leq \frac{(1-\beta)^2}{8\alpha (1-\mu)\kappa_2} = \frac{(1-\beta)^2(r + 2\kappa_2)r}{16(1+r)\kappa_2^2}$.
This $\eta$ leads to the second line of \eqref{eq:EG4NE_eta_bounds}.

Now, under the choice of $\alpha$ and $\mu$ above, we can show that $\underline{\eta} \le \bar{\eta}$. 
Indeed, this is obvious when $\beta = 1$, proving (c).
 
For $\beta \in (0,1)$, we define $\varphi_1(\eta) := (1+r)L\eta^2 - \beta \eta + 4\mu\rho$ and $\varphi_2(\eta) := \kappa_2\alpha  L \eta^2 - (1-\beta)\eta + 2(1-\mu) \rho$ as two quadratic functions of $\eta$.

\vspace{0.5ex}
\noindent\textbf{Case 1:} 
If we choose $0 < \beta \le \frac{r}{r+\kappa_2}$, then we can evaluate $\varphi_2(\underline{\eta}_1)$ as 
\begin{equation*}
\begin{array}{lcl}
\varphi_2(\underline{\eta}_1) &= &  \kappa_2\alpha L \Big( \frac{\beta - \sqrt{\beta^2 - 16(1+r)\mu L\rho}}{2(1+r)L} \Big)^2 - (1-\beta)\frac{\beta - \sqrt{\beta^2 - 16(1+r)\mu L\rho}}{2(1+r)L} + 2(1-\mu)\rho  \vspace{1ex}\\
&= &  \frac{[(1-\beta)r-\kappa_2\beta] [\sqrt{\beta^2(r+2\kappa_2)^2 - 16r(1+r)(r+2\kappa_2)L\rho} - \beta(r+2\kappa_2)]}{2r(1+r)(r+2\kappa_2)L} \vspace{1ex}\\
&\leq & 0,
\end{array}
\end{equation*}
provided that $L\rho \le \Delta \le \frac{\beta^2(r+2\kappa_2)}{16r(1+r)}$.
This shows that $\underline{\eta}_2 \leq \underline{\eta}_1$.

Alternatively, we can evaluate $\varphi_2(\bar{\eta}_1)$ as
\begin{equation*}
\begin{array}{lcl}
\varphi_2(\bar{\eta}_1) &= & \kappa_2\alpha L \Big( \frac{\beta + \sqrt{\beta^2 - 16(1+r)\mu L\rho}}{2(1+r)L} \Big)^2 - (1-\beta)\frac{\beta + \sqrt{\beta^2 - 16(1+r)\mu L\rho}}{2(1+r)L} + 2(1-\mu)\rho \vspace{1ex}\\
&= & \frac{[\kappa_2\beta-(1-\beta)r] [\sqrt{\beta^2(r+2\kappa_2)^2 - 16r(1+r)(r+2\kappa_2)L\rho} + \beta(r+2\kappa_2)]}{2r(1+r)(r+2\kappa_2)L} \vspace{1ex}\\
&\leq & 0,
\end{array}
\end{equation*}
provided that $L\rho \le \Delta \le \frac{\beta^2(r+2\kappa_2)}{16r(1+r)}$. 
This shows that $\bar{\eta}_1 \leq \bar{\eta}_2$.

Combining both cases, we have $\underline{\eta}_2 \leq \underline{\eta}_1 \leq \bar{\eta}_1 \leq \bar{\eta}_2$, and thus $\underline{\eta} \le \bar{\eta}$.
This proves (a).

\vspace{0.5ex}
\noindent\textbf{Case 2.} If we choose $\frac{r}{r+\kappa_2} \le \beta < 1$, then we can evaluate $\varphi_1(\underline{\eta}_2)$ as 
\begin{equation*}
\begin{array}{lcl}
\varphi_1(\underline{\eta}_2) &= & (1+r)L \Big( \frac{1-\beta  -  \sqrt{(1-\beta)^2 - 8\alpha(1-\mu)\kappa_2L\rho}}{2\alpha \kappa_2L} \Big)^2 - \beta \frac{1-\beta  -  \sqrt{(1-\beta)^2 - 8\alpha(1-\mu)\kappa_2L\rho}}{2\alpha \kappa_2L} + 4\mu\rho \vspace{1ex}\\
& = & \frac{[\kappa_2\beta - r(1-\beta)][\sqrt{r^2(1-\beta)^2(r+2\kappa_2)^2 - 16r(1+r)(r+2\kappa_2)\kappa_2^2L\rho} - r(1-\beta)(r+2\kappa_2)]}{2\kappa_2^2 (1+r)(r+2\kappa_2) L} \vspace{1ex}\\
&\leq & 0,
\end{array}
\end{equation*}
provided that $L\rho \le \Delta \le \frac{r(1-\beta)^2(r+2\kappa_2)}{16(1+r)\kappa_2^2}$. 
This shows that $\underline{\eta}_1 \leq \underline{\eta}_2$.

Similarly, we can evaluate $\varphi_1(\bar{\eta}_2)$ as 
\begin{equation*}
\begin{array}{lcl}
\varphi_1(\bar{\eta}_2) &= & (1+r)L \Big( \frac{1-\beta  +  \sqrt{(1-\beta)^2 - 8\alpha(1-\mu)\kappa_2L\rho}}{2\alpha \kappa_2L} \Big)^2 - \beta \frac{1-\beta  +  \sqrt{(1-\beta)^2 - 8\alpha(1-\mu)\kappa_2L\rho}}{2\alpha \kappa_2L} + 4\mu\rho \vspace{1ex}\\
&= & \frac{[r(1-\beta) - \kappa_2\beta][\sqrt{r^2(1-\beta)^2(r+2\kappa_2)^2 - 16r(1+r)(r+2\kappa_2)\kappa_2^2L\rho} + r(1-\beta)(r+2\kappa_2)]}{2\kappa_2^2 (1+r)(r+2\kappa_2) L} \vspace{1ex}\\
&\leq & 0,
\end{array}
\end{equation*}
provided that $L\rho \le \Delta \le \frac{r(1-\beta)^2(r+2\kappa_2)}{16(1+r)\kappa_2^2}$. 
This shows that $\bar{\eta}_2 \leq \bar{\eta}_1$.

Combining both cases, we can show that $\underline{\eta}_1 \le \underline{\eta}_2 \le \bar{\eta}_2 \le \bar{\eta}_1$, and thus $\underline{\eta} \le \bar{\eta}$.
This proves (b).
\Eproof
\end{proof}

\begin{lemma}\label{le:EG4NE_nonemptyness2}
Let $\underline{\eta}$ and $\bar{\eta}$ be defined by  \eqref{eq:EG4NE_eta_bounds}, and $\underline{\hat{\eta}}$ and $ \bar{\hat{\eta}}$ be defined by \eqref{eq:EG4NE_last_iterate_constant}.
Then, we have $[ \underline{\eta}, \bar{\eta}] \cap [\underline{\hat{\eta}}, \bar{\hat{\eta}}] \neq \emptyset$.
\end{lemma}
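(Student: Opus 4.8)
The plan is to exhibit a single value of $\eta$ that belongs to both intervals simultaneously, rather than comparing the four endpoints pairwise. The starting observation is that, in the regime where this lemma is invoked (namely $\beta = 1$ and $\kappa_2 = 0$, so that $\mu = 1$), the endpoints in \eqref{eq:EG4NE_eta_bounds} reduce to $\underline{\eta} = \frac{1 - \sqrt{1 - 16(1+r)L\rho}}{2(1+r)L}$ and $\bar{\eta} = \frac{1 + \sqrt{1 - 16(1+r)L\rho}}{2(1+r)L}$, which are precisely the two roots of the convex quadratic $q_1(\eta) := (1+r)L\eta^2 - \eta + 4\rho$. Likewise, from \eqref{eq:EG4NE_last_iterate_constant}, $\underline{\hat{\eta}}$ and $\bar{\hat{\eta}}$ are the two roots of $q_2(\eta) := mL\eta^2 - \eta + 4\rho$. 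The key structural feature is that $q_1$ and $q_2$ differ only in their leading coefficients $(1+r)L$ and $mL$; they share the same linear term $-\eta$ and the same constant $4\rho$.

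Since $q_1$ and $q_2$ both open upward, each is nonpositive exactly on the closed interval bounded by its two roots. Thus $[\underline{\eta}, \bar{\eta}] = \{\eta : q_1(\eta) \le 0\}$ and $[\underline{\hat{\eta}}, \bar{\hat{\eta}}] = \{\eta : q_2(\eta) \le 0\}$, and it suffices to produce one point at which both quadratics are nonpositive. I would test the candidate $\eta^{\star} := 8\rho$, for which a direct substitution gives $q_1(8\rho) = 4\rho\big[16(1+r)L\rho - 1\big]$ and $q_2(8\rho) = 4\rho\big[16mL\rho - 1\big]$.

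The final step is to invoke the standing smallness conditions $L\rho \le \Delta$ and $L\rho \le \hat{\Delta}$. Because $\Delta = \frac{1}{16(1+r)}$ when $\kappa_2 = 0$ and $\beta = 1$ (as computed in \eqref{eq:EG4NE_kappa_i_quantities}), the hypothesis $L\rho \le \Delta$ gives $16(1+r)L\rho \le 1$, hence $q_1(8\rho) \le 0$ and $\eta^{\star} \in [\underline{\eta}, \bar{\eta}]$; similarly $\hat{\Delta} = \frac{1}{16m}$ from \eqref{eq:EG4NE_last_iterate_constant} together with $L\rho \le \hat{\Delta}$ yields $q_2(8\rho) \le 0$ and $\eta^{\star} \in [\underline{\hat{\eta}}, \bar{\hat{\eta}}]$. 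Consequently $\eta^{\star} = 8\rho$ lies in the intersection, which is therefore nonempty. The degenerate case $\rho = 0$ is covered by the same computation: then $\eta^{\star} = 0$, both intervals have left endpoint $0$, and $0$ is the common witness.

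I expect the only real obstacle to be bookkeeping rather than ideas: one must specialize \eqref{eq:EG4NE_eta_bounds} correctly to $\mu = 1$, $\kappa_2 = 0$, and confirm that the well-definedness (real-rootedness) of all four endpoints is guaranteed precisely by the two smallness conditions, so that $q_1$ and $q_2$ genuinely have the claimed roots. Once the two intervals are recognized as sublevel sets of quadratics sharing everything but the leading coefficient, the common witness $8\rho$ --- which is exactly the double root attained when either discriminant vanishes --- makes the intersection transparent.
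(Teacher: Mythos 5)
Your proof is correct, but it takes a genuinely different route from the paper's. The paper fixes $\rho$ and studies the interval endpoints as functions of the leading coefficient: with $\phi_1(t) := \frac{1-\sqrt{1-16\rho t}}{2t}$ and $\phi_2(t) := \frac{1+\sqrt{1-16\rho t}}{2t}$ on $\big(0, \tfrac{1}{16\rho}\big)$, it shows $\phi_1$ is increasing and $\phi_2$ is decreasing, so comparing $t = (1+r)L$ with $t = mL$ yields the stronger \emph{nesting} conclusion: $[\underline{\hat{\eta}}, \bar{\hat{\eta}}] \subseteq [\underline{\eta}, \bar{\eta}]$ when $1+r \le m$ and the reverse inclusion when $1+r \ge m$, either of which trivially gives a nonempty intersection. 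You instead identify the two intervals as the sublevel sets $\sets{\eta : q_1(\eta) \le 0}$ and $\sets{\eta : q_2(\eta) \le 0}$ of upward quadratics sharing the linear term $-\eta$ and the constant $4\rho$, and verify the single witness $\eta^{\star} = 8\rho$: indeed $q_1(8\rho) = 4\rho\left[16(1+r)L\rho - 1\right] \le 0$ under $L\rho \le \Delta = \frac{1}{16(1+r)}$, and $q_2(8\rho) = 4\rho\left[16mL\rho - 1\right] \le 0$ under $L\rho \le \hat{\Delta} = \frac{1}{16m}$ --- exactly the two conditions already required for the four endpoints to be real, so you assume nothing beyond what the paper's proof implicitly uses (its domain $\big(0,\tfrac{1}{16\rho}\big)$ must contain both $(1+r)L$ and $mL$). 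What each approach buys: the paper's monotonicity argument delivers the finer structural fact that one stepsize interval contains the other, at the cost of a somewhat fiddly derivative computation for $\phi_1'$; your witness argument is shorter and more elementary, handles the degenerate case $\rho = 0$ transparently, and produces an explicit admissible stepsize $\eta = 8\rho$ (precisely the common double root in the limiting case where either discriminant vanishes), but certifies only nonemptiness. Your specialization to $\beta = 1$, $\kappa_2 = 0$, $\mu = 1$ matches the regime in which the paper states, proves, and invokes the lemma, so the scope of the two arguments is identical.
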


\begin{proof}
When $\beta = 1$, $\underline{\eta}$ and $\bar{\eta}$ in \eqref{eq:EG4NE_eta_bounds} become $\underline{\eta} = \frac{1-\sqrt{1 - 16(1+r)L\rho}}{2(1+r)L}$ and $\bar{\eta} = \frac{1+\sqrt{1 - 16(1+r)L\rho}}{2(1+r)L}$, respectively. 
Then, we consider the two following functions 
\begin{equation*}
\arraycolsep=0.2em
\begin{array}{l}
\phi_1(t) := \frac{1-\sqrt{1-16\rho t}}{2t} \quad  \text{and} \quad \phi_2(t) := \frac{1+\sqrt{1-16\rho t}}{2t}, \quad \text{for} \quad t \in \big( 0, \frac{1}{16\rho} \big).
\end{array}
\end{equation*}
On the one hand, we can see that $\phi'_1(t) = \frac{1-8\rho t - \sqrt{(1-8\rho t)^2 - 64\rho^2 t^2}}{2t^2\sqrt{1 - 16\rho t}} \ge 0$ for all $t \in \big( 0, \frac{1}{16\rho} \big)$, which implies that $\phi_1(t)$ is an increasing function on $\big( 0, \frac{1}{16\rho} \big)$.
On the other hand, it is obvious that $\phi_2(t)$ is a decreasing function on $\big( 0, \frac{1}{16\rho} \big)$ as it is the product of two positive decreasing functions. 
Thus if $1+r \le m$, by substituting $t := mL$ and $t := (1+r)L$ into $\phi_1(t)$ and $\phi_2(t)$, we have $\underline{\eta} \le \underline{\hat \eta} \le \bar{\hat{\eta}} \le \bar{\eta}$. 
Similarly, if $1+r \ge m$, we have $\underline{\hat \eta} \le \underline{\eta} \le \bar{\eta} \le \bar{\hat \eta}$. 
Combining both cases, we obtain $[ \underline{\eta}, \bar{\eta}] \cap [\underline{\hat{\eta}}, \bar{\hat{\eta}}] \neq \emptyset$.
\Eproof
\end{proof}

\bibliographystyle{plain}

\end{document}